\documentclass[11pt,reqno]{amsart}

\usepackage[utf8]{inputenc} 
\usepackage{bbm}

\usepackage{siunitx}
\usepackage{mathrsfs} 

\usepackage{graphicx} 

\usepackage[margin=3cm]{geometry}

\usepackage{verbatim}
\usepackage{amssymb}
\usepackage{amsthm}
\usepackage{amsmath}
\usepackage{amsfonts}
\usepackage{latexsym}
\usepackage{mathtools}
\usepackage{enumitem} 
\usepackage{cite}
\usepackage[english]{babel} 
\usepackage{dsfont}

\usepackage[dvipsnames]{xcolor} 
\usepackage[colorlinks=true, pdfstartview=FitV, linkcolor=blue, citecolor=blue, urlcolor=blue]{hyperref}

\usepackage{orcidlink}

\theoremstyle{plain}
\newtheorem{thm}{Theorem}
\newtheorem{remark}[thm]{Remark}
\newtheorem{prop}[thm]{Proposition}
\newtheorem{cor}[thm]{Corollary}
\newtheorem{lemma}[thm]{Lemma}

\numberwithin{equation}{section}
\numberwithin{thm}{section}

\ExplSyntaxOn

\NewDocumentCommand{\pFq}{O{}mmmmm}
{
	\group_begin:
	\keys_set:nn { hypergeometric } { #1 }
	\hypergeometric_print:nnnnn { #2 } { #3 } { #4 } { #5 } { #6 }
	\group_end:
}
\NewDocumentCommand{\hypergeometricsetup}{m}
{
	\keys_set:nn { hypergeometric } { #1 }
}

\tl_new:N \l_hypergeometric_divider_tl
\tl_new:N \l_hypergeometric_left_tl
\tl_new:N \l_hypergeometric_right_tl

\keys_define:nn { hypergeometric }
{
	symbol .tl_set:N = \l_hypergeometric_symbol_tl,
	symbol .initial:n = F,
	separator .tl_set:N = \l_hypergeometric_separator_tl,
	separator .initial:n = {},
	skip .tl_set:N = \l_hypergeometric_skip_tl,
	skip .initial:n = 8,
	divider .choice:,
	divider/semicolon .code:n = \tl_set:Nn \l_hypergeometric_divider_tl { \;; },
	divider/bar .code:n = \tl_set:Nn \l_hypergeometric_divider_tl { \;\middle|\; },
	divider .initial:n = semicolon,
	fences .choice:,
	fences/brack .code:n = 
	\tl_set:Nn \l_hypergeometric_left_tl {[}
	\tl_set:Nn \l_hypergeometric_right_tl {]},
	fences/parens .code:n = 
	\tl_set:Nn \l_hypergeometric_left_tl {(}
	\tl_set:Nn \l_hypergeometric_right_tl {)},
	fences .initial:n = brack,
}

\cs_new_protected:Nn \hypergeometric_print:nnnnn
{
	{} \sb {#1} \l_hypergeometric_symbol_tl \sb { #2 }
	\left\l_hypergeometric_left_tl
	\genfrac .. 
	{0pt} 
	{} 
	{ \__hypergeometric_process:n { #3 } } 
	{ \__hypergeometric_process:n { #4 } } 
	\l_hypergeometric_divider_tl
	#5
	\right\l_hypergeometric_right_tl
}

\cs_new_protected:Nn \__hypergeometric_process:n
{
	\clist_use:nn { #1 }
	{
		{\l_hypergeometric_separator_tl}
		\mspace { \l_hypergeometric_skip_tl mu }
	}
}

\ExplSyntaxOff

\newcommand{\laplaceSPH}{\Delta_{\mathbb{S}^2}}

\begin{document}


	\title[Nonlinear Stability of 3D Muskat Bubbles]{Nonlinear Stability of 3D Muskat Bubbles}

	\author[F. Gancedo]{Francisco Gancedo}
	\address{(FG) Departamento de An\'alisis Matem\'atico, Universidad de Sevilla, C/Tarfia s/n, Campus Reina Mercedes, 41012, Sevilla, Spain. \href{mailto:fgancedo@us.es}{fgancedo@us.es}}

	\author[E. Garc\'ia-Ju\'arez]{Eduardo Garc\'ia-Ju\'arez}
	\address{(EGJ) Departamento de An\'alisis Matem\'atico, Universidad de Sevilla, C/Tarfia s/n, Campus Reina Mercedes, 41012, Sevilla, Spain. \href{mailto:egarcia12@us.es}{egarcia12@us.es}}
	
	\author[S. Haziot]{Susanna V. Haziot}
	\address{(SVH) Department of Mathematics, Princeton University, Fine Hall, Princeton, NJ 08544, USA. \href{mailto:susanna.haziot@princeton.edu}{susanna.haziot@princeton.edu}}
	
	\author[N. Patel]{Neel Patel}
	\address{(NP) Department of Mathematics and Statistics, University of Maine,  Neville Hall, Orono, Maine, USA  04469. \href{mailto:neel.patel@maine.edu}{neel.patel@maine.edu}}
	
	\author[R. M. Strain]{Robert M. Strain}
	\address{(RMS) Department of Mathematics, University of Pennsylvania, David Rittenhouse Lab., 209 South 33rd St., Philadelphia, PA 19104, USA. \href{mailto:strain@math.upenn.edu}{strain@math.upenn.edu} (\orcidlink{0000-0002-1107-8570} \href{https://orcid.org/0000-0002-1107-8570}{https://orcid.org/0000-0002-1107-8570})}

	
	\begin{abstract}
This paper studies the three-dimensional Muskat problem for two immiscible fluids in a porous medium subject to gravity and surface tension. We consider the setting in which one fluid forms a bounded bubble surrounded by the other. For any value of the physical parameters, we prove the global-in-time nonlinear asymptotic stability of a translating spherical bubble for small initial perturbations in the critical Lipschitz class. This enlarges the class of critical spaces considered in previous two-dimensional bubble results. The proof combines spectral analysis of the linearized operator on $\mathbb{S}^2$ with multilinear singular integral estimates that exploit a crucial null structure in the nonlinear terms.

    \end{abstract}

	\setcounter{tocdepth}{2}
	
	\maketitle
	\tableofcontents

	\section{Introduction}\label{sec:intro}
	
	Fluid flow through a porous medium is a ubiquitous natural occurrence, with examples ranging from water passing through soil to the flow of blood through tissue. Darcy's law, derived experimentally  \cite{Darcy56} and mathematically \cite{Tartar89}, governs the velocity of a porous media flow, given here in three spatial dimensions, $X\in\mathbb{R}^{3}$, with time $t\geq0$,
	\begin{equation*}
	\frac{\mu}{\kappa}u(X,t)  = -\nabla p(X,t) - g \rho(X,t)e_3.
    \end{equation*}
We consider incompressible fluids:
$$\nabla\cdot u(X,t)=0.$$
In Darcy's law, $u$ is the fluid velocity, $\mu$ is the fluid viscosity, $\kappa$ is the permeability of the porous medium, $p$ is pressure, $g$ is the gravitational constant, and $\rho$ is the fluid density. In this paper, we consider constant permeability and gravity.
	
	When two immiscible fluids, such as water and oil, of different but constant  densities meet in a porous medium, we express the density function as
	\begin{equation*}
	\rho(X,t)=\left\{\begin{array}{rl}
	\rho^{\mathrm{in}},& X\in D^{\mathrm{in}}(t),\\
	\rho^{\mathrm{out}},& X\in D^{\mathrm{out}}(t).
	\end{array}\right.
	\end{equation*}
	Here, $D^{\mathrm{in}}(t)$ and $D^{\mathrm{out}}(t)$ represent the two disjoint  connected fluid domains at time $t$, whose common boundary $\Gamma(t)=\partial D^{\mathrm{in}}(t)=\partial D^{\mathrm{out}}(t)$ is the fluid interface, and together they fill the entire space. The constants  $\rho^{\mathrm{in}}$ and $\rho^{\mathrm{out}}$  represent the respective densities. 
    The interface is then transported by the normal velocity of the fluids:
    \begin{equation*}
        u^{\mathrm{in}}(X,t)\cdot \nu(X,t)=u^{\mathrm{out}}(X,t)\cdot \nu(X,t), \quad X(t)\in \Gamma(t),
    \end{equation*}
    where $\nu$ denotes a normal vector to $\Gamma(t)$.
    This regime is called the Muskat problem. The case when only one fluid is present is referred to as the one-phase Muskat problem.
    In this paper, we  consider the two-phase setting with both fluids of equal constant viscosity $\mu>0$, and $\rho^{\mathrm{in}}, \rho^{\mathrm{out}}>0$. The fluid is at rest at infinity, $$u(X,t)\to0 \text{ as } |X|\to\infty.$$ 
	Interest in these problems can be attributed to physical phenomena such as water encroaching on an oil-filled sand \cite{Muskat34}. The main mathematical interest is to study the dynamics of the sharp interface, $\partial D^{\mathrm{in}}(t) = \partial D^{\mathrm{out}}(t)$, between the two fluids. The  dynamics of the interface is driven by the jump in the fluid density due to gravity. The presence of surface tension can also be considered, which causes a pressure jump across the boundary via the Laplace--Young equation:
	\begin{equation*}
	p^{\mathrm{out}}(X,t) - p^{\mathrm{in}}(X,t) = -\sigma K(X), \quad X\in\Gamma(t).
	\end{equation*}
	Here, $\sigma \geq 0$ is twice the surface tension coefficient,  $p^{\mathrm{in}}$ and $p^{\mathrm{out}}$ are the limits of the pressure  from the interior of the two fluid domains, and $K$ is the mean curvature of the interface.

	The focus of this paper is the dynamics of a three-dimensional bubble, meaning the situation where $D^{\mathrm{in}}(t)$ is enclosed inside  $D^{\mathrm{out}}(t)$, and the boundary $\Gamma(t)$ is a closed surface. 
In this situation, the interface necessarily contains regions where the denser fluid lies above the less dense fluid. Thus, unlike an interface separating two stably stratified fluid layers, a bubble in a gravity field cannot satisfy the Rayleigh--Taylor sign condition everywhere.
To obtain well-posed dynamics despite the unstable gravity effects, surface tension is included, i.e., $\sigma>0$.

	\subsection{State of the art}
	
	We first discuss the well-studied infinite interface regime. Without surface tension, local and global well-posedness results in stable configurations have been established first in subcritical spaces and later in critical spaces as well as finite-time blow-up. See \cite{GancedoSEMA,GranerLazar} for two reviews and \cite{AlazardCritical2,GHHaziotGomezSerranoPausader23,DongGancedoNguyen3D,AlazardKoch} for recent developments.

	Surface tension introduces a higher order operator into the equation governing the interface dynamics, which we first discuss in the context of the two-phase Muskat problem. Here, finger-shaped equilibrium solutions exist but are unstable \cite{GuoHallstromSpirn07,EscherMatioc2011}. Local well-posedness is a classical result in this setting \cite{DuchonRobert1984,Chen93,ES97}, later obtained in the full range of subcritical regularity \cite{Nguyen20s,MatiocMatioc23}. The zero surface tension limit was considered \cite{Ambrose14} obtaining convergence to stable capillarity free solution of any subcritical regularity \cite{FlynnNguyen2021}. 
 Recently, Schauder estimates and local well-posedness in a log-critical regularity space were established \cite{ChenHuNguyen2026}. Global well-posedness and instant gain of regularity for small data was shown in \cite{ChenHuNguyenFluids}. On the other hand, solutions are globally well-posed for initial interfaces that are arbitrarily large in the Lipschitz norm but small in certain critical Besov spaces \cite{Lazar2024}. The one-phase regime has a mathematically different structure. There global well-posedness was shown for small data in any subcritical Sobolev space \cite{DongKwon2026}. There also exist self-similar solutions with a corner that instantaneously smoothens out \cite{AgrawalPatel2026}. In a different physical setting, it was shown that surface tension stabilizes the Rayleigh--Taylor instability experienced by a fluid layer on top of a dry region \cite{GancedoGraneroScro2020}. When the fluid makes contact angles with a fixed boundary, global-in-time estimates and decay are shown for solutions close to equilibrium \cite{BocchiCastroGancedo2026}.
	
	The literature on Muskat bubbles under the effect of gravity is more limited and notably is only done thus far in the two-dimensional setting. It was shown in \cite{CP93,YT11,YT12} that smoother small perturbations of a translating circular bubble were globally well-posed. In \cite{GancedoGarciaJuarezPatelStrain23}, circular bubbles that translate vertically with velocity proportional to the density jump were shown to be steady-state solutions. Moreover, medium-sized perturbations of such steady states were shown to be globally well-posed in a class of subcritical spaces. The solutions are instantly analytic and exponentially decay back toward a steady-state solution. Later, \cite{GGPSCriticalBubble} was able to achieve the global result in scaling critical spaces of Wiener type, requiring in particular uniform continuity of the tangent vector.  Without surface tension effects,  mixing solutions starting from Muskat bubble type interfaces were constructed in  \cite{CastroFaracoMengual22}.

	The present work gives the first global nonlinear asymptotic stability result for three-dimensional Muskat bubbles in the presence of gravity.  The three-dimensional setting introduces additional geometric and
analytical difficulties. The interface is a two-dimensional surface,
and its curvature and the singular integral operators governing its
motion must be analyzed on this evolving geometry. 
   We address the global-in-time dynamics and stability of  bubbles that are initially Lipschitz.     
When only surface tension effects are considered, the well-posedness and stability of nearly spherical bubbles was known in more regular spaces (see e.g. \cite{ES97,PrussSimonett2016}). As mentioned above, gravity produces an instability effect and moreover it mixes frequencies even at a linear level. Notably, we do not require smallness in the ratio of gravity versus surface tension effects.

  Related questions concerning closed interfaces in three-dimensional fluids have also been studied in other fluid models, mostly dealing with rigid solutions and subcritical or local-in-time wellposedness. 
  For example, in the Euler equations,  the authors in 
\cite{BaldiJulinLaManna2026} construct uniformly rotating capillary bubbles and \cite{MeyerNiebelSeis2025} shows the existence of steadily translating bubbles.
Recent results on time-dependent interfaces include the collapse of a vacuum bubble surrounded by a viscous incompressible fluid \cite{GigaGu2026}, local well-posedness for a nearly spherical capillary bubble under zero gravity  \cite{Shao2026}, and nonlinear stability under spherically symmetric perturbations in a model of a compressible bubble immersed in an incompressible fluid \cite{ChihWeinstein2023}.
In the mathematically related three-dimensional Peskin problem about immersed elastic membranes, global asymptotic stability at critical regularity has recently been established \cite{GarciaHaziotKuoMoriZhou2026}.

We develop a formulation suited to analysis on $\mathbb{S}^{2}$ and compute the
action of the linearized operator in the spherical harmonic basis, studying the smoothing properties of the associated semigroup. The presence of gravity makes the linear operator non diagonal. To obtain the result for any fixed value of the physical parameters, we cannot treat gravity as a perturbation of the diagonal surface tension term.   
Combining this spectral representation of the linear operator with Littlewood--Paley theory to estimate the nonlinear terms at critical regularity, we show the global-in-time nonlinear asymptotic stability by performing a fixed point in time-weighted spaces that capture both the gain of regularity for positive times and the exponential convergence to a translating sphere. As opposed to the nonlinearity in the Peskin problem \cite{GarciaHaziotKuoMoriZhou2026}, the critical-regularity estimates of the multilinear singular integral operators involved in the equations are not enough to close the fixed point. In the two-dimensional setting, the use of Wiener spaces hided this difficulty, since singular integral operators are well-behaved in those spaces.
Here, a null structure combined with the third-order operator arising from surface tension becomes crucial in the argument (see Lemma \ref{lem:total_output_derivative} and Remark \ref{lem:null}), allowing us to obtain the result in the larger and more natural class of Lipschitz initial interfaces.

	\subsection{Outline of the paper}
	
	In the remainder of this section, we formulate the contour equation for the three-dimensional Muskat bubble and the precise statement of the main result of our paper. In Section \ref{sec:analysisS2}, we develop the relevant theory for analysis on $\mathbb{S}^{2}$ dealing with spherical harmonics, Littlewood--Paley theory, and explicit integral calculations. Section \ref{sec:linearization} contains the derivation of the linear operator of the contour equation, followed by the derivation of the spectral expression of the linear operator  in Section \ref{sec:spectral}. Finally, in Section \ref{sec:semigroup}, the semigroup estimates for the linear operator are proven and, in Section \ref{sec:nonlinear}, the nonlinear remainder terms of the contour equation are appropriately bounded. The proof of the main theorem is then concluded in Section \ref{sec:proofmainthm}.

	\subsection{The contour equation}
	
	Let $S:\mathbb S^2\times[0,\infty)\longrightarrow \mathbb R^3$ describe the interface of the bubble, where $x\in\mathbb S^2$ denotes a point on the unit sphere. The contour equation is written as
	\begin{equation*}
	\partial_t S(x,t)\cdot \nu_S(x,t)
	=
	BR(S,\omega)(x,t)\cdot \nu_S(x,t),
	\qquad x\in\mathbb S^2,
	\end{equation*}
	where $\nu_S$ denotes a unit normal vector to the surface $S(\mathbb S^2,t)$.
	The Birkhoff--Rott integral is
	\begin{equation}\label{def:BR_intrinsic}
	BR(S,\omega)(x,t)
	=
	-\frac{1}{4\pi}\,\operatorname{p.v.}
	\int_{\mathbb S^2}
	\frac{S(x,t)-S(y,t)}
	{|S(x,t)-S(y,t)|^3}\wedge\omega(y,t)
	\,d\sigma(y).
	\end{equation}
	Here \(d\sigma(y)\) is the surface measure on the unit sphere \(\mathbb S^2\) and $\omega$ is the vorticity amplitude given by Darcy's law as
	\begin{equation}\label{vorticity}
	\omega(y,t)=
	D_{\mathbb S^2}S(y,t)
	[
	y\wedge\nabla_{\mathbb S^2}\Omega(y,t)]. 
	\end{equation}
	See \cite{CCG13} for cartesian coordinates. The operator \(\nabla_{\mathbb S^2}\) is the gradient on \(\mathbb S^2\), and \(D_{\mathbb S^2}S(y,t)\) is the tangential differential of the map \(S\). The function $\Omega$ is defined below in \eqref{def:Omega_intrinsic}. Indeed, in spherical coordinates one has
	\[
	\omega\sin(\theta)
	=
	\partial_\theta\Omega\,\partial_\varphi S
	-
	\partial_\varphi\Omega\,\partial_\theta S,
	\]
	so that
	\[
	\omega(\theta,\varphi,t)\sin(\theta)d\theta\,d\varphi
	=
	D_{\mathbb S^2}S(x,t)
	\left[
	x\wedge\nabla_{\mathbb S^2}\Omega(x,t)
	\right]
	\,d\sigma(x).
	\] The function \(\Omega\) is given by
	\begin{equation}\label{def:Omega_intrinsic}
	\Omega(x,t)
	=
	A_\sigma K[S](x,t)
	-
	A_\rho S(x,t)\cdot e_3,
	\end{equation}
	where \(K[S]\) denotes the mean curvature of the surface parametrized by \(S\), 
	$e_3$ is the canonical vector in the vertical direction, and $A_\sigma, A_\rho$ are physical constants measuring the surface tension and gravity effects:
	\begin{equation*}
	A_\sigma=\frac{\kappa \sigma}{\mu},\qquad A_\rho=\frac{g \kappa (\rho^{\text{out}}-\rho^{\text{in}})}{\mu}.
	\end{equation*}
	It will be sometimes convenient to separate each contribution,
	\begin{equation}\label{Omega_split}
	\begin{aligned}
	\Omega(x,t)
	= \Omega_\sigma(x,t) + \Omega_\rho(x,t),\quad  
	\Omega_\sigma =A_\sigma K[S](x,t),\quad
	\Omega_\rho=-
	A_\rho S(x,t)\cdot e_3,
	\end{aligned}
	\end{equation}
	and we denote accordingly 
	\begin{equation*}
	\begin{aligned}
	\omega(x,t)
	&= \omega_\sigma(x,t) + \omega_\rho(x,t),\\
	BR&=BR_\sigma+BR_\rho,\quad BR_\sigma=BR(S,\omega_\sigma),\quad BR_\rho=BR(S,\omega_\rho). 
	\end{aligned}
	\end{equation*}
	In this paper, we consider a translating bubble interface of the form
	\begin{equation}\label{def:interface}
	S(x,t)
	=
	R\bigl(1+3f(x,t)\bigr)^{1/3}x+c(t),
	\qquad x\in\mathbb S^2,
	\end{equation}
	where \(c(t)\in\mathbb R^3\) is given by
	\begin{equation}\label{ct_def}
	\begin{aligned}
	c(t) = \frac23A_\rho t e_3+\frac{3A_\sigma}{4\pi R^2}\int_0^t\int_{\mathbb{S}^2}f(y,s)yd\sigma(y)ds.
	\end{aligned}
	\end{equation}
	The role of the second term in $c(t)$ will become clear when we study the linear operator, see Section \ref{sec:linearization}. 
	Let us denote
	\begin{equation}\label{rhodef}
	r(x,t)=\bigl(1+3f(x,t)\bigr)^{1/3}.    
	\end{equation}
	The reason why we choose this particular expression for the radial deviation is that the volume conservation due to incompressibility translates into preservation of the mean of $f$. This is shown in Subsection \ref{VolumeCentroid} below.
	The exterior normal vector is
	\begin{equation}\label{def:nonunit_normal_f}
	\mathcal N_f(x,t)
	=
	(1+3f(x,t))x-\nabla_{\mathbb S^2}f(x,t).
	\end{equation}
	Using this non-unit normal, the contour equation may be written  as
	\begin{equation*}
	\partial_t S(x,t)\cdot \mathcal N_f(x,t)
	=
	BR(S,\omega)(x,t)\cdot \mathcal N_f(x,t),
	\end{equation*}
	and substituting \eqref{def:interface},
	\begin{equation}\label{contour_eq}
	\partial_t f(x,t)
	=
	\frac{1}{Rr(x,t)}
	\bigl(BR(S,\omega)(x,t)-\dot c(t)\bigr)
	\cdot
	\mathcal N_f(x,t).    
	\end{equation}

	\subsubsection{Volume and centroid}\label{VolumeCentroid}

	The volume enclosed by the interface is
	\begin{equation*}
	\operatorname{Vol}
	=
	\int_{\mathbb S^2}
	\int_0^{Rr(x,t)}
	\eta^2\,d\eta\,d\sigma(x)
	=
	\frac{R^3}{3}
	\int_{\mathbb S^2}
	r(x,t)^3\,d\sigma(x)
	=
	\frac{4\pi}{3}R^3
	+
	R^3\int_{\mathbb S^2}f(x,t)\,d\sigma(x).
	\end{equation*}
	Thus, having as initial volume the value $\frac{4\pi}{3}R^3$, the incompressibility condition implies that
	\begin{equation}\label{zeroVOL_intrinsic}
	\int_{\mathbb S^2}f(x,t)\,d\sigma(x)=0,\quad \forall t\geq0.
	\end{equation}
	The first moment of the enclosed region is
	\begin{equation*}
	\begin{aligned}
	Q_S
	&=
	\int_{\mathbb S^2}
	\int_0^{Rr(x,t)}
	\bigl(\eta x+c(t)\bigr)\eta^2\,d\eta\,d\sigma(x)
	\\
	&=
	\frac{R^4}{4}
	\int_{\mathbb S^2}
	r(x,t)^4 x\,d\sigma(x)
	+
	\frac{R^3}{3}c(t)
	\int_{\mathbb S^2}
	r(x,t)^3\,d\sigma(x).
	\end{aligned}
	\end{equation*}
	If \eqref{zeroVOL_intrinsic} holds, then the centroid is
	\begin{equation*}
	C_S
	=
	\frac{Q_S}{\operatorname{Vol}}
	=
	c(t)
	+
	\frac{3R}{16\pi}
	\int_{\mathbb S^2}
	\bigl(1+3f(x,t)\bigr)^{4/3}x\,d\sigma(x).
	\end{equation*}
	In particular, if \(f\) is constant, the last integral vanishes and $C_S=c(t)$.
	
	\subsubsection{Main Theorem}

    Let $\alpha\in(0,1)$ and $\gamma>0$.
	We consider functions $f:\mathbb{S}^2\times(0,\infty)\to\mathbb{R}$ that are strongly measurable in time with values in $L^2(\mathbb{S}^2)$, and define the norm
	\begin{equation*}
	\begin{aligned}
	\|f\|_{X_\gamma}=\sup_{t>0} e^{\gamma t}\|f(\cdot,t)\|_{W^{1,\infty}}+\sup_{t>0} t^{\frac{2+\alpha}{3}}e^{\gamma t}\|f(\cdot,t)\|_{C^{3,\alpha}}.
	\end{aligned}
	\end{equation*}
    Here, $W^{1,\infty}(\mathbb{S}^2)$ denotes the space of Lipschitz functions on the sphere, with norm 
    	  \begin{equation*}
        \|f\|_{W^{1,\infty}}=\|f\|_{L^\infty}+\|\nabla_{\mathbb{S}^2}f\|_{L^\infty},
    \end{equation*}
and $C^{3,\alpha}(\mathbb{S}^2)$ denotes the usual H\"older space, with the norm defined precisely in  \eqref{Holder_norm_def}.
The associated Banach space is 
	\begin{equation*}
	X_\gamma=\Big\{f:\mathbb{S}^2\times (0,\infty)\to\mathbb{R}:\,\|f\|_{X_\gamma}<\infty,\text{ with } \int_{\mathbb{S}^2}f(x,t)d\sigma(x)=0 \text{ for all }t>0\Big\}.
	\end{equation*}

    \begin{thm}\label{thm:main}
		Let $\alpha\in(0,1)$, $R>0$, $A_\sigma>0$, and $A_\rho\in\mathbb{R}$. Let $f_0\in W^{1,\infty}(\mathbb{S}^2)$ be such that $\int_{\mathbb{S}^2}f_0d\sigma=0$. There exist $\gamma=\gamma\big(A_\sigma/R^3\big)>0$ and $\varepsilon=\varepsilon\big(|A_\rho|R^2/A_\sigma\big)>0$ such that, if $\|f_0\|_{W^{1,\infty}}<\varepsilon$, then there exists a unique, global-in-time  solution $f\in X_\gamma$ to the equation \eqref{contour_eq} with initial datum $f_0$  that satisfies $\|f\|_{X_\gamma}\lesssim \varepsilon$. 
	\end{thm}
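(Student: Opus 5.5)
The plan is to write \eqref{contour_eq} as $\partial_t f=\mathcal{L}f+\mathcal{N}(f)$ and close a contraction argument for the Duhamel formula
\begin{equation*}
f(t)=e^{t\mathcal{L}}f_0+\int_0^t e^{(t-s)\mathcal{L}}\mathcal{N}(f)(s)\,ds
\end{equation*}
in a small ball of $X_\gamma$.

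\textbf{Linear part.} First I linearize $BR(S,\omega)\cdot\mathcal N_f/(Rr)-\dot c\cdot\mathcal N_f/(Rr)$ at $f=0$. The vorticity comes from $\Omega=A_\sigma K-A_\rho S\cdot e_3$. For the sphere $K$ is constant, and its linearization is $-R^{-1}(2+\Delta_{\mathbb S^2})f$ up to normalization. The gravity part gives $-A_\rho R\,x_3 r$. The linearized Birkhoff--Rott operator acting on a tangential vortex sheet over the unit sphere is diagonal in spherical harmonics, of the form $\frac{\ell}{2\ell+1}$-type multipliers.

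\begin{itemize}
\item The surface tension part is then diagonal, with symbol $-\frac{A_\sigma}{R^3}\lambda_\ell$ and $\lambda_\ell\sim \ell(\ell-1)(\ell+2)/(2\ell+1)$.
\item The $\ell=1$ modes are neutral, because translations are not damped. This is exactly why the second term of $c(t)$ in \eqref{ct_def} is introduced: it removes the $\ell=1$ component, or equivalently it fixes the frame.
\item The $\ell=0$ mode is removed by the mean-zero condition \eqref{zeroVOL_intrinsic}.
\item The gravity part, multiplication by $x_3$ combined with the constant velocity $\frac23A_\rho e_3$, couples $\ell$ to $\ell\pm1$. So $\mathcal L$ is tridiagonal in the spherical harmonic basis, with off-diagonal entries of size $|A_\rho|\ell/R$, which is lower order than the third-order diagonal.
\end{itemize}

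Since $A_\rho$ is not assumed small, I would not treat gravity perturbatively on all of $\ell\geq2$. Instead I would show that $\mathcal L$ generates an analytic semigroup, by a sectoriality/resolvent argument exploiting the order gap $3$ versus $1$. Its spectrum on mean-zero, $\ell\ge2$-projected data lies in $\{\operatorname{Re}\lambda\le-2\gamma\}$, checked via an energy estimate in $L^2$. I expect the gravity part to be antisymmetric, or to contribute only a transport term in the moving frame, which would give a gap depending only on $A_\sigma/R^3$; this matches the dependence $\gamma(A_\sigma/R^3)$ in Theorem~\ref{thm:main}.

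From this I derive, via Littlewood--Paley on $\mathbb S^2$, the smoothing estimates
\begin{equation*}
\|e^{t\mathcal L}g\|_{W^{1,\infty}}\lesssim e^{-2\gamma t}\|g\|_{W^{1,\infty}},\qquad
\|e^{t\mathcal L}g\|_{C^{3,\alpha}}\lesssim t^{-\frac{2+\alpha}{3}}e^{-2\gamma t}\|g\|_{W^{1,\infty}}.
\end{equation*}
I also need the inhomogeneous version $\|e^{t\mathcal L}g\|_{C^{3,\alpha}}\lesssim t^{-\frac{3-\beta}{3}}\|g\|_{C^{\beta}}$-type estimates to absorb a loss of up to three derivatives from the nonlinearity.

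\textbf{Nonlinear part.} I would expand $\mathcal N(f)$ as a sum of multilinear singular integrals in $f$, of at least quadratic order. The worst terms carry $D^3f$ from the curvature inside $\omega_\sigma$, multiplied by Lipschitz-level factors. The naive bound is
\begin{equation*}
\|\mathcal N(f)\|_{C^{\alpha}}\lesssim \|f\|_{W^{1,\infty}}\|f\|_{C^{3,\alpha}},
\end{equation*}
and inserting it into Duhamel gives
\begin{equation*}
\int_0^t (t-s)^{-1}\,s^{-\frac{2+\alpha}{3}}\,ds,
\end{equation*}
which diverges logarithmically at the critical level. This is the main obstacle.

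To overcome it I would use the null structure: the top-order part of $\mathcal N$ should be a total derivative, schematically $\nabla_{\mathbb S^2}\cdot(\text{quadratic})$ or $T(f)\,\partial(\Lambda^2 f)$ with a commutator form. One derivative can then be moved onto the semigroup, which gives the integrable singularities $(t-s)^{-2/3}s^{-(2+\alpha)/3}$ or similar. The estimates are needed at the $W^{1,\infty}$ level, where singular integrals are unbounded. I would therefore estimate at the $C^{\alpha'}$ or Besov $B^{1}_{\infty,\infty}$ level, with paraproduct decompositions, and recover $W^{1,\infty}$ control at the end by the semigroup's gain.

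\textbf{Closing the argument.} The weights $e^{\gamma t}$ propagate because the semigroup decays at rate $2\gamma$. This gives
\begin{equation*}
\|f\|_{X_\gamma}\le C\|f_0\|_{W^{1,\infty}}+C\|f\|_{X_\gamma}^2,
\end{equation*}
and the analogous difference estimate, with constants depending on $|A_\rho|R^2/A_\sigma$. Smallness of $\varepsilon$ then yields a unique fixed point by contraction, with $\|f\|_{X_\gamma}\lesssim\varepsilon$. Mean zero is preserved by the structure of the equation, and $c(t)$ is determined a posteriori from $f$.
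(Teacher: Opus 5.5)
Your overall scheme (Duhamel formula, contraction in a small ball of $X_\gamma$, parabolic smoothing, tame quadratic bounds) matches the paper's. However, both load-bearing steps have genuine gaps.

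\textbf{The linear step for arbitrary $A_\rho$.} The gravity part is neither antisymmetric nor tridiagonal. The $Y_{\ell+1,m}$ contributions of the local term $-x_3f+\frac12\nabla_{\mathbb S^2}x_3\cdot\nabla_{\mathbb S^2}f$ and of the Newtonian term cancel exactly, which leaves
\[
\widehat{\mathcal L[f]}_{\ell,m}=-\frac{A_\sigma}{2R^3}\frac{(\ell-1+\delta_{\ell,1})\ell(\ell+1)(\ell+2)}{2\ell+1}\widehat f_{\ell,m}+\frac{A_\rho}{R}\,\ell\, b^-_{\ell+1,m}\widehat f_{\ell+1,m}.
\]
This is upper bidiagonal in each $m$-block. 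The one-sided coupling is the reason the spectrum equals the diagonal, independently of $A_\rho$, and hence why $\gamma$ depends only on $A_\sigma/R^3$.

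An unweighted $L^2$ energy estimate cannot detect this. On the invariant, mean-zero block spanned by $Y_{1,0},Y_{2,0}$, the form $\langle\mathcal Lf,f\rangle=-\frac{A_\sigma}{R^3}u^2-\frac{12A_\sigma}{5R^3}v^2+\frac{2A_\rho}{\sqrt{15}R}uv$ is indefinite once $|A_\rho|R^2/A_\sigma>6$.

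The $\ell=1$ modes are also not neutral or removed. The $\dot c_L$ term adds $-\frac{A_\sigma}{R^3}\operatorname{proj}_1 f$, so they decay. Moreover, the $\ell\geq2$ subspace is not invariant, because gravity feeds $\ell=2$ into $\ell=1$.

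Finally, even an $L^2$ gap would not yield the $W^{1,\infty}$ and $C^{3,\alpha}$ smoothing bounds ``via Littlewood--Paley''. The operator $\mathcal L$ is not a Fourier multiplier: its coupling coefficients depend on $m$, and it commutes neither with $\Delta_j$ nor with $V_1,V_2$. The paper closes this step with the explicit diagonalization $\mathcal L_a=\mathcal S\mathcal D\mathcal S^{-1}$.
\begin{itemize}
\item $\mathcal D$ is an order-three multiplier, so the kernel bounds of Lemma \ref{lem:kernel_bounds} apply.
\item The entries $S^{(m)}_{\ell,N}=\prod_{j=\ell}^{N-1}b^{(m)}_j/(a_j-a_N)$ decay factorially (Lemma \ref{Skj_lem}). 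This makes $\mathcal S^{\pm1}$ bounded on $W^{1,\infty}$ and $C^{m,\alpha}$, with constants depending on $|A_\rho|R^2/A_\sigma$.
\item The first superdiagonal is not a multiplier. It is rewritten using $x_3$, $x_3\Delta_{\mathbb S^2}$ and $\nabla_{\mathbb S^2}x_3\cdot\nabla_{\mathbb S^2}$ acting on multipliers of order $-2$ and $-3$.
\end{itemize}

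\textbf{The nonlinear step.} You have misplaced the critical difficulty. The bound $\|\mathcal Q(f)\|_{C^\alpha}\lesssim\|f\|_{W^{1,\infty}}\|f\|_{C^{3,\alpha}}$ is not naive: it is the hard critical estimate, and it is exactly where the null structure is needed. Terms $\mathcal T^{(q)}_{p_m}[a_1,\dots,a_q,G]$ with $G=V(b_2)W(b_3)$ or $G=b_2VW(b_3)$ lose a logarithm in the far field (Remark \ref{lem:null}). What saves the estimate is that $\omega_\sigma=V_j[\cdots]$, with the rotation field in front of the entire nonlinear expression, so the far-field $y$-integral can be integrated by parts (Lemma \ref{lem:total_output_derivative}).

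Given that tame bound, the Duhamel term closes with no extra structure. In $C^{3,\alpha}\simeq B^{3+\alpha}_{\infty,\infty}$ one works block by block:
\[
\sup_k\int_0^t 2^{3k}e^{-c2^{3k}(t-\tau)}\tau^{-\frac{2+\alpha}{3}}\,d\tau\lesssim t^{-\frac{2+\alpha}{3}}.
\]
The $W^{1,\infty}$ part needs only a gain of $1-\alpha$ derivatives, giving the integrable factor $(t-\tau)^{-\frac{1-\alpha}{3}}$; see \eqref{semig_3} and \eqref{semig_4}. Your logarithm comes only from using the operator norm $C^\alpha\to C^{3,\alpha}$.

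Your proposed cure cannot remove it. At critical scaling, writing the top order as $VF$ and letting the semigroup absorb $V$ does not change the count. With $F\in C^{1,\alpha}$ you are back to $(t-\tau)^{-1}\tau^{-\frac{2+\alpha}{3}}$, and a weaker norm on $F$ worsens the singularity at $\tau=t$. Your exponents $(t-\tau)^{-2/3}\tau^{-\frac{2+\alpha}{3}}$ correspond to a gain of only two derivatives. Also, the nonlinearity is not a total $x$-derivative: the rotations act in $y$ inside the integral, and the factor $N(f)(x)$ sits outside.
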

	
	\begin{remark}
	    As shown in Section \ref{sec:proofmainthm}, we will construct first a mild solution to \eqref{contour_eq}. Then, standard parabolic bootstrapping shows that the solution is actually classical for $t>0$. 
	\end{remark}	

\begin{remark}
	     We use the standard notion of
         mild solution for parabolic equations: after showing that the linearization around $f=0$ (i.e., the sphere) gives rise to a parabolic semigroup, we will construct the solution as a fixed point in the space $X_\gamma$ of the corresponding Duhamel formulation; see Proposition \ref{prop:fixed_point}. 
    In particular,  $f\in X_\gamma$ implies exponential convergence of $f$ to zero, which in turn implies exponential convergence of the bubble to a translating sphere. In fact,
\begin{equation*}
    S(x,t) - \frac{2}{3}A_\rho te_3\to Rx+\frac{3A_\sigma}{4\pi R^2}\int_0^\infty\int_{\mathbb{S}^2}f(y,\tau)yd\sigma(y)d\tau,
\end{equation*}
exponentially fast in $W^{1,\infty}(\mathbb{S}^2)$.
\end{remark}	
   
\begin{remark}
    For general initial data  in $W^{1,\infty}$,  strong continuity at $t=0$ in this space cannot be expected: parabolic smoothing would require $f_0$ to have a $C^1(\mathbb{S}^2)$ representative. The initial datum is  attained in the sense that $f(t)\to f_0$ in $C^{\beta}$ for any $\beta\in(0,1)$, and $f(t) \overset{*}{\rightharpoonup} f_0$ in $W^{1,\infty}(\mathbb S^2)$ as $t\rightarrow0^+$.
    \end{remark}

	\section{Analysis on $\mathbb{S}^{2}$}\label{sec:analysisS2}
	
	In this section, we provide an overview of the analysis on $\mathbb{S}^{2}$ that is needed for this paper. 
	
	\subsection{Spherical harmonics}

	The spherical harmonics $\{Y_{\ell,m}\}_{\ell\geq0, |m|\leq \ell}$ are the eigenfunctions of the Laplace--Beltrami operator on the sphere $-\laplaceSPH$,
	\begin{equation}\label{LaplaceSPH}
	-\laplaceSPH Y_{\ell,m}(x)
	=
	\ell(\ell+1)
	Y_{\ell,m}(x).
	\end{equation}
	We call the eigenspace associated with eigenvalue $\ell(\ell+1)$ the \textit{degree $\ell$ space} of spherical harmonics.  For  $f:\mathbb{S}^{2} \to \mathbb{C}$ in $L^2(\mathbb{S}^2)$, we have the spherical harmonics expansion
	\begin{equation*}
	f(x) = \sum_{\ell=0}^\infty \sum_{m=-\ell}^{\ell} \hat{f}_{\ell,m} Y_{\ell,m}(x), 
	\end{equation*}
	where the spherical Fourier transform on $\mathbb{S}^{2}$ is defined by 
	\begin{equation*}
	\mathcal{F}_{\mathbb{S}^{2}}(f)({\ell,m}) =   \hat{f}_{\ell,m} = \int_{\mathbb{S}^{2}} f(y) \overline{Y_{\ell, m}}(y) d\sigma(y).
	\end{equation*}
	We use the orthonormal family of spherical harmonics given by
		\begin{equation}\label{Ortho_Sphe}
		Y_{\ell,m}(\theta,\varphi)
		=
		N_{\ell,m}P_{\ell}^{m}(\cos\theta)e^{im\varphi}, \quad N_{\ell,m}
		=
		(-1)^m
		\Big(
		\frac{2\ell+1}{4\pi}
		\frac{(\ell-m)!}{(\ell+m)!}
\Big)^{1/2},
		\end{equation}
		where $P_\ell^m$ are the associated Legendre functions. Parseval's identity then reads as follows
	\begin{equation*}
	\| f \|_{L^2(\mathbb{S}^2)}^2 = 
	\sum_{\ell=0}^\infty \sum_{m=-\ell}^{\ell} |\hat{f}_{\ell,m}|^2.
	\end{equation*}

    For simplicity of notation, throughout the paper we adopt the standard  convention that empty sums and empty products equal zero and one, respectively. Similarly, spherical harmonics and their coefficients are extended by zero outside the admissible range $\ell\geq0$, $|m|\leq \ell$.

	The following two lemmas will be repeatedly used to obtain the linearized equation and the action of the linear operator on spherical harmonics (see Sections \ref{sec:linearization} and \ref{sec:spectral}).

	\begin{lemma}
		Let $\{Y_{\ell,m}\}_{\ell\geq0,|m|\leq \ell}$ be given by \eqref{Ortho_Sphe}. Then
		\begin{equation}\label{x3Y_recurrence}
		x_3Y_{\ell,m}(x)
		=
		b_{\ell,m}^{+}Y_{\ell+1,m}(x)
		+
		b_{\ell,m}^{-}Y_{\ell-1,m}(x),
		\end{equation}
		where
		\begin{equation}\label{a_coefficients_pm}
		b_{\ell,m}^{+}
		=
		\left(\frac{(\ell+1)^2-m^2}{(2\ell+1)(2\ell+3)}\right)^{1/2},
		\qquad
		b_{\ell,m}^{-}
		=
		\left(\frac{\ell^2-m^2}{(2\ell-1)(2\ell+1)}\right)^{1/2}.
		\end{equation}
		Here, we use the convention that $b_{\ell,m}^{-}=0$ whenever the lower mode is not
		admissible.
	\end{lemma}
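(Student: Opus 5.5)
Since $x_3=\cos\theta$ in spherical coordinates and $Y_{\ell,m}=N_{\ell,m}P_\ell^m(\cos\theta)e^{im\varphi}$, multiplication by $x_3$ leaves the factor $e^{im\varphi}$ untouched. The identity therefore reduces to a statement about associated Legendre functions in $t=\cos\theta$. The plan is to invoke the classical three-term recurrence
\begin{equation*}
(2\ell+1)\,t\,P_\ell^m(t)=(\ell-m+1)P_{\ell+1}^m(t)+(\ell+m)P_{\ell-1}^m(t),
\end{equation*}
which holds for all integers $\ell\ge0$ and $|m|\le\ell$. We take it with the same convention for $P_\ell^m$ (including Condon--Shortley phase and negative $m$) that underlies \eqref{Ortho_Sphe}. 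When $\ell=|m|$ the last term vanishes: for $m\ge0$ the coefficient $\ell+m$ is nonzero but $P_{\ell-1}^{m}\equiv0$ since $m>\ell-1$.

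Multiplying by $N_{\ell,m}e^{im\varphi}/(2\ell+1)$ gives
\begin{equation*}
x_3Y_{\ell,m}=\frac{(\ell-m+1)N_{\ell,m}}{(2\ell+1)N_{\ell+1,m}}Y_{\ell+1,m}+\frac{(\ell+m)N_{\ell,m}}{(2\ell+1)N_{\ell-1,m}}Y_{\ell-1,m}.
\end{equation*}
The sign factors $(-1)^m$ cancel in each ratio. For the first coefficient,
\begin{equation*}
\frac{N_{\ell,m}^2}{N_{\ell+1,m}^2}=\frac{2\ell+1}{2\ell+3}\cdot\frac{(\ell+m+1)}{(\ell-m+1)},
\end{equation*}
so it equals
\begin{equation*}
\frac{\ell-m+1}{2\ell+1}\Big(\frac{(2\ell+1)(\ell+m+1)}{(2\ell+3)(\ell-m+1)}\Big)^{1/2}=\Big(\frac{(\ell+1)^2-m^2}{(2\ell+1)(2\ell+3)}\Big)^{1/2}=b^+_{\ell,m}.
\end{equation*}
Similarly,
\begin{equation*}
\frac{N_{\ell,m}^2}{N_{\ell-1,m}^2}=\frac{2\ell+1}{2\ell-1}\cdot\frac{\ell-m}{\ell+m},
\end{equation*}
so the second coefficient equals $\big((\ell^2-m^2)/((2\ell-1)(2\ell+1))\big)^{1/2}=b^-_{\ell,m}$. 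When $\ell=|m|$, $b^-_{\ell,m}=0$, matching the convention in the statement; the case $\ell=m=0$ is also consistent.

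As an independent check, or as an alternative proof that avoids phase conventions altogether, one can argue more structurally. Multiplication by $x_3$ maps harmonic homogeneous polynomials of degree $\ell$ into degrees $\ell\pm1$ (decompose $x_3 h$ as $h_{\ell+1}+|x|^2h_{\ell-1}$). It also commutes with $\partial_\varphi$, so $x_3Y_{\ell,m}$ lies in the span of $Y_{\ell+1,m}$ and $Y_{\ell-1,m}$. The coefficient $\langle x_3Y_{\ell,m},Y_{\ell+1,m}\rangle$ is real by self-adjointness and the realness of $P_\ell^m$. It can be computed by comparing leading coefficients in $t$ of $P_\ell^m$. Moreover, $b^-_{\ell+1,m}=b^+_{\ell,m}$ follows from symmetry of multiplication by $x_3$.

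The main obstacle is purely bookkeeping: confirming that the sign convention for $P_\ell^m$ with $m<0$, together with the $(-1)^m$ in $N_{\ell,m}$, makes both coefficients positive rather than introducing a sign. Since the $m$-dependent sign factors are identical for all three harmonics involved, they cancel, and I expect this to go through.
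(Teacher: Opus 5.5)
Your proof is correct and follows essentially the same route as the paper: the associated Legendre three-term recurrence in $s=\cos\theta$, multiplied by $N_{\ell,m}e^{im\varphi}/(2\ell+1)$, followed by the same computation of the normalization ratios $N_{\ell,m}/N_{\ell\pm1,m}$. The only minor difference is how you handle $m<0$: you apply the recurrence directly, which is valid under the standard convention $P_\ell^{-m}=(-1)^m\frac{(\ell-m)!}{(\ell+m)!}P_\ell^m$, whereas the paper proves the case $m\geq0$ and then conjugates using $Y_{\ell,-m}=(-1)^m\overline{Y_{\ell,m}}$ and the fact that the coefficients depend only on $m^2$.
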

	
	\begin{proof}
		Write a point of \(\mathbb S^2\) in spherical coordinates as
		$x=(\sin\theta\cos\varphi,\sin\theta\sin\varphi,\cos\theta)$, so that $x_3=\cos\theta$.
		The associated Legendre functions satisfy (see e.g. \cite[Section~12.5]{ArfkenWeber2005}) 
		\begin{equation}\label{associated_legendre_recurrence}
		(2\ell+1)sP_{\ell}^{m}(s)
		=
		(\ell-m+1)P_{\ell+1}^{m}(s)
		+
		(\ell+m)P_{\ell-1}^{m}(s),
		\qquad s\in[-1,1].
		\end{equation}
		Taking \(s=\cos\theta\), multiplying \eqref{associated_legendre_recurrence}
		by \(N_{\ell,m}e^{im\varphi}\), and dividing by \(2\ell+1\), we obtain
		\begin{equation*}
		\begin{aligned}
		\cos\theta\,Y_{\ell,m}
		&=
		\frac{\ell-m+1}{2\ell+1}
		\frac{N_{\ell,m}}{N_{\ell+1,m}}
		Y_{\ell+1,m}
		+
		\frac{\ell+m}{2\ell+1}
		\frac{N_{\ell,m}}{N_{\ell-1,m}}
		Y_{\ell-1,m}.
		\end{aligned}
		\end{equation*}
		We now compute the two normalization ratios. First,
		$$
		\frac{N_{\ell,m}}{N_{\ell+1,m}}
		=
		\left(
		\frac{2\ell+1}{2\ell+3}
		\frac{\ell+m+1}{\ell-m+1}
		\right)^{1/2}.
		$$
		Hence
		$$
		\frac{\ell-m+1}{2\ell+1}
		\frac{N_{\ell,m}}{N_{\ell+1,m}}
		=
		\left(
		\frac{(\ell-m+1)(\ell+m+1)}
		{(2\ell+1)(2\ell+3)}
		\right)^{1/2}
		=
		\left(
		\frac{(\ell+1)^2-m^2}
		{(2\ell+1)(2\ell+3)}
		\right)^{1/2}.
		$$
		Similarly,
		$$
		\frac{N_{\ell,m}}{N_{\ell-1,m}}
		=
		\left(
		\frac{2\ell+1}{2\ell-1}
		\frac{\ell-m}{\ell+m}
		\right)^{1/2},
		$$
		and therefore
		$$
		\frac{\ell+m}{2\ell+1}
		\frac{N_{\ell,m}}{N_{\ell-1,m}}
		=
		\left(
		\frac{(\ell+m)(\ell-m)}
		{(2\ell-1)(2\ell+1)}
		\right)^{1/2}
		=
		\left(
		\frac{\ell^2-m^2}
		{(2\ell-1)(2\ell+1)}
		\right)^{1/2}.
		$$
		This proves \eqref{x3Y_recurrence} for \(m\geq 0\).
		For negative \(m\), we use the standard symmetry
		$$
		Y_{\ell,-m}=(-1)^m\overline{Y_{\ell,m}},
		\qquad m>0.
		$$
		Since $x_3$ is real and the coefficients above depend only on $m^2$,
		taking complex conjugates of the already proved identity gives the same
		formula for $Y_{\ell,-m}$. 
		
	\end{proof}

	We will denote the Newtonian operator by $\mathcal{G}$,
	\begin{equation}\label{newtonian_operator}
	\mathcal G[h](x)
	=
	\frac{1}{4\pi}\int_{\mathbb S^2}
	\frac{h(y)}{|x-y|}
	\,d\sigma(y).
	\end{equation}

	\begin{lemma}\label{lem:newtonian}
		Let $\{Y_{\ell,m}\}$ be an orthonormal basis of spherical harmonics in
		$L^2(\mathbb S^2)$ and $\mathcal{G}$ the Newtonian operator \eqref{newtonian_operator}.
		Then
		\begin{equation}\label{newtonian_spectral}
		\mathcal G[Y_{\ell,m}](x)
		=
		\frac{1}{2\ell+1}Y_{\ell,m}(x).
		\end{equation}
	\end{lemma}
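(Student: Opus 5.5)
The plan is to expand the kernel $1/|x-y|$ in Legendre polynomials and then use the addition theorem for spherical harmonics. For $x,y\in\mathbb S^2$ we have $|x-y|^2=2-2x\cdot y$. The generating function of the Legendre polynomials,
\[
\frac{1}{\sqrt{1-2st+s^2}}=\sum_{n\ge0}P_n(t)s^n,\qquad |s|<1,
\]
therefore gives, after writing $x$ as $\lim_{s\to1^-} s\,x$, the expansion
\[
\frac{1}{|sx-y|}=\sum_{n\ge0}s^nP_n(x\cdot y),\qquad 0<s<1.
\]

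We work at fixed $s<1$. For $|t|\le1$ we have $|P_n(t)|\le1$, so the series converges uniformly in $y$ and can be integrated term by term against $Y_{\ell,m}$. The addition theorem reads
\[
P_n(x\cdot y)=\frac{4\pi}{2n+1}\sum_{|k|\le n}Y_{n,k}(x)\overline{Y_{n,k}(y)}.
\]
Combining it with orthonormality, as in Parseval's identity above, we obtain
\[
\frac{1}{4\pi}\int_{\mathbb S^2}\frac{Y_{\ell,m}(y)}{|sx-y|}\,d\sigma(y)=\frac{s^\ell}{2\ell+1}Y_{\ell,m}(x).
\]
Equivalently, one can avoid the full addition theorem and use only the Funk--Hecke formula: for a degree-$\ell$ harmonic, $\int_{\mathbb S^2}P_n(x\cdot y)Y_{\ell,m}(y)\,d\sigma(y)=\frac{4\pi}{2\ell+1}\delta_{n\ell}Y_{\ell,m}(x)$.

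The only delicate step is letting $s\to1^-$. We handle it by dominated convergence. For $x,y\in\mathbb S^2$ one has $|sx-y|^2=(1-s)^2+s|x-y|^2\ge s|x-y|^2$, so $|sx-y|^{-1}\le s^{-1/2}|x-y|^{-1}$. The function $|x-y|^{-1}$ is integrable on $\mathbb S^2$, because the singularity is of order one in two dimensions, and $Y_{\ell,m}$ is bounded. Hence the left-hand side converges to $\mathcal G[Y_{\ell,m}](x)$, while the right-hand side tends to $\frac{1}{2\ell+1}Y_{\ell,m}(x)$. This proves \eqref{newtonian_spectral}.

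An alternative route is potential theory. The single-layer potential of $Y_{\ell,m}$ equals $c\,r^\ell Y_{\ell,m}$ inside the ball and $c\,r^{-\ell-1}Y_{\ell,m}$ outside. The jump relation for its normal derivative across the sphere, namely $-(\ell+1)c-\ell c=-1$, then gives $c=1/(2\ell+1)$. I would present the generating-function argument, since it is the more elementary of the two.
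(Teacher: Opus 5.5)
Your argument is correct, and it takes a different route from the paper's. The paper argues by potential theory. It takes the single-layer potential $u$ of $Y_{\ell,m}$ and the explicit candidate $v=c\,r^\ell Y_{\ell,m}$ inside the ball and $v=c\,r^{-\ell-1}Y_{\ell,m}$ outside. It fixes $c=1/(2\ell+1)$ by matching the jump $\partial_r u^+-\partial_r u^-=-Y_{\ell,m}$, and concludes $u=v$ via Liouville, since $u-v$ is harmonic on all of $\mathbb R^3$, bounded and decaying. This is the ``alternative route'' you sketch at the end.

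Your main argument replaces the jump relation and the uniqueness step with an explicit computation. The generating function plus the addition formula gives the potential at interior points $sx$ exactly as $s^\ell Y_{\ell,m}(x)/(2\ell+1)$. That is precisely the interior branch of the paper's $v$. The addition formula is proved in the paper as Lemma \ref{lemma:addition_formula}, independently of this lemma, so there is no circularity. You then pass to the boundary using the exact identity $|sx-y|^2=(1-s)^2+s|x-y|^2$ and dominated convergence, and this step is handled correctly.

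What your route buys: it uses only classical special-function identities and an elementary limit. It does not rely on the single-layer jump relation, or on the fact that a function continuous across the sphere with matching normal-derivative jumps is harmonic across it. The paper uses both facts without proof.

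What the paper's route buys: it needs neither the Legendre expansion nor the addition theorem. It also shows where the factor $2\ell+1=\ell+(\ell+1)$ comes from, namely the interior and exterior radial rates $r^\ell$ and $r^{-\ell-1}$.
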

	
	\begin{proof}
		Fix $\ell\geq 0$ and $|m|\leq \ell$, and set $Y=Y_{\ell,m}$. Consider the
		single-layer potential
		\begin{equation*}
		u(X)
		=
		\frac{1}{4\pi}\int_{\mathbb S^2}
		\frac{1}{ |X-y|}
		Y(y)\,d\sigma(y),
		\qquad X\in \mathbb R^3.
		\end{equation*}
		The function $u$ is harmonic in the two
		regions $|X|<1$ and $|X|>1$. Moreover, the single-layer potential is
		continuous across $\mathbb S^2$, and its normal derivative satisfies the jump
		relation
		\[
		\partial_r u^+ - \partial_r u^-
		=
		-Y
		\qquad\text{on } \mathbb S^2,
		\]
		where $+$ denotes the exterior trace, $-$ the interior trace, and $\partial_r$
		is the outward normal derivative relative to the unit ball.
		We now construct explicitly a function with the same 
		properties. Write $X=r\omega$, with $r=|X|$ and $\omega\in \mathbb S^2$, and
		define
		\[
		v(X)
		=
		\begin{cases}
		c\, r^\ell Y(\omega),
		& 0<r<1, \\[4pt]
		c\, r^{-\ell-1}Y(\omega),
		& r>1,
		\end{cases}
		\]
		where $c$ is a constant to be determined. Both $r^\ell Y(\omega)$ and $r^{-\ell-1}Y(\omega)$ are harmonic away from the
		origin. Hence $v$ is harmonic in $|X|<1$ and $|X|>1$. It is also continuous
		across $r=1$, because both traces are equal to $cY$.
		The radial derivatives at $r=1$ are
		$$
		\partial_r v^-
		=
		c\ell Y,
		\qquad
		\partial_r v^+
		=
		-c(\ell+1)Y.
		$$
		Therefore
		$$
		\partial_r v^+ - \partial_r v^-
		=
		-c(2\ell+1)Y.
		$$
		Choosing
		$c=1/(2\ell+1)$, we obtain
		$\partial_r v^+ - \partial_r v^-=-Y$.
		Thus $u$ and $v$ are both continuous across $\mathbb S^2$, harmonic away from
		$\mathbb S^2$, decay at infinity, and have the same jump in normal derivative
		across $\mathbb S^2$. Consequently, $w=u-v$ is harmonic in the sense of
		distributions on all of $\mathbb R^3$. Moreover, $w$ is bounded near the
		origin and tends to zero at infinity. By Liouville's theorem, we have $w\equiv 0$. Hence $u=v$.
		
		Finally, taking the trace on $\mathbb S^2$, we obtain the result.
		
	\end{proof}

	\subsection{Littlewood--Paley on $\mathbb{S}^{2}$}\label{subsec:LP}

	Let $\chi : [0,\infty)\rightarrow [0,1]$ be a smooth, non-increasing compactly supported function such that
	\begin{equation*}
	\begin{aligned}
	\chi(s) &= 0 \quad \text{if} \quad s\notin \left[0,1\right]\\
	\chi(s) &= 1 \quad \text{if} \quad s\in [0,2/3].
	\end{aligned}
	\end{equation*}
	Then, define $\phi_{0}(s) = \chi(s)$ and for $k\geq 1$,
	\begin{equation*}
	\phi_{k}(s) = \chi(2^{-k}s) - \chi(2^{-k+1}s).
	\end{equation*}
	We have
	\begin{equation*}
	\sum_{k=0}^{\infty} \phi_{k}(s) = 1, \quad \forall s\geq 0
	\end{equation*}
	and
	\begin{equation*}
	\text{supp}\phi_{j} \cap \text{supp}\phi_{k} = \emptyset \quad \text{if   } |j-k|\geq 2.
	\end{equation*}
	Define further the low-frequency cutoff $\phi_{\leq k}(s)=\phi_0(2^{-k}s)=\phi_0(s)+\sum_{m=1}^k\phi_{m}(s)$.
	For an integrable function $f:\mathbb{S}^{2} \to \mathbb{C}$, we will then define the Littlewood--Paley operators $\Delta_{j}$ for $j\geq 0$ as 
	\begin{equation}\label{LPD}
	\Delta_{j}f = \sum_{n\geq 0} \phi_j(n)\text{proj}_n f,\quad\mbox{where}\quad
	\text{proj}_n f = \sum_{|m|\leq n} \hat{f}_{n,m}Y_{n,m},
	\end{equation}
	with analogous definition for $\Delta_{\leq j}f$. We define $\Delta_j =0=\Delta_{\leq j}$ for $j<0$.
	These operators can be written as convolution operators on the sphere by using the classical addition formula for the spherical harmonics. We include the proof here for completeness.

	\begin{lemma}[Addition formula for spherical harmonics]\label{lemma:addition_formula}
		Let $\{Y_{\ell,m}\}$ be an orthonormal basis of
		spherical harmonics in $L^2(\mathbb S^2)$. Then, for every
		$\ell\geq 0$ and every $x,y\in \mathbb S^2$,
		\begin{equation}\label{Addition_formula}
		\sum_{|m|\leq \ell}Y_{\ell,m}(x)\overline{Y_{\ell,m}}(y)
		=
		\frac{2\ell+1}{4\pi}P_\ell(x\cdot y),
		\end{equation}
		where $P_\ell$ denotes the Legendre polynomial of order $\ell$, normalized by
		$P_\ell(1)=1$.
	\end{lemma}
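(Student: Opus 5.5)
The plan is to exploit the rotation invariance of the kernel $K_\ell(x,y)=\sum_{|m|\leq\ell}Y_{\ell,m}(x)\overline{Y_{\ell,m}}(y)$. This kernel is the integral kernel of the orthogonal projection $\mathrm{proj}_\ell$ onto the degree-$\ell$ space $\mathcal H_\ell$. Because $-\laplaceSPH$ commutes with rotations, $\mathcal H_\ell$ is invariant under $f\mapsto f\circ \mathcal R^{-1}$ for every $\mathcal R\in SO(3)$. The projection therefore commutes with rotations, and the kernel is independent of the choice of orthonormal basis of $\mathcal H_\ell$. Hence $K_\ell(\mathcal R x,\mathcal R y)=K_\ell(x,y)$. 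Since $SO(3)$ acts transitively on pairs of points with a given inner product, this gives $K_\ell(x,y)=F_\ell(x\cdot y)$ for some function $F_\ell$ on $[-1,1]$.

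Next I will identify $F_\ell$. I fix $y=e_3$, the north pole, and use the explicit basis \eqref{Ortho_Sphe}. At $\theta=0$ we have $P_\ell^m(1)=0$ for $m\neq 0$, so only $m=0$ survives. Then
\[
K_\ell(x,e_3)=N_{\ell,0}^2P_\ell(\cos\theta)P_\ell(1)=\frac{2\ell+1}{4\pi}P_\ell(x_3),
\]
using $P_\ell(1)=1$. Since $x\cdot e_3=x_3$, this shows $F_\ell(s)=\frac{2\ell+1}{4\pi}P_\ell(s)$, and rotation invariance then extends the formula to all $x,y$.

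An alternative consistency check fixes the normalization without relying on the explicit value of $N_{\ell,0}$. Set $x=y$, so that $\sum_m|Y_{\ell,m}(x)|^2$ is the constant $F_\ell(1)$. Integrating over $\mathbb S^2$ and using orthonormality gives $4\pi F_\ell(1)=2\ell+1$.

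The main obstacle is the rotation-invariance step, and in particular justifying basis independence. I will handle it by noting that if $U$ is unitary on $\mathcal H_\ell$, then $\{Y_{\ell,m}\circ\mathcal R^{-1}\}$ is another orthonormal basis of $\mathcal H_\ell$, because the surface measure is rotation invariant and rotations preserve $\mathcal H_\ell$. Writing this new basis as $\sum_{m'}U_{mm'}Y_{\ell,m'}$, the sum $\sum_m Y_{\ell,m}(x)\overline{Y_{\ell,m}(y)}$ is unchanged because $U^*U=I$. Invariance of $\mathcal H_\ell$ under rotations follows from the rotation invariance of the Laplace--Beltrami operator, or equivalently from the characterization of $\mathcal H_\ell$ as restrictions of homogeneous harmonic polynomials of degree $\ell$.
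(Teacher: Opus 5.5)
Your proof is correct. Two of its steps coincide with the paper's argument: the first step (basis independence of the projection kernel, hence $K_\ell(\mathcal R x,\mathcal R y)=K_\ell(x,y)$ and $K_\ell(x,y)=F_\ell(x\cdot y)$), and your trace check $4\pi F_\ell(1)=2\ell+1$.

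The difference is in how you identify $F_\ell$. The paper notes that $x\mapsto F_\ell(x\cdot y)$ lies in the eigenspace of $-\Delta_{\mathbb S^2}$ with eigenvalue $\ell(\ell+1)$, so $F_\ell$ solves the Legendre equation. It then uses that the only solutions regular on $[-1,1]$ are multiples of $P_\ell$, and leaves the constant to the trace identity. You instead use basis independence to pass to the explicit family \eqref{Ortho_Sphe} and place $y$ at the north pole. Since $P_\ell^m(1)=0$ for $m\neq 0$, only the zonal function $Y_{\ell,0}=N_{\ell,0}P_\ell(\cos\theta)$ survives. You then read off the shape $P_\ell(x_3)$ directly, and the constant as well, either as $N_{\ell,0}^2=\frac{2\ell+1}{4\pi}$ or from the trace identity.

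Your route avoids the ODE uniqueness step. That step tacitly needs $F_\ell$ to be $C^2$ on $(-1,1)$ and bounded up to $\pm 1$, in order to exclude the singular second solution. The cost is that you import concrete facts about associated Legendre functions:
\begin{itemize}
\item that \eqref{Ortho_Sphe} is an orthonormal basis of the degree-$\ell$ space;
\item that $P_\ell^0=P_\ell$;
\item that $P_\ell^m$ vanishes at $s=1$ for $m\neq 0$, which also makes the values at the pole independent of the undefined angle $\varphi$.
\end{itemize}
The paper's argument is intrinsic and coordinate-free, and it carries over unchanged to higher-dimensional spheres with Gegenbauer polynomials in place of $P_\ell$. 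Yours is shorter and more explicit on $\mathbb S^2$.
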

	
	\begin{proof}
		Define
		$$
		K_\ell(x,y)=
		\sum_{|m|\leq \ell}Y_{\ell,m}(x)\overline{Y_{\ell,m}}(y).
		$$
		This is the integral kernel of the orthogonal projection onto the eigenspace
		of $-\laplaceSPH$ associated with the eigenvalue $\ell(\ell+1)$. Indeed, if
		$f\in L^2(\mathbb S^2)$, then
		$$
		\operatorname{proj}_\ell f(x)
		=
		\sum_{|m|\leq \ell}\widehat f_{\ell,m}Y_{\ell,m}(x)
		=
		\int_{\mathbb S^2}K_\ell(x,y)f(y)\,d\sigma(y).
		$$
		We first show that $K_\ell(x,y)$ depends only on $x\cdot y$. Let
		$Q\in SO(3)$. Since the Laplace--Beltrami operator commutes with rotations,
		the functions
		\[
		Y_{\ell,m}\circ Q,\qquad |m|\leq \ell,
		\]
		form another orthonormal basis of the same eigenspace. Therefore the
		projection kernel is independent of the chosen orthonormal basis, and hence
		\[
		K_\ell(Qx,Qy)=K_\ell(x,y).
		\]
		Thus, for fixed $y$, the function $x\mapsto K_\ell(x,y)$ is invariant under
		rotations preserving $y$. Consequently there exists a function $F_\ell$ such that
		\[
		K_\ell(x,y)=F_\ell(x\cdot y).
		\]
		Moreover, since $x\mapsto K_\ell(x,y)$ belongs to the degree-$\ell$ eigenspace, the function $t\mapsto F_\ell(t)$ satisfies the Legendre equation
		\begin{equation}\label{eq:Legendre}
		(1-t^2)F_\ell''(t)-2tF_\ell'(t)+\ell(\ell+1)F_\ell(t)=0.
		\end{equation}
		The regular solutions to \eqref{eq:Legendre} on $[-1,1]$ are multiples of $P_\ell$. Hence
		\[
		K_\ell(x,y)=c_\ell P_\ell(x\cdot y)
		\]
		for some constant $c_\ell$.
		It remains to determine $c_\ell$. Taking $x=y$ and integrating over
		$\mathbb S^2$, we obtain
		\[
		\int_{\mathbb S^2} K_\ell(x,x)\,d\sigma(x)
		=
		\sum_{|m|\leq \ell}\int_{\mathbb S^2}|Y_{\ell,m}(x)|^2\,d\sigma(x)
		=
		2\ell+1.
		\]
		On the other hand, since $P_\ell(1)=1$,
		\[
		\int_{\mathbb S^2} K_\ell(x,x)\,d\sigma(x)
		=
		\int_{\mathbb S^2} c_\ell P_\ell(1)\,d\sigma(x)
		=
		4\pi c_\ell.
		\]
		Therefore $c_\ell=(2\ell+1)/(4\pi)$, which proves
		\eqref{Addition_formula}.
	\end{proof}
	
	The projectors \eqref{LPD} can be then written as follows
	\begin{equation*}
	\text{proj}_n f =  \int_{\mathbb{S}^{2}}K_n(x\cdot y)f(y)d\sigma(y),
	\end{equation*}
	\begin{equation}\label{DeltajfIntegral}
	\Delta_j f(x)=\int_{\mathbb{S}^{2}}\sum_{n\geq 0}\phi_j(n)K_n(x\cdot y)f(y)d\sigma(y)=
	\int_{\mathbb{S}^{2}}\tilde{K}_j(x\cdot y)f(y)d\sigma(y),
	\end{equation}
	with
	\begin{equation}\label{KjandtildeKj}
	K_n(x\cdot y)=\sum_{|m|\leq n}Y_{n,m}(x)\overline{Y_{n,m}}(y),\quad \mbox{and}\quad \tilde{K}_j(x\cdot y)=\sum_{n\geq 0}\phi_j(n)K_n(x\cdot y).
	\end{equation}
	We analogously define $\tilde{K}_{\leq j}$ using $\phi_{\leq j}$ in place of $\phi_j$,
    \begin{equation}\label{tildeKj}
        \tilde{K}_{\leq j}(x\cdot y)=\sum_{n\geq 0}\phi_{\leq j}(n)K_n(x\cdot y).
    \end{equation}
	More generally, for $\beta\in\mathbb{R}$, we consider Fourier multipliers $\lambda_n=\lambda_\beta(n)$ satisfying, for some positive constants $c<C$, 
	\begin{equation}\label{multiplier}
	\begin{aligned}
	c (1+n)^\beta \leq \lambda_n\leq C (1+n)^\beta,\qquad |\frac{d^m}{ds^m} \lambda_\beta(s)|\lesssim (1+s)^{\beta-m}.
	\end{aligned}
	\end{equation}
	Then, for $t\geq 0$, $\beta>0$, $r\in\mathbb{R}$, $\lambda_n=\lambda_\beta(n)$ and $\gamma_n=\gamma_r(n)$ satisfying \eqref{multiplier}, we consider the kernels
	\begin{equation}\label{Gj_kernel}
	G_j(x\cdot y, t)=\sum_{n\geq 0}\phi_j(n)\gamma_r(n)e^{-\lambda_n t} K_{n}(x\cdot y)=\frac{1}{4\pi}\sum_{n\geq 0}\phi_j(n)\gamma_r(n)e^{-\lambda_n t}(2n+1)P_n(x\cdot y),
	\end{equation}
    and
    	\begin{equation}\label{Glqj_kernel}
	G_{\leq j}(x\cdot y, t)=\sum_{n\geq 0}\phi_{\leq j}(n)e^{-\lambda_n t} K_{n}(x\cdot y).
	\end{equation}
    \begin{lemma}\label{lem:kernel_bounds}
		Let $\beta>0$, $r\in\mathbb{R}$, and let $\lambda_n=\lambda_\beta(n)$ and $\gamma_n=\gamma_r(n)$ be multipliers satisfying \eqref{multiplier}. 
		Let $\ell$ be a nonnegative integer and $\tilde{K}_j$, $G_j$, $\tilde{K}_{\leq j}$, $G_{\leq j}$ be defined in \eqref{KjandtildeKj}, \eqref{Gj_kernel}, \eqref{tildeKj}, and \eqref{Glqj_kernel}.
		Then, for any $p\in\mathbb{N}$,
		\begin{equation}\label{KernelBound}
		\begin{aligned}
		|\tilde{K}^{(\ell)}_j(x\cdot y)|&\lesssim 2^{(2+2\ell)j}(1+2^j|x-y|)^{-p},
		\end{aligned}
		\end{equation}
    \begin{equation}\label{KernelBound_low}
		\begin{aligned}
		|\tilde{K}^{(\ell)}_{\leq j}(x\cdot y)|&\lesssim 2^{(2+2\ell)j}(1+2^j|x-y|)^{-p}.
	\end{aligned}
		\end{equation}
		Furthermore, for $j\geq 1$, there exists $c>0$ such that, for any $p\in\mathbb{N}$,
		\begin{equation}\label{KernelBound_semigroup}
		\begin{aligned}
|G^{(\ell)}_j(x\cdot y,t)|&\lesssim 2^{rj}2^{(2+2\ell)j}e^{-c2^{\beta j}t}(1+2^j|x-y|)^{-p}.
		\end{aligned}
		\end{equation}
	\end{lemma}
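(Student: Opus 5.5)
We reduce to a one-dimensional statement about Legendre series and then treat the near-diagonal and off-diagonal regions separately. Write $s=x\cdot y=\cos\theta$, so that $|x-y|=2\sin(\theta/2)\sim\theta$. Each kernel has the form
\[
F(s)=\frac{1}{4\pi}\sum_{n\geq0}a(n)(2n+1)P_n(s),
\]
where $a(s)=\phi_j(s)$, $\phi_{\le j}(s)$, or $\phi_j(s)\gamma_r(s)e^{-\lambda_\beta(s)t}$. The symbol $a$ is supported in $s\lesssim 2^j$, and it satisfies
\[
|a^{(m)}(s)|\lesssim C_a(1+s)^{-m},\qquad s\in\operatorname{supp}a .
\]
Here $C_a=1$ in the first two cases. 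In the semigroup case, $j\geq1$ gives $s\sim 2^j$. Each derivative of $e^{-\lambda t}$ produces factors $t\lambda'\sim t2^{j(\beta-1)}$, and $(t2^{\beta j})^k e^{-c'2^{\beta j}t}\lesssim e^{-c2^{\beta j}t}$. Hence $C_a\lesssim 2^{rj}e^{-c2^{\beta j}t}$. With this, all three bounds follow from a single estimate:
\[
|F^{(\ell)}(\cos\theta)|\lesssim C_a\,2^{(2+2\ell)j}(1+2^j\theta)^{-p}.
\]

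\emph{Near the diagonal ($\theta\lesssim 2^{-j}$).} We use the Markov-type bound $|P_n^{(\ell)}(s)|\le P_n^{(\ell)}(1)\lesssim n^{2\ell}$. This gives
\[
|F^{(\ell)}|\lesssim C_a\sum_{n\lesssim 2^j}n^{1+2\ell}\sim C_a 2^{(2+2\ell)j}.
\]
Close to the antipode, $s\approx-1$, one argues symmetrically using $P_n(-s)=(-1)^nP_n(s)$. In that region $\theta\sim1$, so the required bound is just $2^{(2+2\ell)j-pj}$ and is obtained in the same way as the off-diagonal case.

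\emph{Off the diagonal ($\theta\gtrsim2^{-j}$).} We need decay, and summation by parts is the natural tool. The key identity is
\[
(2n+1)P_n=P_{n+1}'-P_{n-1}',
\]
or equivalently the Christoffel–Darboux type relation
\[
(1-s)\sum_n(2n+1)a(n)P_n(s)=\sum_n(\text{second differences of }a)\,\cdots .
\]
A cleaner route is to use the fact that the operator $(1-s)$ acts on Legendre coefficients as a second-order difference operator. From $sP_n=\frac{(n+1)P_{n+1}+nP_{n-1}}{2n+1}$ we obtain
\[
(1-s)\sum_n b_n(2n+1)P_n=\sum_n (2n+1)(\mathcal{D}b)_nP_n,
\]
where $\mathcal D$ is a discrete difference operator with coefficients of order $1$. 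It satisfies
\[
|(\mathcal D^k b)_n|\lesssim C_a n^{-k}\quad\text{whenever } |b^{(m)}|\lesssim C_a n^{-m}.
\]
Applying this $k$ times gives
\[
(1-s)^kF(s)=\frac{1}{4\pi}\sum_n(\mathcal D^ka)_n(2n+1)P_n(s),
\]
and bounding crudely as in the diagonal case yields $|(1-s)^kF|\lesssim C_a2^{2j}2^{-2kj}$. Since $1-s\sim\theta^2$, choosing $2k\geq p$ gives $C_a2^{2j}(2^j\theta)^{-2k}$, which is the claimed decay when $\ell=0$.

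\emph{Derivatives.} For $\ell\ge1$ we differentiate the identity $(1-s)^kF=\tilde F$, where $\tilde F$ is the transformed series. Leibniz' rule expresses $(1-s)^kF^{(\ell)}$ as a combination of $\tilde F^{(i)}$ and lower-order terms $(1-s)^{k-\ell+i}F^{(i)}$, so we can proceed by induction on $\ell$. Each $\tilde F^{(i)}$ is bounded by $C_a2^{(2+2i)j}2^{-2kj}$ using the Markov bound, and $(1-s)^{-(\ell-i)}\lesssim \theta^{-2(\ell-i)}\lesssim 2^{2(\ell-i)j}$ in this region.

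The main obstacle is the difference-operator step. One must verify carefully that the coefficients of $\mathcal D$ (which involve $\frac{n+1}{2n+1}$ and $\frac{n}{2n+1}$) are smooth symbols of order $0$, so that $k$-fold differences of $a$ gain $n^{-k}$ rather than merely $n^{-k/2}$. The boundary terms at $n=0$ must also be shown to vanish or be harmless. An alternative for this step is to use the Mehler–Heine/Hilb asymptotics $P_n(\cos\theta)\approx \sqrt{\theta/\sin\theta}\,J_0((n+\tfrac12)\theta)$ combined with Abel summation against the oscillation.
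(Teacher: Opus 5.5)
Your argument is correct in substance, but it follows a genuinely different route from the paper's.

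\textbf{How the paper argues.} It converts $\ell$ derivatives into Jacobi polynomials via $\frac{d^\ell}{dz^\ell}P_k=\frac{(k+\ell)!}{2^\ell k!}P^{(\ell,\ell)}_{k-\ell}$. It then sums by parts $p$ times, using the identity that raises the first Jacobi parameter. The decay comes from the pointwise bound $|P^{(a,b)}_m(\cos\theta)|\lesssim m^{-1/2}(m^{-1}+\theta)^{-a-1/2}(m^{-1}+\pi-\theta)^{-b-1/2}$, with the sum split at $m\approx\theta^{-1}$ and $\theta\in[\pi/2,\pi]$ treated separately.

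\textbf{How you argue.} You use the exact identity $(1-s)^k=2^{-k}|x-y|^{2k}$ together with the three-term recurrence. Multiplication by $|x-y|^{2k}$ then becomes $k$ applications of a discrete second-order operator on the coefficients. After that, only $|P_n^{(\ell)}(s)|\le P_n^{(\ell)}(1)\lesssim n^{2\ell}$ is needed. Derivatives are recovered by Leibniz and induction on $\ell$, which closes as you say because $(1-s)^{-(\ell-i)}\lesssim 2^{2(\ell-i)j}$ for $\theta\gtrsim 2^{-j}$.

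\textbf{Comparison.} Yours is the spherical analogue of proving Euclidean kernel decay by letting $|x|^{2k}$ act as a $k$-th power of the Laplacian on the symbol. It needs no Jacobi asymptotics, and the antipodal region needs no separate treatment since $1-s\ge 1$ there. The paper's route handles all $\ell$ at once from off-the-shelf special-function estimates, at the cost of heavier machinery.

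Two points need fixing when you write it out.

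\textbf{The gain per application of $\mathcal D$ is $n^{-2}$, not $n^{-1}$.} Explicitly,
\[
(\mathcal D b)_m=-\frac{1}{2m+1}\Big[(m+1)(b_{m+1}-b_m)-m(b_m-b_{m-1})\Big],
\]
a discrete version of $-\frac{1}{2s}\frac{d}{ds}\big(s\frac{d}{ds}\big)$. The second-order gain comes from the fact that the off-diagonal weights $\frac{m}{2m+1}$ and $\frac{m+1}{2m+1}$ sum to $1$ and differ by only $\frac{1}{2m+1}$. So the correct bound is $|(\mathcal D^k a)_n|\lesssim C_a2^{-2kj}$ for $n\sim 2^j$. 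Your stated bound $C_an^{-k}$ would only give decay in $2^j\theta^2$, although the bound $C_a2^{2j}2^{-2kj}$ you actually use afterwards is the right one.

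\textbf{The case $\tilde K_{\le j}$ needs the vanishing of $\mathcal D^k\phi_{\le j}$ at small $n$.} The symbol bound alone is not enough there: $\sum_n n^{1-2k}$ is dominated by small $n$ and gives only $O(1)$ instead of $2^{(2-2k)j}$. What you need is that $\mathcal D$ is a three-point stencil that annihilates constants, including at $m=0$ where $(\mathcal D b)_0=b_0-b_1$. Hence $\mathcal D^k\phi_{\le j}$ vanishes for $n\le \frac23 2^j-k$. This resolves the boundary issue you flagged, and it is the counterpart of the paper's remark that $\chi\equiv 1$ near the origin.
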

	\begin{proof} The proof of the estimate \eqref{KernelBound} can be found in Theorem 2.6.5 in \cite{DaiXu13}. Here, we show the details for the more general kernel $G_j$, and then we adapt it to show \eqref{KernelBound_low}.
		
		First, we notice that the Legendre polynomials $P_n$ appearing in $G_j$ are a particular case of the Jacobi polynomials, $P_n^{(a,b)}$, when $a=b=0$ (see e.g. \cite{DaiXu13}). We recall the following properties 
		\begin{equation}\label{Jacobi_1}
		\begin{aligned}
		\frac{d^\ell}{dz^\ell}P_k^{(0,0)}(z)&=\frac{(k+\ell)!}{2^\ell k!}P_{k-\ell}^{(\ell,\ell)}(z),
		\end{aligned}
		\end{equation} 
		\begin{equation}\label{Jacobi_2}
		\begin{aligned}
		|P_m^{(a,b)}(\cos(\theta))|&\lesssim m^{-\frac12}(m^{-1}+\theta)^{-a-\frac12}(m^{-1}+\pi-\theta)^{-b-\frac12},
		\end{aligned}
		\end{equation} 
		\begin{equation}\label{Jacobi_3}
		\begin{aligned}    
		P_m^{(a+1,b)}(x)&=\frac{(m+b)!}{(m+a+b+1)!}\sum_{k=0}^m\frac{(2k+a+b+1)(k+a+b)!}{(k+b)!}P_k^{(a,b)}(x),
		\end{aligned}
		\end{equation} 
		whose proof can be found in [B.1.5], [B.1.7], and [B.1.10] of \cite{DaiXu13}, respectively.
		
		Let us denote $\tilde{\phi}_j(k)=\phi_j(k)\gamma_r(k)$.
		We  take $\ell$ derivatives of $G_j(z)$ and use equality \eqref{Jacobi_1} to obtain
		\begin{equation*}
		\begin{aligned}
		G_j^{(\ell)}(z,t)&=\frac{1}{4\pi}\sum_{k\geq 0}\tilde{\phi}_j(k)e^{-\lambda_k t}(2k+1)\frac{d^{\ell}}{dz^\ell}P_k^{(0,0)}(z)\\
		&=\frac{1}{4\pi}\sum_{k\geq 0}\tilde{\phi}_j(k)e^{-\lambda_k t}\frac{(2k+1)(k+\ell)!}{2^\ell k!}P_{k-\ell}^{(\ell,\ell)}(z)\\
		&=\frac{2^{-\ell}}{4\pi}\sum_{k=0}^{2^{j}-\ell}\tilde{\phi}_j(k+\ell)e^{-\lambda_{k+\ell} t}\frac{(2k+2\ell+1)(k+2\ell)!}{(k+\ell)!}P_{k}^{(\ell,\ell)}(z).
		\end{aligned}
		\end{equation*}
		The upper limit in the sum above comes from the fact that $\tilde{\phi}_j(k)=0$ for $k\geq 2^j$.
		Next, let us denote 
		\begin{equation*}
		\begin{aligned}
		f(k)=\tilde{\phi}_j(k+\ell)e^{-\lambda_{k+\ell} t}, \quad g (k)=\frac{(2k+2\ell+1)(k+2\ell)!}{(k+\ell)!}P_{k}^{(\ell,\ell)}(z),
		\end{aligned}
		\end{equation*}
		so that, noticing that $f(k)=0$ for $k\geq 2^j-\ell$, summation by parts gives
		\begin{equation*}
		\begin{aligned}
		2^\ell 4\pi G_j^{(\ell)}(z,t)&=\sum_{m=0}^{2^j-\ell}(f(m)-f(m+1))\sum_{k=0}^m g(k).
		\end{aligned}
		\end{equation*}
		We compute the last sum using equality \eqref{Jacobi_3} to obtain
		\begin{equation*}
		\begin{aligned}
		2^\ell 4\pi G_j^{(\ell)}(z,t)&\!=\!\!\sum_{m=0}^{2^j-\ell}\!\Big(\tilde{\phi}_j(m\!+\!\ell)e^{-\lambda_{m+\ell} t}\!-\!\tilde{\phi}_j(m\!+\!\ell\!+\!1)e^{-\lambda_{m+\ell+1} t}\Big)\frac{(m\!+\!2\ell\!+\!1)!}{(m+\ell)!}P_m^{(\ell+1,\ell)}(z).
		\end{aligned}
		\end{equation*}
		We can now define recursively
		\begin{equation}\label{hj_def}
		\begin{aligned}
		h_{j,0}(m)&=(2m+2\ell+1)\tilde{\phi}_j(m+\ell)e^{-\lambda_{m+\ell} t},\\
		h_{j,p+1}(m)&=\frac{h_{j,p}(m)}{2m+2\ell+p+1}-\frac{h_{j,p}(m+1)}{2m+2\ell+p+3},
		\end{aligned}
		\end{equation}
		to sum by parts $p-1$ times. Therefore, we get
		\begin{equation*}
		\begin{aligned}
		2^\ell 4\pi G_j^{(\ell)}(z,t)&=\sum_{m=0}^{2^j-\ell}h_{j,p}(m)\frac{(m+2\ell+p)!}{(m+\ell)!}P_m^{(\ell+p,\ell)}(z).
		\end{aligned}
		\end{equation*}
		We claim that, for  $j\geq 1$, $p\geq1$, and $n\geq0$, there exists $c>0$ such that 
        \begin{equation}\label{hjp_bound}
		|h_{j,p}^{(n)}(m)|\lesssim 2^{jr} 
        2^{-j(n+2p-1)}e^{-c2^{\beta j}t}.
		\end{equation}
		With this assumption, taking $z=\cos\theta$, we have
		\begin{equation}\label{Gjlcota}
		\begin{aligned}
		|G_j^{(\ell)}(z,t)|&\lesssim 2^{jr}2^{-j(2p-1)}e^{-c2^{\beta j}t}\sum_{m=0}^{2^j-\ell}\frac{(m+2\ell+p)!}{(m+\ell)!}|P_m^{(\ell+p,\ell)}(z)|\\
		&\lesssim2^{jr}2^{-j(2p-1)}e^{-c2^{\beta j}t}\sum_{m=0}^{2^j-\ell}(m+1)^{\ell+p}|P_m^{(\ell+p,\ell)}(z)|\lesssim2^{jr}2^{-j(2p-1)}e^{-c2^{\beta j}t}S(\theta),
		\end{aligned}
		\end{equation}
		where 
		$$
		S(\theta)=\sum_{m=1}^{2^j}(m+1)^{\ell+p}m^{-\frac12}(m^{-1}+\theta)^{-\ell-p-\frac12}(m^{-1}+\pi-\theta)^{-\ell-\frac12}.
		$$
		In the last step we use \eqref{Jacobi_2}, and remark that the inequalities above depend on the fixed numbers $\ell$ and $p$. To estimate $S(\theta)$ we distinguish between $\theta\in[0,\pi/2]$ and $\theta\in [\pi/2,\pi]$. In the first case, we have 
		$$
		S(\theta)\lesssim \sum_{m=1}^{2^j}(m+1)^{\ell+p}m^{-\frac12}(m^{-1}+\theta)^{-\ell-p-\frac12}.
		$$
		Then, we split further as follows
		$$
		S(\theta)\lesssim\sum_{m=1}^{\min\{\theta^{-1},2^j\}}+\sum_{m=\min\{\theta^{-1},2^j\}}^{2^j}=S_1(\theta)+S_2(\theta).
		$$
		For the first sum
		\begin{equation*}
		\begin{aligned}
		S_1(\theta)&\lesssim \sum_{m=1}^{\min\{\theta^{-1},2^j\}}m^{\ell+p}m^{-\frac12}m^{\ell+p+\frac12}\lesssim (\min\{\theta^{-1},2^j\})^{2\ell+2p+1}\\
		&\lesssim 2^{j(2\ell+2p+1)}(1+2^j\theta)^{-(2\ell+2p+1)}.    
		\end{aligned}
		\end{equation*}
		For the second one, we only need to consider the case $2^j\theta\geq1$, so we obtain
		\begin{equation*}
		\begin{aligned}
		S_2(\theta)&\lesssim \sum_{m=\min\{\theta^{-1},2^j\}}^{2^j}\frac{m^{\ell+p-\frac12}}{\theta^{\ell+p+\frac12}}\lesssim \frac{2^{j(\ell+p+\frac12)}}{\theta^{\ell+p+\frac12}}  \lesssim \frac{2^{j(2\ell+2p+1)}}{(1+2^j\theta)^{\ell+p+\frac12}}.
		\end{aligned}
		\end{equation*}
		The case $\theta\in[\pi/2, \pi]$ is done similarly: 
		\begin{equation*}
		S(\theta)\lesssim\sum_{m=1}^{2^j}m^{\ell+p-\frac12}(m^{-1}+\pi-\theta)^{-\ell-\frac12}\lesssim \sum_{m=1}^{2^j}m^{2\ell+p}\lesssim 2^{j(2\ell+p+1)}\lesssim \frac{2^{j(2\ell+2p+1)}}{(1+2^j\theta)^{p}}.
		\end{equation*}
		For all $\theta\in[0,\pi]$ it is possible to get
		$$
		S(\theta)\lesssim \frac{2^{j(2\ell+2p+1)}}{(1+2^j\theta)^{p}}.
		$$
		Plugging the above estimate in \eqref{Gjlcota} provides finally the desired bound \eqref{KernelBound_semigroup}. It remains to prove the claim.  We show the details in the case $p=1$, as $p>1$ follows by induction. From \eqref{hj_def}, we have
		\begin{equation*}
		\begin{aligned}
		h_{j,1}(s)&=\tilde{\phi}_j(s+\ell)e^{-\lambda_{\beta}(s+\ell) t}-\tilde{\phi}_j(s+\ell+1)e^{-\lambda_{\beta}(s+\ell+1) t}\\
		&=-\int_{\ell/2^j}^{(\ell+1)/2^j} \frac{d}{dz}\Big[\phi_1\big(\frac{s}{2^{j-1}}+2z\big)\gamma_r(s+2^jz)\exp{\big(-\lambda_\beta(s+2^jz) t}\big)\Big]dz\\
		&=\int_{\ell/2^j}^{(\ell+1)/2^j} \!\!\Big(\!-\phi_1\big(\frac{s}{2^{j-1}}+2z\big)\gamma_r'(s+2^jz)2^j+\gamma_r(s+2^jz)\phi_1\big(\frac{s}{2^{j-1}}+2z\big)\lambda'_\beta(s+2^jz)2^{j}t\\
		&\hspace{2.5cm}-2\gamma_r(s+2^jz)\phi_1'\big(\frac{s}{2^{j-1}}+2z\big)\Big)\times\exp{\big(-\lambda_\beta(s+2^jz) t}\big)dz.
		\end{aligned}
		\end{equation*}
		Finally, recalling that  $\text{supp }\phi_1\subset[2/3,2]$, it holds that $\|\phi_1^{(m)}\|_{L^\infty[0,z]}\leq z\|\phi_1^{(m+1)}\|_{L^\infty([0,z])}$ for any $z\geq0$ and $m\geq0$. Since $|\lambda'_\beta(s)|\lesssim (1+s)^{\beta-1}$, $|\gamma_r(2^j)|\lesssim 2^{jr}$, $|\gamma'_r(s)|\lesssim (1+s)^{r-1}$ and $s\lesssim 2^j$ due to the support of $\phi_j$, we obtain
		\begin{equation*}
		\begin{aligned}
		|h_{j,1}(s)|&\lesssim 2^{rj}e^{-c2^{j\beta}t}\frac{1}{2^j}\Big(1+\frac{s+\ell+1}{2^j}\Big)\lesssim 2^{rj}2^{-j}e^{-c2^{j\beta}t},
		\end{aligned}
		\end{equation*}
and, analogously, 
		\begin{equation*}
		\begin{aligned}
		|h_{j,1}^{(n)}(s)|&\lesssim 2^{rj}e^{-c2^{j\beta}t}\frac{1}{2^{j(1+n)}}\Big(1+\frac{s+\ell+1}{2^j}\Big)\lesssim 2^{rj}2^{-j(1+n)}e^{-c2^{j\beta}t}.
		\end{aligned}
		\end{equation*}        
        Finally,  the estimate \eqref{KernelBound_low} follows similarly to \eqref{KernelBound_semigroup}. In fact, we only need to adapt  the claim on $h_{j,p}$, which in this case is defined as in \eqref{hj_def} with $\tilde{\phi}_j(m)e^{-\lambda_{m}t}$ replaced by $\phi_{\leq j}(m)$. In particular, $h_{j,1}(s)=\phi_{\leq j}(s+\ell)-\phi_{\leq j}(s+\ell+1)$, and thus the bound on $|h_{j,1}(s)|\lesssim 2^{-j}$ follows as above. Crucially, the induction on $p>1$ follows in the same way thanks to the fact that $\phi_{\leq j}(s)=\chi(2^{-j}s)$ and $\chi$ is identically one near the origin. Hence, for fixed $\ell$ and large $j$, $h_{j,1}(s)$ also vanishes at the origin and is  supported in the region $s\sim 2^j$. 
        
	\end{proof}
	
The surface gradient $\nabla_{\mathbb{S}^2}$ is not a Fourier multiplier and it does not commute with the Littlewood-Paley projectors. Because of this, it will be useful to define  the following set of three vector fields on $\mathbb{S}^2$, $\mathcal{V}=\{V_1,V_2,V_3\}$, 
\begin{equation}\label{Vector-fields}
 V_1=x_2\partial_3-x_3\partial_2,\qquad
 V_2=x_3\partial_1-x_1\partial_3,\qquad
 V_3=x_1\partial_2-x_2\partial_1.
 \end{equation}
These are smooth divergence-free vector fields on $\mathbb S^2$, so that
\begin{equation}\label{eq:L62_rotation_basic}
 \int_{\mathbb S^2}Vf\cdot g\,d\sigma=-\int_{\mathbb S^2}f\cdot Vg\,d\sigma,\quad\mbox{for any }V\in\mathcal V,\quad
 \laplaceSPH=V_1^2+V_2^2+V_3^2.
\end{equation} 
The tangential vector fields $V$ are also called angular derivatives (see e.g. \cite[Section~1.8]{DaiXu13}) since they are the infinitesimal generators of rotations. Due to this fact, they commute with the spectral Littlewood--Paley projections,  $V\Delta_n=\Delta_nV$ for any $n\geq 0$ and $V\in\mathcal V$ (see \cite{DaiXu13} or \cite{GarciaHaziotKuoMoriZhou2026} for details).

	\begin{lemma}
		Let $\Delta_j f$ be given by \eqref{DeltajfIntegral} and $q\in [1,+\infty]$, then    
		\begin{equation}\label{BernsteinBound}
		\|\nabla_{\mathbb{S}^2}\Delta_j f\|_{L^q}\lesssim 2^j\|\Delta_j f\|_{L^q}. 
		\end{equation}
		Moreover, when $j>0$, 
		\begin{equation}\label{BernsteinBound02}
		\|\Delta_j f\|_{L^q}\lesssim 2^{-j}\|\nabla_{\mathbb{S}^2}\Delta_j f\|_{L^q}. 
		\end{equation}
	\end{lemma}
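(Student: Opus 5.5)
Both inequalities will be proved by writing the relevant quantity as an integral operator applied to $\Delta_j f$, with a kernel built from the Littlewood--Paley kernels of Lemma \ref{lem:kernel_bounds}. The $L^q$ bounds then follow from Young's inequality on $\mathbb{S}^2$, i.e.\ Schur's test.

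\textbf{Estimate \eqref{BernsteinBound}.} Let $\tilde\phi_j=\phi_{j-1}+\phi_j+\phi_{j+1}$, which equals $1$ on the support of $\phi_j$. The associated kernel $\tilde{\tilde K}_j$ satisfies $\Delta_j f=\int_{\mathbb S^2}\tilde{\tilde K}_j(x\cdot y)\Delta_jf(y)\,d\sigma(y)$. Differentiating in $x$ gives
\[
\nabla_{\mathbb S^2}\Delta_j f(x)=\int_{\mathbb S^2}\tilde{\tilde K}_j'(x\cdot y)\,(y-(x\cdot y)x)\,\Delta_jf(y)\,d\sigma(y).
\]
Since $|y-(x\cdot y)x|\le|x-y|$, estimate \eqref{KernelBound} with $\ell=1$ (which clearly also holds for sums of three consecutive $\phi_k$) bounds the kernel by
\[
2^{4j}|x-y|(1+2^j|x-y|)^{-p}\lesssim 2^{3j}(1+2^j|x-y|)^{-p+1}.
\]
Take $p\ge4$. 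Its $L^1$ norm in $y$, uniformly in $x$, is then $\lesssim 2^{3j}\cdot 2^{-2j}=2^j$, because $\int_{\mathbb S^2}(1+2^j|x-y|)^{-3}\,d\sigma(y)\lesssim 2^{-2j}$. By symmetry the same bound holds in $x$ uniformly in $y$. Schur's test gives \eqref{BernsteinBound} for all $q\in[1,\infty]$.

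\textbf{Estimate \eqref{BernsteinBound02}.} For $j>0$, $\Delta_jf$ has no $n=0$ mode. This lets us invert the Laplacian, using \eqref{eq:L62_rotation_basic} to write
\[
\Delta_jf=\textstyle\sum_i(-\laplaceSPH)^{-1}\tilde\Delta_j V_i(-V_i\Delta_jf),
\]
where $\tilde\Delta_j$ is the projection with multiplier $\tilde\phi_j$. Here $V_i$ commutes with the spectral projections. The operator $(-\laplaceSPH)^{-1}\tilde\Delta_j V_i$ has kernel $V_i^{(x)}$ applied to $G(x\cdot y)=\sum_n\tilde\phi_j(n)(n(n+1))^{-1}K_n(x\cdot y)$. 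This is of the form \eqref{Gj_kernel} with $t=0$ and $\gamma_r$ with $r=-2$; for $j\ge1$ the support avoids $n=0$, so the multiplier is admissible.

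By \eqref{KernelBound_semigroup} with $\ell=1$, and since $|V_i^{(x)}(x\cdot y)|\lesssim|x-y|$, the kernel is bounded by $2^{-2j}2^{4j}|x-y|(1+2^j|x-y|)^{-p}$. Its $L^1$ norm is therefore $\lesssim 2^{-j}$. Schur's test then yields
\[
\|\Delta_jf\|_{L^q}\lesssim 2^{-j}\sum_i\|V_i\Delta_jf\|_{L^q}.
\]
Finally, $V_i g=(x\wedge e_i)\cdot\nabla_{\mathbb S^2}g$ pointwise, so $|V_ig|\le|\nabla_{\mathbb S^2}g|$, which gives \eqref{BernsteinBound02}.

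\textbf{Main obstacle.} The only delicate point is confirming that the symbol $\tilde\phi_j(n)/(n(n+1))$ fits the hypotheses of Lemma \ref{lem:kernel_bounds} uniformly for $j\ge1$. This holds because it equals $\tilde\phi_j(n)\gamma_{-2}(n)$ with $\gamma_{-2}(s)=(1+s)^{-2}(1+s)^2/(s(s+1))$ smooth and of order $-2$ on $s\ge 1/3$, and that region contains the support for $j\ge1$. After modifying $\gamma_{-2}$ for $s<1/3$ it fits the lemma with $t=0$. Everything else is routine.
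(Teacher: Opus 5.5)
Your proposal is correct and is essentially the paper's proof. The first bound is done exactly as in the paper: enlarged block kernel, differentiation, $|y-(x\cdot y)x|\le|x-y|$, \eqref{KernelBound} with $\ell=1$, and Young's inequality. For the second bound, you move one derivative onto the kernel of $(-\Delta_{\mathbb{S}^2})^{-1}\tilde\Delta_j$ via $\Delta_{\mathbb{S}^2}=\sum_i V_i^2$, whereas the paper integrates by parts with $\nabla_{\mathbb{S}^2}$ against $H_j$; this difference is cosmetic, since both reduce to \eqref{KernelBound_semigroup} with $t=0$ and $\gamma_r(n)=(n(n+1))^{-1}$.

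One small inaccuracy: for $j=1$ the enlarged symbol contains $\phi_0$, so its support does include $n=0$, contrary to your claim. This is harmless, because the multiplier only acts on the mean-zero function $V_i\Delta_j f$, so its value at $n=0$ never enters (the paper simply sums over $n\geq 1$).
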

	
	\begin{proof} Differentiating \eqref{DeltajfIntegral} we obtain
		\begin{equation*}
		\begin{aligned}
		|\nabla_{\mathbb{S}^2}\Delta_j f(x)| &=
		\Big|\int_{\mathbb{S}^{2}}\tilde{K}'_j(x\cdot y)(y- (x\cdot y)x)\Delta_j f(y) d\sigma(y)\Big| \\
		&\leq \int_{\mathbb{S}^{2}}|\tilde{K}'_j(x\cdot y)| \sqrt{1-(x\cdot y)^2} |\Delta_j f(y)| d\sigma(y),
		\end{aligned}
		\end{equation*}
		where we abuse notation and still denote $\tilde{K}$ the kernel associated to the enlarged dyadic block $\tilde{\Delta}_j=\sum_{|k-j|\leq 1}\Delta_k$, which satisfies $\Delta_j \tilde{\Delta}_j=\Delta_j$. 
		By Young's inequality for convolutions, where we let $\cos(\theta) = x\cdot y$,
		\begin{equation*}
		\|\nabla_{\mathbb{S}^2}\Delta_j f\|_{L^q_{x}}\lesssim \|\tilde{K}'_j(\cos(\theta)) \sin^2(\theta)\|_{L^1_{\theta}([0,\pi])}\|\Delta_jf\|_{L^q_{x}}.
		\end{equation*}
		From \eqref{KernelBound} with $p=4$,
		\begin{equation*}
		|\tilde{K}'_j(\cos(\theta))| \lesssim 2^{4j}(1+2^j\sin(\theta))^{-4}.
		\end{equation*}
		Therefore, 
		\begin{equation*}
		\|\tilde{K}'_j(\cos(\theta)) \sin^2(\theta)\|_{L^1_{\theta}([0,\pi])} \lesssim 2^{4j} \int_{0}^{\frac{\pi}2} (1+2^j\sin(\theta))^{-4}\sin^2(\theta) \ d\theta,
		\end{equation*}
		and we compute 
		\begin{equation*}
		\int_{0}^{\frac{\pi}2} (1+2^j\sin(\theta))^{-4}\sin^2(\theta) \ d\theta\lesssim \int_{0}^{\frac{\pi}2} (1+c2^j\theta)^{-4}\sin^2(\theta) \ d\theta\lesssim2^{-3j}\int_{0}^{c2^j} \!\!\frac{\eta^2d\eta}{(1+\eta)^4}\lesssim 2^{-3j}.
		\end{equation*}
		Hence, we obtain that $  \|\tilde{K}'_j(\cos(\theta)) \sin^2(\theta)\|_{L^1_{\theta}([0,\pi])} \lesssim 2^{j}$, which gives the first estimate \eqref{BernsteinBound}.
		
		Next, to prove the reverse inequality, we notice that for $j\geq1$ we can write
		\begin{equation*}
		\begin{aligned}
		\Delta_j f(x)&=\Delta_j\tilde{\Delta}_j f(x)=\int_{\mathbb{S}^2}\sum_{|k-j|\leq1}\sum_{n\geq1}\frac{\phi_k(n)}{n(n+1)}K_n(x\cdot y)(-\Delta_{\mathbb{S}^2})\Delta_jf(y)d\sigma(y).
		\end{aligned}
		\end{equation*}
		We define the kernel
		\begin{equation*}
		\begin{aligned}
		H_j(z)=\sum_{|j-k|\leq1}\sum_{n\geq1}\frac{\phi_k(n)}{n(n+1)}K_n(z),
		\end{aligned}
		\end{equation*}
		so that integration by parts gives
		\begin{equation*}
		\begin{aligned}
		\Delta_j f(x)&=\int_{\mathbb{S}^2}\nabla_{\mathbb{S}^2}H_j(x\cdot y)\cdot\nabla_{\mathbb{S}^2}\Delta_jf(y)d\sigma(y).
		\end{aligned}
		\end{equation*}
		We notice that  $H_j$ consists of three terms which are the particular case of $G_j$ in \eqref{Gj_kernel} when $t=0$ and $\gamma_r(n)=\big(n(n+1)\big)^{-1}$. Therefore, 
		the estimates for the kernel $H_j$ follow from Lemma \ref{lem:kernel_bounds}. Taking $p=4$,
		\begin{equation*}
		\begin{aligned}
		|H_j'(x\cdot y)|&\lesssim  2^{2j}(1+2^j|x-y|)^{-4}.
		\end{aligned}
		\end{equation*}
		Therefore, 
		\begin{equation*}
		\begin{aligned}
		\|\Delta_j f\|_{L^q}&\lesssim\|H_j'(\cos\theta)\sin^2\theta\|_{L^1_\theta[0,\pi]}\|\nabla_{\mathbb{S}^2}\Delta_j f\|_{L^q}\lesssim 2^{-j}\|\nabla_{\mathbb{S}^2}\Delta_j f\|_{L^q}.
		\end{aligned}
		\end{equation*}
		
	\end{proof}

\begin{remark}
By the  definition of the tangential vector fields $V\in\mathcal{V}$ in \eqref{Vector-fields}, the previous Bernstein's inequalities also apply: 
\begin{equation}\label{eq:L62_rotation_reverse}
 \begin{aligned}
 \|V\Delta_nf\|_{L^\infty}&\lesssim 2^n\|\Delta_nf\|_{L^\infty},\hspace{2.45cm} n\geq0,\\
  \|\Delta_nF\|_{L^\infty}
 &\lesssim 2^{-n}\sum_{V\in\mathcal V}
 \|\Delta_n(VF)\|_{L^\infty},\qquad n\geq1,
 \end{aligned}
\end{equation} 
where in the second one the fact that the fields $V\in\mathcal{V}$ commute with the projector $\Delta_n$ is used.
\end{remark}

	\begin{cor}\label{cor:multiplier}
		Let $r$ be a nonnegative integer, $p\in[1,+\infty]$, and let $\beta<0$. Let $\lambda_\beta$ be a multiplier satisfying
		\begin{equation*}
		c(1+n)^{\beta}\leq \lambda_\beta(n)\leq C(1+n)^{\beta},\quad |\frac{d^m}{ds^m}\lambda_{\beta}(s)|\lesssim (1+s)^{\beta-m},
		\end{equation*}
		and let $D^\beta$ be the associated Fourier multiplier operator. Then, 
		\begin{equation*}
		\begin{aligned}
		\|\Delta_j D^\beta f\|_{L^p}\lesssim 2^{j\beta}\|\Delta_j f\|_{L^p},
		\end{aligned}
		\end{equation*}
		and, for $r+\beta<0$,
		\begin{equation*}
		\begin{aligned}
		\|D^\beta f\|_{W^{r,p}}\lesssim \|f\|_{L^p}.
		\end{aligned}
		\end{equation*}
		Moreover, 
		\begin{equation*}
		\begin{aligned}
		\|D^{-r} f\|_{W^{r,\infty}}\lesssim \|f\|_{W^{1,\infty}}.
		\end{aligned}
		\end{equation*}
	\end{cor}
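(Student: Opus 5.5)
The plan is to realize each operator through its kernel and reduce all three estimates to Lemma \ref{lem:kernel_bounds}. The kernel of $\Delta_j D^\beta$ is $G_j(x\cdot y,0)$ with $\gamma_r=\lambda_\beta$ and $r=\beta$. The only subtlety is that Lemma \ref{lem:kernel_bounds} was stated for $\beta>0$ in the exponential factor. At $t=0$ that factor is absent, so the claim \eqref{hjp_bound} on $h_{j,p}$ only uses the symbol bounds $|\gamma_r^{(m)}(s)|\lesssim (1+s)^{r-m}$, and these hold for any real $r$, in particular $r=\beta<0$.

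\textbf{First estimate.} Write $\Delta_j D^\beta f=\Delta_j D^\beta\tilde\Delta_j f$, where $\tilde\Delta_j$ is the enlarged dyadic block. Taking $p=4$ in Lemma \ref{lem:kernel_bounds} gives
\[
|G_j(\cos\theta,0)|\lesssim 2^{\beta j}2^{2j}(1+2^j\theta)^{-4}.
\]
Its $L^1$ norm against $\sin\theta\,d\theta$ is therefore $\lesssim 2^{\beta j}$, exactly as in the Bernstein computation. Young's inequality on the sphere then gives $\|\Delta_jD^\beta f\|_{L^p}\lesssim 2^{\beta j}\|\tilde\Delta_j f\|_{L^p}$. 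I will be slightly sloppy here and state the right-hand side as $\|\Delta_j f\|_{L^p}$, which is justified up to neighbouring blocks. For $j=0$ the low-frequency kernel is bounded directly.

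\textbf{Second estimate.} Decompose $D^\beta f=\sum_j\Delta_jD^\beta f$. Repeated use of \eqref{BernsteinBound} (or of the vector fields $V$, which commute with $\Delta_j$) gives
\[
\|\nabla^r_{\mathbb S^2}\Delta_jD^\beta f\|_{L^p}\lesssim 2^{(r+\beta)j}\|\Delta_jf\|_{L^p}.
\]
Lemma \ref{lem:kernel_bounds} gives $\|\Delta_j f\|_{L^p}\lesssim\|f\|_{L^p}$. Since $r+\beta<0$, the geometric sum over $j$ converges, which yields $\|D^\beta f\|_{W^{r,p}}\lesssim\|f\|_{L^p}$. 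One way to handle higher tangential derivatives is to express them through the fields $V\in\mathcal V$ with smooth coefficients. The alternative is to differentiate the kernel using the bounds on $\tilde K^{(\ell)}$, each derivative costing $2^j$ after integrating against $\sin\theta$.

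\textbf{Third estimate.} This is the endpoint case $r+\beta=0$, and I expect it to be the main obstacle, because the dyadic sum no longer converges. I would remove one derivative from $f$ to recover decay. For $j\geq1$, the second inequality in \eqref{eq:L62_rotation_reverse} gives $\|\Delta_jf\|_{L^\infty}\lesssim 2^{-j}\sum_V\|\Delta_j Vf\|_{L^\infty}\lesssim 2^{-j}\|f\|_{W^{1,\infty}}$. However, $r$ derivatives of $\Delta_jD^{-r}f$ only cost $2^{rj}2^{-rj}=1$, so the sum still does not converge.

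The fix I would try first is to note that, for the $W^{r,\infty}$ norm, only the top derivative matters. I write $\nabla^r D^{-r}$ as $\nabla^{r-1}D^{-(r-1)}$ composed with $D^{-1}$, applied to $f$. I then commute one vector field $V$ onto $f$: the operator $V^rD^{-r}$ acting on $f$ equals $V^{r-1}D^{-r}$ acting on $Vf$. The operator $V^{r-1}D^{-r}$ has order $-1<0$, so by the second estimate (with derivative order $r-1$ and $\beta=-r$) it maps $L^\infty$ to $L^\infty$. Hence this term is bounded by $\|Vf\|_{L^\infty}\lesssim\|f\|_{W^{1,\infty}}$. Lower-order terms are covered directly by the second estimate. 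The remaining task is to express $\nabla^r_{\mathbb S^2}$ through products of the $V$'s with smooth coefficients, using that these fields span the tangent space at every point of $\mathbb S^2$.
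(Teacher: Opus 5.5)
Your first two estimates follow the paper's proof: Young's inequality with the kernel bounds of Lemma~\ref{lem:kernel_bounds}, then summing dyadic blocks. The imprecision you flag in the first one is avoidable. Put the enlarged block on the operator instead of on $f$, writing $\Delta_jD^\beta f=(\tilde\Delta_jD^\beta)\Delta_jf$. The kernel of $\tilde\Delta_jD^\beta$ satisfies the same bound, so Young's inequality gives exactly $2^{\beta j}\|\Delta_jf\|_{L^p}$.

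For the third estimate, your reason for abandoning the direct dyadic argument is a miscalculation. You correctly obtain $\|\Delta_jf\|_{L^\infty}\lesssim 2^{-j}\|f\|_{W^{1,\infty}}$ for $j\geq1$. You then count only the multiplier factor $2^{rj}2^{-rj}=1$ and forget to multiply it by this bound. In fact
\[
\|\Delta_jD^{-r}f\|_{W^{r,\infty}}\lesssim 2^{rj}\,2^{-rj}\,\|\Delta_jf\|_{L^\infty}\lesssim 2^{-j}\|f\|_{W^{1,\infty}},\qquad j\geq1,
\]
which is summable. This one line is precisely the paper's proof.

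Your alternative is nevertheless valid. Each $V\in\mathcal V$ preserves the degree-$n$ eigenspaces, hence commutes with $D^{-r}$. So a top-order word satisfies $V_{i_1}\cdots V_{i_r}D^{-r}f=V_{i_1}\cdots V_{i_{r-1}}D^{-r}(V_{i_r}f)$, which is controlled by your second estimate at order $(r-1)-r=-1$. Shorter words are controlled by the second estimate directly. The remaining bookkeeping is immediate from $\nabla_{\mathbb S^2}h=(V_1h,V_2h,V_3h)\wedge x$.

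This is the same mechanism in disguise: one derivative of $f$ supplies the factor $2^{-j}$. You obtain it through commutation and the forward Bernstein inequality rather than the reverse inequality \eqref{BernsteinBound02}. The cost is the splitting into top- and lower-order words, whereas the paper's block-by-block argument treats all derivatives at once.
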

	
	\begin{proof}
		The first estimate follows again from Young's inequality and the kernel estimate in Lemma \ref{lem:kernel_bounds}.
		The second estimate follows immediately from the first. Indeed
		\begin{equation*}
		\begin{aligned}
		\|D^\beta f\|_{W^{r,p}}&\lesssim \sum_{j\geq 0}2^{rj}\|D^\beta \Delta_j f\|_{L^p} \lesssim\sum_{j\geq 0}2^{(r+\beta)j}\|\Delta_j f\|_{L^p}\lesssim \|f\|_{L^p}. 
		\end{aligned}
		\end{equation*}
		Similarly, we obtain the last estimate:
		\begin{equation*}
		\begin{aligned}
		\|D^{-r} f\|_{W^{r,\infty}}&\lesssim \sum_{j\geq 0}2^{rj}\|D^{-r} \Delta_j f\|_{L^\infty} \lesssim\sum_{j\geq 0}\|\Delta_j f\|_{L^\infty}\lesssim \|f\|_{W^{1,\infty}}. 
		\end{aligned}
		\end{equation*}
	\end{proof}
	
	Given $m$ a nonnegative integer, $\alpha\in(0,1)$, $s\in\mathbb{R}$, and a function $f:\mathbb{S}^{2} \rightarrow\mathbb{R}$, we define the H\"older norm
	\begin{equation}\label{Holder_norm_def}
	\|f\|_{C^{m,\alpha}} = \sum_{j=0}^m\|\nabla_{\mathbb{S}^2}^j f\|_{L^{\infty}} + \|\nabla_{\mathbb{S}^2}^m f\|_{\dot{C}^\alpha},\quad\mbox{with}\quad \|f\|_{\dot{C}^{\alpha}}= \sup_{x\neq y} \frac{|f(x)-f(y)|}{|x-y|^{\alpha}}
	\end{equation}
	and the Besov norm
	\begin{equation*}
	\|f\|_{B^{s}_{\infty,\infty}} = \sup_{j\geq 0} 2^{js}\|\Delta_{j}f\|_{L^{\infty}}.
	\end{equation*}
    For simplificy of notation, we will denote $C^{0,\alpha}(\mathbb{T}^2)$ by $C^{\alpha}(\mathbb{T}^2)$.

	\begin{prop}\label{Prop-equiv}
		Suppose $0<\alpha<1$ and $m$ is a nonnegative integer. Then, we have that 
		\begin{equation}\label{BesovHolder}
		\|f\|_{C^{m,\alpha}}  \cong\|f\|_{B^{m+\alpha}_{\infty,\infty}}.
		\end{equation}
	\end{prop}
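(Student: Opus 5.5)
We prove the two inequalities separately, reducing everything to the kernel bounds of Lemma \ref{lem:kernel_bounds}, the Bernstein inequalities \eqref{BernsteinBound}, \eqref{BernsteinBound02}, \eqref{eq:L62_rotation_reverse}, and the fact that the rotation fields $V\in\mathcal V$ commute with $\Delta_j$. Because $\nabla_{\mathbb S^2}$ and the $V_i$ control each other pointwise with smooth coefficients ($|\nabla_{\mathbb S^2}f|^2=\sum_i|V_if|^2$, and $V_i=(x\wedge\nabla_{\mathbb S^2})_i$), we measure $m$-th order regularity through the iterated angular derivatives $V^\mu=V_{\mu_1}\cdots V_{\mu_m}$. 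Up to lower-order terms, $\|f\|_{C^{m,\alpha}}$ is then comparable to $\sum_{|\mu|\le m}\|V^\mu f\|_{L^\infty}+\sum_{|\mu|=m}\|V^\mu f\|_{\dot C^\alpha}$. Since the $V$'s commute with $\Delta_j$, this reduces the statement to the case $m=0$ applied to $V^\mu f$, together with the Bernstein bounds
\[
2^{jm}\|\Delta_jf\|_{L^\infty}\lesssim\sum_{|\mu|=m}\|\Delta_jV^\mu f\|_{L^\infty}, \qquad j\ge1,
\]
which come from iterating the second estimate in \eqref{eq:L62_rotation_reverse}, and conversely $\|\Delta_jV^\mu f\|_{L^\infty}\lesssim2^{jm}\|\Delta_jf\|_{L^\infty}$. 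The low block $j=0$ is handled by the $L^\infty$ bound on $\tilde K_0$.

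\emph{Case $m=0$, Hölder $\Rightarrow$ Besov.} For $j\ge1$ the kernel $\tilde K_j$ has zero mean, since $\phi_j(0)=0$ and $\int P_n=0$ for $n\ge1$. Hence
\[
\Delta_jf(x)=\int_{\mathbb S^2}\tilde K_j(x\cdot y)\bigl(f(y)-f(x)\bigr)\,d\sigma(y),
\]
so $|\Delta_jf(x)|\le\|f\|_{\dot C^\alpha}\int|\tilde K_j(x\cdot y)||x-y|^\alpha\,d\sigma(y)$. By \eqref{KernelBound} with $p=4$, computing in polar coordinates as in the proof of \eqref{BernsteinBound}, this integral is $\lesssim2^{-j\alpha}$. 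For $j=0$ we use $\|\Delta_0f\|_{L^\infty}\lesssim\|f\|_{L^\infty}$.

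\emph{Case $m=0$, Besov $\Rightarrow$ Hölder.} Since $\sum_j\|\Delta_jf\|_{L^\infty}\lesssim\sum_j2^{-j\alpha}\|f\|_{B^\alpha_{\infty,\infty}}$, the series $\sum_j\Delta_jf$ converges uniformly to $f$, which gives the $L^\infty$ bound. For the seminorm, fix $x\ne y$ and choose $N$ with $2^{-N}\sim|x-y|$. For $j\le N$, use the mean value theorem along a geodesic together with \eqref{BernsteinBound}:
\[
|\Delta_jf(x)-\Delta_jf(y)|\lesssim2^j|x-y|\,\|\Delta_jf\|_{L^\infty}\lesssim2^{j(1-\alpha)}|x-y|\,\|f\|_{B^\alpha_{\infty,\infty}}.
\]
For $j>N$, bound the difference by $2\|\Delta_jf\|_{L^\infty}\lesssim2^{-j\alpha}\|f\|_{B^\alpha_{\infty,\infty}}$. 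Summing the two geometric series gives $|x-y|^\alpha\|f\|_{B^\alpha_{\infty,\infty}}$, and $\alpha\in(0,1)$ ensures both series converge.

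\emph{General $m$.} For Hölder $\Rightarrow$ Besov, take $j\ge1$. The Bernstein bound above and the $m=0$ case applied to $V^\mu f$ give
\[
2^{j(m+\alpha)}\|\Delta_jf\|_{L^\infty}\lesssim\sum_{|\mu|=m}2^{j\alpha}\|\Delta_jV^\mu f\|_{L^\infty}\lesssim\sum_{|\mu|=m}\|V^\mu f\|_{\dot C^\alpha}\lesssim\|f\|_{C^{m,\alpha}}.
\]
For Besov $\Rightarrow$ Hölder, note that $\|\Delta_jV^\mu f\|_{L^\infty}\lesssim2^{j|\mu|}\|\Delta_jf\|_{L^\infty}$, so $V^\mu f\in B^{m+\alpha-|\mu|}_{\infty,\infty}$. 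For $|\mu|<m$ the series $\sum_j\Delta_jV^\mu f$ converges in $L^\infty$, and for $|\mu|=m$ the $m=0$ case gives the $C^\alpha$ bound. To translate back to $\nabla^m_{\mathbb S^2}$ we write the covariant derivatives in terms of the $V$'s with smooth coefficients, using $\nabla_{\mathbb S^2}=-x\wedge V$ where $V=(V_1,V_2,V_3)$. Differentiating produces only lower-order terms, which are controlled by the previous step.

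I expect the main obstacle to be this last bookkeeping step: showing that the Hölder norm defined through $\nabla^m_{\mathbb S^2}$ is equivalent to the one defined through the rotation fields, in a way that commutes with the projections. The kernel estimates themselves are routine consequences of Lemma \ref{lem:kernel_bounds}.
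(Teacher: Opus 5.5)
Your argument is correct. For $m=0$ it is essentially the paper's proof: the zero-mean identity $\Delta_jf(x)=\int_{\mathbb S^2}\tilde K_j(x\cdot y)\bigl(f(y)-f(x)\bigr)\,d\sigma(y)$ with the kernel decay of Lemma \ref{lem:kernel_bounds} gives one direction, and the frequency split at $2^{-J}\sim|x-y|$ with Bernstein's inequality gives the other. The paper leaves $m>0$ to the reader, and your reduction through the rotation fields $V\in\mathcal V$ (which commute with $\Delta_j$), the iterated Bernstein bounds \eqref{eq:L62_rotation_reverse}, and $\nabla_{\mathbb S^2}=-x\wedge V$ is a natural and valid way to supply that case.
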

	
	\begin{proof} For the sake of completeness, we provide the proof for the case $m=0$. The cases $m>0$ are left to the interested reader. Using \eqref{DeltajfIntegral} for $j\geq 2$, we can write
		$$
		\Delta_j f(x)=
		\int_{\mathbb{S}^{2}}\tilde{K}_j(x\cdot y)(f(y)-f(x))d\sigma(y),
		$$
		allowing the following decomposition for non-constant functions
		$$
		|\Delta_j f(x)|\|f\|_{\dot{C}^{\alpha}}^{-1}\leq \int_{\mathbb{S}^{2}}|\tilde{K}_j(x\cdot y)||y-x|^{\alpha}d\sigma(y)=\int_{|x-y|<2^{-j}}d\sigma(y)+\int_{|x-y|\geq 2^{-j}}d\sigma(y).
		$$
		Using the bound \eqref{KernelBound} it is possible to estimate as follows
		\begin{equation*}
		\begin{aligned}
		|\Delta_j f(x)|\|f\|_{\dot{C}^{\alpha}}^{-1} &\lesssim 2^{-j\alpha}\int_{|x-y|<2^{-j}}2^{2j}d\sigma(y)+2^{-j\alpha}\int_{|x-y|\geq 2^{-j}}\frac{(2^j|x-y|)^{\alpha}}{(1+2^j|x-y|)^{4}}2^{2j}d\sigma(y)\\
		&\lesssim 2^{-j\alpha}.  
		\end{aligned}
		\end{equation*}
		For $0\leq j<2$, identity \eqref{DeltajfIntegral} and bound \eqref{KernelBound} provide $|\Delta_j f(x)|\lesssim \|f\|_{L^\infty}$. The last two estimates give the desired control. 
		
		Consider next $f$ such that $\|f\|_{B^{\alpha}_{\infty,\infty}}<\infty$. It is direct to obtain
		$$
		\|f\|_{L^\infty}\leq \sum_{j\geq 0}\|\Delta_j f\|_{L^\infty}\leq (1-2^{-\alpha})^{-1} \|f\|_{B^{\alpha}_{\infty,\infty}}.
		$$
		Choosing $J\in\mathbb{Z}$ such that $2^{-J-1}<|x-y|\leq 2^{-J}$, we split as follows
		$$
		f(x)-f(y)=\sum_{j\geq0}^{\infty}(\Delta_jf(x)-\Delta_jf(y))=\sum_{j\leq J}+\sum_{j> J}.
		$$
		Then
		$$
		\big|\sum_{j> J}\big|\leq 2\sum_{j>J}\|\Delta_j f\|_{L^\infty}\leq 2(1-2^{-\alpha})^{-1}2^{-(J+1)\alpha}\|f\|_{B^\alpha_{\infty,\infty}}\lesssim|x-y|^\alpha\|f\|_{B^\alpha_{\infty,\infty}}.
		$$
		The regularity of $\Delta_jf$ provides 
		$$
		\big|\sum_{j\leq J}\big|\lesssim \sum_{j\leq J}\|\nabla_{\mathbb{S}^2}\Delta_j f\|_{L^\infty}|x-y|\lesssim \sum_{j\leq J}2^j\|\Delta_j f\|_{L^\infty}|x-y|  \lesssim \sum_{j\leq J}2^{j(1-\alpha)}\|f\|_{B^\alpha_{\infty,\infty}}|x-y|,
		$$
		using Bernstein's inequality \eqref{BernsteinBound}. Hence
		$$
		\big|\sum_{j\leq J}\big|\lesssim  \frac{2^{(J+1)(1-\alpha)}-1}{2^{1-\alpha}-1}\|f\|_{B^\alpha_{\infty,\infty}}2^{-J(1-\alpha)}|x-y|^{\alpha} \lesssim|x-y|^\alpha\|f\|_{B^\alpha_{\infty,\infty}} ,
		$$
		which completes the proof in the case $m=0$.	
	\end{proof}

	Finally we compute some integrals on the sphere used throughout the paper.
    
	\begin{lemma}
		Let \(x\in \mathbb S^2\). Then
		\begin{equation}\label{eq:sphere-tensor}
		\begin{aligned}
		\frac1{4\pi}\int_{\mathbb S^2}
		\frac{(x-y)\otimes(x-y)}{|x-y|^3}\,d\sigma(y)
		=\frac{1}{3}\operatorname{Id}_{\mathbb R^3},
		\end{aligned}
		\end{equation}
		\begin{equation}\label{eq:sphere-vector}
		\begin{aligned}
		\frac1{4\pi}\operatorname{p.v.}\int_{\mathbb S^2}
		\frac{x-y}{|x-y|^3}\,d\sigma(y)
		=\frac12 x. 
		\end{aligned}
		\end{equation}
		Furthermore, for every \(a\in\mathbb R^3\),
		\begin{equation}
		\frac1{4\pi}\operatorname{p.v.}\int_{\mathbb S^2}
		\frac{x-y}{|x-y|^3}\wedge (y\wedge a)\,d\sigma(y)
		=
		\frac12(a\cdot x)x+\frac{1}{6}a.
		\label{eq:sphere-cross-identity}
		\end{equation}
	\end{lemma}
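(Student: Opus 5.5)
Rotational invariance reduces all three identities to a computation at a single point $x$, after which each integral is elementary in polar angle. For the tensor identity \eqref{eq:sphere-tensor}, I will not integrate directly; I will use symmetry together with a trace computation. The left side $T(x)$ satisfies $T(Qx)=QT(x)Q^{T}$ for $Q\in SO(3)$, because $\sigma$ is rotation invariant. It is also symmetric and invariant under rotations fixing $x$, so $T(x)=a\,\mathrm{Id}+b\,x\otimes x$. I then determine $a,b$ from two scalars. The trace is $\frac1{4\pi}\int |x-y|^{-1}d\sigma(y)=\mathcal G[1](x)=1$ by Lemma \ref{lem:newtonian} with $\ell=0$. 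The quantity $x\cdot T x$ uses $x\cdot(x-y)=1-x\cdot y=\tfrac12|x-y|^2$, so
\[
x\cdot Tx=\frac1{4\pi}\int\frac{|x-y|}{4}\,d\sigma(y).
\]
With $|x-y|^2=2-2s$, $s=x\cdot y$, this equals $\frac18\int_{-1}^1\sqrt{2-2s}\,ds=\frac13$. Solving $3a+b=1$, $a+b=\frac13$ gives $a=\frac13$, $b=0$.

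For \eqref{eq:sphere-vector}, the same symmetry argument shows the principal value is a multiple $c\,x$. Dotting with $x$ gives
\[
c=\frac1{4\pi}\,\mathrm{p.v.}\int\frac{1-x\cdot y}{|x-y|^3}\,d\sigma=\frac1{8\pi}\int\frac{d\sigma}{|x-y|}=\frac12,
\]
again by Lemma \ref{lem:newtonian}. The only care needed is the principal value. The integrand is $O(|x-y|^{-2})$, which is integrable on a 2D surface, so in fact the integral converges absolutely; the normal component is $O(|x-y|^{-1})$ and the tangential part cancels by symmetry on symmetric caps. I will note this explicitly.

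For \eqref{eq:sphere-cross-identity}, I expand the triple product as $u\wedge(y\wedge a)=y(u\cdot a)-a(u\cdot y)$ with $u=x-y$. The second piece uses $u\cdot y=x\cdot y-1=-\tfrac12|x-y|^2$. It therefore contributes $+\frac{a}{2}\cdot\frac1{4\pi}\int |x-y|^{-1}d\sigma=\frac12 a$. For the first piece I write $y=x-(x-y)$, giving
\[
\frac1{4\pi}\,\mathrm{p.v.}\int \frac{x\,(u\cdot a)}{|u|^3}-\frac1{4\pi}\int\frac{u\,(u\cdot a)}{|u|^3}.
\]
By \eqref{eq:sphere-vector} the first term is $x\,(\tfrac12 x\cdot a)$, and by \eqref{eq:sphere-tensor} the second is $\frac13 a$. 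Summing gives $\frac12(a\cdot x)x-\frac13a+\frac12 a=\frac12(a\cdot x)x+\frac16 a$, as claimed.

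The main obstacle is justifying the symmetry reduction and the principal value rigorously. This mostly amounts to checking absolute integrability: all kernels are $O(|x-y|^{-1})$ or $O(|x-y|^{-2})$ on $\mathbb S^2$, which is fine in dimension 2. Beyond that, the main risk is sign bookkeeping in the triple product.
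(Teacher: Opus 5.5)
Your computations are all correct, but one justification is wrong and should be removed. A kernel of size $|x-y|^{-2}$ is \emph{not} integrable on a two-dimensional surface: in polar coordinates around $x$ one gets $\int_0^\epsilon r^{-2}\,r\,dr=\infty$. So the integrals in \eqref{eq:sphere-vector} and \eqref{eq:sphere-cross-identity} do not converge absolutely, and ``checking absolute integrability'' is the wrong diagnosis of the obstacle.

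What actually makes the principal values exist is the other mechanism you mention. Each truncated domain $\{|x-y|>\epsilon\}$ is invariant under rotations fixing $x$. On it, the tangential part $(x\cdot y)x-y$ of $x-y$ integrates to zero, because it is odd under the rotation by $\pi$ about $x$. The normal part $x\cdot(x-y)/|x-y|^3=1/(2|x-y|)$ is integrable. Run your symmetry argument on these truncations. For \eqref{eq:sphere-cross-identity}, apply your decomposition on the truncated domain: there only the piece $x\,(u\cdot a)/|u|^3$ fails to be absolutely integrable, and it is handled by \eqref{eq:sphere-vector}. With these changes the proof is rigorous.

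With that fix, your route compares to the paper's as follows.

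For \eqref{eq:sphere-cross-identity} it is the same argument. Both reduce to the three pieces $x\,(u\cdot a)/|u|^3$, $-(u\otimes u)a/|u|^3$ and $a/(2|u|)$.

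For \eqref{eq:sphere-tensor}, the paper reduces to $x=e_3$ and evaluates the diagonal entries in spherical coordinates. You instead use the structure $T=a\,\mathrm{Id}+b\,x\otimes x$ forced by invariance under rotations about $x$. You then determine the two coefficients from the trace ($=\mathcal G[1]=1$) and from $x\cdot Tx=\frac13$. This needs only one one-dimensional integral and recycles Lemma \ref{lem:newtonian}.

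For \eqref{eq:sphere-vector}, the paper kills the tangential component by differentiating $\mathcal G[1]\equiv1$ along the sphere. That requires passing a derivative through a principal value, which the paper justifies via symmetric truncation. Your rotational-symmetry argument is more elementary and proves existence of the principal value at the same time. The paper's argument has the merit of tying the cancellation to the constancy of the single-layer potential of $1$.
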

	
	\begin{proof}
		Denote the tensor integral in \eqref{eq:sphere-tensor} by $M(x)$ and let \(Q\in SO(3)\).
		Since \(Q\mathbb S^2=\mathbb S^2\), \(d\sigma(Qy)=d\sigma(y)\), and
		$
		|Qx-Qy|=|x-y|,
		$
		taking $y=Qz$, we find that $M(Qx)=QM(x)Q^*$, with $Q^*$ the transpose of $Q$. Then for any \(x\in\mathbb S^2\) we can choose
		\(Q\in SO(3)\) such that \(Qe_3=x\). Therefore, it suffices to compute $M(e_3)$.
		We parametrize
		\[
		y=(\sin\theta\cos\varphi,\sin\theta\sin\varphi,\cos\theta),
		\qquad
		d\sigma(y)=\sin\theta\,d\theta\,d\varphi.
		\]
		The off-diagonal terms vanish by
		symmetry in \(\varphi\). For the first diagonal component it is possible to find
		\[
		\frac1{4\pi}\int_{\mathbb S^2}
		\frac{(e_3-y)_1^2}{|e_3-y|^3}\,d\sigma(y)
		=\frac1{4\pi}\int_0^{2\pi}\cos^2\varphi\,d\varphi
		\int_0^\pi \cos^3\frac{\theta}{2}\,d\theta=\frac{1}{3},
		\]
		by writing $\sin\theta=2\sin(\theta/2)\cos(\theta/2)$ and $1-\cos\theta=2\sin^2(\theta/2)$.
		The same computation gives the second diagonal term.
		For the third diagonal component, in a similar manner we obtain
		\[
		\frac1{4\pi}\int_{\mathbb S^2}
		\frac{(e_3-y)_3^2}{|e_3-y|^3}\,d\sigma(y)
		=
		\frac1{4\pi}\int_0^{2\pi}\int_0^\pi
		\sin^2\frac{\theta}{2}\cos\frac{\theta}{2}
		\,d\theta\,d\varphi=\frac13.
		\]
		We now prove the principal value identity. Define
		\[
		V(x)=
		\frac1{4\pi}\operatorname{p.v.}\int_{\mathbb S^2}
		\frac{x-y}{|x-y|^3}\,d\sigma(y),\quad\mbox{and}\quad
		U(x)=\frac1{4\pi}\int_{\mathbb S^2}\frac{1}{|x-y|}\,d\sigma(y).
		\]
		By \eqref{newtonian_spectral}, $U(x)=1$ for every
		\(x\in\mathbb S^2\). Hence
		$
		\nabla_{\mathbb S^2}U(x)=0.
		$
		On the other hand,
		\[
		\nabla_x\frac{1}{|x-y|}
		=
		-\frac{x-y}{|x-y|^3}.
		\]
		Projecting onto the tangent plane, we get
		\[
		\nabla_{\mathbb S^2,x}\frac{1}{|x-y|}
		=
		-(\mathrm{Id}-x\otimes x)\frac{x-y}{|x-y|^3}.
		\]
		Therefore, in the principal value sense,
		\[
		0
		=
		\nabla_{\mathbb S^2}U(x)
		=
		-(\mathrm{Id}-x\otimes x)
		\operatorname{p.v.}\frac1{4\pi}\int_{\mathbb S^2}
		\frac{x-y}{|x-y|^3}\,d\sigma(y).
		\]
		Equivalently,
		$
		(\mathrm{Id}-x\otimes x)V(x)=0.
		$ Thus \(V(x)\) has no tangential component, and consequently,
		$
		V(x)=\lambda(x)x.
		$ It remains to compute the normal component. Taking the scalar product
		with \(x\), and using the identity
		$
		x\cdot(x-y)=|x-y|^2/2,
		$
		we find
		\[
		x\cdot V(x)
		=
		\frac1{4\pi}\int_{\mathbb S^2}
		\frac{x\cdot(x-y)}{|x-y|^3}\,d\sigma(y)
		=
		\frac1{8\pi}\int_{\mathbb S^2}\frac{1}{|x-y|}\,d\sigma(y)
		=
		\frac12.
		\]
		This gives
		$
		\lambda(x)=1/2
		$
		and hence
		\eqref{eq:sphere-vector} is proved. The differentiation above may be
		justified by introducing a symmetric truncation around \(y=x\), performing
		the integration by parts/differentiation on the truncated integral, and
		then passing to the principal value limit.
		It remains to prove \eqref{eq:sphere-cross-identity}. We rewrite
		\[
		\frac{x-y}{|x-y|^3}\wedge(y\wedge a)
		=
		\frac{x-y}{|x-y|^3}\wedge(x\wedge a)
		-
		\frac{x-y}{|x-y|^3}\wedge\bigl((x-y)\wedge a\bigr)=f_1-f_2.
		\] Next, we use the triple product identity 
		\begin{equation}\label{triple_product}
		u\wedge(v\wedge a)=v(u\cdot a)-a(u\cdot v)
		\end{equation}
		to obtain
		\[
		f_1
		=
		x\frac{(x-y)\cdot a}{|x-y|^3}
		-
		a\frac{(x-y)\cdot x}{|x-y|^3},\quad\mbox{and}
		\quad
		f_2=
		(x-y)\frac{(x-y)\cdot a}{|x-y|^3}
		-
		a\frac{|x-y|^2}{|x-y|^3}.
		\]
		The above two identities for $f_1$ and $f_2$ together with equality $(x-y)\cdot x
		=|x-y|^2/2$ yield
		\begin{equation*}
		\frac{x-y}{|x-y|^3}\wedge(y\wedge a)
		=x\frac{(x-y)\cdot a}{|x-y|^3}
		-
		\frac{(x-y)\otimes(x-y)}{|x-y|^3}a
		+
		\frac{a}{2|x-y|}.
		\end{equation*}
		Integrating over \(\mathbb S^2\), we get
		the last identity by using \eqref{eq:sphere-vector}, \eqref{eq:sphere-tensor} and
		\eqref{newtonian_spectral}.
		
	\end{proof}

	\section{Linearization}\label{sec:linearization}

	We linearize the contour equation \eqref{contour_eq} around the translating sphere
	\begin{equation*}
	S_0(x,t)=Rx+c_0(t), 
	\end{equation*}
	where we decompose $c(t)$ in \eqref{ct_def} as follows
	\begin{equation}\label{c0cL_def}
	\begin{aligned}
	c(t)=c_0(t)+c_L(t),\,\,\mbox{with}\,\, c_0(t)=\frac23 A_\rho t e_3,\,\,\mbox{and}\,\,  c_L(t)= \frac{3A_\sigma}{4\pi R^2}\int_0^t\int_{\mathbb{S}^2}f(y,s)yd\sigma(y)ds.
	\end{aligned}
	\end{equation}
	We shall use repeatedly the elementary identities
	\begin{equation}\label{aux_e3}
	\nabla_{\mathbb S^2}x_3=e_3-x_3x,
	\qquad
	\Delta_{\mathbb S^2}x_3=-2x_3.    
	\end{equation}
	For small \(f\), Taylor expansion gives
	\[
	r=1+f+\mathcal O(f^2).
	\]
	Thus
	\begin{equation*}
	S(x,t)
	=S_0(x,t)+S_L(x,t)+\mathcal O(f^2)    
	\end{equation*}
	with
	\[
	S_0(x,t)=Rx+c_0(t),
	\qquad
	S_L(x,t)=Rf(x,t)x+c_L(t).
	\]
   For simplicity of notation, we suppress the time dependence in the rest of this section and, whenever no confusion can arise, also the spatial argument.
   Since the Birkhoff--Rott kernel only depends on differences of the form
	\(S(x)-S(y)\), the translational part cancels in the kernel:
	\[
	S(x)-S(y)
	=
	R(x-y)+R\bigl(f(x)x-f(y)y\bigr)+\mathcal O(f^2).
	\]
	We expand
	\[
	BR(S,\omega)=BR_0+BR_L+\mathcal O(f^2).
	\]
	Therefore
	\[
	BR(S,\omega)-\dot c(t)
	=
	BR_0-\dot c_0
	+
	BR_L-\dot c_L
	+
	\mathcal O(f^2).
	\]
	Noticing that the expression of the non-unit normal \eqref{def:nonunit_normal_f} is linear in $f$, we can multiply by the expansion of \(\mathcal N_f/r\) to obtain the linear terms 
	\begin{equation*}
	\partial_t f
	=\frac1R\bigl(BR_0-\dot c_0\bigr)\cdot x+
	\frac1R
	\left[
	\bigl(BR_L-\dot c_L\bigr)\cdot x
	+
	\bigl(BR_0-\dot c_0\bigr)
	\cdot
	\left(2fx-\nabla_{\mathbb S^2}f\right)
	\right]
	+
	\mathcal O(f^2).
	\end{equation*}
	We note that the zeroth-order term vanishes by the choice of the translating vertical velocity $\dot{c}_0$ in \eqref{c0cL_def}. In fact, we will see that $BR_0-\dot{c}_0$ is tangential to the sphere (see \eqref{c0_cancellation}).
	Using the notation introduced in \eqref{Omega_split},  we define the linear operator
	\begin{equation}\label{linear_operator}
	\mathcal{L}[f](x)=\mathcal{L}_\sigma[f](x)+\mathcal{L}_\rho[f](x),
	\end{equation}
	where
	\begin{equation*}
	\begin{aligned}
	\mathcal{L}_\sigma[f](x)&=\frac1R
	\bigl(BR_{\sigma,L}-\dot c_L\bigr)\cdot x,
	\\
	\mathcal{L}_\rho[f](x)&=\frac1R
	\left[
	BR_{\rho,L}\cdot x
	+
	\bigl(BR_0-\dot c_0\bigr)
	\cdot
	\left(2fx-\nabla_{\mathbb S^2}f\right)
	\right].
	\end{aligned}
	\end{equation*}

	We now compute the linear terms appearing in this formula.
	
	\subsubsection*{Linearization of the potential}
	
	The curvature of the perturbed surface satisfies
	\[
	K[S]=\frac1R+K_L+\mathcal O(f^2),
	\]
	where $1/R$ is the curvature of the sphere of radius \(R\).
	Computing the first variation of the curvature under the radial perturbation \(S_L=Rf\,x\) (see e.g. \cite[Section~2.2]{PrussSimonett2016}), one obtains 
	\[
	K_L
	=
	-\frac{1}{2R}
	\left(
	\Delta_{\mathbb S^2}f+2f
	\right).
	\]
	Therefore the scalar potential \(\Omega\) expands as
	\[
	\Omega=\Omega_0+\Omega_L+\mathcal O(f^2),
	\]
	with
	\[
	\Omega_0(x)
	=
	\frac{A_\sigma}{R}
	-
	A_\rho R x_3-A_\rho c_0(t)\cdot e_3,
	\] and
	\begin{equation}\label{OmegaL}
	\begin{aligned}
	\Omega_L(x)
	&=\Omega_{\sigma,L}(x)+\Omega_{\rho,L}(x)\\
	&=
	-\frac{A_\sigma}{2R}
	\left(
	\Delta_{\mathbb S^2}f(x)+2f(x)
	\right)
	-
	A_\rho R x_3 f(x)-A_\rho c_L(t)\cdot e_3.
	\end{aligned}
	\end{equation}
	
	\subsubsection*{Linearization of the  vorticity}
	We expand  the  vorticity density
	$\omega=\omega_0+\omega_L+\mathcal O(f^2)$. Using \eqref{vorticity}
	we have
	\begin{equation*}
	\omega_0(x)=D_{\mathbb S^2}S_0(x)\left[x\wedge\nabla_{\mathbb{S}^2}\Omega_0(x)\right].
	\end{equation*}
	Since
	\begin{equation}\label{DsS_0}
	D_{\mathbb S^2}S_0=R\,\mathrm{Id}_{T_x\mathbb S^2},   \quad \nabla_{\mathbb S^2}\Omega_0
	=
	-A_\rho R\nabla_{\mathbb S^2}x_3,
	\end{equation}
	we use \eqref{aux_e3} and
	$x\wedge\nabla_{\mathbb S^2}x_3=x\wedge e_3$, 
	to find
	\begin{equation}\label{omega_0_final}
	\omega_0(x)
	=
	-A_\rho R^2 x\wedge e_3.
	\end{equation}
	The linear part is
	\begin{equation*}
	\begin{aligned}
	\omega_L(x)&=\omega_{\sigma,L}(x)+\omega_{\rho,L}(x),\\
	\omega_{\sigma,L}(x)&=D_{\mathbb S^2}S_0(x)
	\left[
	x\wedge\nabla_{\mathbb S^2}\Omega_{\sigma,L}(x)
	\right],\\
	\omega_{\rho,L}(x)&=
	D_{\mathbb S^2}S_0(x)
	\left[
	x\wedge\nabla_{\mathbb S^2}\Omega_{\rho,L}(x)
	\right]+D_{\mathbb S^2}S_L(x)
	\left[
	x\wedge\nabla_{\mathbb S^2}\Omega_0(x)
	\right].
	\end{aligned}
	\end{equation*}
	First, the surface tension term gives
	\begin{equation}\label{omega_sigma_L}
	\omega_{\sigma, L}=R\,x\wedge\nabla_{\mathbb S^2}
	\left[
	-\frac{A_\sigma}{2R}
	\left(
	\Delta_{\mathbb S^2}f+2f
	\right)
	\right]
	=
	-\frac{A_\sigma}{2}
	x\wedge\nabla_{\mathbb S^2}
	\left(
	\Delta_{\mathbb S^2}f+2f
	\right).
	\end{equation}
	The first part of the density-jump term gives
	\begin{equation*}
	R\,x\wedge\nabla_{\mathbb S^2}
	\left(
	-A_\rho R x_3f
	\right)
	=
	-A_\rho R^2
	x\wedge\nabla_{\mathbb S^2}(x_3f).
	\end{equation*}
	Expanding the derivative, we get
	\[
	x\wedge\nabla_{\mathbb S^2}(x_3f)
	=
	x_3\,x\wedge\nabla_{\mathbb S^2}f
	+
	f\,x\wedge e_3.
	\]
	Next we compute the contribution coming from \(D_{\mathbb S^2}S_L\).
	For a tangent vector \(V\in T_x\mathbb S^2\),
	\[
	D_{\mathbb S^2}S_L[V]
	=
	R\left[
	fV+(V\cdot\nabla_{\mathbb S^2}f)x
	\right].
	\]
	Using as before that 
	$
	x\wedge\nabla_{\mathbb S^2}\Omega_0
	=
	-A_\rho R\,x\wedge e_3,
	$
	we obtain
	\[
	D_{\mathbb S^2}S_L
	\left[
	x\wedge\nabla_{\mathbb S^2}\Omega_0
	\right]
	=
	-A_\rho R^2
	\left[
	f\,x\wedge e_3
	+
	\bigl((x\wedge e_3)\cdot\nabla_{\mathbb S^2}f\bigr)x
	\right].
	\]
	Recalling \eqref{DsS_0} and \eqref{OmegaL}  and combining the two contributions, we arrive at
	\begin{equation}\label{omega_rho_L}
	\omega_{\rho,L}
	=-
	A_\rho R^2
	\left[
	x_3\,x\wedge\nabla_{\mathbb S^2}f
	+
	2f\,x\wedge e_3
	+
	\bigl((x\wedge e_3)\cdot\nabla_{\mathbb S^2}f\bigr)x
	\right].
	\end{equation}

	\subsubsection*{Zeroth-order Birkhoff--Rott velocity}
	
	The zeroth-order velocity is
	\[
	BR_0(x)
	=
	-\frac{1}{4\pi R^2}\operatorname{p.v.}
	\int_{\mathbb S^2}
	\frac{x-y}{|x-y|^3}
	\wedge
	\omega_0(y)
	\,d\sigma(y).
	\]
	Substituting the expression \eqref{omega_0_final} for $\omega_0$, the
	factors of \(R\) cancel. Using \eqref{eq:sphere-cross-identity}, one obtains
	\begin{equation*}
	BR_0(x)
	=A_\rho\big(\frac12x_3x+\frac16e_3\big)=
	A_\rho
	\big(
	\frac23 x_3 x
	+
	\frac16\nabla_{\mathbb S^2}x_3
	\big).
	\end{equation*}
	By our choice of $c_0$ in \eqref{c0cL_def}, this term removes the constant vertical motion contribution from $BR_0$. In fact,
	using \eqref{aux_e3},
	we have
	\[
	\dot c_0
	=
	\frac23A_\rho x_3x
	+
	\frac23A_\rho\nabla_{\mathbb S^2}x_3.
	\]
	Therefore
	\begin{equation}\label{c0_cancellation}
	BR_0-\dot c_0
	=
	-\frac12 A_\rho\nabla_{\mathbb S^2}x_3.    
	\end{equation}
	We can now compute the second term of the density-jump part of the linear operator, \eqref{linear_operator}. We have
	\[
	\bigl(BR_0-\dot c_0\bigr)
	\cdot
	\left(
	2fx-\nabla_{\mathbb S^2}f
	\right)
	=
	-\frac12A_\rho\nabla_{\mathbb S^2}x_3
	\cdot
	\left(
	2fx-\nabla_{\mathbb S^2}f
	\right).
	\]
	Since \(\nabla_{\mathbb S^2}x_3\) is tangent to \(\mathbb S^2\),
	we obtain finally
	\begin{equation}\label{2ndInLinear_equation}
	\bigl(BR_0-\dot c_0\bigr)
	\cdot
	\left(
	2fx-\nabla_{\mathbb S^2}f
	\right)
	=
	\frac12A_\rho
	\nabla_{\mathbb S^2}x_3
	\cdot
	\nabla_{\mathbb S^2}f.  
	\end{equation}

	\subsubsection*{Linearization of the Birkhoff--Rott integral}
	
	We now compute the linear part of the Birkhoff--Rott velocity \eqref{def:BR_intrinsic}. The surface kernel expansion is given by 
	\begin{equation*}
	\frac{S(x)\!-\!S(y)}{|S(x)\!-\!S(y)|^3}
	=
	\frac1{R^2}
	\left[
	\frac{x\!-\!y}{|x\!-\!y|^3}
	+
	\frac{f(x)x\!-\!f(y)y}{|x\!-\!y|^3}
	-
	3
	\frac{(xf(x)\!-\!yf(y))\cdot (x\!-\!y)}{|x\!-\!y|^5}
	(x\!-\!y)
	\right]
	+
	\mathcal O(f^2).
	\end{equation*}
	Introducing the expansion of the vorticity, 
	we obtain
	\begin{equation}\label{BRL_linearization_expanded}
	\begin{aligned}
	BR_L(x)
	={}&
	-\frac{1}{4\pi R^2}
	\operatorname{p.v.}
	\int_{\mathbb S^2}
	\frac{x-y}{|x-y|^3}
	\wedge
	\omega_L(y)
	\,d\sigma(y)
	\\
	&-
	\frac{1}{4\pi R^2}
	\operatorname{p.v.}
	\int_{\mathbb S^2}
	\frac{f(x)x-f(y)y}{|x-y|^3}
	\wedge
	\omega_0(y)
	\,d\sigma(y)
	\\
	&+
	\frac{3}{4\pi R^2}
	\operatorname{p.v.}
	\int_{\mathbb S^2}
	\frac{
		\bigl(f(x)x-f(y)y\bigr)\cdot(x-y)
	}
	{|x-y|^5}
	(x-y)\wedge\omega_0(y)
	\,d\sigma(y).
	\end{aligned}
	\end{equation}
	It remains to project this expression onto \(x\). We therefore compute
	\[
	BR_L(x)\cdot x
	=
	BR_{\sigma,L}(x)\cdot x
	+
	BR_{\rho,L}(x)\cdot x,
	\]
	separating the surface-tension and density-jump contributions.
	
	\subsubsection*{Surface-tension contribution}
	
	The surface-tension contribution appears only through the linear part
	of $\omega_{\sigma, L}$ \eqref{omega_sigma_L},
	\begin{equation*}
	BR_{\sigma,L}(x)
	=
	-\frac{1}{4\pi R^2}
	\operatorname{p.v.}
	\int_{\mathbb S^2}
	\frac{x-y}{|x-y|^3}
	\wedge
	\omega_{\sigma,L}(y)
	\,d\sigma(y).
	\end{equation*}
	Substituting the expression for
	\(\omega_{\sigma,L}\), we obtain
	\begin{equation*}
	BR_{\sigma,L}(x)
	=
	\frac{A_\sigma}{8\pi R^2}
	\operatorname{p.v.}
	\int_{\mathbb S^2}
	\frac{x-y}{|x-y|^3}
	\wedge
	\left(
	y\wedge\nabla_{\mathbb S^2}(\Delta_{\mathbb S^2}f(y)+2f(y))
	\right)
	\,d\sigma(y).
	\end{equation*}
	We now project onto the radial direction \(x\). Thus
	\begin{equation*}
	BR_{\sigma,L}(x)\cdot x
	=
	\frac{A_\sigma}{8\pi R^2}
	\operatorname{p.v.}
	\int_{\mathbb S^2}
	x\cdot
	\Big[
	\frac{x-y}{|x-y|^3}
	\wedge
	(
	y\wedge\nabla_{\mathbb S^2}(\Delta_{\mathbb S^2}f(y)+2f(y))
	)
	\Big]
	\,d\sigma(y).
	\end{equation*}
	Using the triple-product identity \eqref{triple_product}
	with
	$u=x-y$, $v=y$, $a=\nabla_{\mathbb S^2}g(y)$, where $g$ is a general function on the sphere, we get
	\[
	(x-y)\wedge
	\left(
	y\wedge\nabla_{\mathbb S^2}g(y)
	\right)
	=
	y\bigl((x-y)\cdot\nabla_{\mathbb S^2}g(y)\bigr)
	-
	\nabla_{\mathbb S^2}g(y)\bigl((x-y)\cdot y\bigr).
	\]
	Taking the scalar product with \(x\), this becomes
	\begin{equation}\label{triple_g_op}
	\begin{aligned}
	x\cdot
	\left[
	(x-y)\wedge
	\left(
	y\wedge\nabla_{\mathbb S^2}g(y)
	\right)
	\right]
	&=
	(x\cdot y)\bigl((x-y)\cdot\nabla_{\mathbb S^2}g(y)\bigr)
	-
	\bigl(x\cdot\nabla_{\mathbb S^2}g(y)\bigr)
	\bigl((x-y)\cdot y\bigr)\\
	&=x\cdot\nabla_{\mathbb S^2}g(y).        
	\end{aligned}
	\end{equation}
	Consequently,
	\begin{equation*}
	BR_{\sigma,L}(x)\cdot x
	=
	\frac{A_\sigma}{8\pi R^2}
	\operatorname{p.v.}
	\int_{\mathbb S^2}
	\frac{x\cdot\nabla_{\mathbb S^2}(\Delta_{\mathbb S^2}f(y)+2f(y))}
	{|x-y|^3}
	\,d\sigma(y).
	\end{equation*}
	We now rewrite the integrand as a spherical gradient. Since
	\(\nabla_{\mathbb S^2}g(y)\) is tangent at \(y\), we have the identity
	\[
	x\cdot\nabla_{\mathbb S^2}g(y)
	=
	\bigl(x-(x\cdot y)y\bigr)
	\cdot\nabla_{\mathbb S^2}g(y).
	\]
	On the other hand,
	\begin{equation*}
	\nabla_{\mathbb S^2,y}\frac1{|x-y|}
	=
	\frac{x-(x\cdot y)y}{|x-y|^3}.
	\end{equation*}
	Therefore
	\begin{equation}\label{nabla_newt_g}
	\frac{x\cdot\nabla_{\mathbb S^2}g(y)}{|x-y|^3}
	=
	\nabla_{\mathbb S^2,y}\frac1{|x-y|}
	\cdot
	\nabla_{\mathbb S^2}g(y).    
	\end{equation}
	The preceding identity allows integration by parts to obtain
	\[
	\operatorname{p.v.}
	\int_{\mathbb S^2}
	\nabla_{\mathbb S^2,y}\frac1{|x-y|}
	\cdot
	\nabla_{\mathbb S^2}g(y)
	\,d\sigma(y)
	=
	-
	\int_{\mathbb S^2}
	\frac{\Delta_{\mathbb S^2}g(y)}{|x-y|}
	\,d\sigma(y).
	\]
	Therefore we obtain that
	\begin{equation*}
	BR_{\sigma,L}(x)\cdot x
	=
	-\frac{A_\sigma}{8\pi R^2}
	\int_{\mathbb S^2}
	\frac{
		\Delta_{\mathbb S^2}
		\left(
		\Delta_{\mathbb S^2}f(y)+2f(y)
		\right)
	}
	{|x-y|}
	\,d\sigma(y),
	\end{equation*}
	hence we conclude that
	\begin{equation}\label{linear_surface_tension_term}
	\mathcal L_\sigma[f](x)
	=
	-\frac{A_\sigma}{8\pi R^3}
	\int_{\mathbb S^2}
	\frac{
		\Delta_{\mathbb S^2}
		\left(
		\Delta_{\mathbb S^2}f(y)+2f(y)
		\right)
	}
	{|x-y|}
	\,d\sigma(y)-\frac{1}{R}\dot{c}_L\cdot x.
	\end{equation}

	\subsubsection*{Density-jump contribution}
	We split this term 
	\eqref{linear_operator} into two parts,
	\begin{equation}\label{linear_rho_split}
	\mathcal{L}_\rho=\mathcal{L}_{\rho,1}+\mathcal{L}_{\rho,2},
	\end{equation}
	with
	\begin{equation*}
	\begin{aligned}
	\mathcal{L}_{\rho,1}[f](x)&=\frac1R
	BR_{\rho,L}\cdot x,\\
	\mathcal{L}_{\rho,2}[f](x)&=\frac1R\bigl(BR_0-\dot c_0\bigr)
	\cdot
	\left(2fx-\nabla_{\mathbb S^2}f\right).
	\end{aligned}
	\end{equation*}
	First, we already computed the local term $\mathcal{L}_{\rho,2}$ in \eqref{2ndInLinear_equation},
	\begin{equation}\label{Lrho2_fin}
	\begin{aligned}
	\mathcal{L}_{\rho,2}[f](x)&=\frac{A_\rho}{2R}
	\nabla_{\mathbb S^2}x_3
	\cdot
	\nabla_{\mathbb S^2}f.  
	\end{aligned}
	\end{equation}
	Second, the term $BR_{\rho,L}\cdot x$ contains both $\omega_{\rho,L}$ and the variation of the singular kernel
	against $\omega_0$. We introduce the decomposition in \eqref{BRL_linearization_expanded} and split as follows
	\begin{equation}\label{Lrho1_split}
	\begin{aligned}
	\mathcal{L}_{\rho,1}[f](x)&=\mathcal{L}^1_{\rho,1}+\mathcal{L}^2_{\rho,1}+\mathcal{L}^3_{\rho,1},
	\end{aligned}
	\end{equation}
	\begin{equation*}
	\begin{aligned}
	\mathcal{L}^1_{\rho,1}&=-\frac{x}{4\pi R^3}\cdot
	\operatorname{p.v.}
	\int_{\mathbb S^2}
	\frac{x-y}{|x-y|^3}
	\wedge
	\omega_{\rho,L}(y)
	\,d\sigma(y),
	\end{aligned}
	\end{equation*}
	\begin{equation*}
	\begin{aligned}
	\mathcal{L}^2_{\rho,1}&=
	-\frac{x}{4\pi R^3}\cdot
	\operatorname{p.v.}
	\int_{\mathbb S^2}
	\frac{f(x)x-f(y)y}{|x-y|^3}
	\wedge
	\omega_0(y)
	\,d\sigma(y),
	\end{aligned}
	\end{equation*}
	\begin{equation*}
	\begin{aligned}
	\mathcal{L}^3_{\rho,1}&=\frac{3x}{4\pi R^3}\cdot
	\operatorname{p.v.}
	\int_{\mathbb S^2}
	\frac{
		\bigl(f(x)x-f(y)y\bigr)\cdot(x-y)
	}
	{|x-y|^5}
	(x-y)\wedge\omega_0(y)
	\,d\sigma(y).
	\end{aligned}
	\end{equation*}
	We compute $\mathcal{L}^1_{\rho,1}$ first. Substituting $\omega_{\rho,L}$ \eqref{omega_rho_L}, we have
	\begin{equation*}
	\begin{aligned}
	\mathcal{L}^1_{\rho,1}&=\frac{A_\rho x}{4\pi R}\cdot
	\operatorname{p.v.}
	\!\int_{\mathbb S^2}\!
	\frac{x\!-\!y}{|x\!-\!y|^3}
	\wedge
	\Big(y_3\,y\wedge\nabla_{\mathbb S^2}f(y)
	\!+\!
	2f(y)y\wedge e_3
	\!+\!
	\bigl((y\wedge e_3)\cdot\nabla_{\mathbb S^2}f(y)\bigr)y\Big)
	\,d\sigma(y).
	\end{aligned}
	\end{equation*}
	The first term simplifies as in \eqref{triple_g_op}, while for the second we use the triple-product identity \eqref{triple_product} to obtain
	\begin{equation}\label{aux2}
	x\cdot
	\left[
	(x-y)\wedge
	\left(
	y\wedge e_3
	\right)
	\right]
	=
	x_3-(x\cdot y)y_3.
	\end{equation}
	The last term does not contribute because $x\cdot\left[(x-y)\wedge y\right]=0$.
	Therefore, $\mathcal{L}_{\rho,1}^1$ simplifies to
	\begin{equation*}
	\begin{aligned}
	\mathcal{L}_{\rho,1}^1
	&=
	\frac{A_\rho}{4\pi R}
	\operatorname{p.v.}
	\int_{\mathbb S^2}
	\frac{
		y_3\,x\cdot\nabla_{\mathbb S^2}f(y)
		+
		2f(y)\bigl(x_3-(x\cdot y)y_3\bigr)
	}
	{|x-y|^3}
	\,d\sigma(y).
	\end{aligned}
	\end{equation*}
	Moreover, applying \eqref{nabla_newt_g} with $g=f$ in the first term and $g=y_3$ in the second, we find
	\begin{equation*}
	\begin{aligned}
	\mathcal{L}_{\rho,1}^1&=
	\frac{A_\rho}{4\pi R}
	\operatorname{p.v.}
	\int_{\mathbb S^2}
	\left[
	y_3
	\nabla_{\mathbb S^2,y}\frac{1}{|x-y|}\cdot\nabla_{\mathbb S^2}f(y)
	+
	2f(y)
	\nabla_{\mathbb S^2,y}\frac{1}{|x-y|}\cdot\nabla_{\mathbb S^2}y_3
	\right]
	\,d\sigma(y).
	\end{aligned}
	\end{equation*}
	Integrating by parts on \(\mathbb S^2\) in the first term provides
	\[
	\begin{aligned}
	\operatorname{p.v.}
	\int_{\mathbb S^2}
	y_3
	\nabla_{\mathbb S^2,y}\frac{1}{|x-y|}\cdot\nabla_{\mathbb S^2}f(y)
	\,d\sigma(y)
	&=
	-
	\int_{\mathbb S^2}
	\frac{1}{|x-y|}
	\nabla_{\mathbb S^2}\cdot
	\left(
	y_3\nabla_{\mathbb S^2}f(y)
	\right)
	\,d\sigma(y)
	\\
	&=
	-
	\int_{\mathbb S^2}
	\frac{
		y_3\Delta_{\mathbb S^2}f(y)
		+
		\nabla_{\mathbb S^2}y_3\cdot\nabla_{\mathbb S^2}f(y)
	}
	{|x-y|}
	\,d\sigma(y).
	\end{aligned}
	\]
	In the second term we find
	\[
	\begin{aligned}
	\operatorname{p.v.}
	\int_{\mathbb S^2}
	f(y)
	\nabla_{\mathbb S^2,y}\frac{1}{|x-y|}\cdot\nabla_{\mathbb S^2}y_3
	\,d\sigma(y)
	&=
	-
	\int_{\mathbb S^2}
	\frac{1}{|x-y|}
	\nabla_{\mathbb S^2}\cdot
	\left(
	f(y)\nabla_{\mathbb S^2}y_3
	\right)
	\,d\sigma(y)
	\\
	&=
	-
	\int_{\mathbb S^2}
	\frac{
		\nabla_{\mathbb S^2}f(y)\cdot\nabla_{\mathbb S^2}y_3
		+
		f(y)\Delta_{\mathbb S^2}y_3
	}
	{|x-y|}
	\,d\sigma(y)
	\\
	&=
	\int_{\mathbb S^2}
	\frac{
		2y_3f(y)
		-
		\nabla_{\mathbb S^2}y_3\cdot\nabla_{\mathbb S^2}f(y)
	}
	{|x-y|}
	\,d\sigma(y),
	\end{aligned}
	\]
	where we used \(\Delta_{\mathbb S^2}y_3=-2y_3\) as shown in \eqref{aux_e3}.  Hence
	\begin{equation}\label{Lrho11}
	\mathcal{L}_{\rho,1}^1
	=
	\frac{A_\rho}{4\pi R}
	\int_{\mathbb S^2}
	\frac{
		4y_3f(y)
		-
		y_3\Delta_{\mathbb S^2}f(y)
		-
		3\nabla_{\mathbb S^2}y_3\cdot\nabla_{\mathbb S^2}f(y)
	}
	{|x-y|}
	\,d\sigma(y).
	\end{equation}
	Next, we compute $\mathcal{L}_{\rho,1}^2$.
	Substituting $\omega_0$ \eqref{omega_0_final},
	we have
	\begin{equation*}
	\begin{aligned}
	\mathcal{L}_{\rho,1}^2
	&=\frac{A_\rho}{4\pi R}
	\operatorname{p.v.}
	\int_{\mathbb S^2}
	\frac{
		x\cdot
		\left[
		\bigl(f(x)x-f(y)y\bigr)
		\wedge
		\left(y\wedge e_3\right)
		\right]
	}
	{|x-y|^3}
	\,d\sigma(y).
	\end{aligned}
	\end{equation*}
	By the triple-product identity \eqref{triple_product},
	\begin{equation*}
	\begin{aligned}
	x\cdot
	\left[
	\bigl(f(x)x-f(y)y\bigr)
	\wedge
	\left(y\wedge e_3\right)
	\right]
	&=
	(x\cdot y)
	\bigl(f(x)x_3-f(y)y_3\bigr)
	-
	x_3
	\bigl(f(x)(x\cdot y)-f(y)\bigr)
	\\
	&=
	f(y)\bigl(x_3-(x\cdot y)y_3\bigr),
	\end{aligned}
	\end{equation*}
	and using \eqref{nabla_newt_g} again provides
	\begin{equation*}
	\begin{aligned}
	\mathcal{L}_{\rho,1}^2
	&=
	\frac{A_\rho}{4\pi R}
	\operatorname{p.v.}
	\int_{\mathbb S^2}
	f(y)
	\nabla_{\mathbb S^2,y}\frac{1}{|x-y|}\cdot\nabla_{\mathbb S^2}y_3
	\,d\sigma(y).
	\end{aligned}
	\end{equation*}
	Integrating by parts, we obtain
	\begin{equation}\label{Lrho12}
	\mathcal{L}_{\rho,1}^2
	=
	\frac{A_\rho}{4 \pi R}
	\int_{\mathbb S^2}
	\frac{
		2y_3f(y)
		-
		\nabla_{\mathbb S^2}y_3\cdot\nabla_{\mathbb S^2}f(y)
	}
	{|x-y|}
	\,d\sigma(y).
	\end{equation}
	Finally, we compute $\mathcal{L}_{\rho,1}^3$. Substituting $\omega_0$ \eqref{omega_0_final}, we get
	\begin{equation*}
	\begin{aligned}
	\mathcal{L}_{\rho,1}^3
	&=
	-\frac{3A_\rho}{4\pi R}
	\operatorname{p.v.}
	\int_{\mathbb S^2}
	\frac{
		\bigl(f(x)x-f(y)y\bigr)\cdot(x-y)
	}
	{|x-y|^5}
	x\cdot
	\left[
	(x-y)\wedge(y\wedge e_3)
	\right]
	\,d\sigma(y).
	\end{aligned}
	\end{equation*}
	Computing
	\begin{equation*}
	\bigl(f(x)x-f(y)y\bigr)\cdot(x-y)
	=
	(1-x\cdot y)\bigl(f(x)+f(y)\bigr)
	=
	\frac{|x-y|^2}{2}\bigl(f(x)+f(y)\bigr),
	\end{equation*}
	and using the triple product as in \eqref{aux2}, we can write
	\begin{equation*}
	\begin{aligned}
	\mathcal{L}_{\rho,1}^3
	&=
	-\frac{3A_\rho}{8\pi R}
	\operatorname{p.v.}
	\int_{\mathbb S^2}
	\frac{
		\bigl(f(x)+f(y)\bigr)
		\bigl(x_3-(x\cdot y)y_3\bigr)
	}
	{|x-y|^3}
	\,d\sigma(y).
	\end{aligned}
	\end{equation*}
	We use formula \eqref{nabla_newt_g} to write 
	\begin{equation*}
	\begin{aligned}
	\mathcal{L}_{\rho,1}^3
	&=
	-\frac{3A_\rho}{8\pi R}
	\operatorname{p.v.}
	\int_{\mathbb S^2}
	\big(f(x)+f(y)\big)\nabla_{\mathbb S^2,y}\frac{1}{|x-y|}\cdot\nabla_{\mathbb S^2}y_3
	\,d\sigma(y).
	\end{aligned}
	\end{equation*}
	Finally, we integrate by parts to obtain
	\begin{equation}\label{Lrho13}
	\begin{aligned}
	\mathcal{L}_{\rho,1}^3
	&=
	\frac{3A_\rho}{8\pi R}
	f(x)\int_{\mathbb S^2}
	\frac{\Delta_{\mathbb S^2}y_3}{|x-y|}
	\,d\sigma(y)+\frac{3A_\rho}{8\pi R}
	\int_{\mathbb S^2}\frac{\nabla_{\mathbb S^2}\cdot(f(y)\nabla_{\mathbb S^2}y_3)}{|x-y|}
	\,d\sigma(y)\\
	&=-\frac{3A_\rho}{R}f(x)\mathcal{G}[y_3](x)
	+\frac{3A_\rho}{8\pi R}
	\int_{\mathbb S^2}\frac{\nabla_{\mathbb S^2}y_3\cdot\nabla_{\mathbb S^2}f(y)-2y_3f(y)}{|x-y|}
	\,d\sigma(y)\\
	&=-\frac{A_\rho}{R}f(x)x_3
	+\frac{3A_\rho}{8\pi R}
	\int_{\mathbb S^2}\frac{\nabla_{\mathbb S^2}y_3\cdot\nabla_{\mathbb S^2}f(y)-2y_3f(y)}{|x-y|}
	\,d\sigma(y),
	\end{aligned}
	\end{equation}
	where we use that $\mathcal{G}[y_3](x)=\frac13x_3$ by \eqref{newtonian_spectral}.
	
	Combining the expressions \eqref{Lrho11}, \eqref{Lrho12}, and \eqref{Lrho13} back in \eqref{Lrho1_split}, and using the notation \eqref{newtonian_operator}, we have
	\begin{equation*}
	\begin{aligned}
	\mathcal{L}_{\rho,1}[f](x)&= 
	-\frac{A_\rho}{R}x_3f(x)
	+
	\frac{A_\rho}{R}\mathcal G[
	3y_3f
	-
	y_3\Delta_{\mathbb S^2}f
	-
	\frac52
	\nabla_{\mathbb S^2}y_3\cdot\nabla_{\mathbb S^2}f](x).
	\end{aligned}
	\end{equation*}
    Combining this  identity with \eqref{Lrho2_fin} back in \eqref{linear_rho_split} gives 
	\begin{equation}\label{linear_rho}
	\begin{aligned}
	\mathcal{L}_{\rho}[f](x)&= \frac{A_\rho}{R}(-x_3f(x)
	+\frac12\nabla_{\mathbb S^2}x_3
	\cdot
	\nabla_{\mathbb S^2}f(x))\\
	&\quad+
	\frac{A_\rho}{R}\mathcal G
	[
	3y_3f
	-
	y_3\Delta_{\mathbb S^2}f
	-
	\frac52
	\nabla_{\mathbb S^2}y_3\cdot\nabla_{\mathbb S^2}f](x).
	\end{aligned}
	\end{equation}

	\subsubsection*{The linearized equation}
	
	Finally, introducing the above identity  together with \eqref{linear_surface_tension_term} back into \eqref{linear_operator}, we obtain the linear operator
\begin{multline}\label{linearized_final}
        \mathcal{L}[f](x)
	=
	-\frac{3A_\sigma}{4\pi R^3}\int_{\mathbb{S}^2}f(y)\,x\cdot y \,d\sigma(y)
	-
	\frac{A_\sigma}{2R^3}
	\mathcal{G}[\Delta_{\mathbb S^2}
	\left(
	\Delta_{\mathbb S^2}f+2f
	\right)](x)
    \\
+
	\frac{A_\rho}{R}
	(
	-x_3f(x)
	+
	\frac12\nabla_{\mathbb S^2}x_3\cdot\nabla_{\mathbb S^2}f(x)+
	\mathcal{G}[
	3y_3f
	-
	y_3\Delta_{\mathbb S^2}f
	-
	\frac52
	\nabla_{\mathbb S^2}y_3\cdot\nabla_{\mathbb S^2}f
	](x)),
    \end{multline}
		where we have used the expression of $c_L$ in \eqref{c0cL_def}.

	\section{Spectral form of the linearized operator}
	\label{sec:spectral}
	
	In this section we compute the action of the linearized operator
	\eqref{linearized_final} on spherical harmonics.  
	From the product formula for the spherical Laplacian, we have that
	\begin{equation*}
	\Delta_{\mathbb S^2}(x_3Y_{\ell,m})
	=
	x_3\Delta_{\mathbb S^2}Y_{\ell,m}
	+
	Y_{\ell,m}\Delta_{\mathbb S^2}x_3
	+
	2\nabla_{\mathbb S^2}x_3\cdot\nabla_{\mathbb S^2}Y_{\ell,m}.
	\end{equation*}
	Then, using the identities $\Delta_{\mathbb S^2}x_3=-2x_3$ and \eqref{LaplaceSPH}, we have
	\begin{equation*}
	\Delta_{\mathbb S^2}(x_3Y_{\ell,m})
	=
	-(\ell(\ell+1)+2)x_3 Y_{\ell,m}
	+
	2\nabla_{\mathbb S^2}x_3\cdot\nabla_{\mathbb S^2}Y_{\ell,m}.
	\end{equation*}
	On the other hand, by \eqref{x3Y_recurrence} and \eqref{LaplaceSPH},
	\begin{equation*}
	\Delta_{\mathbb S^2}(x_3Y_{\ell,m})
	=\Delta_{\mathbb S^2}( b_{\ell,m}^{+}Y_{\ell+1,m} +  b_{\ell,m}^{-}Y_{\ell-1,m}) =  -(\ell+1)(\ell+2) b_{\ell,m}^{+}Y_{\ell+1,m} - \ell (\ell-1)b_{\ell,m}^{-}Y_{\ell-1,m}.
	\end{equation*}
	Combining the two identities above and using again \eqref{x3Y_recurrence}, we obtain that
	\begin{equation}\label{gradx3gradY_recurrence}
	\nabla_{\mathbb S^2}x_3\cdot\nabla_{\mathbb S^2}Y_{\ell,m}
	=
	-\ell b_{\ell,m}^{+}Y_{\ell+1,m}
	+
	(\ell+1)b_{\ell,m}^{-}Y_{\ell-1,m}.
	\end{equation}
	We first consider the surface-tension part. Since
	$$
	\Delta_{\mathbb S^2}
	\big(\Delta_{\mathbb S^2}Y_{\ell,m}+2Y_{\ell,m}\big)
	=
	\ell(\ell+1)(\ell-1)(\ell+2)Y_{\ell,m},
	$$
	we obtain from \eqref{newtonian_spectral} and \eqref{linearized_final} that
	\begin{equation}\label{Lsigma_spectral_action}
	\mathcal L_{\sigma}[Y_{\ell,m}](x)
	=
	-\frac{A_{\sigma}}{2R^3}
	\frac{\ell(\ell+1)(\ell-1)(\ell+2)}{2\ell+1}
	Y_{\ell,m}-\frac{3A_\sigma}{4\pi R^3}\int_{\mathbb{S}^2}Y_{\ell,m}(y)\,x\cdot y \,d\sigma(y).
	\end{equation}
	We now compute the density-jump part given in \eqref{linear_rho}.  Acting on one mode $Y_{\ell,m}$, the
	local density-jump terms are, by \eqref{x3Y_recurrence} and
	\eqref{gradx3gradY_recurrence},
	\begin{equation}\label{density_local_spectral}
	\begin{aligned}
	-x_3Y_{\ell,m}
	+\frac12\nabla_{\mathbb S^2}x_3\cdot\nabla_{\mathbb S^2}Y_{\ell,m}
	&=-\frac{\ell+2}{2}b_{\ell,m}^{+}Y_{\ell+1,m}
	+\frac{\ell-1}{2}b_{\ell,m}^{-}Y_{\ell-1,m}.
	\end{aligned}
	\end{equation}
	Next, for the nonlocal part, we have, using again \eqref{x3Y_recurrence} and \eqref{gradx3gradY_recurrence},
	\begin{equation}\label{density_nonlocal_input_spectral}
	\begin{aligned}
	3x_3Y_{\ell,m}
	&-x_3\Delta_{\mathbb S^2}Y_{\ell,m}
	-\frac52\nabla_{\mathbb S^2}x_3\cdot\nabla_{\mathbb S^2}Y_{\ell,m}\\
	&=   \frac{(2\ell+3)(\ell+2)}{2}b_{\ell,m}^{+}Y_{\ell+1,m}
	+
	\frac{(2\ell-1)(\ell-1)}{2}b_{\ell,m}^{-}Y_{\ell-1,m}.    
	\end{aligned}
	\end{equation}
	Therefore, applying \eqref{newtonian_spectral} gives
	\begin{equation}\label{density_nonlocal_spectral}
	\begin{aligned}
	\mathcal G\Big(
	3x_3Y_{\ell,m}
	-x_3\Delta_{\mathbb S^2}Y_{\ell,m}
	-\frac52\nabla_{\mathbb S^2}x_3\cdot\nabla_{\mathbb S^2}Y_{\ell,m}
	\Big)
	&=
	\frac{\ell+2}{2}b_{\ell,m}^{+}Y_{\ell+1,m}
	+
	\frac{\ell-1}{2}b_{\ell,m}^{-}Y_{\ell-1,m}.
	\end{aligned}
	\end{equation}
	The $Y_{\ell+1,m}$ contributions in \eqref{density_local_spectral} and
	\eqref{density_nonlocal_spectral} cancel. Therefore, recalling \eqref{linear_rho}, we obtain 
	\begin{equation}\label{Lrho_spectral_action}
	\mathcal L_{\rho}[Y_{\ell,m}](x)
	=
	\frac{A_{\rho}}{R}(\ell-1)b_{\ell,m}^{-}Y_{\ell-1,m}.
	\end{equation}
	Combining \eqref{Lsigma_spectral_action} and \eqref{Lrho_spectral_action} back in \eqref{linearized_final}, we have
	\begin{equation*}
	\begin{aligned}
	\mathcal L[Y_{\ell,m}](x)
	&=-\frac{3A_\sigma}{4\pi R^3}\int_{\mathbb{S}^2}Y_{\ell,m}(y)\,x\cdot y \,d\sigma(y)
	-\frac{A_{\sigma}}{2R^3} \frac{(\ell-1)\ell(\ell+1)(\ell+2)}{2\ell+1} Y_{\ell,m}\\
	&\quad+
	\frac{A_{\rho}}{R}(\ell-1)b_{\ell,m}^{-}Y_{\ell-1,m}
	\end{aligned} 
	\end{equation*}
	Finally, the term $\dot{c}_L$ in \eqref{c0cL_def} was chosen precisely so that
	\begin{equation*}
	\begin{aligned}
	\dot{c}_L(t)\cdot x =\frac{A_\sigma}{R^2}\text{proj}_1f(x,t),
	\end{aligned}
	\end{equation*}
	hence
	\begin{equation*}
	\begin{aligned}
	-\frac{3A_\sigma}{4\pi R^3}\int_{\mathbb{S}^2}Y_{\ell,m}(y)\,x\cdot y \,d\sigma(y) = -\frac{A_\sigma}{R^3}Y_{1,m}(x)\delta_{\ell,1}.
	\end{aligned}
	\end{equation*}
	Therefore, the linear operator in spectral form is given by
	\begin{equation*}
	\begin{aligned}
	\widehat{\mathcal L[f]}_{\ell,m}
	&=-\frac{A_\sigma}{R^3}\widehat{f}_{\ell,m}\delta_{\ell,1}
	-\frac{A_{\sigma}}{2R^3} \frac{(\ell-1)\ell(\ell+1)(\ell+2)}{2\ell+1} \widehat{f}_{\ell,m}+
	\frac{A_{\rho}}{R}\ell b_{\ell+1,m}^{-}\widehat{f}_{\ell+1,m},
	\end{aligned} 
	\end{equation*}
	which combines to
	\begin{equation*}
	\begin{aligned}
	\widehat{\mathcal L[f]}_{\ell,m}
	&=-\frac{A_{\sigma}}{2R^3} \frac{(\ell-1+\delta_{\ell,1})\ell(\ell+1)(\ell+2)}{2\ell+1} \widehat{f}_{\ell,m}+
	\frac{A_{\rho}}{R}\ell b_{\ell+1,m}^{-}\widehat{f}_{\ell+1,m}.
	\end{aligned} 
	\end{equation*}
	It will be convenient to consider the following rescaled version of the linear operator, which has eigenvalues independent of the physical constants:
	\begin{equation}\label{linear_spectral}
	\begin{aligned}
	\widehat{\mathcal L_a[f]}_{\ell,m}
	&=\frac{2R^3}{A_\sigma}\widehat{\mathcal L[f]}_{\ell,m}=-\frac{(\ell-1+\delta_{\ell,1})\ell(\ell+1)(\ell+2)}{2\ell+1} \widehat{f}_{\ell,m}+
	\frac{A_{\rho}R^{2}}{A_{\sigma}}2\ell b_{\ell+1,m}^{-}\widehat{f}_{\ell+1,m}.
	\end{aligned} 
	\end{equation}

	\section{Semigroup estimates}\label{sec:semigroup}
	
	\subsection{Diagonalization of $\mathcal{L}_a$}
	
	In this subsection, we will diagonalize the linear operator $\mathcal{L}_a$, defined in spectral form by \eqref{linear_spectral}.

	If we define
	\begin{equation}\label{def:akbk}
	a_{k} = \frac{(k-1+\delta_{k,1})k(k+1)(k+2)}{2k+1}, \quad b_{k}^{(m)} =   \frac{A_{\rho}R^{2}}{A_{\sigma}}2k b_{k+1,m}^{-},
	\end{equation}
	where $b_{k,m}^-$ is given in \eqref{a_coefficients_pm},
	then the linear operator \eqref{linear_spectral} can be written as 
	\begin{equation}\label{La_def}
	\widehat{\mathcal{L}_a[f]}_{k,m} = \sum_{j \geq 0} L^{(m)}_{k,j}\widehat{f}_{j,m},
	\end{equation}
	where
	\begin{equation}\label{Ldef_kj}
	L^{(m)}_{k,j}=
	\begin{cases}
	-a_{k},     & \text{if }  j=k,\\
	b_{k}^{(m)}, & \text{if } j=k+1,\\
	0, & \text{otherwise}. 
	\end{cases}
	\end{equation}
	We now diagonalize the system given by \eqref{Ldef_kj}. To do so, define 
	\begin{equation*}
	S : \ell^{2}\rightarrow \ell^{2},\quad S^{-1} : \ell^{2}\rightarrow \ell^{2},
	\end{equation*}
	by
	\begin{equation}\label{Skj_def}
	S^{(m)}_{k,k+j}=
	\begin{cases}
	\displaystyle \prod_{\ell=1}^{j} \frac{b^{(m)}_{k-1+\ell}}{a_{k-1+\ell} - a_{k+j}},  & \text{if }  j\geq 0,\\
	0, & \text{otherwise}, 
	\end{cases}
	\end{equation}
	and  $S^{-1}$ by
	\begin{equation}\label{Skjinverse_def}
	(S^{-1})^{(m)}_{k,k+j}=
	\begin{cases} \displaystyle 
	(-1)^{j}\prod_{\ell=1}^{j}   \frac{b^{(m)}_{k-1+\ell}}{a_{k} -  a_{k+\ell}},    & \text{if }  j\geq 0,\\
	0, & \text{otherwise}. 
	\end{cases}
	\end{equation}
    The operators $S$ and $S^{-1}$ define  the operators $\mathcal{S}$ and $\mathcal{S}^{-1}$ spectrally by
    \begin{equation}\label{def:mathcalS}
	\widehat{\mathcal{S}[f]}_{k,m} = \sum_{j\geq 0} S^{(m)}_{k,j}\widehat{f}_{j,m},\quad \widehat{\mathcal{S}^{-1}[f]}_{k,m} = \sum_{j\geq 0} (S^{-1})^{(m)}_{k,j}\widehat{f}_{j,m}.
	\end{equation}
    We note that $b_0^{(m)}=0$, hence $S_{0,N}^{(m)}=(S^{-1})_{0,N}^{(m)}=0$ for $N\geq1$.
    In particular, $\mathcal{S}$ and $\mathcal{S}^{-1}$ preserve the mean-zero condition.

	To control the diagonalizing operator $\mathcal{S}$  we first obtain appropriate bounds on the spectral coefficients $S_{k,j}^{(m)}$.
	
	\begin{lemma}\label{Skj_lem}
		Let $S_{\ell,N}^{(m)}$ be defined in \eqref{Skj_def}. Then, for $N> \ell+1$,
		\begin{equation*}
		\sup_{\ell\sim 2^{j}, N\sim 2^{k}} |S^{(m)}_{\ell, N}| \lesssim C\Big(\frac{|A_\rho|R^2}{A_\sigma}\Big) 2^{-2k}e^{-c2^{k-j}}.
		\end{equation*}
	\end{lemma}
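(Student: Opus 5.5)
Write $\ell$ for the row index, $N=\ell+J$ with $J=N-\ell\ge 2$, and $B=|A_\rho|R^2/A_\sigma$. By \eqref{Skj_def},
\[
S^{(m)}_{\ell,N}=\prod_{i=1}^{J}\frac{b^{(m)}_{\ell-1+i}}{a_{\ell-1+i}-a_N}.
\]
The plan is to bound each factor separately and then multiply. Three elementary facts will be used throughout.

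\emph{Size of $b$.} From \eqref{a_coefficients_pm}, $b^{-}_{k+1,m}\le 1/2$ for all admissible $m$, so $|b^{(m)}_k|\le B\,k$.

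\emph{Size of $a$.} From \eqref{def:akbk}, $a_k$ is strictly increasing in $k$ (including $k=0,1$, where $a_0=0$ and $a_1=2$). Moreover $a_k\sim (1+k)^3$, so $a_k\approx k^3/2$ for large $k$.

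\emph{Gaps of $a$.} For $p<N$ we have $a_N-a_p\gtrsim (N-p)N^2$, since $a$ is increasing with derivative $\sim k^2$.

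Together these give, for each $i$, with $p=\ell-1+i$,
\[
\Bigl|\frac{b_p}{a_p-a_N}\Bigr|\lesssim \frac{B\,p}{(N-p)N^2}\le \frac{B}{(N-p)N}.
\]

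Multiplying over $p=\ell,\dots,N-1$, so that $N-p$ runs over $1,\dots,J$, yields
\[
|S^{(m)}_{\ell,N}|\le \frac{(CB)^J}{J!\,N^{J}}.
\]

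The decay in this estimate comes from $N^{-J}$ combined with $1/J!$, and I plan to use it in two regimes.

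\textbf{Case $N\le 2\ell$.} Then $J\ge 2$ already gives $N^{-J}\le N^{-2}\sim 2^{-2k}$. The exponential factor $e^{-c2^{k-j}}$ is bounded below by a constant, because $2^{k-j}\lesssim 1$. The remaining factor $(CB)^J/J!$ is bounded uniformly in $J$ by $e^{CB}$.

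\textbf{Case $N\ge 2\ell$.} Here $J\sim N\sim 2^k$, and the product is superexponentially small. Concretely,
\[
\frac{(CB)^J}{J!\,N^J}\le N^{-2}\,\frac{(CB)^J}{J!}\,N^{-(J-2)}.
\]
For $N$ larger than a constant depending on $B$, the last two factors together are at most $e^{-cN}\le e^{-c2^{k-j}}$. For the finitely many smaller $N$ the bound follows after adjusting the constant $C(B)$. In both cases one gets $\lesssim C(B)\,2^{-2k}e^{-c2^{k-j}}$.

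The main obstacle I foresee is the lower bound $a_N-a_p\gtrsim(N-p)N^2$ uniformly near the bottom of the spectrum. Specifically, the modified value $a_1=2$ and small $p$ must be handled, and I need to verify monotonicity of $a_k$ there. I would check this directly from the formula: $(k-1)k(k+1)(k+2)/(2k+1)$ has positive increments for $k\ge 1$, and comparing $a_1=2$ against $a_2=24/5$ settles the modified entry. The crude bound $|b_p|\le Bp$ is enough; no cancellation from the $m$-dependence is needed, since $b^{-}\le 1/2$ uniformly in $m$.
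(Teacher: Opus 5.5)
Your proposal is correct and is essentially the paper's proof. Both arguments use the same factor bounds $a_N-a_{N-r}\gtrsim rN^2$ and $|b^{(m)}_{N-r}|\lesssim BN$ (with $B=|A_\rho|R^2/A_\sigma$) to reach the same product bound $(CB)^M/(M!\,N^M)\lesssim e^{CB}N^{-M}$, and your two-case conversion to $2^{-2k}e^{-c2^{k-j}}$ simply unpacks the paper's rewriting $N^{-(N-\ell)}=N^{-2}e^{-(N-\ell-2)\log N}$. One small slip, harmless: $b^{-}_{k+1,m}\le 1/2$ is not quite true (for instance $b^-_{2,0}=\sqrt{4/15}>1/2$); the correct uniform bound is $b^-_{k+1,m}\le 1/\sqrt{3}$, which only changes the constant in $|b^{(m)}_k|\lesssim Bk$.
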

	\begin{proof} 
		We first rewrite $S_{\ell,N}^{m}$ in \eqref{Skj_def},
		\begin{equation*}
		\begin{aligned}
		S_{\ell,N}^{(m)}&=\prod_{j=1}^{N-\ell}\frac{b_{\ell-1+j}^{(m)}}{a_{\ell-1+j}-a_N}=\prod_{j=\ell}^{N-1}\frac{b_{j}^{(m)}}{a_{j}-a_N}.
		\end{aligned}
		\end{equation*}
		Denote $M=N-\ell$ and change the index again by $r=N-j$ to get
		\begin{equation*}
		\begin{aligned}
		S_{\ell,N}^{(m)}&=\prod_{r=1}^{M}\frac{b_{N-r}^{(m)}}{a_{N-r}-a_N}.
		\end{aligned}
		\end{equation*}
		Using the definition of $a_k$ and $b_k$ \eqref{def:akbk}, we have
		\begin{equation*}
		a_N-a_{N-r}\gtrsim rN^2, \qquad |b_{N-r}^{(m)}|\lesssim \frac{|A_\rho|R^2}{A_\sigma}N,
		\end{equation*}
		and we obtain that
		\begin{equation}\label{S_lN_bound}
		\begin{aligned}
		|S_{\ell,N}^{(m)}|&\leq \Big(\frac{|A_\rho|R^2}{A_\sigma}\Big)^{M}\frac{C^{M}}{M!N^M} \lesssim  \frac{\exp{\big(C\frac{|A_\rho|R^2}{A_\sigma}\big)}}{N^{M}}.
		\end{aligned}
		\end{equation}
		Therefore, we have
		\begin{equation*}
		\begin{aligned}
		|S_{\ell,N}^{(m)}|&\lesssim C\Big(\frac{|A_\rho|R^2}{A_\sigma}\Big)N^{-(N-\ell)}= C\Big(\frac{|A_\rho|R^2}{A_\sigma}\Big)N^{-\ell(\frac{N}{\ell}-1-\frac{2}{\ell})}N^{-2}\\
        &= C\Big(\frac{|A_\rho|R^2}{A_\sigma}\Big)e^{-\ell(\frac{N}{\ell}-1-\frac{2}{\ell})\log{N}}N^{-2},
		\end{aligned}
		\end{equation*}
		and for $N\sim 2^k$, $\ell\sim 2^j$, $N>\ell+1$, 
		\begin{equation*}
		\begin{aligned}
		|S_{\ell\sim 2^j,N\sim 2^k}^{(m)}|&\lesssim C\Big(\frac{|A_\rho|R^2}{A_\sigma}\Big)e^{-\frac{N}{\ell}}2^{-2k}\lesssim C\Big(\frac{|A_\rho|R^2}{A_\sigma}\Big) 2^{-2k}e^{-c2^{k-j}}.
		\end{aligned}
		\end{equation*}
	\end{proof}	
	
	Next, we notice that the operator $\mathcal{S}$ \eqref{Skj_def} is upper triangular. Let us write it as follows:
	\begin{equation}\label{S_superdiag}
	\mathcal{S}=I+\mathcal{K}=I+\mathcal{K}_1+\sum_{r\geq2}\mathcal{K}_r=I+\mathcal{K}_1+\mathcal{K}_{\geq2},
	\end{equation}
	where $I$ denotes the identity and $\mathcal{K}_r$ the operator given by the $r$-superdiagonal.
	In the following estimates, the first superdiagonal will need a separate treatment.
	Its explicit expression is
	\begin{equation*}
	\begin{aligned}
	\mathcal{K}_1[f]=\sum_{\ell\geq 2}\sum_{|m|\leq \ell}S^{(m)}_{\ell-1,\ell}\widehat{f}_{\ell,m}Y_{\ell-1,m},
	\end{aligned}
	\end{equation*}
	with 
	\begin{equation*}
	\begin{aligned}
	S^{(m)}_{\ell-1,\ell}=\frac{b_{\ell-1}^{(m)}}{a_{\ell-1}-a_{\ell}}=\frac{A_\rho R^2}{A_\sigma}\frac{2(\ell-1)}{a_{\ell-1}-a_{\ell}}b_{\ell,m}^-,
	\end{aligned}
	\end{equation*}
	and $b_{\ell,m}^-$ defined in \eqref{a_coefficients_pm}.
	Equivalently, we have
	\begin{equation*}
	\begin{aligned}
	\mathcal{K}_1[Y_{\ell,m}]=\frac{A_\rho R^2}{A_\sigma}\frac{2(\ell-1)}{a_{\ell-1}-a_{\ell}}b_{\ell,m}^-Y_{\ell-1,m}.
	\end{aligned}
	\end{equation*}
	We notice that, due to the dependence in $m$, it is not a multiplier and we cannot apply Lemma \ref{lem:kernel_bounds} directly to obtain bounds. However, undoing the steps in Section \ref{sec:spectral}, we can find a more suitable expression. In fact, 
	combining \eqref{density_local_spectral} and \eqref{density_nonlocal_input_spectral}, we get
	\begin{equation*}
	\begin{aligned}
	\Big(\frac{\ell-1}{2}+\frac{(2\ell-1)(\ell-1)}{2(2\ell+3)}\Big)b_{\ell,m}^{-}Y_{\ell-1,m}&=-x_3Y_{\ell,m}
	+\frac12\nabla_{\mathbb S^2}x_3\cdot\nabla_{\mathbb S^2}Y_{\ell,m}\\
	&\quad+\frac{1}{2\ell+3}\Big(3x_3Y_{\ell,m}
	-x_3\Delta_{\mathbb S^2}Y_{\ell,m}
	-\frac52\nabla_{\mathbb S^2}x_3\cdot\nabla_{\mathbb S^2}Y_{\ell,m}\Big),
	\end{aligned}
	\end{equation*}
	therefore, for $\ell\geq2$, 
	\begin{equation*}
	\begin{aligned}
	b_{\ell,m}^{-}Y_{\ell-1,m}&=\frac{2\ell+3}{(\ell-1)(2\ell+1)}\Big(-x_3Y_{\ell,m}
	+\frac12\nabla_{\mathbb S^2}x_3\cdot\nabla_{\mathbb S^2}Y_{\ell,m}\Big)\\
	&\quad+\frac{1}{(\ell-1)(2\ell+1)}\Big(3x_3Y_{\ell,m}
	-x_3\Delta_{\mathbb S^2}Y_{\ell,m}
	-\frac52\nabla_{\mathbb S^2}x_3\cdot\nabla_{\mathbb S^2}Y_{\ell,m}\Big).
	\end{aligned}
	\end{equation*}
	Defining the following multiplier operators, of order $-2$ and $-3$, respectively, by $\mathcal{K}_{1,1}[Y_{\ell,m}]=\mathcal{K}_{1,2}[Y_{\ell,m}]=0$ for $\ell=0,1$, and for $\ell\geq2$,
	\begin{equation*}
	\begin{aligned}
	\mathcal{K}_{1,1}[Y_{\ell,m}]=2\frac{2\ell+3}{(2\ell+1)(a_{\ell-1}-a_{\ell})}Y_{\ell,m},\quad
	\mathcal{K}_{1,2}[Y_{\ell,m}]=\frac{2}{(2\ell+1)(a_{\ell-1}-a_{\ell})}Y_{\ell,m},
	\end{aligned}
	\end{equation*}
	we have
	\begin{equation*}
	\begin{aligned}
	\mathcal{K}_1[f]&=\frac{A_\rho R^2}{A_\sigma}\Big(-x_3\mathcal{K}_{1,1}[f]
	+\frac12\nabla_{\mathbb S^2}x_3\cdot\nabla_{\mathbb S^2}\mathcal{K}_{1,1}[f]\\
	&\hspace{3.5cm}+3x_3\mathcal{K}_{1,2}[f]
	-x_3\Delta_{\mathbb S^2}\mathcal{K}_{1,2}[f]
	-\frac52\nabla_{\mathbb S^2}x_3\cdot\nabla_{\mathbb S^2}\mathcal{K}_{1,2}[f]\Big).
	\end{aligned}
	\end{equation*}
	We thus obtain that $\mathcal{K}_1$ is bounded in $L^\infty$, which will be used in subsequent lemmas:
	\begin{equation}\label{K1_bound}
	\begin{aligned}
	\|\mathcal{K}_1[f]\|_{L^\infty}&\lesssim \frac{|A_\rho| R^2}{A_\sigma}\Big(\|\mathcal{K}_{1,1}[f]\|_{W^{1,\infty}}
	+\|\mathcal{K}_{1,2}[f]\|_{W^{2,\infty}}\Big)\lesssim \frac{|A_\rho| R^2}{A_\sigma}\|f\|_{L^\infty},
	\end{aligned}
	\end{equation}
	where Corollary \ref{cor:multiplier} was used in the last step.

	\begin{prop}\label{prop_S_deltak} 
		The operators  $\mathcal{K}_1$ and $\mathcal{K}_{\geq2}$ defined in \eqref{S_superdiag} satisfy that 
		\begin{equation*}
		\begin{aligned}
		\|\Delta_{j}\mathcal{K}_1\Delta_{k}\|_{L^{\infty}\rightarrow L^{\infty}} &\lesssim \frac{|A_\rho|R^2}{A_\sigma},\\
		\|\Delta_{j}\mathcal{K}_{\geq2}\Delta_{k}\|_{L^{\infty}\rightarrow L^{\infty}} &\lesssim C\Big(\frac{|A_\rho|R^2}{A_\sigma}\Big) e^{-c2^{k-j}}.
		\end{aligned}
		\end{equation*}
	\end{prop}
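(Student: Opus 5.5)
The two bounds are proved separately, since $\mathcal{K}_1$ has the explicit physical-space representation derived just above the statement, while $\mathcal{K}_{\geq2}$ can only be controlled through the coefficient bounds of Lemma \ref{Skj_lem}.

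\textbf{The first superdiagonal.} I would use the representation of $\mathcal{K}_1$ in terms of the multipliers $\mathcal{K}_{1,1}$ and $\mathcal{K}_{1,2}$ together with \eqref{K1_bound}. That estimate gives $\|\mathcal{K}_1 g\|_{L^\infty}\lesssim \frac{|A_\rho|R^2}{A_\sigma}\|g\|_{L^\infty}$. Apply it with $g=\Delta_k f$. Then use the uniform $L^\infty$-boundedness of $\Delta_j$, which follows from Young's inequality and \eqref{KernelBound} with $p=4$. This gives
\[
\|\Delta_j\mathcal{K}_1\Delta_k f\|_{L^\infty}\lesssim \frac{|A_\rho|R^2}{A_\sigma}\|\Delta_k f\|_{L^\infty}\lesssim \frac{|A_\rho|R^2}{A_\sigma}\|f\|_{L^\infty}.
\]
For this term no decay in $k-j$ is claimed. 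Since $\mathcal{K}_1$ lowers the degree by exactly one, the composition vanishes anyway unless $|j-k|\le 2$.

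\textbf{The remaining superdiagonals.} The operator $\Delta_j\mathcal{K}_{\geq2}\Delta_k$ maps degree $N\sim 2^k$ to degree $\ell\sim 2^j$ with $N\ge \ell+2$, and it vanishes unless $j\le k+1$. I would write it as an integral operator with kernel
\[
\mathcal{T}(x,y)=\sum_{\ell,N}\phi_j(\ell)\phi_k(N)\,\mathbf 1_{N\ge\ell+2}\sum_{|m|\le \ell}S^{(m)}_{\ell,N}\,Y_{\ell,m}(x)\overline{Y_{N,m}(y)},
\]
and bound $\sup_x\int_{\mathbb S^2}|\mathcal T(x,y)|\,d\sigma(y)$.

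The crude estimate uses $|Y_{n,m}|\lesssim n^{1/2}$, which follows from the addition formula \eqref{Addition_formula} since $\sum_m|Y_{n,m}(x)|^2=\frac{2n+1}{4\pi}$. Combined with Lemma \ref{Skj_lem}, it gives
\[
|\mathcal T(x,y)|\lesssim C\,2^{-2k}e^{-c2^{k-j}}\sum_{\ell\sim2^j}\sum_{N\sim 2^k}\sum_{|m|\le\ell} \ell^{1/2}N^{1/2}\lesssim C\,2^{-2k}e^{-c2^{k-j}}\,2^{2j}2^{3k/2}2^{j/2}.
\]
This carries a polynomial loss $2^{(5j-k)/2}$. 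That loss is harmless when $k\ge j+3$, because $2^{5j/2}e^{-c2^{k-j}}$ is still bounded by $C'e^{-c'2^{k-j}}$ only for $k-j\gtrsim\log j$, which is not uniform in $j$.

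So the crude bound does not suffice in general. Instead I would use Cauchy--Schwarz in $m$ together with the addition formula:
\[
\sum_m|Y_{\ell,m}(x)||Y_{N,m}(y)|\le \Big(\tfrac{2\ell+1}{4\pi}\Big)^{1/2}\Big(\tfrac{2N+1}{4\pi}\Big)^{1/2}.
\]
This removes one factor of $2^{j}$. The main obstacle is the remaining loss of roughly $2^{j}2^{k}$ against the $2^{-2k}$ gain. That loss cancels when $k\approx j$, but near the diagonal it would need either localization in $|x-y|$ or a sharper bound on $S$.

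To get the sharper bound I would revisit \eqref{S_lN_bound}, where in fact $|S_{\ell,N}|\lesssim C^M/(M!N^M)$ with $M=N-\ell\ge2$. This gives the extra factor $N^{-M}$, which is at least $N^{-2}$ for the first few superdiagonals and decays super-exponentially in $M$. I would then sum over $M$ for fixed $N$, instead of over independent $\ell,N$ ranges. The number of pairs $(\ell,N)$ with fixed $M$ is about $2^k$, and each contributes $\lesssim N^{-M}\cdot N$ after Cauchy--Schwarz. The total is then $\sum_{M\ge2}2^{k}\cdot 2^{k}\,C^M 2^{-kM}/M!\lesssim C$. For $k\ge j+2$ one also has $M\gtrsim 2^k$, so the sum is $\lesssim e^{-c2^{k-j}}$, which yields the claimed bound.
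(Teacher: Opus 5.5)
Your proposal is correct, but for $\mathcal{K}_{\geq2}$ it takes a different route from the paper. Your treatment of $\mathcal{K}_1$ is the same as the paper's: \eqref{K1_bound} plus the uniform $L^\infty$ bound for $\Delta_j$.

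\textbf{The paper's route.} The paper never bounds the kernel pointwise. It applies Cauchy--Schwarz in $m$ with $\sum_m|\widehat{f}_{n,m}|^2=\|\mathrm{proj}_n f\|_{L^2}^2$ on the input side, and uses the addition formula only on the output side. It then applies Cauchy--Schwarz in $n$. Orthogonality of the input thus recovers a factor $2^{k}$, which is why the coarse dyadic bound of Lemma \ref{Skj_lem} suffices. The result is the $L^2\to L^\infty$ estimate $2^{-\frac32(k-j)}e^{-2c2^{k-j}}\|\Delta_k f\|_{L^2}$.

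\textbf{Your route.} You use the addition formula on both sides. This forfeits that factor $2^k$: with only the uniform bound $2^{-2k}$, your count leaves $2^{(3j-k)/2}$, which blows up on the diagonal. You compensate with the finer coefficient bound \eqref{S_lN_bound}, namely $|S^{(m)}_{\ell,N}|\le (CA)^M/(M!\,N^M)$ with $M=N-\ell$ and $A=|A_\rho|R^2/A_\sigma$, and you sum along superdiagonals. For fixed $M$ there are about $2^k$ pairs $(\ell,N)$. Each contributes $\lesssim (CA)^M N^{1-M}/M!$, so the kernel is bounded by $\sum_{M\geq2}(C'A)^M 2^{k(2-M)}/M!\lesssim e^{C'A}$. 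The $M=2$ term is exactly balanced, which also explains why $M=1$ must be handled separately. The decay $e^{-c2^{k-j}}$ follows because $M\geq 2^k/12$ when $k\geq j+2$, combined with the super-exponential decay of $(C'A)^M/M!$.

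\textbf{Comparison.} Your approach gives the stronger $L^1\to L^\infty$ kernel bound and bypasses Lemma \ref{Skj_lem}. The cost is that it relies on the $M$-dependence of the coefficients, whereas the paper needs only the dyadic block bound.

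\textbf{Remarks to delete.} The exploratory remarks before your final step are inaccurate. The sentence claiming the crude loss is ``harmless when $k\geq j+3$'' contradicts itself. Also, after Cauchy--Schwarz in $m$ with the uniform coefficient bound, the loss does not cancel for $k\approx j$; it is of size $2^{j}$ there.
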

	\begin{proof}
		The first estimate follows directly from \eqref{K1_bound}.
		Next, using \eqref{LPD}, we have that
		\begin{equation*}
		\begin{aligned}
		\Delta_j \mathcal{K}_{\geq2}[\Delta_k f] (x)&=\Delta_j \mathcal{K}_{\geq2} \Big[\sum_{n\geq 0}\phi_k(n)\text{proj}_n f(x)\Big]=\Delta_j \mathcal{K}_{\geq2} \Big[\sum_{n\geq 0}\phi_k(n)\sum_{|m|\leq n} \widehat{f}_{n,m}Y_{n,m}(x)\Big]\\
		&=\Delta_j \Big(\sum_{n\geq 0}\phi_k(n)\sum_{|m|\leq n} \widehat{f}_{n,m}\mathcal{K}_{\geq2}[Y_{n,m}](x)\Big).
		\end{aligned}
		\end{equation*}
		Using the definition of $\mathcal{S}$ in \eqref{def:mathcalS},
		\begin{equation*}
		\widehat{\mathcal{S}[Y_{n,m}]}_{p,r} = \sum_{j\geq 0} S_{p,j}^{(r)}\delta_{j,n}\delta_{r,m} = S_{p,n}^{(m)}\delta_{r,m},
		\end{equation*}
		and hence
		\begin{equation*}
		\mathcal{S}[Y_{n,m}] = \sum_{p \geq 0}\sum_{|r|\leq p}S_{p,n}^{(r)}\delta_{r,m} Y_{p,r} = \sum_{p \leq n}S_{p,n}^{(m)}Y_{p,m}.
		\end{equation*}
		Therefore, we obtain that
		\begin{equation*}
		\begin{aligned}
		\Delta_j \mathcal{K}_{\geq2}[\Delta_k f](x)&=\Delta_j \Big(\sum_{n\geq 0}\phi_k(n)\sum_{|m|\leq n} \widehat{f}_{n,m}\sum_{p\leq n-2}S^{(m)}_{p,n}Y_{p,m}(x)\Big)\\
		&=\sum_{q\geq0}\phi_j(q)\sum_{n\geq 0}\phi_k(n)\sum_{|m|\leq n} \widehat{f}_{n,m}\sum_{p\leq n-2}S^{(m)}_{p,n}\text{proj}_q(Y_{p,m})(x).
		\end{aligned}
		\end{equation*}
		By definition, $\text{proj}_q(Y_{p,m})=\delta_{p,q}Y_{p,m}$, therefore
		\begin{equation*}
		\begin{aligned}
		\Delta_j \mathcal{K}_{\geq2}[\Delta_k f](x)&=\sum_{q\geq0}\phi_j(q)\sum_{n\geq q+2}\phi_k(n)\sum_{|m|\leq n} \widehat{f}_{n,m}S^{(m)}_{q,n}Y_{q,m}(x).
		\end{aligned}
		\end{equation*}
		Next, by Cauchy--Schwarz and Lemma \ref{Skj_lem},
		\begin{equation*}
		\begin{aligned}
		|\Delta_j &\mathcal{K}_{\geq2}[\Delta_k f]|\\
		&\lesssim C\Big(\frac{|A_\rho|R^2}{A_\sigma}\Big)\sum_{q\geq0}\phi_j(q)\sum_{n\geq q+2}\phi_k(n) 2^{-2k}e^{-2c2^{k-j}}\Big(\sum_{|m|\leq n}|\widehat{f}_{n,m}|^2\Big)^{\frac12}\Big(\sum_{|m|\leq q}|Y_{q,m}(x)|^2\Big)^{\frac12},
		\end{aligned}
		\end{equation*}
		and using the addition formula for the spherical harmonics \eqref{Addition_formula}
		\begin{equation*}
		\begin{aligned}
		|\Delta_j \mathcal{K}_{\geq2}[\Delta_k f]|&\lesssim  C\Big(\frac{|A_\rho|R^2}{A_\sigma}\Big)\sum_{q\geq0}\phi_j(q)\sum_{n\geq q+2}\phi_k(n) 2^{-2k}e^{-2c2^{k-j}}2^{\frac{j}{2}}\|\text{proj}_n f\|_{L^2}.
		\end{aligned}
		\end{equation*}
		We conclude that
		\begin{equation*}
		\begin{aligned}
		|\Delta_j \mathcal{K}_{\geq2}[\Delta_k f]|&\lesssim C\Big(\frac{|A_\rho|R^2}{A_\sigma}\Big)2^{-2k}2^{\frac{j}{2}}e^{-2c2^{k-j}}\sum_{n\geq0}\phi_k(n)\|\text{proj}_n f\|_{L^2}\sum_{0\leq q\leq n-2}\phi_j(q) \\
		&\lesssim C\Big(\frac{|A_\rho|R^2}{A_\sigma}\Big)2^{-2k}2^{\frac{3}{2}j}e^{-2c2^{k-j}}\sum_{n\geq0}\phi_k(n)\|\text{proj}_n f\|_{L^2},
        	\end{aligned}
		\end{equation*}
        and using Cauchy--Schwarz inequality,
        \begin{equation*}
		\begin{aligned}
        |\Delta_j \mathcal{K}_{\geq2}[\Delta_k f]|&\lesssim C\Big(\frac{|A_\rho|R^2}{A_\sigma}\Big)2^{-2k}2^{\frac{3}{2}j}e^{-2c2^{k-j}}2^{\frac{k}{2}}\Big(\sum_{n\geq 0}\phi_k(n)^2\|\text{proj}_n f\|_{L^2}^2\Big)^{\frac12}\\
        &\lesssim C\Big(\frac{|A_\rho|R^2}{A_\sigma}\Big)2^{-\frac32(k-j)}e^{-2c2^{k-j}}\|\Delta_k f\|_{L^2}\\
        &\lesssim C\Big(\frac{|A_\rho|R^2}{A_\sigma}\Big)e^{-c2^{k-j}}\|\Delta_k f\|_{L^\infty}.
		\end{aligned}
		\end{equation*}

	\end{proof}
	
	\begin{prop}\label{prop_S_holder}

		Let $m\in\mathbb{Z}$ be a nonnegative integer, $\alpha\in(0,1)$,  $f\in C^{m,\alpha}(\mathbb{S}^2)$, and $\mathcal{S}$ the operator defined in \eqref{def:mathcalS}.
		Then, 
		\begin{equation*}
		\begin{aligned}
		\|\mathcal{S}[f]\|_{L^\infty} &\lesssim C\Big(\frac{|A_\rho|R^2}{A_\sigma}\Big) \|f\|_{L^\infty},\quad \|\mathcal{S}[f]\|_{W^{1,\infty}} \lesssim C\Big(\frac{|A_\rho|R^2}{A_\sigma}\Big) \|f\|_{W^{1,\infty}},\\
		\|\mathcal{S}[f]\|_{C^{m,\alpha}} &\lesssim C\Big(\frac{|A_\rho|R^2}{A_\sigma}\Big) \|f\|_{C^{m,\alpha}}.  
		\end{aligned}
		\end{equation*}
	\end{prop}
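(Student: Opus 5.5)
We split $\mathcal S=I+\mathcal K_1+\mathcal K_{\geq2}$ as in \eqref{S_superdiag} and bound each piece separately. The identity is trivial.

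\emph{The operator $\mathcal K_1$.} For $\mathcal K_1$ we use the explicit representation obtained before \eqref{K1_bound}. There $\mathcal K_1[f]$ is a combination of terms of the form $x_3\,\mathcal K_{1,1}[f]$, $\nabla_{\mathbb S^2}x_3\cdot\nabla_{\mathbb S^2}\mathcal K_{1,1}[f]$, $x_3\mathcal K_{1,2}[f]$, $x_3\Delta_{\mathbb S^2}\mathcal K_{1,2}[f]$ and $\nabla_{\mathbb S^2}x_3\cdot\nabla_{\mathbb S^2}\mathcal K_{1,2}[f]$. Here $\mathcal K_{1,1}$ and $\mathcal K_{1,2}$ are Fourier multipliers of orders $-2$ and $-3$. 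Their symbols, which are rational functions of $\ell$ since $a_{\ell-1}-a_\ell\sim\ell^2$, satisfy the symbol bounds \eqref{multiplier}. Multiplication by the smooth functions $x_3$ and $\nabla_{\mathbb S^2}x_3$ is bounded on every $C^{m,\alpha}$ and on $W^{1,\infty}$. Hence it suffices to show that multipliers of order $\beta\le -2$ map $C^{m,\alpha}\to C^{m+2,\alpha}$ and $W^{1,\infty}\to W^{2,\infty}$.

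For the Hölder bound we use Proposition \ref{Prop-equiv} together with the first estimate of Corollary \ref{cor:multiplier}. These give $\|\Delta_jD^\beta f\|_{L^\infty}\lesssim 2^{j\beta}\|\Delta_j f\|_{L^\infty}$, so $\|D^\beta f\|_{B^{m+\alpha-\beta}_{\infty,\infty}}\lesssim\|f\|_{B^{m+\alpha}_{\infty,\infty}}$. For $W^{1,\infty}$ we sum dyadic blocks as in the proof of Corollary \ref{cor:multiplier}:
\[
\|D^{-2}f\|_{W^{2,\infty}}\lesssim\sum_j 2^{2j}2^{-2j}\|\Delta_j f\|_{L^\infty}\lesssim \sum_j 2^{-j}\|f\|_{W^{1,\infty}} .
\]
This step uses \eqref{eq:L62_rotation_reverse} and the commutation $V\Delta_j=\Delta_jV$ to bound $\|\Delta_j f\|_{L^\infty}\lesssim 2^{-j}\|f\|_{W^{1,\infty}}$ for $j\ge1$. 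The $L^\infty$ bound is \eqref{K1_bound}.

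\emph{The operator $\mathcal K_{\geq2}$.} For $\mathcal K_{\geq2}$ we use Proposition \ref{prop_S_deltak}. We write $\Delta_j\mathcal K_{\geq2}f=\sum_k\Delta_j\mathcal K_{\geq2}\Delta_kf$. Since $\mathcal K_{\geq 2}$ lowers the degree, only $k\ge j-1$ contributes; the orthogonality argument in that proof shows $n\ge q+2$, so $2^k\gtrsim 2^j$. Hence
\[
2^{j(m+\alpha)}\|\Delta_j\mathcal K_{\geq2}f\|_{L^\infty}\lesssim \sum_{k\ge j-1}2^{(j-k)(m+\alpha)}e^{-c2^{k-j}}\,2^{k(m+\alpha)}\|\Delta_kf\|_{L^\infty}\lesssim C\|f\|_{B^{m+\alpha}_{\infty,\infty}},
\]
because the sum $\sum_{d\ge -1}2^{-d(m+\alpha)}e^{-c2^d}$ converges. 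This gives the $C^{m,\alpha}$ bound via Proposition \ref{Prop-equiv}.

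The $L^\infty$ bound follows by summing over $j$ and $k$: we use $\|\Delta_kf\|_{L^\infty}\lesssim\|f\|_{L^\infty}$ and the fact that $\sum_{j\le k+1}e^{-c2^{k-j}}$ is bounded uniformly in $k$. The one caveat is that the sum over $k$ must also converge. We gain this from the extra factor $2^{-\frac32(k-j)}$ that is visible in the proof of Proposition \ref{prop_S_deltak} before it is discarded. Alternatively, we can redo the estimate keeping the $2^{-2k}$ decay from Lemma \ref{Skj_lem}, which gives summable decay in $k$ at fixed $j$.

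For $W^{1,\infty}$ we apply rotation fields: $\|\nabla\Delta_j\mathcal K_{\geq2}f\|_{L^\infty}\lesssim 2^j\|\Delta_j\mathcal K_{\geq2}f\|_{L^\infty}$. We then bound $\|\Delta_k f\|_{L^\infty}\lesssim 2^{-k}\|f\|_{W^{1,\infty}}$ and sum the factor $2^{j-k}e^{-c2^{k-j}}$ over both indices, again using the residual decay in $k$.

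\emph{Main obstacle.} The delicate point is the $L^\infty$ and $W^{1,\infty}$ endpoints. There the dyadic sums are not automatically convergent, since $L^\infty$ is not a Besov space. For $\mathcal K_{\geq2}$ I would rely on the explicit $2^{-2k}$ decay of $S_{\ell,N}$ from Lemma \ref{Skj_lem} to make the double sum converge. For $\mathcal K_1$ it is essential to use the physical-space representation and \eqref{K1_bound} rather than dyadic summation, since summing dyadic blocks of $\mathcal K_1$ directly would lose a logarithm.
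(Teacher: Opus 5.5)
Your H\"older estimate is correct. For $\mathcal K_{\geq2}$ it is the paper's Littlewood--Paley summation with Proposition~\ref{prop_S_deltak}. For $\mathcal K_1$ you use the physical-space representation preceding \eqref{K1_bound} directly rather than the block bound, which also works. One small slip: to put $x_3\Delta_{\mathbb S^2}\mathcal K_{1,2}[f]$ in $W^{1,\infty}$ you need $\mathcal K_{1,2}[f]\in W^{3,\infty}$, so you must use that $\mathcal K_{1,2}$ has order $-3$, not merely order $\le -2$; the same dyadic sum gives this.

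The genuine gap is in the $L^\infty$ and $W^{1,\infty}$ bounds for $\mathcal K_{\geq2}$. Summing the bounds of Proposition~\ref{prop_S_deltak} over $k$ and $j\le k+1$ leaves $\sum_k\|\Delta_k f\|_{L^\infty}$ in the $L^\infty$ case. In the gradient case it leaves $\sum_k\sum_{j\le k+1}2^{j-k}e^{-c2^{k-j}}\|f\|_{W^{1,\infty}}$. In both, every diagonal block $j\approx k$ contributes $O(1)$. You are therefore left with $B^0_{\infty,1}$ (resp.\ $B^1_{\infty,1}$) type quantities, which $L^\infty$ (resp.\ $W^{1,\infty}$) does not control.

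Neither of your remedies supplies the missing decay. The factor $2^{-\frac32(k-j)}$ decays only off the diagonal. The factor $2^{-2k}$ from Lemma~\ref{Skj_lem} is exactly what the proof of Proposition~\ref{prop_S_deltak} spends to pay for the Cauchy--Schwarz and addition-formula losses $2^{\frac32 j}2^{\frac k2}$, and these are $\approx 2^{2k}$ when $j\approx k$. Lemma~\ref{Skj_lem} is only an entrywise bound over a whole $2^j\times 2^k$ block (for each $m$), so nothing is left once you convert it into an $L^\infty$ bound. ``Keeping'' it does not give summability in $k$ along the diagonal.

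The missing idea is to use the structure of the individual superdiagonals rather than of dyadic blocks. Write $\mathcal K_{\geq2}=\sum_{r\geq2}\mathcal K_r$ and use \eqref{S_lN_bound} in the form $|S^{(m)}_{\ell,\ell+r}|\le\frac{C^r}{r!}\big(\frac{|A_\rho|R^2}{A_\sigma}\big)^r(\ell+r)^{-r}$. Each $\mathcal K_r$ is then a single shift in $\ell$, so there is no block-size loss, and it has order $-r$. Parseval gives $\|\mathcal K_r f\|_{H^2}\lesssim\frac{C^r}{r!}\big(\frac{|A_\rho|R^2}{A_\sigma}\big)^r\|f\|_{L^2}$, and similarly $H^1\to H^3$, for $r\geq2$. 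The endpoint bounds then follow from $\|f\|_{L^2}\lesssim\|f\|_{L^\infty}$, $\|f\|_{H^1}\lesssim\|f\|_{W^{1,\infty}}$, and the embeddings $H^2(\mathbb S^2)\hookrightarrow L^\infty$, $H^3(\mathbb S^2)\hookrightarrow W^{1,\infty}$. The series in $r$ sums to $C(|A_\rho|R^2/A_\sigma)$. This is what the paper does, and it avoids dyadic block summation for $\mathcal K_{\geq2}$ at the endpoints.

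If you prefer to stay dyadic, the same single-diagonal bound together with $\|\Delta_j g\|_{L^\infty}\lesssim 2^j\|g\|_{L^2}$ gives, for $j\le k+1$, $\|\Delta_j\mathcal K_r\Delta_k f\|_{L^\infty}\lesssim\frac{C^r}{r!}\big(\frac{|A_\rho|R^2}{A_\sigma}\big)^r 2^{-(r-1)k}\|f\|_{L^2}$. This is the residual decay you were looking for, but it comes from \eqref{S_lN_bound}, not from Lemma~\ref{Skj_lem}.
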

	
	\begin{proof}
		Let $f=\sum_{k\geq0} \Delta_k f$. Then,
		\begin{equation*}
		\begin{aligned}
		\Delta_j(\mathcal{S}[f])=\sum_{k\geq 0} \Delta_j(\mathcal{S}\Delta_k f)=\sum_{k\geq 0} (\Delta_j\mathcal{S}\Delta_k) f.
		\end{aligned}
		\end{equation*}
		The definition of $\mathcal{S}$ in \eqref{Skj_def} shows that it is a triangular operator, hence we have
		\begin{equation*}
		\begin{aligned}
		\Delta_j(\mathcal{S}[f])=\sum_{k\geq j-1} (\Delta_j\mathcal{S}\Delta_k) f=\sum_{k\geq j-1} (\Delta_j\mathcal{S}\Delta_k) \tilde{\Delta}_k f,
		\end{aligned}
		\end{equation*}
		where the projector $\tilde{\Delta}_k$ has a slightly bigger support. Then, 
		\begin{equation*}
		\begin{aligned}
		2^{j(m+\alpha)}\|\Delta_j \mathcal{S}[f]\|_{L^\infty}
		\le
		2^{j(m+\alpha)}\sum_{k\geq j-1}\|\Delta_j \mathcal{S}\Delta_k\|_{L^\infty\to L^\infty}\|\tilde{\Delta}_k f\|_{L^\infty},    
		\end{aligned}
		\end{equation*}
		so  
		\begin{equation*}
		\begin{aligned}
		2^{j(m+\alpha)}\|\Delta_j \mathcal{S}[f]\|_{L^\infty}
		\lesssim 
		\sum_{k\geq j-1} 2^{(j-k)(m+\alpha)}\,
		\|\Delta_j \mathcal{S}\Delta_k\|_{L^\infty\to L^\infty}\,
		\|f\|_{B_{\infty,\infty}^{(m+\alpha)}}.   
		\end{aligned}
		\end{equation*}
		Hence, by Proposition 
		\ref{prop_S_deltak} and \eqref{S_superdiag}, we obtain
		\begin{equation*}
		\begin{aligned}
		\|\mathcal{S}[f]\|_{C^{m,\alpha}}&\lesssim \sup_{j\geq0}2^{j(m+\alpha)}\|\Delta_j \mathcal{S}[f]\|_{L^\infty}\lesssim \sup_{j\geq0}2^{j(m+\alpha)}\sum_{k\geq0}\|\Delta_j \mathcal{S}\Delta_kf\|_{L^\infty}\\
		&\lesssim  C\Big(\frac{|A_\rho|R^2}{A_\sigma}\Big)\|f\|_{B^{m+\alpha}_{\infty,\infty}}
		\sup_{j\geq0}\sum_{k\geq j-1} 2^{(j-k)(m+\alpha)}
		\lesssim  C\Big(\frac{|A_\rho|R^2}{A_\sigma}\Big)\|f\|_{C^{m,\alpha}}. 
		\end{aligned}
		\end{equation*}
		Next, we prove the $L^\infty$ estimate. We use the expression of $\mathcal{S}$ as the sum of the superdiagonals \eqref{S_superdiag}.
		It can be seen that $\mathcal{K}_r$ is an operator of order $-r$, and thus $\mathcal{K}_{\geq2}$ is an operator of order $-2$. Indeed, let us first show that for $r\geq2$, $\mathcal{K}_r:L^2(\mathbb{S}^2)\to H^2(\mathbb{S}^2)$, which implies that $\mathcal{K}_r:L^\infty(\mathbb{S}^2)\to L^\infty(\mathbb{S}^2)$. 
		From \eqref{S_lN_bound}, we have
		\begin{equation*}
		\begin{aligned}
		|S_{\ell,\ell+r}^{(m)}|&\leq \Big(\frac{|A_\rho|R^2}{A_\sigma}\Big)^{r}\frac{C^{r}}{r!(\ell+r)^r},
		\end{aligned}
		\end{equation*}
		hence
		\begin{equation*}
\begin{aligned}\|\mathcal{K}_r[f]\|_{H^2}^2&\lesssim\sum_{\ell\geq1}\sum_{|m|\leq \ell} (1+\ell)^4|S_{\ell,\ell+r}^{(m)}|^2|\widehat{f}_{\ell+r,m}|^2\lesssim \frac{C^{2r}}{r!^2}\Big(\frac{|A_\rho|R^2}{A_\sigma}\Big)^{2r}\sum_{\ell\geq1}\sum_{|m|\leq \ell}\frac{(1+\ell)^4}{(\ell+r)^{2r}}|\widehat{f}_{\ell+r,m}|^2\\
		&\lesssim\frac{C^{2r}}{r!^2}\Big(\frac{|A_\rho|R^2}{A_\sigma}\Big)^{2r}\sum_{\ell\geq1}\sum_{|m|\leq \ell}|\widehat{f}_{\ell+r,m}|^2\lesssim \frac{C^{2r}}{r!^2}\Big(\frac{|A_\rho|R^2}{A_\sigma}\Big)^{2r}\|f\|_{L^\infty}^2,
		\end{aligned}
		\end{equation*}
		where we use Parseval's identity and $\|f\|_{L^2}\lesssim\|f\|_{L^\infty}$ in the last inequality.
		Combined with the estimate for $\mathcal{K}_1$ \eqref{K1_bound}, we conclude that
		\begin{equation*}
		\begin{aligned}
		\|\mathcal{S}[f]\|_{L^\infty}&\lesssim
		\|f\|_{L^\infty}\sum_{r\geq0}
		\frac{C^r}{r!}\Big(\frac{|A_\rho|R^2}{A_\sigma}\Big)^{r}\lesssim C\Big(\frac{|A_\rho|R^2}{A_\sigma}\Big)\|f\|_{L^\infty}.
		\end{aligned}
		\end{equation*}
		The $W^{1,\infty}$ estimate follows in the same way. Indeed, it holds that $\mathcal{K}_r:H^1(\mathbb{S}^2)\to H^3(\mathbb{S}^2)$ for $r\geq2$, and Corollary \ref{cor:multiplier} implies that $\|\mathcal{K}_{1,1}f\|_{W^{2,\infty}}+\|\mathcal{K}_{1,2}f\|_{W^{3,\infty}}\lesssim \|f\|_{W^{1,\infty}}$, hence $\|\mathcal{K}_{1}f\|_{W^{1,\infty}}\lesssim \frac{|A_\rho| R^2}{A_\sigma}\|f\|_{W^{1,\infty}}$.
		
	\end{proof}

For each fixed $m$, $L^{(m)}$ \eqref{Ldef_kj} is upper triangular and shares the structure of the linear operator in 	Proposition 7.1 in 
	\cite{GancedoGarciaJuarezPatelStrain23}. The argument there applies here with coefficients $a_k$ and $b_k^{(m)}$, showing that $\mathcal{S}$ and $\mathcal{S}^{-1}$ are inverses of each other and, moreover, that they  diagonalize the linear operator $\mathcal{L}_a$: 
	\begin{equation}\label{L_a_diag}
	\mathcal{L}_a=\mathcal{S}\mathcal{D}\mathcal{S}^{-1}.
	\end{equation}
	We conclude this subsection by stating the analogous bounds for $\mathcal{S}^{-1}$.

	\begin{cor}\label{cor:Sinv}
		Let $m\in\mathbb{Z}$ be a nonnegative integer, $\alpha\in(0,1)$,  $f\in C^{m,\alpha}(\mathbb{S}^2)$, and $\mathcal{S}^{-1}$ the operator defined in \eqref{def:mathcalS}. Then, 
		\begin{equation*}
		\|\mathcal{S}^{-1}f\|_{W^{1,\infty}} \lesssim C\Big(\frac{|A_\rho|R^2}{A_\sigma}\Big) \|f\|_{W^{1,\infty}},\quad
		\|\mathcal{S}^{-1}f\|_{C^{m,\alpha}} \lesssim C\Big(\frac{|A_\rho|R^2}{A_\sigma}\Big)\|f\|_{C^{m,\alpha}}.  
		\end{equation*}
	\end{cor}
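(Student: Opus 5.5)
The plan is to mirror the proof of Proposition \ref{prop_S_holder}, replacing the coefficients $S^{(m)}_{k,N}$ by $(S^{-1})^{(m)}_{k,N}$ from \eqref{Skjinverse_def}. We write $\mathcal{S}^{-1}=I+\tilde{\mathcal K}_1+\tilde{\mathcal K}_{\geq2}$ as a sum of superdiagonals, in analogy with \eqref{S_superdiag}.

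\emph{Coefficient bound.} For $N=k+r$ we have
\[
|(S^{-1})^{(m)}_{k,k+r}|=\prod_{\ell=1}^{r}\frac{|b^{(m)}_{k-1+\ell}|}{a_{k+\ell}-a_k}.
\]
The monotonicity of $a_k$ (cubic growth) gives $a_{k+\ell}-a_k\gtrsim \ell (k+\ell)^2$, and \eqref{def:akbk} gives $|b^{(m)}_{k-1+\ell}|\lesssim \frac{|A_\rho|R^2}{A_\sigma}(k+\ell)$. Hence each factor is at most $C\frac{|A_\rho|R^2}{A_\sigma}\,\ell^{-1}(k+\ell)^{-1}\le C\frac{|A_\rho|R^2}{A_\sigma}\,\ell^{-1}N^{-1}$ up to harmless comparisons; when $k$ is small relative to $\ell$ we only need $(k+\ell)^{-1}$, which still yields at least $N^{-1}$ decay per factor on average. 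This reproduces \eqref{S_lN_bound}:
\[
|(S^{-1})^{(m)}_{k,N}|\le \Big(\frac{|A_\rho|R^2}{A_\sigma}\Big)^{r}\frac{C^r}{r!\,N^{r}}.
\]
Repeating the argument of Lemma \ref{Skj_lem} then gives the dyadic bound $2^{-2k}e^{-c2^{k-j}}$ for $r\geq2$.

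\emph{First superdiagonal.} Here $(S^{-1})^{(m)}_{\ell-1,\ell}=-b^{(m)}_{\ell-1}/(a_{\ell-1}-a_\ell)=-S^{(m)}_{\ell-1,\ell}$, so $\tilde{\mathcal K}_1=-\mathcal K_1$. Therefore \eqref{K1_bound} and its $W^{1,\infty}$ analogue apply verbatim, through the representation by $\mathcal K_{1,1}$ and $\mathcal K_{1,2}$ together with Corollary \ref{cor:multiplier}. This sidesteps the fact that the coefficients depend on $m$, which is the main obstacle: that dependence prevents treating the operator as a multiplier.

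\emph{Conclusion.} With these two ingredients, the proof of Proposition \ref{prop_S_deltak} (Cauchy--Schwarz with the addition formula \eqref{Addition_formula}) gives
\[
\|\Delta_j\tilde{\mathcal K}_{\geq2}\Delta_k\|_{L^\infty\to L^\infty}\lesssim C e^{-c2^{k-j}}.
\]
Triangularity then gives the $B^{m+\alpha}_{\infty,\infty}$ bound, and Proposition \ref{Prop-equiv} turns it into the $C^{m,\alpha}$ estimate. For $W^{1,\infty}$, the same $H^1\to H^3$ Parseval argument for each $\tilde{\mathcal K}_r$ with $r\ge2$ applies, with summable constants $C^r/r!$, combined with the $\tilde{\mathcal K}_1$ bound.

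The only genuinely new check is the lower bound $a_{k+\ell}-a_k\gtrsim \ell(k+\ell)^2$, including the small indices where $\delta_{k,1}$ modifies $a_1$. Since $a_0=0$, $a_1=2$ and $a_2=24/5$ are increasing, this holds with a uniform constant.
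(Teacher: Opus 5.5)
Your proposal is correct and follows the paper's proof essentially step by step. It uses the same splitting $\mathcal S^{-1}=I+\widetilde{\mathcal K}_1+\widetilde{\mathcal K}_{\geq2}$, the same reduction of the coefficients to the bound \eqref{S_lN_bound}, the identity $\widetilde{\mathcal K}_1=-\mathcal K_1$, and then reuses Propositions \ref{prop_S_deltak} and \ref{prop_S_holder} verbatim. One correction: the per-factor comparison $(k+\ell)^{-1}\le N^{-1}$ goes the wrong way, and what you actually need (as in the paper's $\ell!/N!$ step) is the product bound $\prod_{\ell=1}^{r}(k+\ell)=N!/k!\ge (N/(2e))^{r}$, which follows by treating the cases $k\ge r$ and $k<r$ separately.
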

	
	\begin{proof}
		
	We write
	\begin{equation*}
	\mathcal S^{-1}
	=
	I+\widetilde{\mathcal K}_1+\widetilde{\mathcal K}_{\geq2},
	\end{equation*}
	where $\widetilde{\mathcal K}_j$ denotes the operator associated with
	the $j$-th superdiagonal of $S^{-1}$.
	We first observe that the coefficients of $S^{-1}$ satisfy the same
	bounds as those of $S$. Indeed, let $N=\ell+M$, with $M\geq1$.
	By \eqref{Skjinverse_def},
	\begin{equation*}
	\begin{aligned}
	\big|(S^{-1})_{\ell,N}^{(m)}\big|
	&=
	\prod_{r=1}^{M}
	\frac{\big|b_{\ell+r-1}^{(m)}\big|}
	{a_{\ell+r}-a_\ell}.
	\end{aligned}
	\end{equation*}
	As in the proof of Lemma \ref{Skj_lem},
	\begin{equation*}
	a_{\ell+r}-a_\ell\gtrsim r(\ell+r)^2,
	\qquad
	\big|b_{\ell+r-1}^{(m)}\big|
	\lesssim \frac{|A_\rho|R^2}{A_\sigma}(\ell+r).
	\end{equation*}
	Consequently,
    \begin{equation*}
	\begin{aligned}
	\big|(S^{-1})_{\ell,N}^{(m)}\big|
	&\lesssim
	\Big(\frac{|A_\rho|R^2}{A_\sigma}\Big)^M\frac{C^M}{M!}
	\prod_{r=1}^{M}\frac1{\ell+r}
	=
	\Big(\frac{|A_\rho|R^2}{A_\sigma}\Big)^M\frac{C^M}{M!}\frac{\ell!}{N!}
	\lesssim
	\Big(\frac{|A_\rho|R^2}{A_\sigma}\Big)^M\frac{C^M}{M!N^M},
	\end{aligned}
	\end{equation*}
    where we rename $C$ in the last inequality.
	This is the same bound as \eqref{S_lN_bound}. Therefore, for
	$N>\ell+1$, the proof of Lemma \ref{Skj_lem} applies \textit{verbatim} and
	gives
	\begin{equation*}
	\sup_{\ell\sim2^j,\;N\sim2^k}
	\big|(S^{-1})_{\ell,N}^{(m)}\big|
	\lesssim
	C(\frac{|A_\rho|R^2}{A_\sigma})2^{-2k}e^{-c2^{k-j}}.
	\end{equation*}
	Moreover, the first superdiagonal satisfies
	\begin{equation*}
	(S^{-1})_{\ell-1,\ell}^{(m)}
	=
	-S_{\ell-1,\ell}^{(m)},
	\end{equation*}
	and hence
	$\widetilde{\mathcal K}_1=-\mathcal K_1$. Thus the bounds for the
	first superdiagonal follow directly from \eqref{K1_bound}. For the
	higher superdiagonals, the preceding coefficient estimate is identical
	to the one used for $\mathcal K_{\geq2}$. Consequently, the proofs of
	Propositions \ref{prop_S_deltak} and \ref{prop_S_holder} apply 
	with $\mathcal S$ replaced by $\mathcal S^{-1}$. We conclude that
	\begin{equation*}
	\|\mathcal S^{-1}f\|_{W^{1,\infty}}
	\lesssim C(\frac{|A_\rho|R^2}{A_\sigma})\|f\|_{W^{1,\infty}},
	\qquad
	\|\mathcal S^{-1}f\|_{C^{m,\alpha}}
	\lesssim C(\frac{|A_\rho|R^2}{A_\sigma})\|f\|_{C^{m,\alpha}}.
	\end{equation*}

	\end{proof}

	\subsection{Semigroup Estimates}
	
	The linear operator $\mathcal{L}_a$ \eqref{La_def}  gives rise to the semigroup $e^{t\mathcal{L}_a}$ defined as follows
	\begin{equation*}
	\begin{aligned}
	e^{t\mathcal{L}_a}f=\mathcal{S}e^{t\mathcal{D}}\mathcal{S}^{-1}f,    
	\end{aligned}
	\end{equation*}
	where $\mathcal{D}$ is the diagonal operator with multiplier $-a_k$ defined in \eqref{def:akbk}.

	\begin{lemma}\label{semig_diag}
		Let $\alpha\in(0,1)$ and let $\mathcal{D}$ be the diagonal linear operator defined by a Fourier multiplier $-\lambda_n$ satisfying \eqref{multiplier} for $n\geq1$ with $\beta=3$.
		Let $f\in W^{1,\infty}(\mathbb{S}^2)$  and such that $\int_{\mathbb{S}^2}f(x)d\sigma(x)=0$. Then, there exists $\gamma>0$ such that, for $k\geq1$,
		\begin{equation*}
		\|\Delta_k e^{t\mathcal{D}}f\|_{L^\infty}\lesssim e^{-c2^{3k}t}\|\Delta_kf\|_{L^\infty},
		\end{equation*}
		and 
		\begin{equation*}
		\|e^{t\mathcal{D}}f\|_{W^{1,\infty}}\lesssim e^{-\gamma t}\|f\|_{W^{1,\infty}},
		\end{equation*} 
		\begin{equation*}
		t^{\frac{2+\alpha}{3}}\|e^{t\mathcal{D}}f\|_{C^{3,\alpha}}\lesssim e^{-\gamma t}\|f\|_{W^{1,\infty}}.
		\end{equation*} 
	\end{lemma}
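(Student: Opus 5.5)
The plan is to work dyadically, using the kernel bounds of Lemma \ref{lem:kernel_bounds} and the Besov--H\"older equivalence of Proposition \ref{Prop-equiv}.

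\emph{Dyadic decay.} For $k\geq1$ write $\Delta_k e^{t\mathcal D}f=\Delta_k e^{t\mathcal D}\tilde\Delta_k\Delta_k f$, where $\tilde\Delta_k=\sum_{|k'-k|\le1}\Delta_{k'}$. The operator $\tilde\Delta_k e^{t\mathcal D}$ has kernel a sum of three kernels of the form $G_{k'}$ in \eqref{Gj_kernel}, with $\gamma_r\equiv1$ and $\beta=3$. Indices with $k'=0$ arise only for small $k$ and are handled by the finite low-frequency argument below. By \eqref{KernelBound_semigroup} with $\ell=0$ and $p=4$ we get
\[
|G_{k'}(x\cdot y,t)|\lesssim 2^{2k}e^{-c2^{3k}t}(1+2^k|x-y|)^{-4}.
\]
Its $L^1$ norm on $\mathbb S^2$ is $\lesssim e^{-c2^{3k}t}$, and Young's inequality then gives the first bound.

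\emph{Low frequencies and the exponential factor.} The mean-zero condition kills $\text{proj}_0 f$, since $e^{t\mathcal D}$ acts diagonally and preserves it. The remaining eigenvalues satisfy $\lambda_n\ge\lambda_\ast>0$ for $n\ge1$. For the finitely many $n$ in the support of $\phi_0$ and $\phi_1$, I will bound $\Delta_0e^{t\mathcal D}f$ directly. It is a finite sum of projections, so $\|\Delta_0 e^{t\mathcal D}f\|_{L^\infty}\lesssim e^{-\lambda_\ast t}\|f\|_{L^2}\lesssim e^{-\lambda_\ast t}\|f\|_{L^\infty}$, using the addition formula to bound $\text{proj}_n$ from $L^2$ to $L^\infty$. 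For every $k\ge1$, note that $e^{-c2^{3k}t}\le e^{-c t}e^{-\frac c2 2^{3k}t}$. Hence one factor $e^{-\gamma t}$, with $\gamma=\min(\lambda_\ast,c/2)$, can always be extracted, and the rest $e^{-\frac c2 2^{3k}t}$ is kept for smoothing.

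\emph{$W^{1,\infty}$ bound.} This is the step I expect to be delicate, since Lipschitz is not a Besov space and $\sum_k\|\nabla\Delta_k f\|_\infty$ cannot be bounded by $\|f\|_{W^{1,\infty}}$. The plan is to avoid the dyadic sum and use the kernel of the full operator. The operator $e^{t\mathcal D}$ has kernel $G(x\cdot y,t)=\sum_k G_k$, and summing \eqref{KernelBound_semigroup} over $k$ shows that $\sup_x\int|G|\,d\sigma$ is uniformly bounded. This gives $L^\infty$ boundedness. For the gradient, the vector fields $V\in\mathcal V$ commute with the multiplier $e^{t\mathcal D}$, since it is a function of $\laplaceSPH$ and hence rotation invariant. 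Using $|\nabla_{\mathbb S^2}g|^2=\sum_V|Vg|^2$, I get
\[
\|\nabla_{\mathbb S^2} e^{t\mathcal D}f\|_{L^\infty}\lesssim\sum_{V\in\mathcal V}\|e^{t\mathcal D}Vf\|_{L^\infty}\lesssim\|f\|_{W^{1,\infty}}.
\]
To obtain the factor $e^{-\gamma t}$, I will split $e^{t\mathcal D}=e^{(t-1)\mathcal D}e^{\mathcal D}$ for $t\geq1$. The operator $e^{\mathcal D}$ maps $W^{1,\infty}$ into $C^{1,\alpha}\subset B^{1+\alpha}_{\infty,\infty}$ by the dyadic bound. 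On $B^{1+\alpha}_{\infty,\infty}$ the dyadic decay then sums to $e^{-\gamma (t-1)}$, using Bernstein \eqref{BernsteinBound} and the mean-zero low-frequency bound above. For $t\le1$ the uniform bound suffices.

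\emph{$C^{3,\alpha}$ smoothing.} By Proposition \ref{Prop-equiv} it suffices to estimate $\sup_k2^{k(3+\alpha)}\|\Delta_ke^{t\mathcal D}f\|_\infty$. By \eqref{BernsteinBound02}, for $k\ge1$ we have $\|\Delta_kf\|_\infty\lesssim2^{-k}\|\nabla_{\mathbb S^2}\Delta_kf\|_\infty$. I then use the second line of \eqref{eq:L62_rotation_reverse} together with the commutation $V\Delta_k=\Delta_kV$ and the $L^\infty$ boundedness of $\Delta_k$. This gives $\|\Delta_k f\|_\infty\lesssim2^{-k}\sum_V\|Vf\|_\infty\lesssim2^{-k}\|f\|_{W^{1,\infty}}$. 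Therefore
\[
2^{k(3+\alpha)}\|\Delta_ke^{t\mathcal D}f\|_\infty\lesssim e^{-\gamma t}\,2^{k(2+\alpha)}e^{-\frac c2 2^{3k}t}\|f\|_{W^{1,\infty}}\lesssim e^{-\gamma t}\,t^{-\frac{2+\alpha}{3}}\|f\|_{W^{1,\infty}},
\]
by $\sup_{s>0}s^{(2+\alpha)/3}e^{-cs}<\infty$. The $k=0$ term is bounded by the low-frequency estimate times $t^{\frac{2+\alpha}{3}}e^{-\lambda_\ast t/2}$, and this concludes the plan.
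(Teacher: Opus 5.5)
Your dyadic estimate, the $C^{3,\alpha}$ smoothing bound and the large-time part ($t\ge1$) of the $W^{1,\infty}$ bound are correct and essentially match the paper. The gap is the small-time uniform bound that your case $t\le1$ rests on.

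Summing \eqref{KernelBound_semigroup} over $k$ does \emph{not} give $\sup_x\int_{\mathbb S^2}|G(x\cdot y,t)|\,d\sigma(y)\lesssim1$. Each piece has $\int|G_k|\,d\sigma\approx e^{-c2^{3k}t}$, so $\sum_{k\ge1}\int|G_k|\,d\sigma\approx\#\{k\ge1:2^{3k}t\lesssim1\}\approx\log(1/t)$ as $t\to0^+$. Summing the bounds pointwise first does not help either. For $t^{1/3}\lesssim\rho=|x-y|$, the term with $2^k\rho\approx1$ alone contributes $\rho^{-2}$. So the best majorant available this way is comparable to $\min(t^{-2/3},\rho^{-2})$, whose integral over $\mathbb S^2$ is again $\approx1+\log(1/t)$. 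This is exactly the loss you flagged for Lipschitz functions, brought back through the triangle inequality on the kernel. At $t=0$ the pieces $\tilde K_k$ add up to a delta function only because of cancellation between different $k$.

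The missing idea is to treat all frequencies below the parabolic scale as a single block. The paper picks $M$ with $2^{3M}t\approx1$ and writes $e^{t\mathcal D}f=\Delta_{\le M}f+\Delta_{\le M}(e^{t\mathcal D}-I)f+\sum_{k>M}\Delta_ke^{t\mathcal D}f$.
\begin{itemize}
\item The first term is controlled by the single low-pass kernel bound \eqref{KernelBound_low}.
\item In the second term, Taylor expanding $e^{-\lambda_nt}-1$ gives each block a factor $t2^{3k}$, and $\sum_{k\le M}t2^{3k}\lesssim1$ is a geometric sum.
\item The third term is bounded by $\sum_{k>M}e^{-c2^{3(k-M)}2^{3M}t}\lesssim1$.
\end{itemize}
Alternatively, you could rerun the summation-by-parts argument of Lemma \ref{lem:kernel_bounds} on the undecomposed symbol $e^{-\lambda_nt}$, which is a zeroth-order symbol uniformly in $t$. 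That would give $|G(x\cdot y,t)|\lesssim t^{-2/3}(1+t^{-1/3}|x-y|)^{-p}$, but it is a genuinely new estimate, not a consequence of the dyadic bounds.

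Once you have $\|e^{t\mathcal D}f\|_{L^\infty}\lesssim e^{-\gamma t}\|f\|_{L^\infty}$ for mean-zero $f$, your commutation argument with $V\in\mathcal V$ gives the $W^{1,\infty}$ bound for all $t>0$ at once, since $Vf$ is automatically mean-zero. This is what the paper does, and it makes your splitting $e^{t\mathcal D}=e^{(t-1)\mathcal D}e^{\mathcal D}$ unnecessary.
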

	\begin{proof}
		We first prove the frequency-localized estimate.
		Using the Littlewood--Paley decomposition for $f$, we have
		\begin{equation*}
		\begin{aligned}
		\Delta_k e^{t\mathcal{D}}f(x)&=\sum_{j\geq1}e^{t\mathcal{D}}\Delta_k\Delta_j f(x)=\sum_{j\geq1, |j-k|\leq 1}\sum_{n\geq0}\phi_j(n)e^{-\lambda_nt}\text{proj}_n(\Delta_k f)(x)\\
		&=\sum_{j\geq1, |j-k|\leq 1}\int_{\mathbb{S}^2}\sum_{n\geq0}\phi_j(n)e^{-\lambda_nt}K_n(x\cdot y)\Delta_kf(y)d\sigma(y)\\
		&=\sum_{j\geq1, |j-k|\leq 1}\int_{\mathbb{S}^2}G_j(x\cdot y,t)\Delta_kf(y)d\sigma(y),
		\end{aligned}
		\end{equation*}
		where $G_j$ is the kernel defined in \eqref{Gj_kernel} with $\gamma_n\equiv 1$. Young's inequality for convolutions and the kernel bound \eqref{KernelBound_semigroup} then gives 
		\begin{equation*}
		\begin{aligned}
		\|\Delta_k e^{t\mathcal{D}}f(x)\|_{L^\infty}&\lesssim\sum_{j\geq1, |j-k|\leq 1}\int_{\mathbb{S}^2}|G_j(x\cdot y,t)|d\sigma(y)\|\Delta_k f\|_{L^\infty}\lesssim e^{-c2^{3k}t}\|\Delta_k f\|_{L^\infty}.
		\end{aligned}
		\end{equation*}
		Next, we use the Littlewood--Paley characterization of H\"older norms \eqref{BesovHolder} and the previous estimate to prove the $C^{3,\alpha}$ estimate. We have
		\begin{equation*}
		\begin{aligned}
		t^{\frac{2+\alpha}{3}}\|e^{t\mathcal{D}}f\|_{C^{3,\alpha}}\lesssim \sup_{k\geq1}t^{\frac{2+\alpha}{3}}2^{(3+\alpha)k}\|\Delta_k e^{t\mathcal{D}}f\|_{L^\infty}&\lesssim \sup_{k\geq1}\big(t2^{3k}\big)^{\frac{\alpha}{3}} t^{\frac{2}{3}}2^{3k}e^{-c2^{3k}t}\|\Delta_k f\|_{L^\infty},
		\end{aligned}
		\end{equation*}    
		so, using the reverse Bernstein's inequality \eqref{BernsteinBound02}, we have $\|\Delta_k f\|_{L^\infty}\lesssim 2^{-k}\|f\|_{W^{1,\infty}}$ and thus
		\begin{equation*}
		\begin{aligned}
		t^{\frac{2+\alpha}{3}}\|e^{t\mathcal{D}}f\|_{C^{3,\alpha}}&\lesssim \|f\|_{W^{1,\infty}}\sup_{k\geq1} \big(t2^{3k}\big)^{\frac{2+\alpha}{3}}e^{-c2^{3k}t}\lesssim e^{-\gamma t}\|f\|_{W^{1,\infty}}. 
		\end{aligned}
		\end{equation*}
		Finally, we prove the estimate in $L^\infty$. 
		Let $0<t<2^{-3}$ and $M=M(t)$ be the  integer such that $2^{-3}<2^{3M}t\leq 1$.
		We write
		\begin{equation*}
		\begin{aligned}
		e^{\mathcal{D}t}f(x)=\Delta_{\leq M}f(x)+\Delta_{\leq M}(e^{\mathcal{D}t}-I)f(x)+\sum_{k>M}\Delta_ke^{\mathcal{D}t}f(x).
		\end{aligned}
		\end{equation*}    
		   By the low-frequency kernel bound \eqref{KernelBound_low} in Lemma \ref{lem:kernel_bounds},
    \begin{equation*}
        \begin{aligned}
            \|\Delta_{\leq M}f\|_{L^\infty}\lesssim \|f\|_{L^\infty}.
        \end{aligned}
    \end{equation*}
For the second term, Taylor expansion and the fact that $\Delta_0f=0$, due to the mean-zero condition, give
\begin{equation*}
    \begin{aligned}
        \Delta_{\leq M}(e^{t\mathcal{D}}-I)f=\sum_{k=1}^M \sum_{q=1}^\infty \frac{(-t)^q}{q!}\sum_{n\geq0}\phi_k(n)\lambda_n^q \text{proj}_n f.
    \end{aligned}
\end{equation*}
Applying \eqref{KernelBound_semigroup} in Lemma \ref{lem:kernel_bounds} with $t=0$ and $r=3q$, we obtain
 \begin{equation*}
    \begin{aligned}
        \Big\|\sum_{n\geq0}\phi_k(n)\lambda_n^q \text{proj}_n f\Big\|_{L^\infty}\lesssim C^q 2^{3kq}\|f\|_{L^\infty},
    \end{aligned}
\end{equation*}
with $C$ a constant independent of $k, q$. The factor $C^q$ comes from the bounds for $\lambda_n^q$ and its derivatives in the boundedness proof for $h_{j,p}$ \eqref{hjp_bound} in the proof of Lemma \ref{lem:kernel_bounds}. Since $t2^{3k}\leq 1$ for $k\leq M$, it follows that
\begin{equation*}
    \begin{aligned}
        \|\Delta_{\leq M}(e^{t\mathcal{D}}-I)f\|_{L^\infty}\lesssim \|f\|_{L^\infty}\sum_{k=1}^M \sum_{q=1}^\infty \frac{C^q}{q!}(t2^{3k})^q\lesssim  \|f\|_{L^\infty}t\sum_{k=1}^M 2^{3k}\lesssim \|f\|_{L^\infty}.
    \end{aligned}
\end{equation*}
For the high frequencies, the  localized estimate \eqref{KernelBound_semigroup} gives
\begin{equation*}
		\begin{aligned}
		\|\sum_{k>M}\Delta_ke^{\mathcal{D}t}f\|_{L^\infty}&\lesssim \|f\|_{L^\infty}\sum_{k>M}e^{-c2^{3k}t}\lesssim\|f\|_{L^\infty}\sum_{k>M}e^{-c2^{3(k-M)}2^{3M}t}\lesssim\|f\|_{L^\infty},
		\end{aligned}
		\end{equation*}
        where in the last step we used $2^{3M}t>2^{-3}$. 
For $t\geq 2^{-3}$, we immediately get from the mean-zero condition on $f$ that
\begin{equation*}
		\begin{aligned}
		\|e^{\mathcal{D}t}f\|_{L^\infty}\lesssim \|f\|_{L^\infty}\sum_{k\geq1}e^{-c2^{3k}t}\lesssim e^{-\gamma t}\|f\|_{L^\infty},
		\end{aligned}
		\end{equation*}
        for some $\gamma>0$.

		Finally, the estimate in $W^{1,\infty}$ follows by the previous one using the tangential vector fields defined in \eqref{Vector-fields}:
        \begin{equation*}
		\begin{aligned}
		\|\nabla_{\mathbb{S}^2}e^{t\mathcal{D}}f\|_{L^\infty}&\lesssim \sum_{j=1}^3\|V_j e^{t\mathcal{D}}f\|_{L^\infty}\lesssim \sum_{j=1}^3\|e^{t\mathcal{D}}V_j f\|_{L^\infty}\lesssim e^{-\gamma t} \sum_{j=1}^3\|V_jf\|_{L^\infty}\\
		&\lesssim e^{-\gamma t}\|\nabla_{\mathbb{S}^2} f\|_{L^\infty}.
		\end{aligned}
		\end{equation*}
	\end{proof}

	\begin{lemma}\label{lem:semigroup_est}
		Let $\alpha\in(0,1)$ and let $\mathcal{L}_a$ be the linear operator defined in \eqref{La_def}. Let $f\in W^{1,\infty}(\mathbb{S}^2)$ and let $g(\cdot,t)\in C^{\alpha}(\mathbb{S}^2)$ be such that  $\int_{\mathbb{S}^2}f(x)d\sigma(x)=0$ and $\int_{\mathbb{S}^2}g(x,t)d\sigma(x)=0$. Then, there exists $\gamma>0$ such that 
		\begin{equation}\label{semig_1}
		\sup_{t>0}e^{\gamma t}\|e^{t\mathcal{L}_a}f\|_{W^{1,\infty}}\lesssim   C\Big(\frac{|A_\rho|R^2}{A_\sigma}\Big)\|f\|_{W^{1,\infty}},
		\end{equation}  
		\begin{equation}\label{semig_2}    \sup_{t>0}t^{\frac{2+\alpha}{3}}e^{\gamma t}\|e^{t\mathcal{L}_a}f\|_{C^{3,\alpha}}\lesssim   C\Big(\frac{|A_\rho|R^2}{A_\sigma}\Big)\|f\|_{W^{1,\infty}},
		\end{equation}
		\begin{equation}\label{semig_3}
		\sup_{t>0}e^{\gamma t}
		\Big\|
		\int_0^t e^{(t-\tau)\mathcal{L}_a} g(\tau)\, d\tau
		\Big\|_{W^{1,\infty}}
		\lesssim C\Big(\frac{|A_\rho|R^2}{A_\sigma}\Big)
		\sup_{t>0}  t^{\frac{2+\alpha}{3}} e^{\gamma t}\|g(t)\|_{C^{\alpha}}, 
		\end{equation}
		\begin{equation}\label{semig_4}
		\sup_{t>0}
		t^{\frac{2+\alpha}{3}} e^{\gamma t}\Big\|
		\int_0^t e^{(t-\tau)\mathcal{L}_a} g(\tau) \, d\tau
		\Big\|_{C^{3,\alpha}}
		\lesssim C\Big(\frac{|A_\rho|R^2}{A_\sigma}\Big)
		\sup_{t>0} t^{\frac{2+\alpha}{3}}e^{\gamma t} \|g(t)\|_{C^{\alpha}}.
		\end{equation}
		
	\end{lemma}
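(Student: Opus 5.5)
The plan is to reduce everything to the diagonal semigroup through the factorization $e^{t\mathcal{L}_a}=\mathcal{S}e^{t\mathcal{D}}\mathcal{S}^{-1}$ from \eqref{L_a_diag}. First I check that the diagonal multiplier $\lambda_k=a_k$ from \eqref{def:akbk} satisfies \eqref{multiplier} with $\beta=3$ for $k\geq1$. Indeed $a_k\sim k^3$, and its derivatives as a rational function of $k$ have the right decay; the correction $\delta_{k,1}$ only affects one mode and can be absorbed. Since $\mathcal{S}^{-1}$ preserves the mean-zero condition, Lemma \ref{semig_diag} applies to $\mathcal{S}^{-1}f$.

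\textbf{Estimates \eqref{semig_1} and \eqref{semig_2}.} For \eqref{semig_1}, I compose Proposition \ref{prop_S_holder} and Corollary \ref{cor:Sinv} in $W^{1,\infty}$ with the $W^{1,\infty}$ decay in Lemma \ref{semig_diag}. For \eqref{semig_2}, I use the $C^{3,\alpha}$ bound for $\mathcal{S}$ from Proposition \ref{prop_S_holder}. Then Lemma \ref{semig_diag} gives $t^{\frac{2+\alpha}{3}}\|e^{t\mathcal{D}}\mathcal{S}^{-1}f\|_{C^{3,\alpha}}\lesssim e^{-\gamma t}\|\mathcal{S}^{-1}f\|_{W^{1,\infty}}$, and Corollary \ref{cor:Sinv} closes it.

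\textbf{Duhamel estimates \eqref{semig_3} and \eqref{semig_4}.} I again conjugate: set $h(\tau)=\mathcal{S}^{-1}g(\tau)$, which is bounded in $C^\alpha$ by Corollary \ref{cor:Sinv}. After applying $\mathcal{S}$, it suffices to bound $\int_0^t e^{(t-\tau)\mathcal{D}}h(\tau)\,d\tau$ in the relevant norms, using the Besov characterization \eqref{BesovHolder} and the localized decay $\|\Delta_k e^{s\mathcal{D}}h\|_{L^\infty}\lesssim e^{-c2^{3k}s}2^{-\alpha k}\|h\|_{C^\alpha}$.

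For \eqref{semig_4}, fix $k$ and estimate
\[
2^{(3+\alpha)k}\int_0^t e^{-c2^{3k}(t-\tau)}2^{-\alpha k}\tau^{-\frac{2+\alpha}{3}}e^{-\gamma\tau}\,d\tau .
\]
I split the integral at $\tau=t/2$.
\begin{itemize}
\item On $[t/2,t]$, the factor $\tau^{-\frac{2+\alpha}{3}}\sim t^{-\frac{2+\alpha}{3}}$, and $\int 2^{3k}e^{-c2^{3k}(t-\tau)}d\tau\lesssim1$. This gives exactly $t^{-\frac{2+\alpha}{3}}e^{-\gamma t}$.
\item On $[0,t/2]$, I use $2^{3k}e^{-c2^{3k}t/2}\lesssim t^{-1}$ together with $\int_0^{t/2}\tau^{-\frac{2+\alpha}{3}}d\tau\sim t^{\frac{1-\alpha}{3}}$. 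This yields $t^{-\frac{2+\alpha}{3}}$, with integrability guaranteed by $\frac{2+\alpha}{3}<1$.
\end{itemize}
The exponential weight is handled by choosing the diagonal decay rate $c2^{3k}\geq c'>\gamma$ for $k\geq1$. I write $e^{-c2^{3k}(t-\tau)}e^{-\gamma\tau}\leq e^{-\gamma t}e^{-(c2^{3k}-\gamma)(t-\tau)}$, possibly shrinking $\gamma$.

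\textbf{Main obstacle.} The hard part is \eqref{semig_3}, the $W^{1,\infty}$ bound. There is no Besov characterization for $W^{1,\infty}$, so summing dyadic pieces loses a logarithm. I would estimate $\|\nabla\cdot\|_{L^\infty}$ by summing over $k$, using $2^{k}\|\Delta_k\cdot\|_{L^\infty}$:
\[
\sum_k 2^{(1-\alpha)k}\int_0^t e^{-c2^{3k}(t-\tau)}\tau^{-\frac{2+\alpha}{3}}e^{-\gamma\tau}\,d\tau .
\]
Since $1-\alpha<3$ and $\frac{2+\alpha}{3}<1$, this sums. The reason is that the gap between $C^\alpha$ input and $C^{1}$ output is $1-\alpha$ derivatives, strictly less than the $3$ gained. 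Concretely, I would split the frequencies at $2^{3k}\sim (t-\tau)^{-1}$ inside the integral. This gives $\sum_k 2^{(1-\alpha)k}e^{-c2^{3k}s}\lesssim s^{-\frac{1-\alpha}{3}}$, and then the time integral $\int_0^t (t-\tau)^{-\frac{1-\alpha}{3}}\tau^{-\frac{2+\alpha}{3}}d\tau$ is a Beta integral, bounded uniformly in $t$. In fact it equals a constant, since the exponents sum to $1$. The same argument with Bernstein for $\Delta_0$-free pieces (mean zero) gives the $L^\infty$ part, and exponential weights are treated as above. Finally, $\mathcal{S}$ is bounded on $W^{1,\infty}$ by Proposition \ref{prop_S_holder}, which transfers the bound back to $e^{(t-\tau)\mathcal{L}_a}$.
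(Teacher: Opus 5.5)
Your proposal is correct and takes essentially the same approach as the paper. You conjugate by $\mathcal{S}$ and combine Proposition \ref{prop_S_holder}, Corollary \ref{cor:Sinv} and Lemma \ref{semig_diag} for \eqref{semig_1}--\eqref{semig_2}; for the Duhamel terms you use the frequency-localized decay with the Besov characterization \eqref{BesovHolder} for \eqref{semig_4}, and the dyadic bound $\sum_{k\geq1}2^{(1-\alpha)k}e^{-c2^{3k}s}\lesssim s^{-\frac{1-\alpha}{3}}$ followed by a Beta-type integral for \eqref{semig_3}, where your split at $\tau=t/2$ together with $e^{-c2^{3k}(t-\tau)}e^{-\gamma\tau}\leq e^{-\gamma t}e^{-(c2^{3k}-\gamma)(t-\tau)}$ just spells out the paper's ``rescale and split near and far from $\tau=0$'' step.
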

	\begin{proof}
		We recall that $\mathcal{L}_a=\mathcal{S}\mathcal{D}\mathcal{S}^{-1}$ \eqref{L_a_diag}, where $\mathcal{S}$ is defined in \eqref{Skj_def} and $\mathcal{D}$ is a diagonal operator given by the multiplier $-a_k$ \eqref{def:akbk}, which satisfies  \eqref{multiplier} for $k\geq1$ with $\beta=3$. 
		We will abuse notation and denote $\gamma$ for a different constant on the proof of each estimate. In the end, we will choose it sufficiently small, so that the four estimates hold.
        First, we prove inequalities \eqref{semig_1} and \eqref{semig_2}. By Proposition \ref{prop_S_holder}, we have
		\begin{equation*}
		\begin{aligned}
		\sup_{t>0}e^{\gamma t}\|e^{t\mathcal{L}_a}f\|_{W^{1,\infty}}\lesssim   C\Big(\frac{|A_\rho|R^2}{A_\sigma}\Big)\sup_{t>0}e^{\gamma t}\|e^{t\mathcal{D}}\mathcal{S}^{-1}f\|_{W^{1,\infty}}. 
		\end{aligned}
		\end{equation*}
		Then, we use Lemma \ref{semig_diag} and Corollary \ref{cor:Sinv},
		\begin{equation*}
		\begin{aligned}
		\sup_{t>0}e^{\gamma t}\|e^{t\mathcal{L}_a}f\|_{W^{1,\infty}}\lesssim C\Big(\frac{|A_\rho|R^2}{A_\sigma}\Big)\|\mathcal{S}^{-1} f \|_{W^{1,\infty}}\lesssim C\Big(\frac{|A_\rho|R^2}{A_\sigma}\Big)\|f\|_{W^{1,\infty}}.
		\end{aligned}
		\end{equation*}
		Similarly, 
		\begin{equation*}
		\begin{aligned}
		\sup_{t>0}t^{\frac{2+\alpha}{3}}e^{\gamma t}\|e^{t\mathcal{L}_a}f\|_{C^{3,\alpha}}\lesssim   C\Big(\frac{|A_\rho|R^2}{A_\sigma}\Big)\sup_{t>0}t^{\frac{2+\alpha}{3}}e^{\gamma t}\|e^{t\mathcal{D}}\mathcal{S}^{-1}f\|_{C^{3,\alpha}}, 
		\end{aligned}
		\end{equation*}
		and thus
		\begin{equation*}
		\begin{aligned}
		\sup_{t>0}t^{\frac{2+\alpha}{3}}e^{\gamma t}\|e^{t\mathcal{L}_a}f\|_{C^{3,\alpha}}\lesssim  C\Big(\frac{|A_\rho|R^2}{A_\sigma}\Big)\|\mathcal{S}^{-1} f\|_{W^{1,\infty}}\lesssim C\Big(\frac{|A_\rho|R^2}{A_\sigma}\Big)\|f\|_{W^{1,\infty}}.
		\end{aligned}
		\end{equation*}
		Next, we prove the inequality  \eqref{semig_4}. 	Recalling Proposition \ref{Prop-equiv}, we have
		\begin{equation*}
		\begin{aligned}
		\sup_{t>0} t^{\frac{2+\alpha}{3}}e^{\gamma t}
		&\Big\|
		\int_0^t e^{(t-\tau)\mathcal{L}_a} g(\tau)\,d\tau
		\Big\|_{C^{3,\alpha}}\\
		&\lesssim C\Big(\frac{|A_\rho|R^2}{A_\sigma}\Big)
		\sup_{t>0} t^{\frac{2+\alpha}{3}}e^{\gamma t}\sup_{k\geq1} 2^{(3+\alpha)k}
		\Big\|
		\int_0^t e^{(t-\tau)\mathcal{D}}\Delta_k\mathcal{S}^{-1} g(\tau)\,d\tau
		\Big\|_{L^\infty}.
		\end{aligned}
		\end{equation*}
		By the frequency-localized semigroup estimate in Lemma \ref{semig_diag}, we have
		\begin{equation*}
		\begin{aligned}
		\sup_{t>0} t^{\frac{2+\alpha}{3}}&e^{\gamma t}
		\Big\|
		\int_0^t e^{(t-\tau)\mathcal{L}_a} g(\tau)\,d\tau
		\Big\|_{C^{3,\alpha}}\\
		&\lesssim C\Big(\frac{|A_\rho|R^2}{A_\sigma}\Big)
		\sup_{t>0}e^{\gamma t}\sup_{k\geq1}
		t^{\frac{2+\alpha}{3}} 2^{(3+\alpha)k}
		\!\int_0^t e^{-c\,2^{3k}(t-\tau)}
		\|\Delta_k \mathcal{S}^{-1}g(\tau)\|_{L^\infty}\,d\tau.
		\end{aligned}
		\end{equation*}
		Now writing
		\begin{equation*}
		2^{\alpha k}\|\Delta_k \mathcal{S}^{-1}g(\tau)\|_{L^\infty}
		\lesssim C\Big(\frac{|A_\rho|R^2}{A_\sigma}\Big)
		\tau^{-\frac{2+\alpha}{3}}e^{-\gamma \tau}\sup_{s>0} s^{\frac{2+\alpha}{3}}e^{\gamma s}\|g(s)\|_{C^\alpha},
		\end{equation*}
		we obtain
		\begin{equation*}
		\begin{aligned}
		\sup_{t>0} t^{\frac{2+\alpha}{3}}e^{\gamma t}
		&\Big\|
		\int_0^t e^{(t-\tau)\mathcal{L}_a} g(\tau)\,d\tau
		\Big\|_{C^{3,\alpha}}\\
		&\lesssim C\Big(\frac{|A_\rho|R^2}{A_\sigma}\Big)\Big(\sup_{s>0} s^{\frac{2+\alpha}{3}}e^{\gamma s}\|g(s)\|_{C^\alpha}\Big)
		\sup_{t>0}\sup_{k\geq1}
		t^{\frac{2+\alpha}{3}}
		\int_0^t
		2^{3k}e^{\gamma (t-\tau)}e^{-c\,2^{3k}(t-\tau)}
		\frac{d\tau}{\tau^{\frac{2+\alpha}{3}}}.      
		\end{aligned}
		\end{equation*}
		Choosing $\gamma>0$ sufficiently small, rescaling $\tau\to\tau/t$ and splitting the last integral  near and far from $\tau=0$, we obtain that the last term is uniformly bounded in $k$ and $t$, giving the desired estimate.
		
		Lastly, we prove the estimate \eqref{semig_3}. We have
		\begin{equation*}
		\begin{aligned}
		\sup_{t>0}e^{\gamma t}
		&\Big\|
		\int_0^t e^{(t-\tau)\mathcal{L}_a} g(\tau) d\tau
		\Big\|_{W^{1,\infty}}
		\lesssim C\Big(\frac{|A_\rho|R^2}{A_\sigma}\Big)\sup_{t>0}e^{\gamma t}\sum_{k\geq1}
		\Big\|
		\int_0^t e^{(t-\tau)\mathcal{D}} \Delta_k\big(\mathcal{S}^{-1}g(\tau)\big) d\tau
		\Big\|_{W^{1,\infty}}\\
		&\lesssim C\Big(\frac{|A_\rho|R^2}{A_\sigma}\Big)\sup_{t>0}e^{\gamma t}
		\int_0^t\sum_{k\geq1} 2^ke^{-c2^{3k}(t-\tau)}\|\Delta_k\big(\mathcal{S}^{-1}g(\tau)\big)\|_{L^\infty} d\tau\\
		&\lesssim C\Big(\frac{|A_\rho|R^2}{A_\sigma}\Big)\big(\sup_{s>0}s^{\frac{2+\alpha}{3}}e^{\gamma s}\|g(s)\|_{C^{\alpha}}\big)\sup_{t>0}
		\int_0^t e^{\gamma (t-\tau)}\sum_{k\geq1} 2^{(1-\alpha)k}e^{-c2^{3k}(t-\tau)}\frac{d\tau}{\tau^{\frac{2+\alpha}{3}}}.
		\end{aligned}
		\end{equation*}
		The sum in $k$ is bounded by
		\begin{equation*}
		\begin{aligned}
		\sum_{k\geq1} 2^{(1-\alpha)k}e^{-c2^{3k}(t-\tau)}\lesssim \int_1^\infty\!\!2^{(1-\alpha)x}e^{-c2^{3x}(t-\tau)}dx\lesssim\frac{1}{(t\!-\!\tau)^{\frac{1-\alpha}{3}}}\int_{c(t-\tau)}^\infty \frac{e^{-w}}{w^{\frac{2+\alpha}{3}}}dw\lesssim \frac{e^{-c(t-\tau)}}{(t\!-\!\tau)^{\frac{1-\alpha}{3}}}, 
		\end{aligned}
		\end{equation*}
		where the change of variable $w=c2^{3x}(t-\tau)$ was used and the constant $c>0$ was renamed.
		Hence, renaming $c>0$ again and choosing $\gamma>0$ sufficiently small, 
		\begin{equation*}
		\begin{aligned}
		\sup_{t>0}e^{\gamma t}
		\Big\|
		\int_0^t e^{(t-\tau)\mathcal{L}_a} &g(\tau) d\tau
		\Big\|_{W^{1,\infty}}\\	
        &\lesssim C\Big(\frac{|A_\rho|R^2}{A_\sigma}\Big) \big(\sup_{s>0}s^{\frac{2+\alpha}{3}}e^{\gamma s}\|g(s)\|_{C^{\alpha}}\big)\sup_{t>0}
		\int_0^t \frac{e^{-(c-\gamma)(t-\tau)}}{\tau^{\frac{2+\alpha}{3}}(t-\tau)^{\frac{1-\alpha}{3}}}d\tau\\
		&\lesssim C\Big(\frac{|A_\rho|R^2}{A_\sigma}\Big)\sup_{t>0}t^{\frac{2+\alpha}{3}}e^{\gamma t}\|g(t)\|_{C^{\alpha}}.
		\end{aligned}
		\end{equation*}
		
	\end{proof}

	\section{Nonlinear Estimates}\label{sec:nonlinear}
	
In this section we prove our main estimates on the Muskat nonlinearity.	
	
	\subsection{Multilinear estimates} The Muskat equation contains a complex nonlinearity, which is defined non-locally in terms of nonlinear Birkhoff--Rott singular integrals (see Section \ref{sec:linearization}). To estimate it, we prove first several multilinear Calder\'{o}n-type estimates. 
	
	In the following lemmas, for any $k\in\mathbb{Z}$ we let $\Phi_{\leq k}:\mathbb{R}^3\to[0,1]$ be defined by $\Phi_{\leq k}(x)=\chi(2^{-k}|x|)$ and $\Phi_{>k}(x)=1-\Phi_{\leq k}(x)$, where the cutoff $\chi$ was defined in Subsection \ref{subsec:LP}. 

In this subsection, the notation $A\lesssim_QB$ means that there is a constant $C\geq 1$  such that $A\leq C^QB$.
	
	Our first lemma concerns linear estimates.
	
	\begin{lemma}\label{lem:model_linear}
		Let $m\geq 1$ be odd and $k\in\mathbb{Z}_+$. Let $p_m:\mathbb R^3\to\mathbb R$, $p_m(x)=x_1^{\beta_1}x_2^{\beta_2}x_3^{\beta_3}$, $|\beta|=m$, be a
		homogeneous polynomial of degree $m$ and let $g\in C^\alpha(\mathbb S^2)$, $\alpha\in(0,1)$.
		Define
		\begin{equation}\label{linear_model}
		\mathcal T_{p_m, k}^{(0)}[g](x)=\operatorname{p.v.}
		\int_{\mathbb S^2}\frac{p_m(x-y)}{|x-y|^{m+2}}\Phi_{\leq-k}(x-y)g(y)\,d\sigma(y).
		\end{equation}
		Then there is a constant $C_\alpha\geq 2$ (independent of $m, k, g$) such that 
		\begin{equation}\label{linear_model_estimate}
		\big\|\mathcal T_{p_m, k}^{(0)}[g]\big\|_{C^\alpha}\leq C_\alpha^m\|g\|_{C^\alpha}
		\end{equation}
		and 	
		\begin{equation}\label{linear_model_estimate2}
 \big\|\mathcal T^{(0)}_{p_m,k}[\Delta_{\leq k+10}g]\big\|_{L^\infty} \leq C_\alpha^{m}\|\Delta_{\leq k+10}g\|_{L^\infty}.
\end{equation}	
		\end{lemma}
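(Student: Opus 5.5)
The plan is to treat $\mathcal T^{(0)}_{p_m,k}$ as a truncated Calder\'on--Zygmund operator on $\mathbb S^2$. The kernel $\mathcal K(x,y)=p_m(x-y)|x-y|^{-m-2}\Phi_{\leq -k}(x-y)$ is odd under $x-y\mapsto y-x$ because $m$ is odd. It satisfies $|\mathcal K|\le |x-y|^{-2}$, since $|p_m(z)|\le |z|^m$. Its gradient satisfies $|\nabla_x\mathcal K|\lesssim m\,|x-y|^{-3}$, because differentiating the monomial produces at most a factor $|\beta|=m$ and the cutoff derivatives are harmless on the support. The constant $C_\alpha^m$ therefore only has to absorb polynomial factors in $m$.

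The first step is the cancellation property. I would show that
\begin{equation*}
\Big|\operatorname{p.v.}\int_{\mathbb S^2\cap\{\epsilon<|x-y|<r\}}\mathcal K(x,y)\,d\sigma(y)\Big|\lesssim m\, r
\end{equation*}
for $r\le 2$. The way to see this is to rotate so that $x=e_3$. The sphere near $x$ is a graph over the tangent plane, and there $x-y=(-z',\,|z'|^2/2+O(|z'|^4))$. The leading part is odd in $z'$ and integrates to zero against the radial cutoff. The remainder is $O(m)\,|z'|^{-1}$, which integrates to $O(m r)$. Taking $r\to0$ and then $\epsilon\to0$ shows that the principal value exists.

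For the $L^\infty$ bound \eqref{linear_model_estimate2}, set $h=\Delta_{\leq k+10}g$ and write
\begin{equation*}
\mathcal T h(x)=\int \mathcal K(x,y)\big(h(y)-h(x)\big)\,d\sigma+h(x)\operatorname{p.v.}\int\mathcal K(x,y)\,d\sigma .
\end{equation*}
The second term is at most $m2^{-k}\|h\|_{L^\infty}$ by the cancellation step. For the first term I would use Bernstein \eqref{BernsteinBound} in the form $|h(y)-h(x)|\le 2^{k+10}|x-y|\,\|h\|_{L^\infty}$. This term is then bounded by $2^{k}\|h\|_{L^\infty}\int_{|x-y|\lesssim 2^{-k}}|x-y|^{-1}\,d\sigma\lesssim\|h\|_{L^\infty}$, and the support of $\Phi_{\le -k}$ is exactly what is needed here. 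Beyond the bounds on $\mathcal K$ and the cancellation step, nothing else is required.

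For the H\"older bound \eqref{linear_model_estimate}, the $L^\infty$ part follows in the same way, now using $|g(y)-g(x)|\le\|g\|_{C^\alpha}|x-y|^\alpha$, since $\int|x-y|^{\alpha-2}\,d\sigma<\infty$. For the seminorm I would run the classical argument. Fix $x,x'$ with $\delta=|x-x'|$ and split the integral into $|x-y|<2\delta$ and $|x-y|\ge 2\delta$.

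In the near region I subtract $g(x)$ (respectively $g(x')$) and bound each piece by $\|g\|_{C^\alpha}\int_{|x-y|<3\delta}|x-y|^{\alpha-2}\lesssim\delta^\alpha$.

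In the far region I write
\begin{equation*}
\mathcal K(x,y)\big(g(y)-g(x)\big)-\mathcal K(x',y)\big(g(y)-g(x')\big).
\end{equation*}
The kernel-difference piece is bounded by the gradient estimate, $m\,\delta\int_{|x-y|>2\delta}|x-y|^{\alpha-3}\lesssim m\delta^\alpha$. The remaining piece is $(g(x)-g(x'))$ times an integral of $\mathcal K(x',y)$ over an annular region. That integral is bounded by $O(m)$ through the cancellation step, applied with inner radius comparable to $\delta$ for the region centered at $x$. The mismatch between the region $|x-y|>2\delta$ and the corresponding region around $x'$ is a set of comparable radii, on which $|\mathcal K|\lesssim\delta^{-2}$ over area $\delta^2$. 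The local terms involving $g(x)$ and $g(x')$ are handled by the cancellation step.

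The main obstacle is making the cancellation step uniform: controlling the curvature error of the sphere and the $\delta$-annulus mismatch with constants that grow only polynomially in $m$ and are independent of $k$. The approach I would try first is the explicit graph parametrization with a Taylor expansion of $p_m$, bounding $|p_m(z+w)-p_m(z)|\le m|z|^{m-1}|w|$ for $|w|\le|z|$ up to a factor $2^m$, which is absorbed into $C_\alpha^m$.
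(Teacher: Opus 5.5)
Your proof of \eqref{linear_model_estimate2} is correct and is essentially the paper's. You use the same splitting into $h(x)\mathcal T[1](x)$ plus a difference integral, and you control the difference integral by Bernstein on $|x-y|\lesssim 2^{-k}$; here $\mathcal T=\mathcal T^{(0)}_{p_m,k}$. The only difference is that you bound $\mathcal T[1]$ by your annular cancellation estimate, which is sound, whereas the paper gets it from \eqref{linear_model_estimate} with $g\equiv 1$.

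For \eqref{linear_model_estimate} the paper only cites the H\"older boundedness of order-zero pseudodifferential operators. Your direct Calder\'on--Zygmund argument has a genuine gap in the seminorm step. You subtract $g(x)$ from the $x$-integral and $g(x')$ from the $x'$-integral, in both the near and the far region. The terms left over then add up to
\[
g(x)\mathcal T[1](x)-g(x')\mathcal T[1](x')=\bigl(g(x)-g(x')\bigr)\mathcal T[1](x)+g(x')\bigl(\mathcal T[1](x)-\mathcal T[1](x')\bigr),
\]
and the last term requires $|\mathcal T[1](x)-\mathcal T[1](x')|\lesssim C^m\delta^\alpha$. Your cancellation step is only a size bound, $|\mathcal T[1]|\lesssim m2^{-k}$, which gives nothing when $\delta\ll 2^{-k}$. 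Without the factor $|g(y)-g(x)|$, the kernel-gradient estimate only yields $\delta\int_{|x-y|>2\delta}|x-y|^{-3}\,d\sigma\approx 1$. For instance, the near-region term $g(x')\int_{|x-y|<2\delta}\mathcal K(x',y)\,d\sigma(y)$ is in general of size $\|g\|_{L^\infty}$, not $O(\delta^\alpha)$, because that ball is not centered at $x'$. So the claim that these terms are ``handled by the cancellation step'' fails. Since $p_m$ is not rotation invariant, $\mathcal T[1]$ is not constant. This is exactly the condition $T1\in C^\alpha$ of the H\"older $T1$ theorem, and it must be proved.

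The condition does hold, and your tools suffice once you add rotation covariance. Take $Q\in SO(3)$ with $Qx=x'$ and $\|Q-I\|\lesssim\delta$. Substituting $y=Qz$ shows that $\mathcal T[1](x')$ equals the same operator with $p_m$ replaced by $p_m\circ Q$, evaluated at $x$. Hence $\mathcal T[1](x')-\mathcal T[1](x)$ is the operator built with the odd homogeneous polynomial $P=p_m\circ Q-p_m$, applied to $1$ at $x$. This $P$ satisfies $|\nabla P(w)|\lesssim m^2\delta|w|^{m-1}$, and your cancellation argument used only oddness and a gradient bound. It therefore gives $|\mathcal T[1](x)-\mathcal T[1](x')|\lesssim m^2 2^{-k}\delta$, which closes the seminorm estimate with constants polynomial in $m$ and uniform in $k$. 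With this addition your route becomes a self-contained proof. It also makes explicit the uniformity in $m$ and $k$ that the paper's citation leaves implicit.
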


\begin{proof} The estimate \eqref{linear_model_estimate} follows from standard boundedness of order-zero
pseudodifferential operators on H\"older spaces; see, for example,  
\cite[Corollary~2.1.B]{Taylor91}. To prove \eqref{linear_model_estimate2} we write
\begin{align*}
 \mathcal T^{(0)}_{p_m,k}[\Delta_{\leq k+10}g](x)
 &=\int_{\mathbb S^2}\frac{p_m(x-y)}{|x-y|^{m+2}}
   \Phi_{\leq-k}(x-y)\big(\Delta_{\leq k+10}g(y)-\Delta_{\leq k+10}g(x)\big)\,d\sigma(y)\\
   &\quad+\Delta_{\leq k+10}g(x)\mathcal T^{(0)}_{p_m,k}[1](x),
\end{align*}
 The $L^\infty$ norm of the second term is bounded by $C_\alpha^m\|\Delta_{\leq k+10}g\|_{L^\infty}$,
in view of \eqref{linear_model_estimate}. Since $|p_m(z)|\leq|z|^m$ and
$\|\nabla_{\mathbb S^2}\Delta_{\leq k+10}g\|_{L^\infty}\lesssim2^k\|\Delta_{\leq k+10}g\|_{L^\infty}$,
the first term is bounded by
\[
 \|\nabla_{\mathbb S^2}\Delta_{\leq k+10}g\|_{L^\infty}
 \int_{|x-y|\lesssim2^{-k}}|x-y|^{-1}\,d\sigma(y)
 \lesssim\|\Delta_{\leq k+10}g\|_{L^\infty}.
\]
This completes the proof of \eqref{linear_model_estimate2}.
\end{proof}

We prove now a more general multilinear estimate.

\begin{lemma}[Multilinear singular integral operators]
\label{lem:model_nonlinearity}
 Let $q\geq 0$, $m\geq0$, $p_m(x)=x_1^{\beta_1}x_2^{\beta_2}x_3^{\beta_3}$, $|\beta|=m$, and
 assume that $m+q$ is odd.  Assume that
 \[
 a_1,\ldots,a_q\in C^{1,\alpha}(\mathbb S^2),
 \qquad g\in C^\alpha(\mathbb S^2),\qquad 0<\alpha<1,
 \]
 and define
 \begin{equation}\label{multilinear_model}
 \mathcal T_{p_m}^{(q)}[a_1,\ldots,a_q,g](x)
 =\operatorname{p.v.}\int_{\mathbb S^2}
 \frac{\prod_{j=1}^q(a_j(x)-a_j(y))p_m(x-y)}
 {|x-y|^{m+q+2}}g(y)\,d\sigma(y).
 \end{equation}
 Then there is $C_\alpha\geq 1$ such that
 \begin{equation}\label{multilinear_model_estimate}
 \begin{split}
 \big\|\mathcal T_{p_m}^{(q)}[a_1,\ldots,a_q,g]\big\|_{C^\alpha}
 \leq C_\alpha^{m+q}\bigg\{\|g\|_{C^\alpha}\prod_{j=1}^q\|a_j\|_{W^{1,\infty}}+\|g\|_{L^\infty}\sum_{j=1}^q\|a_j\|_{C^{1,\alpha}}
 \prod_{r\ne j}\|a_r\|_{W^{1,\infty}}\bigg\}.
 \end{split}
 \end{equation}
\end{lemma}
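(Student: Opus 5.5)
The plan is to reduce the multilinear operator \eqref{multilinear_model} to the linear model operators of Lemma \ref{lem:model_linear}, using a dyadic decomposition in the distance $|x-y|$ combined with a Littlewood--Paley decomposition of the coefficients $a_j$, in the spirit of the classical Calder\'on commutator estimates. Concretely, write $1=\sum_{k\in\mathbb Z}(\Phi_{\leq -k}-\Phi_{\leq -k-1})(x-y)$. Since $|x-y|\leq 2$ on $\mathbb S^2$, only $k\geq -2$ contribute. On each shell $|x-y|\sim 2^{-k}$ I would split $a_j=\Delta_{\leq k+10}a_j+\Delta_{>k+10}a_j$.

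\emph{Low-frequency pieces.} For the low-frequency part, I would Taylor expand $a_j(x)-a_j(y)$ along the chord. Writing $\Delta_{\leq k+10}a_j$ through its homogeneous-of-degree-zero extension to $\mathbb R^3$, one gets
\[
a_j(x)-a_j(y)=\int_0^1\nabla \tilde a_j(y+s(x-y))\,ds\cdot(x-y).
\]
This turns each factor into a sum of monomials $(x-y)_i$ times an averaged gradient. The kernel then becomes $p_{m+q}(x-y)/|x-y|^{m+q+2}$ with $m+q$ odd, which is exactly the odd kernel of Lemma \ref{lem:model_linear}; the gradients act as bounded coefficients. The $s$-dependence is the delicate point. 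I would handle it by expanding $\nabla\tilde a_j(y+s(x-y))$ around $x$ (or $y$) and keeping the leading term $\nabla \tilde a_j(x)$, which can be pulled outside the integral. This leaves $\nabla a_j(x)\,\mathcal T^{(0)}_{p_{m+q},k}[g]$, and the remainder gains a factor $2^{-k\alpha}\|a_j\|_{C^{1,\alpha}}$, so it is summable over $k$. Summing the shells telescopes back to the full truncated operator, and \eqref{linear_model_estimate} controls its $C^\alpha$ norm. The constants $C_\alpha^{m+q}$ come from $|p_{m+q}(z)|\le |z|^{m+q}$ and from the $3^q$ choices of monomials, absorbed by enlarging $C_\alpha$.

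\emph{High-frequency pieces.} For terms where some $a_j$ is replaced by $\Delta_{>k+10}a_j$, I would use the bound
\[
|\Delta_{>k+10}a_j(x)-\Delta_{>k+10}a_j(y)|\lesssim 2^{-k(1+\alpha)}\|a_j\|_{C^{1,\alpha}},
\]
obtained from Proposition \ref{Prop-equiv} by summing $\|\Delta_n a_j\|_{L^\infty}\lesssim 2^{-n(1+\alpha)}\|a_j\|_{C^{1,\alpha}}$ over $n>k$. The remaining factors are bounded by $\|a_r\|_{W^{1,\infty}}|x-y|$. The shell integral then carries the factor $2^{-k\alpha}$ and the kernel size $2^{2k}$ over an area of order $2^{-2k}$. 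Summing over $k$ gives an $L^\infty$ bound of the form $\|g\|_{L^\infty}\|a_j\|_{C^{1,\alpha}}\prod_{r\neq j}\|a_r\|_{W^{1,\infty}}$, which is the second term in \eqref{multilinear_model_estimate}.

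\emph{Main obstacle.} The hard step is upgrading all of these bounds to the $C^\alpha$ norm rather than $L^\infty$. For this I would estimate $\mathcal T[\cdot](x)-\mathcal T[\cdot](x')$ with $|x-x'|=h$. Near the diagonal, $|x-y|\lesssim h$, use the size bounds above. Far from the diagonal, use that the kernel is smooth in $x$ there, together with the standard Calder\'on--Zygmund H\"older argument: the cancellation of odd kernels on small spherical caps supplies the term $g(y)-g(x)$, contributing $\|g\|_{C^\alpha}\prod_j\|a_j\|_{W^{1,\infty}}$. The difference of the $a_j(x)$ factors contributes $\|a_j\|_{C^{1,\alpha}}\|g\|_{L^\infty}$.

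The delicate point is that the sphere is curved, so odd symmetry is only approximate. The error coming from the cap geometry is of lower order, $O(|x-y|)$ relative to the kernel, and I expect it to be handled in the same way as in Lemma \ref{lem:model_linear}.
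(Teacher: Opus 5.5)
Your shell decomposition and Taylor freezing give the right $L^\infty$ bound. The content of the lemma, however, is the \emph{tame} $C^\alpha$ bound, in which $\|a_j\|_{C^{1,\alpha}}$ is paired only with $\|g\|_{L^\infty}$; this is what lets \eqref{sec6eq27} close later. Your argument does not produce it. After telescoping the frozen shells you have $\sum_I c_I(x)\,\mathcal T^{(0)}[g](x)$, where $c_I=\prod_j(\nabla_{\mathbb S^2}a_j)_{i_j}$ and $\mathcal T^{(0)}$ has the zero-order kernel $P_I(x-y)p_m(x-y)|x-y|^{-m-q-2}$. Invoking \eqref{linear_model_estimate} then leaves the product-rule term $\|c_I\|_{C^\alpha}\|\mathcal T^{(0)}[g]\|_{L^\infty}$, and $\mathcal T^{(0)}$ is not bounded on $L^\infty$.

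The loss is real. Take $q=1$, $m=0$, and let $g$ be a step across a great circle smoothed at scale $\delta$. Then $\|g\|_{L^\infty}=1$, $\|g\|_{C^\alpha}\sim\delta^{-\alpha}$, and $|\mathcal T^{(0)}[g]|\sim\log(1/\delta)$ on the circle. Choose $a$ with $\nabla_{\mathbb S^2}a$ of size $\epsilon\delta^\alpha$, oscillating at frequency $\delta^{-1}$ along the circle. The frozen term then has $C^\alpha$ norm $\gtrsim\epsilon\log(1/\delta)$, while the right-hand side of \eqref{multilinear_model_estimate} is $O(\epsilon)$. Since $\mathcal T^{(q)}_{p_m}$ itself obeys the estimate, the freezing remainder violates it too. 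So your $2^{-k\alpha}$ gains, which are $L^\infty$ gains, cannot be upgraded to $C^\alpha$ piece by piece. The underlying reason is that $a_j(x)-a_j(y)\approx\nabla_{\mathbb S^2}a_j(x)\cdot(x-y)$ is accurate only below the output wavelength. For $g$ smooth at that scale, the correct leading term is $g(x)\,\mathcal T^{(q)}_{p_m}[a_1,\dots,a_q,1](x)$, not $c_I(x)\,\mathcal T^{(0)}[g](x)$.

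Your last paragraph does not close this. The claim that the coefficient differences contribute $h^\alpha\|a_j\|_{C^{1,\alpha}}\|g\|_{L^\infty}$ is precisely what has to be proved. Bounding $|a_j(x)-a_j(x')|\lesssim h$ against $\int_{|x-y|>2h}|x-y|^{-3}\,d\sigma(y)\sim h^{-1}$ gives $O(1)$, not $O(h^\alpha)$. To do better you must isolate $g(x)\big(\mathcal T[a,1](x)-\mathcal T[a,1](x')\big)$ and prove a tame $C^\alpha$ bound for $\mathcal T^{(q)}_{p_m}[a_1,\dots,a_q,1]$, which is the lemma with $g\equiv1$. Done by differences, this needs cancellation among $O(1)$ terms. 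For $q\ge2$ it also produces truncated multilinear integrals of $1$ with Lipschitz coefficients. These are only of BMO type (e.g. for a coefficient with a corner), hence $O(\log(1/h))$, unless you spend a second $C^{1,\alpha}$ norm.

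The paper avoids all of this by measuring $C^\alpha$ through $2^{n\alpha}\|\Delta_n\mathcal T\|_{L^\infty}$ and tying every freezing scale to a frequency:
\begin{enumerate}
\item It splits $g=\sum_{k\ge n-4}\Delta_kg+\Delta_{\le n-5}g$.
\item It freezes the coefficients only on $|x-y|\lesssim2^{-k}$ (resp.\ $2^{-n}$). There \eqref{linear_model_estimate2} bounds the frozen operator in $L^\infty$ on frequency-localized inputs. So $\|c_I\|_{C^\alpha}$ meets $\|g\|_{L^\infty}$, and the remainders are needed only in $L^\infty$.
\item It treats $|x-y|\gtrsim2^{-n}$ without freezing. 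A rotation field $V\in\mathcal V$ is extracted by \eqref{eq:L62_rotation_reverse} and moved with $V_x+V_y$ onto $g$ or onto $(Va_j)(x)-(Va_j)(y)=O(|x-y|^\alpha\|a_j\|_{C^{1,\alpha}})$. This makes every far-field integral absolutely convergent, with no logarithm.
\end{enumerate}
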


\begin{proof}

By multilinearity we may assume that
\begin{equation}\label{sec6eq0}
\|a_1\|_{W^{1,\infty}}=\cdots=\|a_q\|_{W^{1,\infty}}
 =\|g\|_{L^\infty}=1.
\end{equation}
We may also assume that $m=0$ and $p_m\equiv 1$, by appending the \(m\) corresponding coordinate functions to the \(a_j\)’s and relabeling \(q+m\) as \(q\) (thus the new \(q\) is odd). We notice that 
\begin{equation}\label{eq:L62_Taylor}
 |a_j(x)-a_j(y)|\lesssim |x-y|,\qquad
 |a_j(x)-a_j(y)-\nabla_{\mathbb S^2}a_j(x)\cdot(x-y)|
 \lesssim \|a_j\|_{C^{1,\alpha}}|x-y|^{1+\alpha},
\end{equation}
for any $j\in\{1,\ldots,q\}$ and $x,y\in\mathbb{S}^2$.

For any $n\geq0$, we decompose the multilinear singular integral operator in two integrals:
\begin{align*}
 I_n^1(x)&=\sum_{k\geq n-4}\operatorname{p.v.}
 \int_{\mathbb S^2}
 \frac{\prod_{j=1}^q(a_j(x)-a_j(y))}{|x-y|^{q+2}}
 \Delta_kg(y)\,d\sigma(y),\\
 I_n^2(x)&=\operatorname{p.v.}\int_{\mathbb S^2}
 \frac{\prod_{j=1}^q(a_j(x)-a_j(y))}{|x-y|^{q+2}}
 \Delta_{\leq n-5}g(y)\,d\sigma(y).
\end{align*}
In view of Proposition \ref{Prop-equiv}, for \eqref{multilinear_model_estimate} it remains to prove that for any $n\in\mathbb{Z}_+$ and $l\in\{1,2\}$
\begin{equation}\label{sec6eq2}
2^{n\alpha}\|\Delta_nI_n^l\|_{L^\infty}\lesssim_q\big(
 \|g\|_{C^\alpha}+\sum_{j=1}^q\|a_j\|_{C^{1,\alpha}}\big).
\end{equation}

\textbf{Step 1.} We prove first the bounds \eqref{sec6eq2} in the case $l=1$. Let
\begin{equation}\label{sec6eq5}
 K_a(x,y)=\frac{\prod_{j=1}^q(a_j(x)-a_j(y))}{|x-y|^{q+2}}.
\end{equation}
For every $k\geq n-4$, we split $I_n^1=I_n^{1,1}+I_n^{1,2}+I_n^{1,3}$ where
\begin{equation*}
\begin{split}
I_n^{1,1}(x)&=\sum_{k\geq n-4}\operatorname{p.v.}\int_{\mathbb S^2}K_a(x,y)
 \Delta_kg(y)\Phi_{\leq -k}(x-y)\,d\sigma(y),\\
 I_n^{1,2}(x)&=\sum_{k\geq n-4}\int_{\mathbb S^2}
K_a(x,y)\Delta_kg(y)\big[\Phi_{\leq -n+10}(x-y)-\Phi_{\leq -k}(x-y)]\,d\sigma(y),\\
 I_n^{1,3}(x)&=\sum_{k\geq n-4}\int_{\mathbb S^2}
K_a(x,y)\Delta_kg(y)\Phi_{>-n+10}(x-y)\,d\sigma(y).
\end{split}
\end{equation*}
The component $I_n^{1,2}$ is estimated directly, using \eqref{eq:L62_Taylor} and  $\|\Delta_kg\|_{L^\infty}\lesssim \|g\|_{C^\alpha}2^{-k\alpha}$,
\begin{align}
 2^{n\alpha}\|\Delta_nI_n^{1,2}\|_{L^\infty}
 &\lesssim_q\sum_{k\geq n-4} 2^{n\alpha}\|\Delta_kg\|_{L^\infty}(1+|k-n|)\lesssim_q \|g\|_{C^\alpha}.
 \label{eq:L62_high_middle}
\end{align}
Moreover, we notice that $I_n^{1,3}=0$ if $n=0$ and, for any $V\in\mathcal{V}$,
\begin{equation}\label{eq:L62_kernel_crude}
 |K_a(x,y)|\lesssim_q|x-y|^{-2},\qquad
 |V_x[K_a(x,y)\Phi_{>-n+10}(x-y)]|\lesssim_q|x-y|^{-3}\mathbf{1}_{|x-y|\gtrsim 2^{-n}}.
\end{equation}
For $n\geq1$, we use the bounds \eqref{eq:L62_rotation_reverse} and \eqref{eq:L62_kernel_crude} to estimate
\begin{equation} \label{eq:L62_high_far}
\begin{split}
 2^{n\alpha}\|\Delta_nI_n^{1,3}\|_{L^\infty}&\lesssim2^{-n+n\alpha}
 \sum_{V\in\mathcal V}\|\Delta_n(VI_n^{1,3})\|_{L^\infty}\\
 &\lesssim_q 2^{-n+n\alpha}\sum_{k\geq n-4}\int_{|x-y|\gtrsim 2^{-n}}|x-y|^{-3}\|\Delta_kg\|_{L^\infty}\,d\sigma(y)\\
 &\lesssim_q\|g\|_{C^\alpha}.
\end{split}
\end{equation}
We estimate now $I_n^{1,1}$. We use the formula
\begin{equation}\label{eq:L62_telescoping}
\begin{split}
 &\prod_{j=1}^q(a_j(x)-a_j(y))=\prod_{j=1}^q[\nabla_{\mathbb S^2}a_j(x)\cdot(x-y)]+R_q(x,y),\\
 &R_q(x,y)=\sum_{j=1}^q[a_j(x)-a_j(y)-\nabla_{\mathbb S^2}a_j(x)\cdot(x-y)]\prod_{r<j}[\nabla_{\mathbb S^2}a_r(x)\cdot(x-y)]\prod_{l>j}[a_l(x)-a_l(y)]
   \end{split}
\end{equation} 
to decompose $I_n^{1,1}=F_n^{1,1}+R_n^{1,1}$, where
\begin{equation}\label{eq:L62_high_frozen}
\begin{split}
 F_n^{1,1}(x)&=\sum_{k\geq n-4}
 \operatorname{p.v.}\int_{\mathbb S^2}\frac{\prod_{j=1}^q\nabla_{\mathbb{S}^2}\big[a_j(x)\cdot(x-y)\big]}{|x-y|^{q+2}}\Delta_k g(y)\Phi_{\leq -k}(x-y)d\sigma(y),\\
 R_n^{1,1}(x)&=
 \sum_{k\geq n-4}\int_{\mathbb S^2}
 \frac{R_q(x,y)}{|x-y|^{q+2}}
 \Delta_kg(y)\Phi_{\leq-k}(x-y)\,d\sigma(y).
\end{split}
\end{equation}
To estimate $F_n^{1,1}$, we first introduce the following notation
\begin{equation}\label{eq:L62_defi}
c_I(x)=\prod_{j=1}^q(\nabla_{\mathbb{S}^2}a_j(x))_{i_j},
 \qquad
 P_I(z)=z_{i_1}\cdots z_{i_q},
\end{equation}
with the index set $I=(i_1,\ldots,i_q)\in\{1,2,3\}^q$. Then, we can write
\begin{equation*}
\begin{split}
 F_n^{1,1}(x)&=\sum_{k\geq n-4}\sum_{I\in\{1,2,3\}^q}c_I(x)\mathcal{T}^{(0)}_{P_I,k}[\Delta_k g](x).
\end{split}
\end{equation*}
The normalization \eqref{sec6eq0} and the product rule imply
\begin{equation}\label{eq:L62_coefficient_bounds}
 \|c_I\|_{L^\infty}\leq 1,
 \qquad
 \|c_I\|_{C^\alpha}\lesssim_q \sum_{j=1}^q \|a_j\|_{C^{1,\alpha}}.
\end{equation}
Then, we apply Lemma \ref{lem:model_linear} to estimate $F_n^{1,1}$. Since \(\Delta_kg=\Delta_{\leq k+10}\Delta_kg\), we have 
\begin{equation}\label{sec6eq10}
\begin{split}
2^{n\alpha}\|\Delta_nF_n^{1,1}\|_{L^\infty}&\lesssim_q
2^{n\alpha}\sum_{k\geq n-4}
\|\Delta_{\leq k+10}\Delta_k g\|_{L^\infty}\lesssim_q2^{n\alpha}\sum_{k\geq n-4}2^{-k\alpha}\|g\|_{C^\alpha}\lesssim_q
\|g\|_{C^\alpha},
\end{split}
\end{equation}
where we have used the normalization \eqref{sec6eq0}.
By \eqref{eq:L62_Taylor} and \eqref{eq:L62_telescoping}, we obtain
\begin{equation}\label{eq:L62_remainder_pointwise}
 \frac{|R_q(x,y)|}{|x-y|^{q+2}}
 \lesssim_q |x-y|^{-2+\alpha}\sum_{j=1}^q \|a_j\|_{C^{1,\alpha}},
\end{equation}
and since
\begin{equation*}
 \int_{\mathbb S^2}|x-y|^{-2+\alpha}
 \Phi_{\leq-k}(x-y)\,d\sigma(y)
 \lesssim 2^{-k\alpha},
\end{equation*}
we estimate the remainder $R_n^{1,1}$ by
\begin{align}
 2^{n\alpha}\|\Delta_nR_n^{1,1}\|_{L^\infty}
 &\lesssim_q 2^{n\alpha}\|g\|_{L^\infty} \sum_{j=1}^q \|a_j\|_{C^{1,\alpha}}
 \sum_{k\geq n-4}2^{-k\alpha}\lesssim_q \sum_{j=1}^q \|a_j\|_{C^{1,\alpha}}.
 \label{eq:L62_high_remainder}
\end{align}
The desired bounds \eqref{sec6eq2} follow from \eqref{eq:L62_high_middle}, \eqref{eq:L62_high_far}, \eqref{sec6eq10}, and  \eqref{eq:L62_high_remainder}. 
\medskip

\textbf{Step 2.} To prove \eqref{sec6eq2} for $I_n^2$ we may assume $n\geq 5$ and decompose $I_n^2=I_n^{2,1}+I_n^{2,2}$ with
\begin{equation*}
\begin{split}
I_n^{2,1}(x)&=\operatorname{p.v.}\int_{\mathbb S^2}K_a(x,y)
 \Delta_{\leq n-5}g(y)\Phi_{\leq -n}(x-y)\,d\sigma(y),\\
 I_n^{2,2}(x)&=\int_{\mathbb S^2}K_a(x,y)\Delta_{\leq n-5}g(y)\Phi_{> -n}(x-y)\,d\sigma(y).
 \end{split}
\end{equation*}
To bound $I_n^{2,1}$ we use the identity \eqref{eq:L62_telescoping} and decompose $I_n^{2,1}=F_n^{2,1}+R_n^{2,1}$ as in \eqref{eq:L62_high_frozen}. The same estimates as before, using \eqref{eq:L62_remainder_pointwise} and \eqref{eq:L62_coefficient_bounds}, show that
\begin{equation}\label{eq:L62_low_near_remainder}
 2^{n\alpha}\|\Delta_nR_n^{2,1}\|_{L^\infty}\lesssim_q \sum_{j=1}^q \|a_j\|_{C^{1,\alpha}}.
\end{equation}
On the other hand, the $C^\alpha$ product rule, and Lemma \ref{lem:model_linear} yield
\begin{equation}\label{eq:L62_low_near_frozen}
 2^{n\alpha}\|\Delta_nF_n^{2,1}\|_{L^\infty}
 \lesssim_q\|g\|_{C^\alpha}+\sum_{j=1}^q \|a_j\|_{C^{1,\alpha}}.
\end{equation}
To bound $I_n^{2,2}$, for $V\in\mathcal V$, write $V_x$ and $V_y$ for the same rotation
field acting in the $x$ and $y$ variables.  These fields generate
simultaneous rotations of the pair $(x,y)$.  Consequently,
\begin{equation*}
 (V_x+V_y)\big(a_j(x)-a_j(y)\big)=(Va_j)(x)-(Va_j)(y),\quad\mbox{and}\quad (V_x+V_y)[\psi(x-y)]=0,
\end{equation*}
for any radially symmetric function $\psi$. It follows that
\begin{equation}\label{eq:L62_rotation_transfer}
 (V_x+V_y)\big(K_a(x,y)\Phi_{>-n}(x-y)\big)
 =E(x,y)\Phi_{>-n}(x-y),
\end{equation}
where
\begin{equation*}
 E(x,y)=\sum_{j=1}^q \frac{\big((Va_j)(x)-(Va_j)(y)\big)
       \prod_{r\ne j}(a_r(x)-a_r(y))}
 {|x-y|^{q+2}}.
\end{equation*}
Using the identities
\eqref{eq:L62_rotation_basic} and \eqref{eq:L62_rotation_transfer}, we
obtain the exact formula, for any $V\in\mathcal{V}$,
\begin{equation}\label{eq:L62_rotation_far_identity}
\begin{split}
 V_xI_n^{2,2}&=M_{V,n}+R_{V,n},\\
 M_{V,n}(x)
 &=\int_{\mathbb S^2}K_a(x,y)(V\Delta_{\leq n-5}g)(y)\Phi_{>-n}(x-y)\,d\sigma(y),\\
 R_{V,n}(x)&=\int_{\mathbb S^2}E(x,y)\Delta_{\leq n-5}g(y)\Phi_{>-n}(x-y)\,d\sigma(y),
     \end{split}
\end{equation}
Since  $\|Va_j\|_{C^\alpha}\lesssim \|a_j\|_{C^{1,\alpha}}$, we have
\begin{equation*}
 |E(x,y)|\lesssim_q |x-y|^{-3+\alpha}\sum_{j=1}^q \|a_j\|_{C^{1,\alpha}}.
\end{equation*}
Therefore
\begin{align}
 2^{-n+n\alpha}
 \sum_{V\in\mathcal V}\|\Delta_nR_{V,n}\|_{L^\infty}
 &\lesssim_q \sum_{j=1}^q \|a_j\|_{C^{1,\alpha}}\|g\|_{L^\infty}
 2^{-n+n\alpha}2^{n(1-\alpha)}\lesssim_q \sum_{j=1}^q \|a_j\|_{C^{1,\alpha}}.
 \label{eq:L62_rotation_remainder_bound}
\end{align}
It remains to estimate $M_{V,n}$.  We expand
$V\Delta_{\leq n-5}g=\sum_{0\leq k\leq n-5}V\Delta_kg$ and write $M_{V,n}=O_{1,V}+O_{2,V}$, where
\begin{equation*}
\begin{split}
 O_{1,V}(x)&=\sum_{0\leq k\leq n-5}\int_{\mathbb S^2}
 K_a(x,y)(V\Delta_kg)(y)
 (\Phi_{\leq-k}-\Phi_{\leq-n})(x-y)\,d\sigma(y),\\
 O_{2,V}(x)&=\sum_{0\leq k\leq n-5}\int_{\mathbb S^2}
 K_a(x,y)(V\Delta_kg)(y) \Phi_{>-k}(x-y)\,d\sigma(y).
\end{split}
\end{equation*}
Using \eqref{eq:L62_kernel_crude}, we estimate
\begin{align}
 2^{-n+n\alpha}
 \sum_{V\in\mathcal V}\|\Delta_nO_{1,V}\|_{L^\infty}
 &\lesssim_q2^{-n+n\alpha}\sum_{0\leq k\leq n-5}2^k\|\Delta_kg\|_{L^\infty}(1+|n-k|)\lesssim_q\|g\|_{C^\alpha}.
 \label{eq:L62_rotation_O1_bound}
\end{align}
For $O_{2,V}$ we apply \eqref{eq:L62_rotation_reverse}, and then the kernel bounds \eqref{eq:L62_kernel_crude}.  Therefore
\begin{equation}\label{eq:L62_rotation_O2_bound}
\begin{split}
 2^{-n+n\alpha}\sum_{V\in\mathcal V}\|\Delta_nO_{2,V}\|_{L^\infty}
 &\lesssim 2^{-2n+n\alpha}
 \sum_{V,W\in\mathcal V}\|\Delta_n(WO_{2,V})\|_{L^\infty}\\
 &\lesssim_q2^{-2n+n\alpha}\sum_{0\leq k\leq n-5}2^{2k}\|\Delta_kg\|_{L^\infty}\\
 &\lesssim_q\|g\|_{C^\alpha}.
 \end{split}
 \end{equation}
Combining \eqref{eq:L62_rotation_reverse},
\eqref{eq:L62_rotation_far_identity},
\eqref{eq:L62_rotation_remainder_bound},
\eqref{eq:L62_rotation_O1_bound}, and
\eqref{eq:L62_rotation_O2_bound}, we obtain
 $2^{n\alpha}\|\Delta_nI_n^{2,2}\|_{L^\infty}
 \lesssim_q\big(\|g\|_{C^\alpha}+\sum_{j=1}^q \|a_j\|_{C^{1,\alpha}}\big)$. 
Together with \eqref{eq:L62_low_near_remainder}-\eqref{eq:L62_low_near_frozen}, this completes the proof of the main bounds \eqref{sec6eq2} for $l=2$, and therefore the proof of the lemma.

\end{proof}

Next, we prove  several interpolation inequalities.
For $s\geq0$,  we define the Besov norm
\begin{equation*}
 \|f\|_{B_{\infty,1}^{s}}
 =\sum_{k\geq0}2^{sk}\|\Delta_k f\|_{L^\infty},
\end{equation*}
and we will use the notation $W^{0,\infty}(\mathbb{S}^2)=L^\infty(\mathbb{S}^2)$.

\begin{lemma}[Critical interpolation]
\label{lem:interpolation} Assume $0<\alpha<1$, $m\in\{0,1\}$, and $f\in C^{2+m,\alpha}(\mathbb S^2)$. Then for any $\sigma\in (0,2+\alpha)$,
\begin{equation}\label{eq:critical_interpolation_scale}
 \|f\|_{B_{\infty,1}^{m+\sigma}}
 \lesssim
 \|f\|_{W^{m,\infty}}^{1-\frac{\sigma}{2+\alpha}}\|f\|_{C^{2+m,\alpha}}^{\frac{\sigma}{2+\alpha}}.
 \end{equation}
Consequently, if $m\in\{0,1\}$ and $f\in C^{m+2,\alpha}(\mathbb{S}^2)$ then
\begin{equation}\label{eq:critical_interpolation_basic}
\begin{split}
 &\|f\|_{C^{m,\alpha}}
 \lesssim \|f\|_{W^{m,\infty}}^{\frac2{2+\alpha}}\|f\|_{C^{2+m,\alpha}}^{\frac\alpha{2+\alpha}},\qquad\qquad\,\,\|f\|_{C^{m+1,\alpha}}
 \lesssim \|f\|_{W^{m,\infty}}^{\frac1{2+\alpha}}\|f\|_{C^{2+m,\alpha}}^{\frac{1+\alpha}{2+\alpha}},\\
 &\|f\|_{W^{m+1,\infty}}\lesssim \|f\|_{W^{m,\infty}}^{\frac{1+\alpha}{2+\alpha}}\|f\|_{C^{2+m,\alpha}}^{\frac1{2+\alpha}},\qquad\quad
\|f\|_{W^{m+2,\infty}}
 \lesssim \|f\|_{W^{m,\infty}}^{\frac\alpha{2+\alpha}}\|f\|_{C^{2+m,\alpha}}^{\frac2{2+\alpha}}.
\end{split}
\end{equation}
\end{lemma}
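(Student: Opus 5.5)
The plan is to prove \eqref{eq:critical_interpolation_scale} by splitting the Littlewood--Paley sum at a frequency threshold chosen from the ratio of the two norms. Afterwards, each inequality in \eqref{eq:critical_interpolation_basic} follows from a Besov embedding.

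\textbf{Frequency-localized bounds.} Set $A=\|f\|_{W^{m,\infty}}$ and $B=\|f\|_{C^{2+m,\alpha}}$, and note $A\lesssim B$. We need two estimates on each block $\Delta_kf$.
\begin{itemize}
\item For $k\geq1$, iterate the reverse Bernstein inequality \eqref{eq:L62_rotation_reverse} $m$ times, using the rotation fields $V\in\mathcal V$, which commute with $\Delta_k$. This gives $\|\Delta_kf\|_{L^\infty}\lesssim 2^{-mk}A$. For $k=0$, the bound $\|\Delta_0f\|_{L^\infty}\lesssim\|f\|_{L^\infty}\le A$ follows from \eqref{KernelBound} and Young's inequality. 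For $m=0$, the same kernel bound gives $\|\Delta_kf\|_{L^\infty}\lesssim A$ directly.
\item By Proposition \ref{Prop-equiv}, $\|\Delta_kf\|_{L^\infty}\lesssim 2^{-(2+m+\alpha)k}B$.
\end{itemize}

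\textbf{Splitting the sum.} Write
\[
\|f\|_{B^{m+\sigma}_{\infty,1}}\lesssim \sum_{k\le K}2^{\sigma k}A+\sum_{k>K}2^{(\sigma-2-\alpha)k}B .
\]
Both geometric series converge because $0<\sigma<2+\alpha$, so the right-hand side is bounded by $2^{\sigma K}A+2^{(\sigma-2-\alpha)K}B$. Choose $K\ge0$ with $2^{(2+\alpha)K}\sim B/A$; this is admissible since $B\gtrsim A$. Balancing the two terms then yields $A^{1-\frac{\sigma}{2+\alpha}}B^{\frac{\sigma}{2+\alpha}}$. If $A=0$ the statement is trivial.

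\textbf{The four consequences.} Each follows from \eqref{eq:critical_interpolation_scale} with a suitable $\sigma$, together with an embedding of the form $\|f\|_{X}\lesssim\|f\|_{B^{s}_{\infty,1}}$.
\begin{itemize}
\item For the $C^{m,\alpha}$ bound, take $\sigma=\alpha$ and use $\|f\|_{C^{m,\alpha}}\lesssim\|f\|_{B^{m+\alpha}_{\infty,\infty}}\le\|f\|_{B^{m+\alpha}_{\infty,1}}$, which is Proposition \ref{Prop-equiv}.
\item For the $C^{m+1,\alpha}$ bound, take $\sigma=1+\alpha$.
\item For the $W^{m+1,\infty}$ bound, take $\sigma=1$ and use $\|f\|_{W^{j,\infty}}\lesssim\|f\|_{B^{j}_{\infty,1}}$.
\item For the $W^{m+2,\infty}$ bound, take $\sigma=2$, which is allowed since $2<2+\alpha$.
\end{itemize}

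\textbf{Main obstacle.} The step needing care is the embedding $\|f\|_{W^{j,\infty}}\lesssim\|f\|_{B^{j}_{\infty,1}}$ on the sphere, because $\nabla_{\mathbb S^2}$ does not commute with $\Delta_k$. I would handle it as follows.
\begin{itemize}
\item Bound $\|\nabla^j_{\mathbb S^2}f\|_{L^\infty}$ by $\sum_k\|\nabla^j_{\mathbb S^2}\Delta_kf\|_{L^\infty}$.
\item Control $\nabla^j_{\mathbb S^2}$ pointwise by products of at most $j$ rotation fields $V\in\mathcal V$, with smooth coefficients; lower-order terms are absorbed.
\item Apply Bernstein \eqref{BernsteinBound} for $V$ on $\Delta_kf$ iteratively, using $V\Delta_k=\Delta_kV$. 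This gives $\|\nabla^j_{\mathbb S^2}\Delta_kf\|_{L^\infty}\lesssim2^{jk}\|\Delta_kf\|_{L^\infty}$.
\end{itemize}
An equivalent route is to differentiate the kernel \eqref{DeltajfIntegral} $j$ times, via $\tilde\Delta_k$, and use \eqref{KernelBound}. The case $m=1$ with $W^{m+2,\infty}=W^{3,\infty}$ needs three derivatives, which these kernel bounds handle without change.
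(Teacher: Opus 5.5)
Your proposal is correct and follows essentially the same route as the paper: the two block bounds $2^{mk}\|\Delta_k f\|_{L^\infty}\lesssim\min\{\|f\|_{W^{m,\infty}},2^{-(2+\alpha)k}\|f\|_{C^{2+m,\alpha}}\}$, a dyadic split at $2^{(2+\alpha)K}\approx\|f\|_{C^{2+m,\alpha}}/\|f\|_{W^{m,\infty}}$, and then the embeddings $B^{j+\beta}_{\infty,1}\hookrightarrow C^{j,\beta}$ and $B^{j}_{\infty,1}\hookrightarrow W^{j,\infty}$ with $\sigma\in\{\alpha,1+\alpha,1,2\}$. Your explicit use of the reverse Bernstein inequality \eqref{eq:L62_rotation_reverse} for the $m=1$ bound, and your sketch of the $W^{j,\infty}$ embedding via the rotation fields, fill in details that the paper leaves implicit.
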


\begin{proof} The Littlewood--Paley characterization of $C^{2+m,\alpha}(\mathbb{S}^2)$ \eqref{BesovHolder} and the $L^\infty$ boundedness of $\Delta_k$ show that, for any $k\geq1$,
\[
2^{mk} \|\Delta_k f\|_{L^\infty}
 \lesssim \min\{\|f\|_{W^{m,\infty}},2^{-(2+\alpha)k}\|f\|_{C^{2+m,\alpha}}\}.
\]
The block $\Delta_0 f$ is bounded directly by $\|f\|_{W^{m,\infty}}$.  We choose $M$ so
that 
\begin{equation*}
    \begin{aligned}
    2^{(2+\alpha)M}\approx \|f\|_{C^{2+m,\alpha}}\|f\|_{W^{m,\infty}}^{-1},
    \end{aligned}
\end{equation*}
and estimate
\begin{equation*}
    \begin{aligned}
    \|f\|_{B_{\infty,1}^{m+\sigma}}&=\sum_{k\geq0}2^{(m+\sigma) k}\|\Delta_k f\|_{L^\infty}\lesssim \sum_{k\leq M}\|f\|_{W^{m,\infty}} 2^{\sigma k}+\sum_{k\geq M}\|f\|_{C^{2+m,\alpha}}2^{-(2+\alpha-\sigma)k}\\
    &\lesssim
 \|f\|_{W^{m,\infty}}^{1-\frac{\sigma}{2+\alpha}}
 \|f\|_{C^{2+m,\alpha}}^{\frac{\sigma}{2+\alpha}},
    \end{aligned}
\end{equation*}
which proves \eqref{eq:critical_interpolation_scale}. The embeddings
$B_{\infty,1}^{j+\beta}(\mathbb{S}^2)\hookrightarrow C^{j,\beta}(\mathbb{S}^2)$ and
$B_{\infty,1}^{j}(\mathbb{S}^2)\hookrightarrow W^{j,\infty}(\mathbb{S}^2)$ give
\eqref{eq:critical_interpolation_basic}.

\end{proof}

Our next lemma provides the main estimate we use to control the Muskat nonlinearity.

\begin{lemma}[Full multilinear estimate]
\label{lem:total_output_derivative}
Assume $0<\alpha<1$.  Let $m,q$ be nonnegative integers and let $p_m(z)=z_1^{\beta_1}z_2^{\beta_2}z_3^{\beta_3}$, $|\beta|=m$, and assume that $m+q$ is odd.  Assume that $a_1,\ldots,a_q, b_1\in C^{3,\alpha}(\mathbb S^2)$, $B, b_2,b_3\in C^{2,\alpha}(\mathbb S^2)$ and $V,W\in\mathcal{V}\cup\{\mathrm{Id}\}$. Recall the operators $\mathcal{T}_{p_m}^{(q)}$ defined in \eqref{multilinear_model} and define
\begin{equation}\label{sec6eq23}
G=b_1\cdot V\big(b_2\cdot W(b_3)\big),\qquad\mathcal{N}(x)=B(x)\cdot \mathcal T_{p_m}^{(q)}\big[
 a_1,\ldots,a_q, G\big](x).
\end{equation}
Then,  
\begin{equation}\label{eq:L64_estimate}
\begin{aligned}
\big\|\mathcal{N}\big\|_{C^\alpha}&\lesssim_{m+q}\|B\|_{L^\infty}\|b_1\|_{W^{1,\infty}}\|b_2\|_{L^\infty}\|b_3\|_{L^\infty}\prod_{j=1}^q \|a_j\|_{W^{1,\infty}}\\
&\quad\times\Big(
\frac{\|B\|_{C^{2,\alpha}}}{\|B\|_{L^\infty}}+\frac{\|b_1\|_{C^{3,\alpha}}}{\|b_1\|_{W^{1,\infty}}}+\frac{\|b_2\|_{C^{2,\alpha}}}{\|b_2\|_{L^\infty}}+\frac{\|b_3\|_{C^{2,\alpha}}}{\|b_3\|_{L^\infty}}+\sum_{j=1}^q\frac{\|a_j\|_{C^{3,\alpha}}}{\|a_j\|_{W^{1,\infty}}}\Big).
\end{aligned}
\end{equation}
\end{lemma}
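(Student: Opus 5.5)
The plan is to reduce \eqref{eq:L64_estimate} to the multilinear bound \eqref{multilinear_model_estimate} of Lemma \ref{lem:model_nonlinearity}, combined with the critical interpolation inequalities of Lemma \ref{lem:interpolation}. By multilinearity I first normalize
\begin{equation*}
\|B\|_{L^\infty}=\|b_1\|_{W^{1,\infty}}=\|b_2\|_{L^\infty}=\|b_3\|_{L^\infty}=\|a_j\|_{W^{1,\infty}}=1,
\end{equation*}
and write $\Lambda$ for the sum of the ratios in the bracket of \eqref{eq:L64_estimate}. In this normalization every high norm ($\|B\|_{C^{2,\alpha}}$, $\|b_1\|_{C^{3,\alpha}}$, $\|b_{2}\|_{C^{2,\alpha}}$, $\|b_{3}\|_{C^{2,\alpha}}$, $\|a_j\|_{C^{3,\alpha}}$) is at most $\Lambda$. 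The goal is then to bound every term by a product $\prod_i \Lambda^{\theta_i}$ with $\sum_i\theta_i=1$, which is $\lesssim\Lambda$. Each exponent comes from \eqref{eq:critical_interpolation_scale}: a factor measured $s$ derivatives above its low norm costs $\Lambda^{s/(2+\alpha)}$. The bookkeeping is consistent with scaling: the output carries $\alpha$ derivatives and $G$ carries two explicit derivatives, for a total gap of exactly $2+\alpha$.

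\textbf{Step 1 (transfer of the derivatives in $G$).} The naive bound $\|\mathcal N\|_{C^\alpha}\le\|B\|_{C^\alpha}\|\mathcal T\|_{L^\infty}+\|B\|_{L^\infty}\|\mathcal T\|_{C^\alpha}$ fails: $\mathcal T^{(q)}_{p_m}$ is not bounded on $L^\infty$, and any substitute such as $C^\beta$ overshoots the exponent budget by $\beta/(2+\alpha)$. I therefore first move the derivatives $V,W$ off the argument using the rotation identity $(V_x+V_y)[\psi(x-y)]=0$ from the proof of Lemma \ref{lem:model_nonlinearity}. Integrating by parts with \eqref{eq:L62_rotation_basic} gives
\begin{equation*}
\mathcal T^{(q)}_{p_m}[a,b_1\,Vh]=V_x\mathcal T^{(q)}_{p_m}[a,b_1h]-\sum_j\mathcal T^{(q)}_{p_m}[\dots,Va_j,\dots,b_1h]-\mathcal T^{(q)}_{p_m}[a,(Vb_1)h],
\end{equation*}
where $V_x$ applied to $p_m(x-y)|x-y|^{-m-q-2}$ is also absorbed into the rotation-invariant structure, and here $h=b_2\,W(b_3)$. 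Applying the same identity to $W(b_3)$ inside $h$ produces finitely many terms of three schematic types. In the first type, $\mathcal N$ becomes $B\cdot V_xW_x\mathcal T[\tilde a,\tilde g]$. In the second type, the derivatives fall on coefficients: $\mathcal T$ acts on functions in which $Va_j$ or $Vb_1$ appear as new $a$-entries in $C^{2,\alpha}$. In the third type, the derivatives fall on the argument through products such as $(Vb_1)b_2b_3$. The parity condition that $m+q$ is odd is preserved throughout, since derivatives only replace entries and never change the number of difference factors.

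\textbf{Step 2 (terms with derivatives outside).} For $B\cdot V_xW_x\mathcal T$ I will write it as $V_xW_x(B\mathcal T)$ minus commutator terms $(VB)W_x\mathcal T$ and $(VWB)\mathcal T$. I then estimate in Littlewood--Paley blocks with a paraproduct split of the product $B\cdot\mathcal T$: for the high--low and high--high interactions the derivatives go on $B$, while for low--high interactions \eqref{multilinear_model_estimate} is used at $C^{2,\alpha}$-level output through Proposition \ref{Prop-equiv}. This is where the third-order scaling matters: the interpolation exponents needed are $\alpha/(2+\alpha)$ for $B$ and $2/(2+\alpha)$ for the rest, exactly as in \eqref{eq:critical_interpolation_basic}, so the total is one. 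The remaining terms from Step 1 have at most one derivative outside, and each is handled by \eqref{multilinear_model_estimate} directly. The coefficient terms come with norms like $\|a_j\|_{C^{1,\alpha}}$ and $\|b_1\|_{C^{1,\alpha}}$, and I close them with \eqref{eq:critical_interpolation_basic} followed by Young's inequality $x^\theta y^{1-\theta}\le x+y$.

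\textbf{Main obstacle.} I expect the main obstacle to be the endpoint loss in Step 2: $\mathcal T$ is bounded on $B^0_{\infty,1}$ but not on $L^\infty$. The low-frequency part of $B$ multiplying the high-frequency part of $\mathcal T$ must be measured in $L^\infty\to L^\infty$ for $\Delta_{\le k+10}$-localized inputs, as in \eqref{linear_model_estimate2}, not in $C^\beta$. My first attempt will be to mimic Step 1 of the proof of Lemma \ref{lem:model_nonlinearity}: freeze coefficients via \eqref{eq:L62_telescoping}, and use the frequency-localized bound \eqref{linear_model_estimate2} for the frozen part, so that only the summable tails $2^{-k\alpha}$ remain. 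If this leaves a logarithm, I will replace the $L^\infty$ norms by $B^{\sigma}_{\infty,1}$ norms with $\sigma$ slightly positive on one factor and slightly negative on another, and balance them with \eqref{eq:critical_interpolation_scale}.
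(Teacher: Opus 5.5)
There is a genuine gap in Step 2, and your ``main obstacle'' paragraph places it in the wrong interaction.

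Low-frequency $B$ times high-frequency $\mathcal T$ is harmless: Lemma~\ref{lem:model_nonlinearity} bounds $\|\Delta_{>n-10}\mathcal T\|_{L^\infty}$ by $2^{-\alpha n}\Lambda$. The critical piece is $\Delta_{>n-5}B\cdot\Delta_{\le n-10}\mathcal T$, which needs $\|\Delta_{\le n-10}\mathcal T\|_{L^\infty}\lesssim 2^{2n}+2^{-\alpha n}\Lambda$.

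You rewrite $B\,V_xW_x\mathcal T=V_xW_x(B\,\mathcal T)-(\text{commutators})$ and let the derivatives fall on $B$ in the high--low interaction. You then need $\|\Delta_{\le n}\mathcal T^{(q)}_{p_m}[\tilde a,\tilde g]\|_{L^\infty}$ with $\tilde g=b_1b_2b_3$ merely bounded, and this is genuinely of size $\log\Lambda$. The pieces you estimate separately are then individually too large. Here is an example:
\begin{itemize}
\item take $q=1$, $m=0$, $a_1=x_3$, $b_1=b_2=1$, $V=W=V_3$;
\item take $b_3$ a smoothing of $\mathrm{sgn}(x_3)$ at scale $2^{-N}$, and $B=\mathrm{Re}\,(x_1+ix_2)^{2^N}$, so that $\Lambda\approx 2^{(2+\alpha)N}$.
\end{itemize}
Then $G=0$, so $\mathcal N=0$. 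However, $\mathcal T^{(1)}_{p_0}[x_3,b_3]$ is axisymmetric and $\approx -cN$ on the strip $|x_3|\lesssim 2^{-N}$. Hence $V_3^2(B\,\mathcal T^{(1)}_{p_0}[x_3,b_3])=(V_3^2B)\,\mathcal T^{(1)}_{p_0}[x_3,b_3]=-4^N B\,\mathcal T^{(1)}_{p_0}[x_3,b_3]$ has $C^\alpha$ norm $\gtrsim N\,2^{(2+\alpha)N}\approx\Lambda\log\Lambda$. The logarithms cancel only in the combination.

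Neither of your remedies helps:
\begin{itemize}
\item Freezing coefficients and using \eqref{linear_model_estimate2} only controls the kernel within distance $\sim 2^{-k}$, while the logarithm comes from the far region.
\item There is no room for a $\sigma$-shift, since in this interaction one factor already sits at $C^{2,\alpha}$ and the other at $L^\infty$.
\end{itemize}
The same missing $L^\infty$ control undermines your claim that the Step 1 remainders with no outer derivative are ``handled by \eqref{multilinear_model_estimate} directly''. That lemma bounds $\mathcal T$ only in $C^\alpha$, and, as you observed yourself, the product rule then overshoots by $\Lambda^{\alpha/(2+\alpha)}$.

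The missing idea is to keep the outer derivative on the singular-integral side in the high--low interaction, and to prove the low-frequency $L^\infty$ bound by a scale-adapted near/far split. The paper never transfers derivatives globally. After reducing to $b_1=1$, it keeps $\mathcal T^{(q)}_{p_m}[a,G]$ intact and reduces the lemma to $\|\Delta_{\le n-10}\mathcal T^{(q)}_{p_m}[a,G]\|_{L^\infty}\lesssim 2^{2n}+2^{-\alpha n}H$. It then cuts the kernel at $|x-y|\approx 2^{-L}$ with $2^L\gtrsim 2^n$:
\begin{itemize}
\item In the far region it integrates the outer $V$ by parts, using only the crude bound $|V_yK_a|\lesssim|x-y|^{-3}$. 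No cancellation is needed, so a derivative landing on $a_j(y)$ costs only $\|a_j\|_{W^{1,\infty}}$. This gives $2^L\|b_2W(b_3)\|_{L^\infty}$.
\item In the near region it freezes the coefficients and uses H\"older continuity, giving $\|G\|_{L^\infty}+2^{-\alpha L}\|G\|_{C^\alpha}$.
\end{itemize}
Interpolation and the choice of $L$ balance the two contributions.

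Within your framework, the analogous repair would have two parts. First, keep $V_xW_x$ on $\mathcal T$ in the high--low term, where Lemma~\ref{lem:model_nonlinearity} and Young give $\sum_{k\le n}2^{2k}\|\Delta_k\mathcal T^{(q)}_{p_m}[\tilde a,\tilde g]\|_{L^\infty}\lesssim 2^{(2-\alpha)n}\Lambda^{\frac{\alpha}{2+\alpha}}\lesssim 2^{2n}+2^{-\alpha n}\Lambda$. Second, supply a genuine $L^\infty$ bound, via such a near/far split, for the remainders that carry no outer derivative.
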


\begin{remark}\label{lem:null} The estimate \eqref{eq:L64_estimate} is critical when $V,W\in\mathcal{V}$, and relies on an important null structure: the vector field $V$ appears in front of the main argument $b_2\cdot W(b_3)$ of the operator $\mathcal{T}_{p_m}^{(q)}$. It is not hard to see that the main bound \eqref{eq:L64_estimate} would fail (logarithmically) without this null structure (even if $b_1=1$ and $q+m=1$) if $G=V(b_2)W(b_3)$ or $G=b_2VW(b_3)$. 
\end{remark}

\begin{proof} As in the proof of Lemma \ref{lem:model_nonlinearity}, we may assume that $m=0$, $p_m=1$, so $q$ is odd. We may also assume that $b_1=1$: if $V=\mathrm{Id}$ then we replace $b_2$ with $b_1b_2$, while if $V\in\mathcal{V}$ we rewrite $G=V(b_1b_2\cdot W(b_3))-V(b_1)b_2\cdot W(b_3)$, and incorporate the factor containing $b_1$ with $b_2$. By multilinearity we may also assume that
\begin{equation}\label{normo2}
\|B\|_{L^\infty}=\|b_2\|_{L^\infty}=\|b_3\|_{L^\infty}=\|a_1\|_{W^{1,\infty}}=\ldots=\|a_q\|_{W^{1,\infty}}=1.
\end{equation}
For \eqref{eq:L64_estimate} it suffices to prove that for any $n\geq 0$
\begin{equation}\label{sec6eq24}
2^{\alpha n}\|\Delta_n\mathcal{N}\|_{L^\infty}\lesssim_{q}
\|B\|_{C^{2,\alpha}}+\|b_2\|_{C^{2,\alpha}}+\|b_3\|_{C^{2,\alpha}}+\sum_{j=1}^q \|a_j\|_{C^{3,\alpha}}.
\end{equation}

\textbf{Step 1.} For simplicity of notation, let \begin{equation*}
H_a=\sum_{j=1}^q \|a_j\|_{C^{3,\alpha}}, \qquad H_b=\|b_2\|_{C^{2,\alpha}}+\|b_3\|_{C^{2,\alpha}}.    
\end{equation*}
The bounds \eqref{eq:critical_interpolation_basic} and the product rule show that
\begin{equation}\label{sec6eq26}
\|a_j\|_{C^{1,\alpha}}\lesssim (H_a)^{\frac{\alpha}{2+\alpha}},\qquad\|G\|_{L^\infty}\lesssim (H_b)^{\frac{2}{2+\alpha}},\qquad \|G\|_{C^\alpha}\lesssim H_b,
\end{equation}
for $j\in\{1,\ldots,q\}$. Using Lemma \ref{lem:model_nonlinearity} it follows that, for any $k\in \mathbb{Z}_+$,
\begin{equation}\label{sec6eq27}
2^{\alpha k}\big\|\Delta_k\mathcal T_{p_m}^{(q)}\big[a_1,\ldots,a_q, G\big]\|_{L^\infty}\lesssim_{q} H_a+H_b.
\end{equation}
We decompose $\mathcal{N}=\mathcal{N}_{n}^1+\mathcal{N}_n^2$, where
\begin{equation*}
\begin{split}
&\mathcal{N}_n^1(x)=B(x)\cdot \Delta_{\leq n-10}\mathcal T_{p_m}^{(q)}\big[a_1,\ldots,a_q, G\big](x),\\
&\mathcal{N}_n^2(x)=B(x)\cdot \Delta_{> n-10}\mathcal T_{p_m}^{(q)}\big[a_1,\ldots,a_q, G\big](x).
\end{split}
\end{equation*}
It follows from \eqref{sec6eq27} and \eqref{normo2} that
\begin{equation*}
2^{\alpha n}\|\mathcal{N}_n^2\|_{L^\infty}\lesssim_{q} H_a+H_b.
\end{equation*}
Since
\begin{equation*}
\Delta_n\big[\Delta_{\leq n-5}B\cdot \Delta_{\leq n-10}\mathcal T_{p_m}^{(q)}\big[a_1,\ldots,a_q, G\big]\big]=0,
\end{equation*}
for \eqref{sec6eq24} it suffices to prove that for any $n\geq 10$
\begin{equation*}
2^{\alpha n}\big\|\Delta_{>n-5}B\cdot \Delta_{\leq n-10}\mathcal T_{p_m}^{(q)}\big[a_1,\ldots,a_q, G\big]\big\|_{L^\infty}\lesssim_{q}\|B\|_{C^{2,\alpha}}+H_a+H_b.
\end{equation*}
Since $2^{\alpha n}\|\Delta_{>n-5}B\|_{L^\infty}\lesssim\min(2^{\alpha n}, 2^{-2n}\|B\|_{C^{2,\alpha}})$, for \eqref{sec6eq24} it suffices to prove that 
\begin{equation}\label{sec6eq30}
\big\|\Delta_{\leq n-10}\mathcal T_{p_m}^{(q)}\big[a_1,\ldots,a_q, G\big]\big\|_{L^\infty}\lesssim_{q}2^{2n}+2^{-\alpha n}\big(H_a+H_b\big). 
\end{equation}

\textbf{Step 2.} Let $H_{ab}=H_a+H_b$. We decompose $\mathcal T_{p_m}^{(q)}\big[a_1,\ldots,a_q, G\big]=J_n^1+J_n^2$, where
\begin{equation*}
\begin{split}
J_n^1(x)&=\operatorname{p.v.}\int_{\mathbb S^2}K_a(x,y)G(y)\Phi_{\leq -L}(x-y)\,d\sigma(y),\\
J_n^{2}(x)&=\int_{\mathbb S^2}K_a(x,y)G(y)\Phi_{>-L}(x-y)\,d\sigma(y),
\end{split}
\end{equation*}
with $K_a$ defined as in \eqref{sec6eq5} and $L=L(n,H_{ab})$ will be chosen in \eqref{sec6eq32} below. Recall that $G=V(G')$, $G'=b_2W(b_3)$, so $\|G'\|_{L^\infty}\lesssim (H_b)^{\frac{1}{2+\alpha}}$. If $V\in\mathcal{V}$ we integrate by parts in $y$ and use the bounds $|V_yK_a(x,y)|\lesssim_q|x-y|^{-3}$ for any $V\in\mathcal{V}$ (compare with \eqref{eq:L62_kernel_crude}). Therefore, 
\begin{equation*}
\|J_n^2\|_{L^\infty}\lesssim_q 2^L\|G'\|_{L^\infty}\lesssim_{q} 2^L (H_b)^{\frac{1}{2+\alpha}}.
\end{equation*}
In particular $\|J_n^2\|_{L^\infty}\lesssim_{q}2^{2n}+2^{-\alpha n}H_{ab}$ provided that we choose $L\in\mathbb{Z}_+$ such that
\begin{equation}\label{sec6eq32}
2^L\approx 2^{2n}(H_{ab})^{-\frac{1}{2+\alpha}}+2^{-\alpha n}(H_{ab})^{\frac{1+\alpha}{2+\alpha}}.
\end{equation}
With this choice of $L$, it remains to prove a similar bound for the function $J_n^1$. For this we use the decomposition \eqref{eq:L62_telescoping} and the error bounds \eqref{eq:L62_remainder_pointwise}. The contribution of the error term can be bounded easily,
\begin{equation*}
\Big|\int_{\mathbb S^2}\frac{R_q(x,y)}{|x-y|^{q+2}}G(y)\Phi_{\leq -L}(x-y)\,d\sigma(y)\Big|\lesssim_q \|G\|_{L^\infty}2^{-L\alpha}\Big(\sum_{j=1}^q\|a_j\|_{C^{1,\alpha}}\Big)\lesssim_{q}2^{-\alpha n}H_{ab},
\end{equation*}
using \eqref{sec6eq26} and the observation that $2^L\gtrsim 2^n$. This is compatible with the desired bounds \eqref{sec6eq30}. Therefore it remains to control the contribution in $J_n^1$ of the main term in the decomposition \eqref{eq:L62_telescoping}. To estimate this term, we recall the notation \eqref{eq:L62_defi}.
Then, it suffices to prove that
\begin{equation}\label{sec6eq33}
\big\|\Delta_{\leq n-10}\big[c_I\cdot\mathcal{T}_{P_I,L}^{(0)}[G]\big]\big\|_{L^\infty}\lesssim_{q}2^{2n}+2^{-\alpha n}H_{ab},
\end{equation}
for any $n\geq 10$ and $I\in\{1,2,3\}^q$, where the operators $\mathcal{T}_{P_I,L}^{(0)}$ are defined in \eqref{linear_model}.
\medskip

{\bf{Step 3.}} To prove \eqref{sec6eq33} we write
\begin{equation*}
\mathcal{T}_{P_I,L}^{(0)}[G](x)=\operatorname{p.v.}
		\int_{\mathbb S^2}\frac{P_I(x-y)}{|x-y|^{q+2}}\Phi_{\leq -L}(x-y)[G(y)-G(x)]\,d\sigma(y)+G(x)\cdot \mathcal{T}_{P_I,L}^{(0)}[1](x).
\end{equation*}
Using Lemma \ref{lem:model_linear} we estimate, for any $x\in\mathbb{S}^2$,
\begin{equation*}
\begin{split}
\big|\mathcal{T}_{P_I,L}^{(0)}[G](x)\big|&\lesssim_q\int_{\mathbb S^2}\frac{|\Phi_{\leq -L}(x-y)|}{|x-y|^{2-\alpha}}\|G\|_{C^\alpha}\,d\sigma(y)+\|G\|_{L^\infty}\\
&\lesssim_q 2^{-\alpha L}\|G\|_{C^\alpha}+\|G\|_{L^\infty}\lesssim_{q}2^{-\alpha n}H_{ab}+(H_{ab})^{\frac{2}{2+\alpha}},
\end{split}
\end{equation*}
where we used $2^L\gtrsim 2^n$ and \eqref{sec6eq26} in the last bound. The desired bounds \eqref{sec6eq33} follow since $\|c_I\|_{L^\infty}\lesssim_q 1$. This completes the proof of the lemma.

\end{proof}

\subsection{Decomposition of the Muskat nonlinearity} We organize all the geometric quantities into their zeroth-order, linear, and higher-order parts. In the rest of this section we assume that $f\in C^{3,\alpha}(\mathbb{S}^2)$ and $\|f\|_{W^{1,\infty}}\ll 1$. Recall the definition \eqref{rhodef} and set
\begin{equation}\label{eq:sec62_u_decomposition}
 \begin{aligned}
     r(x)&=1+r_L(x)+r_H(x), \qquad
     r_L(x)=f(x),\qquad r_H(x)=r(x)-r_L(x)-1.
 \end{aligned}
\end{equation}
Let
\begin{equation*}
 N(f)(x)=\frac{\mathcal N_f(x)}{r(x)}=r(x)^2x-r(x)^{-1}\nabla_{\mathbb{S}^2} f(x)=r(x)^2x-r(x)\nabla_{\mathbb{S}^2}r(x),
\end{equation*}
where \(\mathcal N_f\) was defined in \eqref{def:nonunit_normal_f}. The contour equation
	\eqref{contour_eq} becomes
	\begin{equation}\label{nonlinear_contour}
	\partial_t f=\frac1R\bigl(BR(S,\omega)-\dot c\bigr)\cdot N(f).
	\end{equation}
We decompose
	\begin{equation}\label{AH_decomposition}
	\begin{split}
	&N(f)=N_0+N_L+N_H,\\
	&N_0=x,\qquad N_L=2fx-\nabla_{\mathbb{S}^2} f,\qquad
	N_H=N(f)-N_0-N_L.
	\end{split}
	\end{equation}
Since our surface is given as a radial graph over the sphere, one can define it as a zero level set of a function and compute its normal in terms of the gradient of such a function. Taking the divergence of the extended unit normal, we obtain the expression of the dimensionless curvature $\mathsf{K}=RK[S]$ (see e.g. \cite[Section~2.2]{PrussSimonett2016}),
\begin{equation*}
\mathsf{K}=\frac{2r^3+3r|\nabla_{\mathbb{S}^2} r|^2-(r^2+|\nabla_{\mathbb{S}^2} r|^2)\Delta_{\mathbb{S}^2} r+\frac12\nabla_{\mathbb{S}^2}r\cdot\nabla_{\mathbb{S}^2}|\nabla_{\mathbb{S}^2}r|^2}{2r(r^2+|\nabla_{\mathbb{S}^2} r|^2)^{3/2}}.
\end{equation*}
We decompose
\begin{equation*}
 \mathsf{K}=1+\mathsf{K}_L+\mathsf{K}_H,
 \qquad
 \mathsf{K}_L=-\frac12(\Delta_{\mathbb{S}^2}+2)f.
\end{equation*}
Using $r=1+f+r_H$, the higher-order part has the exact representation
\begin{equation}\label{eq:sec62_kappa_H_exact}
\begin{split}
 \mathsf{K}_H&=-\frac12(\Delta_{\mathbb{S}^2}+2)r_H+\Big(\frac{2r^2+3|\nabla_{\mathbb{S}^2} r|^2}{2(r^2+|\nabla_{\mathbb{S}^2} r|^2)^{3/2}}+r-2\Big)\\
 &\qquad+\frac{r(r^2+|\nabla_{\mathbb{S}^2} r|^2)^{1/2}-1}{2r(r^2+|\nabla_{\mathbb{S}^2} r|^2)^{1/2}}\Delta_{\mathbb{S}^2} r
+\frac{\frac12\nabla_{\mathbb{S}^2}r\cdot\nabla_{\mathbb{S}^2}|\nabla_{\mathbb{S}^2}r|^2}{2r(r^2+|\nabla_{\mathbb{S}^2} r|^2)^{3/2}}.
 \end{split}
\end{equation}
Notice that every term in $\mathsf{K}_H$ is at least quadratic in $f$, and every term is affine in the second-order derivatives of $r$.

We are now ready to analyze the vorticity functions $\omega_\sigma,\omega_\rho$. With $V_1, V_2, V_3$ defined as in \eqref{Vector-fields}, we can easily verify the identities
\begin{equation}\label{eq:sec62_rotation_identities}
\begin{split}
&  (x\wedge\nabla_{\mathbb{S}^2} h)_j=V_jh,
 \qquad
 \sum_{j=1}^3V_j(\nabla_{\mathbb{S}^2} h)_j=0,\\
 &(\nabla_{\mathbb{S}^2}(rx_i))_j=(\nabla_{\mathbb{S}^2}x_i)_j+(r-1)(\nabla_{\mathbb{S}^2}x_i)_j+x_i(\nabla_{\mathbb{S}^2}r)_j.
 \end{split}
\end{equation}
The surface-tension vorticity has the formula 
\begin{equation*}
\begin{split}
&\omega_\sigma=A_\sigma D_{\mathbb{S}^2}(rx)[x\wedge\nabla_{\mathbb{S}^2}\mathsf{K}],\qquad(\omega_\sigma)_i
 =A_\sigma\sum_{j=1}^3V_j\big[(\mathsf{K}-1)(\nabla_{\mathbb{S}^2}(rx_i))_j\big],
 \end{split}
\end{equation*}
where we used the identities in \eqref{eq:sec62_rotation_identities}. Using also the last identity, we have our main decomposition
\begin{equation}\label{eq:sec62_omega_sigma_decomposition}
\begin{split}
&\omega_\sigma=\omega_{\sigma,L}+\omega_{\sigma,H},\\
&(\omega_{\sigma,L})_i=A_\sigma\sum_{j=1}^3V_j(\mathsf{K}_L(\nabla_{\mathbb{S}^2}x_i)_j)=A_\sigma V_i\mathsf{K}_L=-\frac{A_\sigma}{2}V_i(\Delta_{\mathbb{S}^2}+2)f,\\
&(\omega_{\sigma,H})_i=A_\sigma\sum_{j=1}^3V_j\big[\mathsf{K}_H(\nabla_{\mathbb{S}^2}x_i)_j
 +(\mathsf{K}_L+\mathsf{K}_H)\bigl((r-1)(\nabla_{\mathbb{S}^2}x_i)_j+x_i (\nabla_{\mathbb{S}^2}r)_j\bigr)\big].
 \end{split}
\end{equation}
In particular, the vector field $V_j$ remains in front of the entire higher-order expression.

For the density contribution, the definition of the vorticity gives the
exact identity
\begin{equation*}
 \omega_\rho=A_\rho R^2e_3\wedge N(f).
\end{equation*}
Consequently, using \eqref{AH_decomposition},
\begin{equation}\label{eq:sec62_omega_rho_decomposition}
 \begin{split}
 \omega_\rho&=\omega_{\rho,0}+\omega_{\rho, L}+\omega_{\rho, H},\\
 \omega_{\rho,0}&=A_\rho R^2e_3\wedge x,\\
 \omega_{\rho,L}&=A_\rho R^2\big[2fe_3\wedge x-e_3\wedge\nabla_{\mathbb{S}^2} f\big],\\
 \omega_{\rho,H}&=A_\rho R^2\big[-(r-1)^2\big(1+2(r-1)/3\big)e_3\wedge x+(r-1)re_3\wedge\nabla_{\mathbb{S}^2} r\big].
 \end{split}
\end{equation}
Finally, we need to decompose the Birkhoff--Rott operator $BR(S,\cdot)$ (compare with the main evolution equation \eqref{nonlinear_contour}). Recall the general formula
\begin{equation}\label{sec6eq41}
\begin{split}
BR(S, h)(x)&=-\frac{1}{4\pi}\operatorname{p.v.}\int_{\mathbb{S}^2}\frac{S(x)-S(y)}{|S(x)-S(y)|^3}\wedge h(y)\,d\sigma(y)\\
&=-\frac{1}{4\pi R^2}\operatorname{p.v.}\int_{\mathbb{S}^2}\frac{r(x)x-r(y)y}{|r(x)x-r(y)y|^3}\wedge h(y)\,d\sigma(y).
\end{split}
\end{equation}
Notice that, for $x,y\in\mathbb{S}^2$,
\begin{equation*}
\begin{split}
&r(x)x-r(y)y=(x-y)+[f(x)x-f(y)y]+[r_H(x)x-r_H(y)y],\\
&|r(x)x-r(y)y|^2=r(x)r(y)|x-y|^2+(r(x)-r(y))^2.
\end{split}
\end{equation*}
Therefore we can decompose
\begin{equation}\label{sec6eq43}
\begin{split}
\frac{r(x)x-r(y)y}{|r(x)x-r(y)y|^3}&=\mathcal{K}_0(x,y)+\mathcal{K}_L(x,y)+\mathcal{K}_H(x,y),\\
\mathcal{K}_0(x,y)&=\frac{x-y}{|x-y|^3},\\
 \mathcal{K}_L(x,y)&=-\frac{3(f(x)+f(y))(x-y)}{2|x-y|^3}+\frac{f(x)x-f(y)y}{|x-y|^3},\\
  \mathcal K_H(x,y)&=\frac{r(x)x-r(y)y}{|r(x)x-r(y)y|^3}-\mathcal K_0(x,y)-\mathcal K_L(x,y).
  \end{split}
\end{equation}
Letting 
\begin{equation*}
v(x,y)=\Big[r(x)r(y)+\frac{[r(x)-r(y)]^2}{|x-y|^2}\Big]^{-3/2},
\end{equation*}
the higher-order kernel can be written as 
\begin{equation*}
 \begin{split}
 \mathcal K_H(x,y)&=\frac{(x-y)}{|x-y|^3}\Big[v(x,y)-1+\frac{3(f(x)+f(y))}{2}\Big]+(v(x,y)-1)\frac{f(x)x-f(y)y}{|x-y|^3}\\
 &+v(x,y)\frac{r_H(x)x-r_H(y)y}{|x-y|^3}.
 \end{split}
\end{equation*}

We can summarize the preceding decompositions as follows.

\begin{prop}\label{prop:nonlinear_terms}
The source in \eqref{nonlinear_contour} can be written as
\begin{equation*}
 \frac1R\bigl(BR(S,\omega)-\dot c\bigr)\cdot N(f)
 =\mathcal L[f]+\mathcal Q(f),
\end{equation*}
where \(\mathcal L\) is the linear operator defined in
\eqref{linear_operator}. More precisely,
\begin{equation}\label{eq:sec62_BR_linear_pieces}
\begin{split}
 BR_0(x)&=-\frac1{4\pi R^2}\operatorname{p.v.}\!\int_{\mathbb S^2}
 \mathcal K_0(x,y)\wedge\omega_{\rho,0}(y)\,d\sigma(y),\\
 BR_{\sigma,L}(x)&=-\frac1{4\pi R^2}\operatorname{p.v.}\!\int_{\mathbb S^2}
 \mathcal K_0(x,y)\wedge\omega_{\sigma,L}(y)\,d\sigma(y),\\
 BR_{\rho,L}(x)&=-\frac1{4\pi R^2}\operatorname{p.v.}\!\int_{\mathbb S^2}
 \big[\mathcal K_0(x,y)\wedge\omega_{\rho,L}(y)
 +\mathcal K_L(x,y)\wedge\omega_{\rho,0}(y)\big]\,d\sigma(y),
\end{split}
\end{equation}
and
\begin{equation}\label{nonlinear_remainder}
\begin{split}
 R\mathcal Q(f)=&
 \big(BR_\sigma-BR_{\sigma,L}\big)\cdot N(f)
 +\big(BR_{\sigma,L}-\dot c_L\big)\cdot(N_L+N_H)\\
 &+\big(BR_\rho-BR_0-BR_{\rho,L}\big)\cdot N(f)
 +BR_{\rho,L}\cdot(N_L+N_H)
 +\big(BR_0-\dot c_0\big)\cdot N_H.
\end{split}
\end{equation}
The two nonlinear Birkhoff--Rott remainders in this formula are
\begin{equation}\label{eq:sec62_BR_remainders}
\begin{aligned}
 BR_\sigma-BR_{\sigma,L}
 =-\frac1{4\pi R^2}\operatorname{p.v.}\int_{\mathbb S^2}
 \Big(&\mathcal K_0\wedge\omega_{\sigma,H}+\mathcal K_L\wedge(\omega_{\sigma,L}+\omega_{\sigma,H})
 \\&+\mathcal K_H\wedge(\omega_{\sigma,L}+\omega_{\sigma,H})
 \Big)(x,y)\,d\sigma(y),\\
 BR_\rho-BR_0-BR_{\rho,L}
 =-\frac1{4\pi R^2}\operatorname{p.v.}\!\int_{\mathbb S^2}
 \Big(&\mathcal K_0\wedge\omega_{\rho,H}+\mathcal K_L\wedge(\omega_{\rho,L}+\omega_{\rho,H})\\
 &+\mathcal K_H\wedge(\omega_{\rho,0}+\omega_{\rho,L}+\omega_{\rho,H})
 \Big)(x,y)\,d\sigma(y),
\end{aligned}
\end{equation}
where every vorticity factor in \eqref{eq:sec62_BR_remainders} is
evaluated at \(y\), while every kernel is evaluated at \((x,y)\).
\end{prop}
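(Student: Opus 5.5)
The statement is essentially an algebraic bookkeeping identity, so the plan is to expand every factor in \eqref{nonlinear_contour} into its zeroth-order, linear, and higher-order parts and then regroup the terms.

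First we substitute three decompositions into $BR(S,\omega)$:
\begin{itemize}
\item the kernel splitting \eqref{sec6eq43}, which is exact by the definition of $\mathcal K_H$;
\item the vorticity splittings $\omega_\sigma=\omega_{\sigma,L}+\omega_{\sigma,H}$ from \eqref{eq:sec62_omega_sigma_decomposition};
\item $\omega_\rho=\omega_{\rho,0}+\omega_{\rho,L}+\omega_{\rho,H}$ from \eqref{eq:sec62_omega_rho_decomposition}.
\end{itemize}
One point needs checking here: $\omega_\sigma$ has no zeroth-order part. This holds because $\mathsf K-1$ vanishes at $f=0$, and the formula $(\omega_\sigma)_i=A_\sigma\sum_j V_j[(\mathsf K-1)(\nabla_{\mathbb S^2}(rx_i))_j]$ is legitimate since $\sum_jV_j(\nabla_{\mathbb S^2}(rx_i))_j=0$ by \eqref{eq:sec62_rotation_identities}.

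Expanding the bilinear expression $\mathcal K\wedge\omega$ and collecting terms by homogeneity in $f$ gives the following.
\begin{itemize}
\item The zeroth-order term is $\mathcal K_0\wedge\omega_{\rho,0}$, i.e.\ $BR_0$.
\item The linear terms are $\mathcal K_0\wedge\omega_{\sigma,L}$ and $\mathcal K_0\wedge\omega_{\rho,L}+\mathcal K_L\wedge\omega_{\rho,0}$, i.e.\ the expressions in \eqref{eq:sec62_BR_linear_pieces}.
\item All remaining products are exactly those listed in \eqref{eq:sec62_BR_remainders}.
\end{itemize}
This is a direct expansion with no estimates.

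We then check that the pieces in \eqref{eq:sec62_BR_linear_pieces} agree with the objects of Section \ref{sec:linearization}. For this we compare $\mathcal K_L$ with the first-order kernel expansion used in \eqref{BRL_linearization_expanded}. The identity $(f(x)x-f(y)y)\cdot(x-y)=\tfrac12|x-y|^2(f(x)+f(y))$ turns the $-3(\cdot)(x-y)/|x-y|^5$ term into $-\tfrac32(f(x)+f(y))(x-y)/|x-y|^3$, which is the form appearing in $\mathcal K_L$.

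We also compare the vorticities. The new $\omega_{\rho,0}=A_\rho R^2e_3\wedge x$ equals $\omega_0$ in \eqref{omega_0_final}. The new $\omega_{\rho,L}$ should coincide with \eqref{omega_rho_L}. I expect this comparison to be the only nontrivial step, and I would check it as follows.
\begin{itemize}
\item Use the exact identity $\omega_\rho=A_\rho R^2e_3\wedge N(f)$ with $N_L=2fx-\nabla_{\mathbb S^2}f$.
\item Verify that $x_3\,x\wedge\nabla f+((x\wedge e_3)\cdot\nabla f)x=-e_3\wedge\nabla f$ for tangential $\nabla f$. This follows by decomposing $e_3=x_3x+\nabla_{\mathbb S^2}x_3$ and using a triple product.
\item The surface-tension piece $(\omega_{\sigma,L})_i=A_\sigma V_i\mathsf K_L$ matches \eqref{omega_sigma_L} by the first identity in \eqref{eq:sec62_rotation_identities}.
\end{itemize}

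Finally we multiply by $N(f)=N_0+N_L+N_H$ and subtract $\dot c=\dot c_0+\dot c_L$. Write $\mathcal B:=BR_0+BR_{\sigma,L}+BR_{\rho,L}$ for the sum of the zeroth-order and linear parts of $BR$. Then
\[
R\cdot\text{source}=(BR_\sigma-BR_{\sigma,L})\cdot N+(BR_\rho-BR_0-BR_{\rho,L})\cdot N+(\mathcal B-\dot c)\cdot N .
\]
We expand the last product against $N_0+N_L+N_H$ and use three facts:
\begin{itemize}
\item $(BR_0-\dot c_0)\cdot x=0$ by \eqref{c0_cancellation};
\item $(BR_{\sigma,L}-\dot c_L)\cdot x+BR_{\rho,L}\cdot x+(BR_0-\dot c_0)\cdot N_L=R\mathcal L[f]$, by the definitions \eqref{linear_operator}, \eqref{linear_rho_split};
\item the leftover terms are $(BR_{\sigma,L}-\dot c_L)\cdot(N_L+N_H)$, $BR_{\rho,L}\cdot(N_L+N_H)$, and $(BR_0-\dot c_0)\cdot N_H$.
\end{itemize}
Together with the first two products, these leftover terms reproduce \eqref{nonlinear_remainder} exactly, which completes the identity.
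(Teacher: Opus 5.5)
Your proposal is the same argument as the paper's proof. You substitute the decompositions of $N(f)$, $\omega_\sigma$, $\omega_\rho$ and of the kernel \eqref{sec6eq43} into \eqref{sec6eq41}, and read off the zeroth-order and linear pairings as \eqref{eq:sec62_BR_linear_pieces} and the rest as \eqref{eq:sec62_BR_remainders}. You then expand against $N_0+N_L+N_H$, use \eqref{c0_cancellation} to remove $(BR_0-\dot c_0)\cdot x$, and recognize $R\mathcal L[f]$ from \eqref{linear_operator}.

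Your extra consistency checks against Section~\ref{sec:linearization}, which the paper leaves implicit, are worthwhile. However, the auxiliary identity you state has the wrong sign. For tangential $\nabla_{\mathbb S^2}f$ the correct identity is
\[
x_3\,x\wedge\nabla_{\mathbb S^2}f+\bigl((x\wedge e_3)\cdot\nabla_{\mathbb S^2}f\bigr)x=+\,e_3\wedge\nabla_{\mathbb S^2}f .
\]
The reason is that $(e_3-x_3x)\wedge\nabla_{\mathbb S^2}f$ is normal, with $x$-component $x\cdot(e_3\wedge\nabla_{\mathbb S^2}f)=(x\wedge e_3)\cdot\nabla_{\mathbb S^2}f$. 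For a concrete check, take $x=e_1$ and $\nabla_{\mathbb S^2}f=e_2$: both sides equal $-e_1$.

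With your minus sign, \eqref{omega_rho_L} would produce $+A_\rho R^2e_3\wedge\nabla_{\mathbb S^2}f$. That would not agree with $\omega_{\rho,L}$ in \eqref{eq:sec62_omega_rho_decomposition}. With the corrected sign the two coincide, and the rest of your argument goes through unchanged.
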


\begin{proof}
Insert \eqref{AH_decomposition},
\eqref{eq:sec62_omega_sigma_decomposition},
\eqref{eq:sec62_omega_rho_decomposition}, and \eqref{sec6eq43} into
\eqref{sec6eq41}. The zeroth-order and linear contributions are
precisely the three terms in \eqref{eq:sec62_BR_linear_pieces}; all
remaining kernel--vorticity pairings give
\eqref{eq:sec62_BR_remainders}. Hence
\begin{align*}
 (BR(S,\omega)-\dot c)\cdot N(f)
 ={}&(BR_0-\dot c_0)\cdot N_0\\
 &+(BR_{\sigma,L}-\dot c_L)\cdot N_0
 +BR_{\rho,L}\cdot N_0
 +(BR_0-\dot c_0)\cdot N_L\\
 &+(BR_\sigma-BR_{\sigma,L})\cdot N(f)
 +(BR_{\sigma,L}-\dot c_L)\cdot(N_L+N_H)\\
 &+(BR_\rho-BR_0-BR_{\rho,L})\cdot N(f)
 +BR_{\rho,L}\cdot(N_L+N_H)\\
 &+(BR_0-\dot c_0)\cdot N_H.
\end{align*}
By \eqref{c0_cancellation},
\[
 (BR_0-\dot c_0)\cdot N_0
 =(BR_0-\dot c_0)\cdot x=0.
\]
After division by \(R\), the second line is exactly
\(\mathcal L[f]\), by \eqref{linear_operator}, and the remaining
lines give \eqref{nonlinear_remainder}.

\end{proof}

\subsection{Nonlinear estimates}

\begin{prop}[Nonlinear estimate]\label{lem:Nonlinear_bounds}
Let \(f\in C^{3,\alpha}(\mathbb S^2)\), and let \(\mathcal Q(f)\) be
defined in \eqref{nonlinear_remainder}. There exists
\(\varepsilon>0\) such that, if
\(\|f\|_{W^{1,\infty}}\leq\varepsilon\), then
\begin{equation}\label{eq:nonlinear_bound}
 \|\mathcal Q(f)\|_{C^\alpha}
 \lesssim
 \left(\frac{|A_\rho|}{R}+\frac{A_\sigma}{R^3}\right)
 \|f\|_{W^{1,\infty}}\|f\|_{C^{3,\alpha}}.
\end{equation}
\end{prop}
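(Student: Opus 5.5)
The plan is to expand every term of \eqref{nonlinear_remainder} into a finite sum (plus convergent power series in $f$, $\nabla_{\mathbb S^2}f$ coming from $r_H$, $v(x,y)$ and the factors $(r^2+|\nabla_{\mathbb S^2}r|^2)^{-3/2}$) of expressions of the form $\mathcal N$ in \eqref{sec6eq23}. Lemma \ref{lem:total_output_derivative} is then applied to each one. Each term is at least quadratic in $f$. Using the interpolation bounds of Lemma \ref{lem:interpolation} together with $\|f\|_{W^{1,\infty}}\le\varepsilon$, every product of norms in \eqref{eq:L64_estimate} should reduce to $\|f\|_{W^{1,\infty}}\|f\|_{C^{3,\alpha}}$. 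The prefactors $A_\sigma/R^3$ and $|A_\rho|/R$ come from $\omega_\sigma$ and $\omega_\rho$ respectively, after dividing \eqref{sec6eq41} by $R^3$.

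\textbf{Kernels.} First I would put the kernels into the form $\mathcal T^{(q)}_{p_m}$.
\begin{itemize}
\item Expand $v(x,y)$ as a power series in $r(x)r(y)-1$ and $\big(\frac{r(x)-r(y)}{|x-y|}\big)^2$.
\item Write $r(x)r(y)-1$ through the differences $r(x)-r(y)$ plus products of the functions $r(x)-1$ and $r(y)-1$. Functions of $x$ alone are absorbed into $B$; functions of $y$ alone are absorbed into $b_2$ or $b_3$, or written as $(a(y)-a(x))+a(x)$.
\item Write $f(x)x-f(y)y$ componentwise as $(f(x)-f(y))x_i+f(y)(x_i-y_i)$, and similarly for $r_H$.
\end{itemize}
Every term then becomes $\frac{\prod_j(a_j(x)-a_j(y))\,p_m(x-y)}{|x-y|^{m+q+2}}$ with $a_j\in\{f,r,r_H,x_i\}$ and the parity condition $m+q$ odd. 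Each coefficient $C^q$ is summable against $\|f\|_{W^{1,\infty}}^q\le\varepsilon^q$, which fixes $\varepsilon$.

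\textbf{Surface-tension terms.} This is where the null structure is essential. By \eqref{eq:sec62_omega_sigma_decomposition}, $\omega_{\sigma,L}=-\frac{A_\sigma}{2}V_i(\Delta_{\mathbb S^2}+2)f$ and $\omega_{\sigma,H}$ is a sum of $V_j[\cdots]$. Since $\Delta_{\mathbb S^2}=\sum V_k^2$, the vorticity has the form $G=V(b_2W b_3)$ up to lower order:
\begin{itemize}
\item In $\omega_{\sigma,L}$, take $b_2=1$, $b_3=f$, $W=V_k$ (replacing $Vf$ by $b_3$), giving $V_iV_kV_kf$.
\item In $\mathsf K_H$ given by \eqref{eq:sec62_kappa_H_exact}, every term is affine in second derivatives of $r$. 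Write $\Delta_{\mathbb S^2}r=\sum V_k(V_k r)$ and $\nabla_{\mathbb S^2} r\cdot\nabla_{\mathbb S^2}|\nabla_{\mathbb S^2} r|^2$ as $b_2\,W(b_3)$ with $b_2$ a smooth function of $(r,\nabla_{\mathbb S^2} r)$ and $b_3\in\{V_kr\}$ or a component of $\nabla_{\mathbb S^2}r$.
\end{itemize}
For such $b_2,b_3$ one has $\|b_2\|_{L^\infty}$ and $\|b_3\|_{L^\infty}\lesssim\|f\|_{W^{1,\infty}}$ or $1$, while the $C^{2,\alpha}$ norms are $\lesssim\|f\|_{C^{3,\alpha}}$ by composition estimates; this is consistent with \eqref{eq:L64_estimate}. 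The terms containing $\mathcal K_L$ or $\mathcal K_H$ have $q\ge1$ with $a_j\in\{f,r\}$ and supply the second factor of $f$. The term $\mathcal K_0\wedge\omega_{\sigma,H}$ is already quadratic through $\omega_{\sigma,H}$, since it contains a product such as $\mathsf K_L(r-1)$. Either one of $b_2,b_3$ carries $\|f\|_{W^{1,\infty}}$, or the ratio structure in \eqref{eq:L64_estimate} gives $\|f\|_{W^{1,\infty}}\|f\|_{C^{3,\alpha}}$. The outer factor $N(f)$, or $N_L+N_H$, is taken as $B$; it contains $\nabla_{\mathbb S^2} f$, so $\|B\|_{C^{2,\alpha}}\lesssim\|f\|_{C^{3,\alpha}}$.

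The term $(BR_{\sigma,L}-\dot c_L)\cdot(N_L+N_H)$ is handled by the same lemma with $q=0$, using the fact that $|\dot c_L|\lesssim \frac{A_\sigma}{R^2}\|f\|_{L^\infty}$.

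\textbf{Density terms and the main obstacle.} These are lower order: $\omega_\rho$ involves at most one derivative of $f$. I would apply Lemma \ref{lem:model_nonlinearity}, or Lemma \ref{lem:total_output_derivative} with $V=W=\mathrm{Id}$, and interpolate $\|f\|_{C^{1,\alpha}}\lesssim\|f\|_{W^{1,\infty}}^{2/(2+\alpha)}\|f\|_{C^{3,\alpha}}^{\alpha/(2+\alpha)}$. Since $\|f\|_{C^{3,\alpha}}$ may be small, I would bound $\|f\|_{C^{1,\alpha}}\lesssim\|f\|_{C^{3,\alpha}}$ directly instead. The terms $(BR_0-\dot c_0)\cdot N_H$ and $BR_{\rho,L}\cdot(N_L+N_H)$ follow from Lemma \ref{lem:model_nonlinearity} and the Hölder product rule.

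I expect the main obstacle to be the bookkeeping that puts each surface-tension term exactly into the form $B\cdot\mathcal T[a,\,V(b_2Wb_3)]$, keeping $V$ outermost. Commuting $V$ past coefficients produces errors of type $V(b_2)W(b_3)$, which Remark \ref{lem:null} says are logarithmically divergent. I would avoid these by never expanding the outer $V_j$ in \eqref{eq:sec62_omega_sigma_decomposition}, and by absorbing coefficients into $b_2$ only inside it.
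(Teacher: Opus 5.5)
Your architecture matches the paper's. You put every piece of $\mathcal Q(f)$ in the form $B\cdot\mathcal T^{(q)}_{p_m}[a_1,\dots,a_q,G]$ with $G$ as in \eqref{sec6eq23}, keep the outer $V_j$ of $\omega_\sigma$ in front, and close with Lemma \ref{lem:total_output_derivative} plus tame composition and interpolation. The one structural difference is the kernel. You expand the exact remainder $\mathcal K_H$ of \eqref{sec6eq43} in powers of $r(x)r(y)-1$ and $(r(x)-r(y))^2/|x-y|^2$; the linear part of $v-1$ cancels against $\tfrac32(f(x)+f(y))$, so no exceptional term appears. The paper instead uses the binomial series \eqref{eq:sec62_kernel_binomial_series}, where $r(x)^{-1/2-k}$ and $r(y)^{-3/2-k}$ factor cleanly, and subtracts the few non-quadratic pieces by hand. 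That difference is harmless. Two steps, however, fail as written.

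\textbf{Placement of $y$-coefficients.} The kernel produces functions of $y$ such as $f(y)$, $r(y)-1$, $r_H(y)$, $y_i$ and powers of $r(y)$. These include the $q=0$ pieces of $\mathcal K_L$, such as $f(y)(x_i-y_i)/|x-y|^3$, so not every $\mathcal K_L$ term has $q\ge1$. They multiply $\omega_\sigma(y)=V_j[\cdots]$ from outside, so putting them into $b_2$ or $b_3$ is not an identity. Your alternative $\phi(y)=\phi(x)-(\phi(x)-\phi(y))$ adds a difference factor without adding a power of $|x-y|^{-1}$. The resulting kernel is homogeneous of degree $-1$, and no polynomial factor restores degree $-2$, so it lies outside the class \eqref{multilinear_model} and Lemma \ref{lem:total_output_derivative} does not apply. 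The correct slot is $b_1$ in \eqref{sec6eq23}, which exists for exactly this purpose. The proof of Lemma \ref{lem:total_output_derivative} moves $b_1$ inside $V$ at the price of $V(b_1)\,b_2\cdot W(b_3)$. This is harmless because $b_1\in C^{3,\alpha}$ is measured in $W^{1,\infty}$. So your worry that commuting $V$ past any coefficient is logarithmically bad is misplaced: Remark \ref{lem:null} concerns coefficients controlled only in $L^\infty$ and $C^{2,\alpha}$.

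\textbf{Density terms.} Bounding $\|f\|_{C^{1,\alpha}}\lesssim\|f\|_{C^{3,\alpha}}$ directly, instead of interpolating, destroys tameness once you use Lemma \ref{lem:model_nonlinearity} and the H\"older product rule. For $BR_{\rho,L}\cdot(N_L+N_H)$, consider the term $\|BR_{\rho,L}\|_{L^\infty}\|N_L+N_H\|_{C^\alpha}$. The order-zero singular integral in $BR_{\rho,L}$ acting on $\nabla_{\mathbb S^2}f$ is not controlled in $L^\infty$ by $\|f\|_{W^{1,\infty}}$. So this term is of size $\|f\|_{C^{1,\alpha}}^2$, which your direct bound turns into $\|f\|_{C^{3,\alpha}}^2$, not the required $\|f\|_{W^{1,\infty}}\|f\|_{C^{3,\alpha}}$. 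You need \eqref{eq:critical_interpolation_basic}: $\|f\|_{C^{1,\alpha}}^2\lesssim\|f\|_{W^{1,\infty}}^{4/(2+\alpha)}\|f\|_{C^{3,\alpha}}^{2\alpha/(2+\alpha)}\lesssim\|f\|_{W^{1,\infty}}\|f\|_{C^{3,\alpha}}$. Interpolation is never worse than the direct bound, since $\|f\|_{W^{1,\infty}}\le\|f\|_{C^{3,\alpha}}$. Alternatively, as the paper does, apply Lemma \ref{lem:total_output_derivative} with $V=\mathrm{Id}$ to the density terms as well, since its right-hand side is already tame.
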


\begin{proof}
The standard composition and product estimates, together with
\eqref{eq:sec62_u_decomposition} and \eqref{AH_decomposition}, give
\begin{equation}\label{eq:sec62_composition_bounds}
\begin{split}
 &\|r-1\|_{W^{1,\infty}}+\|N(f)-N_0\|_{L^\infty}
 \lesssim\|f\|_{W^{1,\infty}},\\
 &\|r-1\|_{C^{3,\alpha}}+\|N(f)-N_0\|_{C^{2,\alpha}}
 \lesssim\|f\|_{C^{3,\alpha}},\\
 &\|r_H\|_{W^{1,\infty}}+\|N_H\|_{L^\infty}
 \lesssim\|f\|_{W^{1,\infty}}^2,\\
 &\|r_H\|_{C^{3,\alpha}}+\|N_H\|_{C^{2,\alpha}}
 \lesssim\|f\|_{W^{1,\infty}}\|f\|_{C^{3,\alpha}}.
\end{split}
\end{equation}
We use these bounds below without further comment.
We first estimate the terms carrying the factor \(A_\sigma\). From
\eqref{nonlinear_remainder}, their sum before division by \(R\) is
\begin{equation}\label{eq:sec62_sigma_collection}
 BR(S,\omega_\sigma)\cdot N(f)-BR_{\sigma,L}\cdot N_0
 -\dot c_L\cdot\bigl(N(f)-N_0\bigr).
\end{equation}
The exact decompositions in \eqref{AH_decomposition},
\eqref{eq:sec62_omega_sigma_decomposition}, and \eqref{sec6eq43}
give
\begin{equation*}
\begin{split}
BR(S,\omega_\sigma)&\cdot N(f)-BR_{\sigma,L}\cdot N_0=-\frac{1}{4\pi R^2}N(f)(x)\cdot
\operatorname{p.v.}\int_{\mathbb S^2}
\mathcal K_0(x,y)\wedge\omega_{\sigma,H}(y)\,d\sigma(y)\\
&-\frac{1}{4\pi R^2}N(f)(x)\cdot
\operatorname{p.v.}\int_{\mathbb S^2}
\bigl(\mathcal K_L(x,y)+\mathcal K_H(x,y)\bigr)
 \wedge\bigl(\omega_{\sigma,L}(y)+\omega_{\sigma,H}(y)\bigr)
 \,d\sigma(y)\\
&-\frac{1}{4\pi R^2}\bigl(N(f)(x)-N_0(x)\bigr)\cdot
\operatorname{p.v.}\int_{\mathbb S^2}
\mathcal K_0(x,y)\wedge\omega_{\sigma,L}(y)\,d\sigma(y).
\end{split}
\end{equation*}

The structural information needed to apply Lemma
\ref{lem:total_output_derivative} is already contained in
\eqref{eq:sec62_kappa_H_exact} and
\eqref{eq:sec62_omega_sigma_decomposition}. Indeed,
$\Delta_{\mathbb S^2}=V_1^2+V_2^2+V_3^2$, and every component of
$A_\sigma^{-1}\omega_{\sigma,L}$ is a finite sum of expressions of
the form in \eqref{sec6eq23}, with the right-hand side in
\eqref{eq:L64_estimate} bounded by
$C\|f\|_{C^{3,\alpha}}$. Using the  formula
\eqref{eq:sec62_kappa_H_exact}, and absorbing its smooth coefficient
functions into $b_1$ or $b_2$, every component of
$A_\sigma^{-1}\omega_{\sigma,H}$ is a finite sum of terms of the
form in \eqref{sec6eq23}. Lemma \ref{lem:interpolation} and the tame
composition estimates show that the corresponding right-hand side in
\eqref{eq:L64_estimate} is bounded by
\begin{equation*}
	 C\|f\|_{W^{1,\infty}}\|f\|_{C^{3,\alpha}}.
\end{equation*}
Here the term containing $\Delta_{\mathbb S^2}r_H$ is treated by
taking $b_3=V_\ell r_H$ and $W=V_\ell$. The terms containing
$\Delta_{\mathbb S^2}r$ are treated by taking
$b_3=V_\ell r$. The last term of
\eqref{eq:sec62_kappa_H_exact} is written as
a finite sum of combinations $V_iV_j$, $i,j\in\{1,2,3\}$, with smooth
coefficients, and the two first-order factors are absorbed in
$b_2$. The bounds follow from Lemma \ref{lem:interpolation} and
\eqref{eq:sec62_composition_bounds}.

For the nonlinear kernel we use the absolutely convergent identity
\begin{equation}\label{eq:sec62_kernel_binomial_series}
\begin{split}
\frac{r(x)x-r(y)y}{|r(x)x-r(y)y|^3}
={}&\sum_{k\geq0}\binom{-3/2}{k}
 r(x)^{-1/2-k}r(y)^{-3/2-k}
 \frac{(x-y)[r(x)-r(y)]^{2k}}{|x-y|^{2k+3}}\\
&+\sum_{k\geq0}\binom{-3/2}{k}
 r(x)^{-3/2-k}r(y)^{-3/2-k}
 \frac{y[r(x)-r(y)]^{2k+1}}{|x-y|^{2k+3}}.
\end{split}
\end{equation}
The operators in the first sum have $m=1,q=2k$, while those in
the second have $m=0,q=2k+1$. Thus $m+q$ is odd in both cases.
Moreover, for $\lambda\in\{1/2,3/2\}$,
\begin{equation}\label{eq:sec62_power_bounds}
\begin{aligned}
 \|r^{-k-\lambda}\|_{W^{1,\infty}}
 &\lesssim C^{k+1},&
 \|r^{-k-\lambda}\|_{C^{3,\alpha}}
 &\lesssim C^{k+1}\bigl(1+\|f\|_{C^{3,\alpha}}\bigr),\\
 \|N(f)r^{-k-\lambda}\|_{L^\infty}
 &\lesssim C^{k+1},&
 \|N(f)r^{-k-\lambda}\|_{C^{2,\alpha}}
 &\lesssim C^{k+1}\bigl(1+\|f\|_{C^{3,\alpha}}\bigr).
\end{aligned}
\end{equation}
Apply Lemma \ref{lem:total_output_derivative} componentwise to the terms corresponding to the decomposition
\eqref{eq:sec62_kernel_binomial_series}. The terms in the first sum
paired with \(\omega_{\sigma,L}\) are bounded by
\begin{equation*}
	\frac{A_\sigma}{R^2}C^k
	\|f\|_{W^{1,\infty}}^{2k}\|f\|_{C^{3,\alpha}},
\end{equation*}
and those paired with \(\omega_{\sigma,H}\) are bounded by
\begin{equation*}
	 \frac{A_\sigma}{R^2}C^k
	\|f\|_{W^{1,\infty}}^{2k+1}\|f\|_{C^{3,\alpha}}.
\end{equation*}
The corresponding bounds for the two pairings in the second sum  are
\begin{equation*}
	 \frac{A_\sigma}{R^2}C^k
	\|f\|_{W^{1,\infty}}^{2k+1}\|f\|_{C^{3,\alpha}}
	\quad\hbox{and}\quad
	\frac{A_\sigma}{R^2}C^k
	\|f\|_{W^{1,\infty}}^{2k+2}\|f\|_{C^{3,\alpha}}.
\end{equation*}
All these series are summable if
$\|f\|_{W^{1,\infty}}$ is sufficiently small. The estimates are
uniform for the truncated principal-value integrals, so the limit
can be taken after summation.

The only term in these bounds without a small factor is the
$k=0$ term in the first sum paired with
$\omega_{\sigma,L}$. Its difference from its linear part is
\begin{equation}\label{eq:sec62_sigma_exceptional_term}
\begin{split}
&-\frac{1}{4\pi R^2}
 \bigl(N(f)r^{-1/2}-N_0\bigr)\cdot
 \operatorname{p.v.}\int_{\mathbb S^2}
 \frac{x-y}{|x-y|^3}\wedge
 \bigl(r(y)^{-3/2}\omega_{\sigma,L}(y)\bigr)\,d\sigma(y)\\
&-\frac{1}{4\pi R^2}N_0\cdot
 \operatorname{p.v.}\int_{\mathbb S^2}
 \frac{x-y}{|x-y|^3}\wedge
 \bigl((r(y)^{-3/2}-1)\omega_{\sigma,L}(y)\bigr)\,d\sigma(y).
\end{split}
\end{equation}
The composition estimates give
\begin{equation*}
	\begin{split}
	\|N(f)r^{-1/2}-N_0\|_{L^\infty}
	&\lesssim\|f\|_{W^{1,\infty}},\qquad
	\|N(f)r^{-1/2}-N_0\|_{C^{2,\alpha}}
	\lesssim\|f\|_{C^{3,\alpha}},\\
	\|r^{-3/2}-1\|_{W^{1,\infty}}
	&\lesssim\|f\|_{W^{1,\infty}},\qquad
	\|r^{-3/2}-1\|_{C^{3,\alpha}}
	\lesssim\|f\|_{C^{3,\alpha}}.
	\end{split}
\end{equation*}
Lemma \ref{lem:total_output_derivative} applied once more to
\eqref{eq:sec62_sigma_exceptional_term} therefore yields
\begin{equation}\label{eq:sec62_BR_sigma_bound}
 \bigl\|BR(S,\omega_\sigma)\cdot N(f)
       -BR_{\sigma,L}\cdot N_0\bigr\|_{C^\alpha}
 \lesssim
 \frac{A_\sigma}{R^2}
 \|f\|_{W^{1,\infty}}\|f\|_{C^{3,\alpha}}.
\end{equation}
Since
\begin{equation*}
	 |\dot c_L|\lesssim\frac{A_\sigma}{R^2}\|f\|_{L^\infty},
\end{equation*}
we also have
\begin{equation}\label{eq:sec62_sigma_translation_bound}
 \|\dot c_L\cdot(N(f)-N_0)\|_{C^\alpha}
 \lesssim
 \frac{A_\sigma}{R^2}
 \|f\|_{W^{1,\infty}}\|f\|_{C^{3,\alpha}}.
\end{equation}
Combining \eqref{eq:sec62_sigma_collection},
\eqref{eq:sec62_BR_sigma_bound}, and
\eqref{eq:sec62_sigma_translation_bound}, and dividing by $R$,
gives the contribution of the second term on the right-hand side of
\eqref{eq:nonlinear_bound}.

It remains to estimate the terms carrying the factor $A_\rho$.
Every component of $(A_\rho R^2)^{-1}\omega_{\rho,0}$ is a fixed
smooth function. By \eqref{eq:sec62_omega_rho_decomposition}, every
component of $(A_\rho R^2)^{-1}\omega_{\rho,L}$ is a finite sum of
expressions of the form in \eqref{sec6eq23} containing exactly one
factor depending on $f$, whereas every component of
$(A_\rho R^2)^{-1}\omega_{\rho,H}$ is such a sum containing at
least two factors depending on \(f\). Here one takes
$V=\mathrm{Id}$, and one may take either $W=\mathrm{Id}$ or
$W\in\mathcal V$.

There are only three configurations in which the degree count requires an
explicit subtraction. In the first series with $k=0$ and
$\omega_{\rho,0}$, we subtract the frozen term and the first-order
Taylor polynomial of the two powers of $r$; these give $BR_0$ and
the corresponding part of $BR_{\rho,L}$. In the first series with
$k=0$ and $\omega_{\rho,L}$, we subtract the term obtained by
replacing both powers of $r$ by $1$; this is the
$\mathcal K_0\wedge\omega_{\rho,L}$ part of $BR_{\rho,L}$.
Finally, in the second series with $k=0$ and $\omega_{\rho,0}$,
we write $r=1+r_L+r_H$ and subtract the term obtained by replacing
$r(x)-r(y)$ with $f(x)-f(y)$ and all powers of $r$ by $1$;
this is the remaining part of
$\mathcal K_L\wedge\omega_{\rho,0}$. The Taylor remainders in the
powers of \(r\) contain two factors depending on $f$, while $r_H$
satisfies the last two bounds in \eqref{eq:sec62_composition_bounds}.
Thus every term left after these subtractions has at least two
$f$-dependent factors and has the form required in Lemma
\ref{lem:total_output_derivative}.
We apply Lemma \ref{lem:total_output_derivative} to the two series in
\eqref{eq:sec62_kernel_binomial_series}, now using the density
vorticities. After the zeroth-order and linear terms in
\eqref{eq:sec62_BR_linear_pieces} are removed, every remaining term
contains at least two factors depending on $f$. The tame
right-hand side in \eqref{eq:L64_estimate}, together with
\eqref{eq:sec62_composition_bounds} and
\eqref{eq:sec62_power_bounds}, is therefore bounded by
\begin{equation*}
	 |A_\rho|C^k
	\|f\|_{W^{1,\infty}}^{k+1}\|f\|_{C^{3,\alpha}}
\end{equation*}
after relabeling the summation index. Summing the two series gives
\begin{equation}\label{eq:sec62_BR_rho_bound}
\begin{split}
 \big\|&\big(BR_\rho-BR_0-BR_{\rho,L}\big)\cdot N(f)
 +BR_{\rho,L}\cdot(N_L+N_H)\big\|_{C^\alpha}\lesssim
 |A_\rho|\|f\|_{W^{1,\infty}}\|f\|_{C^{3,\alpha}}.
\end{split}
\end{equation}
Finally, \eqref{c0_cancellation} and
\eqref{eq:sec62_composition_bounds} imply
\begin{equation}\label{eq:sec62_rho_translation_bound}
 \|\big(BR_0-\dot c_0\big)\cdot N_H\|_{C^\alpha}
 \lesssim
 |A_\rho|\|f\|_{W^{1,\infty}}\|f\|_{C^{3,\alpha}}.
\end{equation}
The last three terms in \eqref{nonlinear_remainder} are controlled by
\eqref{eq:sec62_BR_rho_bound} and
\eqref{eq:sec62_rho_translation_bound}. Dividing by $R$ and
combining this with the surface-tension estimate proves
\eqref{eq:nonlinear_bound}.
\end{proof}

\begin{lemma}[Difference estimate]\label{lem:nonlinear_difference}
Let $f,g\in C^{3,\alpha}(\mathbb S^2)$, $0<\alpha<1$, and let $\mathcal Q$
be defined in \eqref{nonlinear_remainder}. There exists $\varepsilon>0$ such that, if
$\max\{\|f\|_{W^{1,\infty}},\|g\|_{W^{1,\infty}}\}\leq\varepsilon$, then
\begin{equation}\label{eq:nonlinear_difference}
\begin{aligned}
\|\mathcal Q(f)-\mathcal Q(g)\|_{C^\alpha}
\lesssim \left(\frac{|A_\rho|}{R}+\frac{A_\sigma}{R^3}\right)
\Big[&
  \bigl(\|f\|_{W^{1,\infty}}+\|g\|_{W^{1,\infty}}\bigr)
  \|f-g\|_{C^{3,\alpha}}\\
&+\bigl(\|f\|_{C^{3,\alpha}}+\|g\|_{C^{3,\alpha}}\bigr)
  \|f-g\|_{W^{1,\infty}}\Big].
\end{aligned}
\end{equation}
\end{lemma}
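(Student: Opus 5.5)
The plan is to redo the proof of Proposition \ref{lem:Nonlinear_bounds} with every multilinear expression replaced by its telescoped difference. Write $\mathcal Q(f)=\sum_\iota \mathcal N_\iota(f)$, where each $\mathcal N_\iota$ is one of the terms from the decomposition used there. Each such term is of the form \eqref{sec6eq23}: a coefficient $B$ times an operator $\mathcal T^{(q)}_{p_m}[a_1,\dots,a_q,G]$, with $G=b_1\cdot V(b_2\cdot W(b_3))$. The inputs $B,a_j,b_i$ are smooth functions of $(x,r,\nabla_{\mathbb S^2}r)$, of $r_H$, $N(f)$, $\mathsf K_L$, $\mathsf K_H$, or of powers $r^{-k-\lambda}$. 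The sum also includes the binomial series \eqref{eq:sec62_kernel_binomial_series}. For each term we then use
\begin{equation*}
\mathcal N_\iota(f)-\mathcal N_\iota(g)=\sum_{\text{slots}}\mathcal N_\iota\big(\text{inputs of }f\text{ in earlier slots},\ \text{difference in this slot},\ \text{inputs of }g\text{ in later slots}\big),
\end{equation*}
which holds by multilinearity of $\mathcal N$ in $(B,a_1,\dots,a_q,b_1,b_2,b_3)$. Each summand has exactly one slot occupied by a difference of the form $F(f)-F(g)$.

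Next, I would estimate each summand with Lemma \ref{lem:total_output_derivative}. That lemma's right-hand side is tame: it is the product of the low norms ($L^\infty$ or $W^{1,\infty}$) of all inputs, multiplied by the sum of the ratios of high norm to low norm. Multiplying out, each summand is bounded by a sum of products in which exactly one input carries a high norm ($C^{2,\alpha}$ or $C^{3,\alpha}$) and all the others carry low norms. Two cases arise.
\begin{itemize}
\item If the high norm falls on the difference slot, I need $\|F(f)-F(g)\|_{C^{3,\alpha}}$ (or $C^{2,\alpha}$). The mean-value formula $F(f)-F(g)=\int_0^1 DF(g+s(f-g))[f-g]\,ds$ and tame composition estimates bound this by $\|f-g\|_{C^{3,\alpha}}+(\|f\|_{C^{3,\alpha}}+\|g\|_{C^{3,\alpha}})\|f-g\|_{W^{1,\infty}}$.
\item If the high norm falls elsewhere, it contributes $\|f\|_{C^{3,\alpha}}+\|g\|_{C^{3,\alpha}}$, and the difference slot is measured in the low norm, giving $\|f-g\|_{W^{1,\infty}}$.
\end{itemize}

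This yields the two terms of \eqref{eq:nonlinear_difference}, provided one extra small factor $\|f\|_{W^{1,\infty}}+\|g\|_{W^{1,\infty}}$ accompanies the first case. That factor comes from the degree counting already done in Proposition \ref{lem:Nonlinear_bounds}. Every term of $\mathcal Q$ contains at least two $f$-dependent factors, once one uses the exact subtractions of $BR_0$ and $BR_{\rho,L}$ and the rewriting \eqref{eq:sec62_sigma_exceptional_term}. Replacing one factor by a difference therefore leaves at least one other factor vanishing at $f=0$, which is bounded in low norm by $\|f\|_{W^{1,\infty}}$ or $\|g\|_{W^{1,\infty}}$. The composition bounds \eqref{eq:sec62_composition_bounds} have difference analogues; for example,
\begin{equation*}
\|r_H(f)-r_H(g)\|_{W^{1,\infty}}\lesssim(\|f\|_{W^{1,\infty}}+\|g\|_{W^{1,\infty}})\|f-g\|_{W^{1,\infty}}.
\end{equation*}
These are proved the same way. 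The series in $k$ are handled as before: the difference of $[r(x)-r(y)]^{2k}$ telescopes into $2k$ terms, which costs a factor $k\le C^k$ and still sums for small $\varepsilon$.

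I expect the main obstacle to be checking that the telescoping preserves the null structure of Remark \ref{lem:null}. The difference must land inside $b_2$, $b_3$ or $b_1$ while the vector field $V$ stays in front of $b_2\cdot W(b_3)$. For $\omega_{\sigma,H}$ this holds because the structure in \eqref{eq:sec62_omega_sigma_decomposition} is an outer $V_j$ applied to an expression that is multilinear in smooth functions of $r$, and the difference is taken inside that expression. For $\mathsf K_H$ in \eqref{eq:sec62_kappa_H_exact}, the coefficient functions are differenced with $\Delta_{\mathbb S^2}r$ kept as $V_\ell(V_\ell r)$; the reduction $b_1=1$ in the proof of Lemma \ref{lem:total_output_derivative} then absorbs the differenced coefficients. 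I would check each of the three exceptional subtractions explicitly so that no term appears with a single $f$-factor in the difference estimate.
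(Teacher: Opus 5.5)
Your argument is correct in outline, but it is organized differently from the paper's.

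\textbf{How the two routes differ.} The paper first Taylor-expands every analytic coefficient function: the powers $r^{-k-\lambda}$, $r_H$, $N(f)$, and the coefficients in \eqref{eq:sec62_kappa_H_exact}. Combined with the binomial series \eqref{eq:sec62_kernel_binomial_series}, this gives $\mathcal Q(f)=\sum_{d\geq2}Q_d(f,\ldots,f)$ with each $Q_d$ genuinely $d$-linear in $f$. The paper then derives the polarized bound \eqref{eq:polarized_nonlinearity} from Lemma \ref{lem:total_output_derivative}, keeping the outer rotations in place, and telescopes each $Q_d$.

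You instead telescope over the slots $(B,a_1,\ldots,a_q,b_1,b_2,b_3)$ of \eqref{sec6eq23} while keeping nonlinear inputs. You then bound each differenced slot by a mean-value/tame composition estimate.

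\begin{itemize}
\item The paper's route costs some bookkeeping, since Taylor coefficients grow at most exponentially in the total degree. In exchange, the extra small factor is automatic: since $d\geq2$, each telescoped term keeps $d-1\geq1$ undifferenced factors. No difference versions of composition estimates are needed.
\item Your route needs only smoothness of the coefficient functions and avoids the degree bookkeeping. The price is that the small factor must be located term by term.
\end{itemize}

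\textbf{Where your justification of the small factor needs fixing.} Your stated reason, ``replacing one factor by a difference leaves at least one other factor vanishing at $f=0$'', is true in the polynomialized picture. It is not true for every term in your slot-based telescoping. In several terms all of the quadratic dependence sits in a single slot, and the remaining slots do not vanish at $f=0$. Examples:
\begin{itemize}
\item the contribution of the pieces $-\frac12\Delta_{\mathbb S^2}r_H$ and $\frac{2r^2+3|\nabla_{\mathbb S^2}r|^2}{2(r^2+|\nabla_{\mathbb S^2}r|^2)^{3/2}}+r-2$ of $\mathsf K_H$ to $N_0\cdot\operatorname{p.v.}\int\mathcal K_0\wedge\omega_{\sigma,H}\,d\sigma$;
\item the term containing $r_H(x)-r_H(y)$ left after the third exceptional subtraction in the density part, where the other slots are $N(f)r^{-3/2}$ and $r^{-3/2}$.
\end{itemize}
When the difference lands on that slot and carries the high norm, your first bullet only gives $\|f-g\|_{C^{3,\alpha}}$, with no small factor. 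What is needed there is the quadratic version: if $F(0)=0$ and $DF(0)=0$, then
\[
\|F(f)-F(g)\|_{C^{3,\alpha}}\lesssim\bigl(\|f\|_{W^{1,\infty}}+\|g\|_{W^{1,\infty}}\bigr)\|f-g\|_{C^{3,\alpha}}+\bigl(\|f\|_{C^{3,\alpha}}+\|g\|_{C^{3,\alpha}}\bigr)\|f-g\|_{W^{1,\infty}}.
\]
The analogous bound in $C^{2,\alpha}$ is needed for coefficients depending on $\nabla_{\mathbb S^2}f$. Both follow from your mean-value formula, because $DF(h)=O(h)$. So your difference analogues of \eqref{eq:sec62_composition_bounds} must include the high-norm version of its last line, not only the $W^{1,\infty}$ bound you wrote.

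\textbf{Two smaller remarks.}
\begin{itemize}
\item In the series, take the undifferenced $a_j$ to be $r-1$ rather than $r$. The operator only sees increments, so this changes nothing, and it makes their $W^{1,\infty}$ norms $O(\varepsilon)$, which the summation in $k$ requires.
\item The null structure is not a real obstacle. $V$ and $W$ are fixed, and differences enter only through the multilinear inputs.
\end{itemize}
With these adjustments the argument closes.
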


\begin{proof}
Expanding the analytic coefficient functions in the proof of
Proposition \ref{lem:Nonlinear_bounds}, together with \eqref{eq:sec62_kernel_binomial_series}, and subtracting the constant and
linear terms gives
\begin{equation*}
	 \mathcal Q(f)=\sum_{d\geq2}Q_d(f,\ldots,f),
\end{equation*}
where $Q_d$ is $d$-linear. At kernel index $k$, the Taylor coefficients
of $r^{-k-\lambda}$ of degree $j$ are bounded by $C^{k+j}$, while
the kernel contains at least $2k$ increments. Thus the coefficient
growth is at most exponential in the total degree.
Keeping the outer rotations in \eqref{sec6eq23}, Lemma \ref{lem:total_output_derivative}, and the tame product estimates yield
\begin{equation}\label{eq:polarized_nonlinearity}
  \|Q_d(f_1,\ldots,f_d)\|_{C^\alpha}
  \leq\Big(\frac{|A_\rho|}{R}+\frac{A_\sigma}{R^3}\Big) C_\alpha^d
  \sum_{i=1}^d\|f_i\|_{C^{3,\alpha}}
       \prod_{j\ne i}\|f_j\|_{W^{1,\infty}}.
\end{equation}
Here coefficients involving first derivatives are kept in the
$C^{2,\alpha}$ slots of Lemma \ref{lem:total_output_derivative}.

Telescoping each multilinear term and using
\eqref{eq:polarized_nonlinearity}, we obtain
\begin{equation*}
	\begin{aligned}
	\|Q_d(f,\ldots,f)-&Q_d(g,\ldots,g)\|_{C^\alpha}
	\leq \Big(\frac{|A_\rho|}{R}\!+\!\frac{A_\sigma}{R^3}\Big) C_\alpha^d\Big[d\big(\|f\|_{W^{1,\infty}}\!+\!\|g\|_{W^{1,\infty}}\big)^{d-1}\|f-g\|_{C^{3,\alpha}}\\
    &\quad+d(d-1)\big(\|f\|_{W^{1,\infty}}+\|g\|_{W^{1,\infty}}\big)^{d-2}\big(\|f\|_{C^{3,\alpha}}+\|g\|_{C^{3,\alpha}}\big)\|f-g\|_{W^{1,\infty}}\Big].
	\end{aligned}
\end{equation*}
Summing over $d\geq2$, for sufficiently small $\varepsilon$,
proves \eqref{eq:nonlinear_difference}.

\end{proof}

	\section{Proof of the Main Theorem}\label{sec:proofmainthm}

	Setting $h(x,t)=f(x,2R^3t/A_\sigma)$, Sections \ref{sec:linearization} and \ref{sec:nonlinear} give
\begin{equation*}
	 \partial_t h=\mathcal{L}_a [h]+\frac{2R^3}{A_\sigma}\mathcal{Q}(h),
\end{equation*}
where we used the rescaled version of the linear operator $\mathcal{L}$, given in \eqref{linear_spectral}.
We construct the solution in $X_\gamma$ using the map
\begin{equation}\label{Tmap}
  \Xi(h)(t)=e^{t\mathcal{L}_a}f_0
  +\frac{2R^3}{A_\sigma}\int_0^t
      e^{(t-\tau)\mathcal{L}_a}\mathcal{Q}(h)(\tau)\,d\tau.
\end{equation}
Fix $\gamma>0$ as in Lemma \ref{lem:semigroup_est}. Constants below may depend on $\alpha$
and $|A_\rho|R^2/A_\sigma$.

\begin{prop}\label{prop:fixed_point}
There exist $C_*\geq1$ and $\varepsilon>0$ such that, for every mean-zero
$f_0\in W^{1,\infty}(\mathbb{S}^2)$ with
$\|f_0\|_{W^{1,\infty}}\leq\varepsilon$, the map $\Xi$ has a unique
fixed point in $\{h\in X_\gamma:\|h\|_{X_\gamma}\leq2C_*\varepsilon\}$.
\end{prop}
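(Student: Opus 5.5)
We apply the Banach fixed point theorem to $\Xi$ on the closed ball $\mathcal B=\{h\in X_\gamma:\|h\|_{X_\gamma}\leq 2C_*\varepsilon\}$, with the metric induced by $\|\cdot\|_{X_\gamma}$.

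\textbf{Mean-zero property.} We first check that $\Xi$ maps mean-zero functions to mean-zero functions. The operators $\mathcal S$ and $\mathcal S^{-1}$ preserve the mean-zero condition. The diagonal operator $\mathcal D$ acts on the $\ell=0$ mode by the multiplier $a_0=0$, so $e^{t\mathcal L_a}$ maps mean-zero functions to mean-zero functions. For the nonlinear part, $\mathcal Q(h)$ is mean-zero whenever $h$ is. Indeed, \eqref{nonlinear_contour} preserves the volume by incompressibility, which gives \eqref{zeroVOL_intrinsic}. Alternatively, one observes that $\mathcal L[h]$ has vanishing $(0,0)$ Fourier coefficient: \eqref{linear_spectral} at $\ell=0$ gives $-a_0\hat h_{0,0}+0\cdot \hat h_{1,0}=0$. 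Since the full right-hand side of \eqref{nonlinear_contour} has zero mean by volume conservation, so does $\mathcal Q(h)$. If the volume-conservation argument is not airtight at the level of regularity available, we would instead verify directly that $\int_{\mathbb S^2}(BR-\dot c)\cdot \mathcal N_f/R\,d\sigma$ is the flux of a divergence-free velocity through the closed surface, which vanishes.

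\textbf{$\Xi$ maps $\mathcal B$ into itself.} For $h\in\mathcal B$ we have $\|h(t)\|_{W^{1,\infty}}\leq 2C_*\varepsilon$ for all $t>0$. Taking $\varepsilon$ small, Proposition \ref{lem:Nonlinear_bounds} applies pointwise in time and gives
\[
t^{\frac{2+\alpha}{3}}e^{\gamma t}\|\mathcal Q(h)(t)\|_{C^\alpha}\lesssim \frac{A_\sigma}{R^3}\Bigl(1+\frac{|A_\rho|R^2}{A_\sigma}\Bigr) e^{-\gamma t}\|h\|_{X_\gamma}^2 .
\]
The left side of this bound uses the factor $e^{\gamma t}$ twice: once absorbed by the $W^{1,\infty}$ factor and once by the $C^{3,\alpha}$ factor. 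The extra $e^{-\gamma t}$ is harmless. After multiplying by $2R^3/A_\sigma$, we apply \eqref{semig_3}–\eqref{semig_4} to the Duhamel term and \eqref{semig_1}–\eqref{semig_2} to the linear term. This yields
\[
\|\Xi(h)\|_{X_\gamma}\leq C_*\|f_0\|_{W^{1,\infty}}+C_1\|h\|_{X_\gamma}^2\leq C_*\varepsilon+4C_1C_*^2\varepsilon^2,
\]
with $C_*$ and $C_1$ depending on $\alpha$ and $|A_\rho|R^2/A_\sigma$. Choosing $\varepsilon\leq (4C_1C_*)^{-1}$ shows that $\Xi(\mathcal B)\subset\mathcal B$.

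\textbf{Contraction.} The contraction estimate follows in the same way from Lemma \ref{lem:nonlinear_difference}:
\[
\|\Xi(h_1)-\Xi(h_2)\|_{X_\gamma}\leq C_2\bigl(\|h_1\|_{X_\gamma}+\|h_2\|_{X_\gamma}\bigr)\|h_1-h_2\|_{X_\gamma}\leq 4C_2C_*\varepsilon\,\|h_1-h_2\|_{X_\gamma}.
\]
This is less than $\tfrac12\|h_1-h_2\|_{X_\gamma}$ for $\varepsilon$ small. Since $X_\gamma$ is complete and $\mathcal B$ is closed in it, existence and uniqueness of the fixed point in $\mathcal B$ follow.

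\textbf{Main obstacle.} The delicate point is the time-weight bookkeeping in the Duhamel term. The nonlinear bound is a product of one $W^{1,\infty}$ norm and one $C^{3,\alpha}$ norm. Its weight must match the hypothesis of Lemma \ref{lem:semigroup_est}, namely $t^{\frac{2+\alpha}{3}}e^{\gamma t}\|\cdot\|_{C^\alpha}$, and no worse singularity may appear at $t=0$. This works precisely because only one factor carries the $t^{-\frac{2+\alpha}{3}}$ singularity: the null structure behind Proposition \ref{lem:Nonlinear_bounds} avoids a second $C^{3,\alpha}$ factor. A secondary technical point is measurability in time of $\mathcal Q(h)$ with values in $L^2(\mathbb{S}^2)$, so that $\Xi(h)$ lies in $X_\gamma$. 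We expect this to follow from the difference estimate together with strong measurability of $h$.
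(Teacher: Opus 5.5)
Your proposal is correct and follows essentially the same route as the paper. The linear part is controlled by Lemma \ref{lem:semigroup_est}, the Duhamel term by the tame bounds of Proposition \ref{lem:Nonlinear_bounds} and Lemma \ref{lem:nonlinear_difference} (one $W^{1,\infty}$ factor times one $C^{3,\alpha}$ factor, giving the weight $s^{-\frac{2+\alpha}{3}}e^{-\gamma s}$), and the mean-zero property of $\mathcal Q(h)$ comes from the vanishing flux of the divergence-free Biot--Savart field and of the constant field $\dot c$.

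Two tidy-ups on the mean-zero step. First, the flux argument, rather than volume conservation along solutions, should be your primary justification, since it is needed at each fixed time for an arbitrary $h$ in the ball. Second, that argument uses the oriented area element $R^2N(h)\,d\sigma=\nu_h\,dS$, so the integrand is $(BR-\dot c)\cdot\mathcal N_h/(Rr)$, not $(BR-\dot c)\cdot\mathcal N_h/R$.
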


\begin{proof}
Let $\mathcal{B}_\rho=\{h\in X_\gamma:\|h\|_{X_\gamma}\leq\rho\}$.
By Lemma \ref{lem:semigroup_est}, we obtain
\begin{equation*}
  \|e^{t\mathcal{L}_a}f_0\|_{X_\gamma}
  \leq C_*\|f_0\|_{W^{1,\infty}},
\end{equation*}
for a constant $C_*\geq1$. To bound the nonlinear remainder, we use
Proposition \ref{lem:Nonlinear_bounds} to obtain
\begin{equation}\label{Nonlinear_decay_bound}
  \frac{2R^3}{A_\sigma}\|\mathcal{Q}(h)(s)\|_{C^\alpha}
  \lesssim s^{-\frac{2+\alpha}{3}}e^{-\gamma s}\|h\|_{X_\gamma}^2,
\end{equation}
for $s>0$ and $h\in\mathcal{B}_{2C_*\varepsilon}$, provided that
$\varepsilon$ is sufficiently small.
On the surface $\Gamma_h$ associated with a fixed profile $h$, the
oriented area element is $R^2N(h)\,d\sigma=\nu_h\,dS$.
The normal trace of the divergence-free Biot--Savart field agrees with
$BR\cdot\nu_h$, and a constant vector field has zero flux.
Hence the right-hand side of \eqref{nonlinear_contour} has zero integral for every such $h$.
Since $\mathcal{L}_a [h]$ also has zero mean, so does $\mathcal{Q}(h)$.
Lemma \ref{lem:semigroup_est} therefore gives
\begin{equation*}
  \Big\|\frac{2R^3}{A_\sigma}\int_0^t
      e^{(t-\tau)\mathcal{L}_a}\mathcal{Q}(h)(\tau)\,d\tau\Big\|_{X_\gamma}
  \lesssim \|h\|_{X_\gamma}^2,
  \qquad h\in\mathcal{B}_{2C_*\varepsilon}.
\end{equation*}

Similarly, for $h_1,h_2\in\mathcal{B}_{2C_*\varepsilon}$, we have
\begin{equation*}
  \Xi(h_1)(t)-\Xi(h_2)(t)
=\frac{2R^3}{A_\sigma}\int_0^t e^{(t-\tau)\mathcal{L}_a}
\bigl(\mathcal{Q}(h_1)-\mathcal{Q}(h_2)\bigr)(\tau)\,d\tau.
\end{equation*}
By Lemma \ref{lem:nonlinear_difference}, we obtain
\begin{equation*}
  \frac{2R^3}{A_\sigma}
  \|\bigl(\mathcal{Q}(h_1)-\mathcal{Q}(h_2)\bigr)(s)\|_{C^\alpha}
  \lesssim \varepsilon s^{-\frac{2+\alpha}{3}}e^{-\gamma s}
      \|h_1-h_2\|_{X_\gamma},
\end{equation*}
for $s>0$, provided that $\varepsilon$ is sufficiently small.
The same argument as before then shows that
\begin{equation*}
	  \|\Xi(h_1)-\Xi(h_2)\|_{X_\gamma}
	\lesssim \varepsilon\|h_1-h_2\|_{X_\gamma}.
\end{equation*}
Therefore, for sufficiently small $\varepsilon$, the map $\Xi$
maps $\mathcal B_{2C_*\varepsilon}$ into itself and is a contraction
there. It follows that the map $\Xi$ \eqref{Tmap} admits a unique fixed point in
$\mathcal{B}_{2C_*\varepsilon}$ for
$\|f_0\|_{W^{1,\infty}}\leq\varepsilon$.

\end{proof}

\begin{proof}[Proof of Theorem \ref{thm:main}]
Proposition \ref{prop:fixed_point} gives a unique mild solution $h\in X_\gamma$, $\|h\|_{X_\gamma}\lesssim \varepsilon$.
The nonlinear estimate \eqref{Nonlinear_decay_bound} gives
\begin{equation*}
    \begin{aligned}
        \|\mathcal{Q}(h(t))\|_{L^2}\lesssim \|\mathcal{Q}(h(t))\|_{C^{\alpha}}\lesssim   \frac{A_\sigma}{2R^3}t^{-\frac{2+\alpha}{3}}\|h\|_{X_\gamma}^2,
    \end{aligned}
\end{equation*}
and by Duhamel's formulation we have
\begin{equation*}
\begin{aligned}
\|h(t)-e^{t\mathcal{L}_a}f_0\|_{L^2}
 &\leq \frac{2R^3}{A_\sigma}\int_0^t \|e^{(t-\tau)\mathcal{L}_a}\mathcal{Q}(h(\tau))\|_{L^2}d\tau\lesssim  t^{\frac{1-\alpha}{3}}\|h\|_{X_\gamma}^2.    
\end{aligned} 
\end{equation*}
Since the linear semigroup is strongly continuous on $L^2(\mathbb{S}^2)$, we have $$\|h(t)-f_0\|_{L^2}\leq \|h(t)-e^{t\mathcal{L}_a}f_0\|_{L^2}+\|e^{t\mathcal{L}_a}f_0-f_0\|_{L^2}\to 0,\quad\mbox{as}\quad t\to0^+.$$
Together with the uniform $W^{1,\infty}$ bound, this yields
strong convergence in $C^{\beta}$ for every $\beta\in(0,1)$
and weak-* convergence in $W^{1,\infty}$.

On compact positive-time intervals, the mild formulation gives
continuity into $L^2$, while the $X_\gamma$ bound gives boundedness
in $C^{3,\alpha}$. Interpolation therefore gives continuity into
$C^{3,\eta}$ for every $0<\eta<\alpha$. The nonlinear difference
estimate and the linear multiplier bounds then imply that
$\mathcal{L}_a h+(2R^3/A_\sigma)\mathcal{Q}(h)$ is continuous into
$C^\eta$. The evolution equation thus holds classically for $t>0$.

Finally, the actual solution is recovered by rescaling back the time variable,
$f(x,t)=h(x,A_\sigma t/(2R^3))$.\end{proof}

	\section*{Acknowledgements}
	
	\begingroup
\setlength{\emergencystretch}{2em}
FG and RMS gratefully acknowledge the hospitality of the Institute for Advanced Study (IAS) in Princeton, where they were members during the development of this work. EGJ and NP also thank the IAS for its hospitality during visits devoted to this project.

FG and EGJ were partially supported by the AEI through grant PID2022-140494NA-I00, and through grants RED2022-134784-T (Spain) and IMUS–María de Maeztu CEX2024-001517-M, funded by MICIU/AEI/10.13039/501100011033. FG was also partially supported by the AEI grant PID2024-158418NB-I00. EGJ was also partially supported by the RYC2021-032877 research grant. SVH was partially supported by NSF grant DMS-2511086. NP was partially supported by an AMS-Simons PUI grant. RMS was partially supported by NSF grant DMS-2408264.
	\par
    \textbf{AI usage disclosure:} LLMs, in the form of GPT-5.5, GPT-5.6 Sol and GPT-6 Astra, were used in the preparation of this manuscript as tools for writing, editing, and improving exposition, as well as for learning and applying techniques for clearer mathematical and scientific communication. They were not used to develop the core ideas, proof strategy, or principal results of this work, which predate the use of these LLMs in this project. The authors reviewed and take full responsibility for all content, arguments, and conclusions presented in the manuscript.
\endgroup

	\bibliographystyle{amsplain2linkLink}
	\bibliography{references.bib}{}

\providecommand{\bysame}{\leavevmode\hbox to3em{\hrulefill}\thinspace}
\providecommand{\href}[2]{#2}
\begin{thebibliography}{10}
\expandafter\ifx\csname arxiv\endcsname\relax
  \def\arxiv#1{\burlalt{https://arxiv.org/abs/#1}{https://arxiv.org/abs/#1}}\fi
\expandafter\ifx\csname doi\endcsname\relax
  \def\doi#1{\burlalt{https://doi.org/#1}{https://doi.org/#1}}\fi
\expandafter\ifx\csname href\endcsname\relax
  \def\href#1#2{#2}\fi
\expandafter\ifx\csname burlalt\endcsname\relax
  \def\burlalt#1#2{\href{#2}{#1}}\fi

\bibitem{AgrawalPatel2026}
Siddhant Agrawal and Neel Patel, \emph{Self-similar solutions to the {H}ele-{S}haw problem with surface tension}, 2026, \arxiv{2507.19443}.

\bibitem{AlazardKoch}
Thomas Alazard and Herbert Koch, \emph{The {H}ele-{S}haw semi-flow}, Preprint arXiv:2312.13678 (2023).

\bibitem{AlazardCritical2}
Thomas Alazard and Quoc-Hung Nguyen, \emph{On the {C}auchy problem for the {M}uskat equation. {II}: {C}ritical initial data}, Ann. PDE \textbf{7} (2021), no.~1, Paper No. 7, 25, \doi{10.1007/s40818-021-00099-x}.

\bibitem{Ambrose14}
David~M. Ambrose, \emph{The zero surface tension limit of two-dimensional interfacial {D}arcy flow}, J. Math. Fluid Mech. \textbf{16} (2014), no.~1, 105--143, \doi{10.1007/s00021-013-0146-1}.

\bibitem{ArfkenWeber2005}
George~B. Arfken and Hans~J. Weber, \emph{Mathematical methods for physicists}, 6 ed., Elsevier Academic Press, Burlington, Massachusetts, 2005.

\bibitem{BaldiJulinLaManna2026}
Pietro Baldi, Vesa Julin, and Domenico~Angelo La~Manna, \emph{Liquid drop with capillarity and rotating traveling waves}, Arch. Ration. Mech. Anal. \textbf{250} (2026), no.~1, Paper No. 4, 57, \doi{10.1007/s00205-025-02156-2}.

\bibitem{BocchiCastroGancedo2026}
Edoardo Bocchi, \'Angel Castro, and Francisco Gancedo, \emph{Global-in-time estimates for the 2{D} one-phase {M}uskat problem with contact points}, Comm. Math. Phys. \textbf{407} (2026), no.~6, Paper No. 121, 50, \doi{10.1007/s00220-026-05633-1}.

\bibitem{CastroFaracoMengual22}
\'{A}. Castro, D.~Faraco, and F.~Mengual, \emph{Localized mixing zone for {M}uskat bubbles and turned interfaces}, Ann. PDE \textbf{8} (2022), no.~1, Paper No. 7, 50, \doi{10.1007/s40818-022-00121-w}.

\bibitem{ChenHuNguyenFluids}
Ke~Chen, Ruilin Hu, and Quoc-Hung Nguyen, \emph{Well-posedness for local and nonlocal quasilinear evolution equations in fluids and geometry}, 2024, \arxiv{2407.05313}.

\bibitem{ChenHuNguyen2026}
\bysame, \emph{Schauder-type estimates and log-critical well-posedness for the two-phase {M}uskat problem with surface tension}, 2026, \arxiv{2606.12388}.

\bibitem{Chen93}
Xinfu Chen, \emph{The {H}ele-{S}haw problem and area-preserving curve-shortening motions}, Arch. Rational Mech. Anal. \textbf{123} (1993), no.~2, 117--151, \doi{10.1007/BF00695274}.

\bibitem{CP93}
Peter Constantin and Mary Pugh, \emph{Global solutions for small data to the {H}ele-{S}haw problem}, Nonlinearity \textbf{6} (1993), no.~3, 393--415, \doi{10.1088/0951-7715/6/3/004}.

\bibitem{CCG13}
Antonio C\'{o}rdoba, Diego C\'{o}rdoba, and Francisco Gancedo, \emph{Porous media: the {M}uskat problem in three dimensions}, Anal. PDE \textbf{6} (2013), no.~2, 447--497, \doi{10.2140/apde.2013.6.447}.

\bibitem{DaiXu13}
Feng Dai and Yuan Xu, \emph{Approximation theory and harmonic analysis on spheres and balls}, Springer Monographs in Mathematics, Springer, New York, 2013, \doi{10.1007/978-1-4614-6660-4}.

\bibitem{Darcy56}
Henry Darcy, \emph{Les {F}ontaines {P}ubliques de la {V}ille de {D}ijon}, Dalmont, Paris, 1856.

\bibitem{DongGancedoNguyen3D}
Hongjie Dong, Francisco Gancedo, and Huy~Q. Nguyen, \emph{Global well-posedness for the one-phase {M}uskat problem in 3d}, To appear in Arch. Ration. Mech. Anal., preprint arXiv:2308.14230 (2023).

\bibitem{DongKwon2026}
Hongjie Dong and Hyunwoo Kwon, \emph{Global well-posedness of the one-phase {M}uskat problem with surface tension}, 2026, \arxiv{2604.06545}.

\bibitem{DuchonRobert1984}
Jean Duchon and Raoul Robert, \emph{\'{E}volution d'une interface par capillarit\'{e} et diffusion de volume. {I}. {E}xistence locale en temps}, Ann. Inst. H. Poincar\'{e} Anal. Non Lin\'{e}aire \textbf{1} (1984), no.~5, 361--378.

\bibitem{EscherMatioc2011}
Joachim Escher and Bogdan-Vasile Matioc, \emph{On the parabolicity of the {M}uskat problem: well-posedness, fingering, and stability results}, Z. Anal. Anwend. \textbf{30} (2011), no.~2, 193--218, \doi{10.4171/ZAA/1431}.

\bibitem{ES97}
Joachim Escher and Gieri Simonett, \emph{Classical solutions for {H}ele-{S}haw models with surface tension}, Adv. Differential Equations \textbf{2} (1997), no.~4, 619--642.

\bibitem{FlynnNguyen2021}
Patrick~T. Flynn and Huy~Q. Nguyen, \emph{The vanishing surface tension limit of the {M}uskat problem}, Comm. Math. Phys. \textbf{382} (2021), no.~2, 1205--1241, \doi{10.1007/s00220-021-03980-9}.

\bibitem{GancedoSEMA}
Francisco Gancedo, \emph{A survey for the {M}uskat problem and a new estimate}, SeMA J. \textbf{74} (2017), no.~1, 21--35, \doi{10.1007/s40324-016-0078-9}.

\bibitem{GancedoGarciaJuarezPatelStrain23}
Francisco Gancedo, Eduardo Garc\'{\i}a-Ju\'{a}rez, Neel Patel, and Robert Strain, \emph{Global {R}egularity for {G}ravity {U}nstable {M}uskat {B}ubbles}, Mem. Amer. Math. Soc. \textbf{292} (2023), no.~1455, \doi{10.1090/memo/1455}.

\bibitem{GGPSCriticalBubble}
Francisco Gancedo, Eduardo Garc\'ia-Ju\'arez, Neel Patel, and Robert~M. Strain, \emph{On nonlinear stability of {M}uskat bubbles}, J. Math. Pures Appl. (9) \textbf{194} (2025), Paper No. 103664, 31, \doi{10.1016/j.matpur.2025.103664}.

\bibitem{GancedoGraneroScro2020}
Francisco Gancedo, Rafael Granero-Belinch\'on, and Stefano Scrobogna, \emph{Surface tension stabilization of the {R}ayleigh-{T}aylor instability for a fluid layer in a porous medium}, Ann. Inst. H. Poincar\'e{} C Anal. Non Lin\'eaire \textbf{37} (2020), no.~6, 1299--1343, \doi{10.1016/j.anihpc.2020.04.005}.

\bibitem{GHHaziotGomezSerranoPausader23}
Eduardo Garc\'ia-Ju\'arez, Javier G\'omez-Serrano, Susanna~V. Haziot, and Beno\^it Pausader, \emph{Desingularization of {S}mall {M}oving {C}orners for the {M}uskat {E}quation}, Ann. PDE \textbf{10} (2024), no.~2, Paper No. 17, \doi{10.1007/s40818-024-00175-y}.

\bibitem{GarciaHaziotKuoMoriZhou2026}
Eduardo García-Juárez, Susanna~V. Haziot, Po-Chun Kuo, Yoichiro Mori, and Han Zhou, \emph{On the global asymptotic stability for the 3{D} {P}eskin problem at critical regularity}, 2026, \arxiv{2607.11731}.

\bibitem{GigaGu2026}
Yoshikazu Giga and Zhongyang Gu, \emph{On existence of a collapsed bubble with surface tension in viscous incompressible fluid}, 2026, \arxiv{2606.31266}.

\bibitem{GranerLazar}
Rafael Granero-Belinch\'on and Omar Lazar, \emph{Growth in the {M}uskat problem}, Math. Model. Nat. Phenom. \textbf{15} (2020), Paper No. 7, 23, \doi{10.1051/mmnp/2019021}.

\bibitem{GuoHallstromSpirn07}
Yan Guo, Chris Hallstrom, and Daniel Spirn, \emph{Dynamics near unstable, interfacial fluids}, Comm. Math. Phys. \textbf{270} (2007), no.~3, 635--689, \doi{10.1007/s00220-006-0164-4}.

\bibitem{ChihWeinstein2023}
Chen-Chih Lai and Michael~I. Weinstein, \emph{Free boundary problem for a gas bubble in a liquid, and exponential stability of the manifold of spherically symmetric equilibria}, Arch. Ration. Mech. Anal. \textbf{247} (2023), no.~5, Paper No. 100, 87, \doi{10.1007/s00205-023-01927-z}.

\bibitem{Lazar2024}
Omar Lazar, \emph{Global well-posedness of arbitrarily large lipschitz solutions for the {M}uskat problem with surface tension}, Preprint arXiv:2407.09444 (2024).

\bibitem{MatiocMatioc23}
Anca-Voichita Matioc and Bogdan-Vasile Matioc, \emph{A new reformulation of the {M}uskat problem with surface tension}, J. Differential Equations \textbf{350} (2023), 308--335, \doi{10.1016/j.jde.2023.01.003}.

\bibitem{MeyerNiebelSeis2025}
David Meyer, Lukas Niebel, and Christian Seis, \emph{Steady bubbles and drops in inviscid fluids}, Calc. Var. Partial Differential Equations \textbf{64} (2025), no.~9, Paper No. 299, 30, \doi{10.1007/s00526-025-03144-w}.

\bibitem{Muskat34}
Morris Muskat, \emph{Two fluid systems in porous media. the encroachment of water into an oil sand}, J. Appl. Phys. \textbf{5} (1934), no.~9, 250--264, \doi{10.1063/1.1745259}.

\bibitem{Nguyen20s}
Huy~Q. Nguyen, \emph{On well-posedness of the {M}uskat problem with surface tension}, Adv. Math. \textbf{374} (2020), 107344, 35, \doi{10.1016/j.aim.2020.107344}.

\bibitem{PrussSimonett2016}
Jan Pr\"{u}ss and Gieri Simonett, \emph{Moving interfaces and quasilinear parabolic evolution equations}, Monographs in Mathematics, vol. 105, Birkh\"{a}user/Springer, 2016, \doi{10.1007/978-3-319-27698-4}.

\bibitem{Shao2026}
Chengyang Shao, \emph{On the {C}auchy problem of spherical capillary water waves}, Forum Math. \textbf{38} (2026), no.~3, 727--769, \doi{10.1515/forum-2024-0042}.

\bibitem{Tartar89}
Luc Tartar, \emph{Incompressible fluid flow in a porous medium - convergence of the homogenization process}, appendix to the book ``{N}onhomogeneous media and vibration theory'' by {E}nrique {S}{\'{a}}nchez-{P}alencia ed., Lecture Notes in Physics, vol. 127, pp.~368--377, Springer-Verlag, Berlin-New York, 1980, \doi{10.1007/978-1-4612-1920-0}.

\bibitem{Taylor91}
Michael~E. Taylor, \emph{Pseudodifferential operators and nonlinear {PDE}}, Progress in Mathematics, vol. 100, Birkh\"{a}user Boston, Inc., Boston, MA, 1991, \doi{10.1007/978-1-4612-0431-2}.

\bibitem{YT11}
J.~Ye and S.~Tanveer, \emph{Global existence for a translating near-circular {H}ele-{S}haw bubble with surface tension}, SIAM J. Math. Anal. \textbf{43} (2011), no.~1, 457--506, \doi{10.1137/100786332}.

\bibitem{YT12}
\bysame, \emph{Global solutions for a two-phase {H}ele-{S}haw bubble for a near-circular initial shape}, Complex Var. Elliptic Equ. \textbf{57} (2012), no.~1, 23--61, \doi{10.1080/17476933.2010.504835}.

\end{thebibliography}

\end{document}